
%

\documentclass[aos,preprint]{imsart}  

\setattribute{journal}{name}{} 

\usepackage{color}
\usepackage{enumerate}
\usepackage{amsmath,amsthm,amsfonts}
\usepackage{graphicx}
\usepackage[export]{adjustbox}
\usepackage{multirow}
\usepackage{hyperref}
\usepackage{extarrows}
\usepackage{algorithm, algpseudocode}
\usepackage{soul}
\usepackage{MnSymbol}
\usepackage{natbib}
\newcommand{\ignore}[1]{}
\startlocaldefs

\numberwithin{table}{section} 

\theoremstyle{plain}
\newtheorem*{assum*}{Assumption}
\theoremstyle{plain}
\newtheorem{thm}{Theorem}
\newtheorem{lem}[thm]{Lemma}

\newtheorem{cor}[thm]{Corollary}
\newtheorem{defn}{Definition}[section]
\newtheorem{assum}{Assumption}

\makeatletter

\makeatother
\newcounter{parentnumber}

\newtheoremstyle{TheoremNum}
{\topsep}{\topsep}              
{\itshape}                      
{}                              
{\bfseries}                     
{.}                             
{ }                             
{\thmname{#1}\thmnote{ \bfseries #3}}

\theoremstyle{TheoremNum}



\newcommand {\Reals}  {{\rm I \! R}}

\newcommand {\Procrustes}  {{\cP\cT}}

\newcommand{\1}{{\bf 1}}

\newcommand{\bbA}{\mathbb{A}}
\newcommand{\bbB}{\mathbb{B}}

\newcommand{\bbE}{\mathbb{E}}

\newcommand{\bbH}{\mathbb{H}}
\newcommand{\bbI}{\mathbb{I}}
\newcommand{\bbG}{{\mathbb{G}}}

\newcommand{\bbM}{{\mathbb{M}}}

\newcommand{\bbP}{\mathbb{P}}

\newcommand{\bbT}{\mathbb{T}}
\newcommand{\bbU}{\mathbb{U}}

\newcommand{\bbV}{\mathbb{V}}
\newcommand{\bbW}{{\mathbb{W}}}
\newcommand{\bbX}{{\mathbb{X}}}

\newcommand{\bbZ}{\mathbb{Z}}

\newcommand{\expc}{{\E}}

\newcommand{\cA}{\mathcal{A}}
\newcommand{\cB}{\mathcal{B}}
\newcommand{\cC}{\mathcal{C}}

\newcommand{\cG}{\mathcal{G}}
\newcommand{\cK}{\mathcal{K}}

\newcommand{\cL}{\mathcal{L}}
\newcommand{\cS}{\mathcal{S}}
\newcommand{\cT}{\mathcal{T}}
\newcommand{\cU}{\mathcal{U}}

\renewcommand{\b}{{\mathbf{b}}}

\renewcommand{\v}{{\mathbf{v}}}

\newcommand{\x}{{\mathbf{x}}}

\newcommand{\I}{{\mathbf{I}}}

\newcommand{\E}{{\mathbf{E}}}
\renewcommand{\P}{{\mathbf{P}}}

\newcommand{\Var}{{\rm Var}}


\makeatletter
\newcommand{\doublewidetilde}[1]{{%
		\mathpalette\double@widetilde{#1}%
}}
\newcommand{\double@widetilde}[2]{%
	\sbox\z@{$\m@th#1\widetilde{#2}$}%
	\ht\z@=.9\ht\z@
	\widetilde{\box\z@}%
}
\makeatother

\newcommand{\cN}{{\cal N}}
\newcommand{\cO}{{\cal O}}
\newcommand{\cP}{{\cal P}}
\newcommand{\cQ}{{\cal Q}}

\newcommand{\DiaEig}{{ \Lambda }}
\newcommand{\diag}{{\rm diag}}

\newcommand{\rank}{{\rm rank}}
\newcommand{\tr}{{\rm tr}}

\newcommand{\NumOfB}{{K }}
\newcommand{\BernP}{\text{Bernoulli}}
\newcommand{\BExpect}{\bbB_{(n)}}
\newcommand{\Irenyi}{{\I_{\text{Renyi}}^{\frac12}}}
\newcommand{\WignerLikeRM}{{ \bbH_{(n)} }}
\newcommand{\SparsityFactor}{{ s_{n} }}
\newcommand{\word}{{ \omega }}
\newcommand{\sentence}{{\cal S}}

\newcommand{\dps}{\displaystyle}
\endlocaldefs

\numberwithin{equation}{section}
\begin{document}

\begin{frontmatter}
	\title{Two-sample Test of Community Memberships of Weighted Stochastic Block Models}
	\runtitle{Inference of Weighted Stochastic Block Models}
	\begin{aug}
		\author{\fnms{Yezheng}  \snm{Li}\ead[label=e1]{yezheng@sas.upenn.edu}}
		\and
		\author{\fnms{Hongzhe} \snm{Li}\corref{Correspondence}\ead[label=e3]{hongzhe@pennmedicine.upenn.edu}\thanksref{t1}}
		\runauthor{Y. Li and H. Li}
		\affiliation{University of Pennsylvania}
		\address{Yezheng Li\\Program in Applied Mathematics and \\
			Computational Science \\ University of Pennsylvania \\ Philadelphia, PA 19104, USA. \\ \printead{e1}}
		\address{Hongzhe Li \\ Department of Biostatistics, \\
			Epidemiology and Informatics (DBEI) \\ University of Pennsylvania \\ Philadelphia, PA 19104, USA. \\ \printead{e3}}
		\thankstext{t1}{This research was supported in part by the National Institutes of Health Grants GM123056 and GM129781.}
	\end{aug}

		\begin{abstract}
	Suppose two networks are observed for the same set of nodes, where each network is  assumed to be generated from a weighted stochastic block model. This paper  
	considers the problem of testing whether the community memberships  of the two networks are the same. A  test statistic based on singular subspace distance is developed. 
Under the weighted stochastic block models with  dense graphs,  the limiting distribution of the proposed test statistic is developed.  Simulation results show that the test has
correct 	empirical type 1 errors under the dense graphs.   The test also behaves as expected in  empirical power, showing gradual changes   when the intra-block  and  inter-block distributions are close and achieving  1 when the two distributions are not so close, where the  closeness of the two distributions is characterized by Renyi divergence of order 1/2.  The Enron email networks are used to demonstrate the proposed test.  
		\end{abstract}
		
		\begin{keyword}[class=MSC]
			\kwd[Primary ]{62E20, 62F03, 62H15, 91D30}
			\kwd{}
			\kwd[Secondary ]{62F05}
		\end{keyword}
		
		\begin{keyword}
			\kwd{Moment matching, Networks, Procrustes transformation, Random matrix,   Singular subspace distance, Spectral clustering}
		\end{keyword}
		
	\end{frontmatter}

	\section{Introduction}
	Network data appear in many disciplines such as social science, neuroscience, and genetics. Many models have been proposed for network data, among which the  stochastic block models (SBMs) \citep{holland1983stochastic} 
	have emerged as a popular statistical framework for modeling network data with  community  structures.  SBMs are a class of generative models for describing the community structure in unweighted networks. The model assigns each of $n$ nodes to one of $K_n$ blocks, and each edge  exists with a probability specified by the block memberships of their endpoints.   To account for edge weights,   the observations are given in the form of a weighted adjacency matrix.  As extensions of unweighted SBMs, weighted 
	SBMs have been proposed, where the weight of each edge is generated independently from some  probability density determined by the community membership of its endpoints \citep{jog2015information,jog2015recovering,xu2017optimal}.

	Alternative to SBMs,  random dot product graph (RDPG)  models have been proposed where the  adjacency matrix of the nodes  is  generated from Bernoulli distributions with probabilities defined through  latent positions. The latent positions can be random and generated from some distribution. Such RDPG models are related to stochastic block model graphs  and degree-corrected stochastic block model graphs \citep{karrer2011stochastic}, as well as mixed membership block models \citep{airoldi2008mixed}.
	
	
	Community identification in a network is an important problem in network data analysis.  Spectral clustering is one of the mostly studied methods for community identification based on  SBMs \citep{von2008consistency, rohe2011spectral, mossel2012reconstruction, mcsherry2001spectral, lei2015consistency, lei2016goodness, zhang2016minimax, joseph2016impact, schiebinger2015geometry}.  \cite{lei2015consistency} showed that, under mild conditions, spectral clustering applied to the adjacency matrix of the network can consistently recover the hidden communities even when the order of the maximum expected degree is as small as $\log n$ where   $n$ is the number of nodes. \cite{xu2017optimal,jog2015information} established the optimal rates for community estimation in the weighted SBMs. 
	 \cite{lei2016goodness,bickel2016hypothesis} developed goodness of fit tests on number of clusters $\NumOfB$ for SBMs.

	This paper considers the problem of two-sample inference in the setting that two networks are observed for the same set of nodes, where each network is assumed to be generated from a weighted SBM. We specifically 
consider the problem of testing whether the community memberships  of the two networks are the same.  Such tests have many applications. For example, one might be interested in testing whether there is a change of community structures over time and whether a set of genes have different  network structures between disease and normal states.   This problem has not been studied in literature for weighted SBMs.  There are some related inference works developed  for the  RDPGs \citep{athreya2018statistical},  but these methods do not  treat the block  memberships as the parameters of interest.   \cite{tang2017nonparametric,tang2017semiparametric} considered the problem of testing whether two independent finite-dimensional 
random dot product graphs have the same generating latent positions or 
the respective generating latent positions  are scaled or diagonal transformations of  each other.  \cite{cape2017two,athreya2016limit,tang2018limit,cape2018signal,tang2017semiparametric,tang2017nonparametric} extend the discussion on an interesting asymptotic expansion of subspace distance in Frobenius norm and considered the two-sample test problem by upper bounds of subspace distance (or its  variants), but the limiting distribution for test statistic is unknown \citep{tang2017nonparametric,tang2017semiparametric}.  

\cite{GhoLux18} and  \cite{ghoshdastidar2017two} considered a different two-sample hypothesis testing problem, where one observes two random graphs, possibly of different sizes. Based on the two given network graphs, they are interested in testing  whether the underlying distributions that generate the graphs are same or different.  Their proposed test statistic is based on some summary statistics associated with the graphs. 

 Based on singular subspace distance in Frobenius norm, this paper derives a test statistic  of two-sample community memberships of weighted stochastic block models.  Different from the previous two-sample test statistics of \cite{tang2017nonparametric,tang2017semiparametric},  we derive the limiting distribution  of our proposed test statistic by moment matching method using random matrix theory for Gaussian ensembles.  Such results have not appeared in literature even for the  dense graphs.  A recent independent work of \cite{bao2018singular}  derived the normal distribution for singular subspace in Frobenius for low-rank matrices with  Wigner noises.  
The major difficulty to overcome is to prove that the asymptotic expansion  in \cite{tang2017nonparametric,tang2017semiparametric} still holds  in the dense graph region in order to derive mean and variance of our test statistic \eqref{eq:test statistic} (see Theorem \ref{thm:asymptotic expansion two views} in Section \ref{sec:asymp distri two views}).

The rest of the paper is organized as follows. Section \ref{sec:SBM} defines the homogeneous weighted SBMs and the conditions for dense graphs.  Section \ref{sec:two-sample}  presents the statistical definition of the null hypothesis that  two networks have the same community structures and presents our proposed test statistic.  Section \ref{sec:asymp distri two views} presents  its limiting distribution. Simulation results to evaluate the type I errors and the power of the proposed test are given in Section \ref{sec:simulation}. Section \ref{data}	 demonstrates the application of the proposed test to the  Enron email networks. Finally, Section \ref{sec:discuss} gives a brief discussion.  Detailed proofs can  be found in the Supplemental Materials. 

	\section{Homogeneous weighted SBM and  dense graph}\label{sec:SBM}
	\subsection{Homogeneous weighted SBM} Homogeneous weighted SBM of $n$ nodes with $\NumOfB$ underlying clusters is characterized by two set of parameters:  the underlying membership assignments $\bbZ_n \in \{0,1\}^{n \times \NumOfB}$ and  the  intra-, inter-edge distributions $\cP_n, \cQ_n$ \citep{xu2017optimal,lei2016goodness}.  For the sake of simplicity and similar to \cite{tang2018limit}, this paper assumes that $\NumOfB$ is fixed.

	The underlying membership assignments of a weighted SBM is characterized by $\bbZ_n $ where each row of $\bbZ_n \in \{0,1\}^{n \times \NumOfB}$ contains exactly one $1$, and  each column represents the assignments of a particular  membership. Here  $\bbZ_n$ is treated as fixed parameters for the model. Membership assignments can also be characterized by introducing  a mapping function $\cK$ \citep{jog2015recovering}, defined as 
	\begin{defn}
		\label{defn:true cluster assignment}
		Function $\cK: [n] \to \left[ \NumOfB \right]$ outputs the true membership assignment of each node $i$.
	\end{defn}

			Similar to \cite{gao2017achieving,xu2017optimal}, we make the following assumption on the size of each block: 
	\begin{assum}[Size of each block]
		\label{assum:size of each block}
		There exists $\beta \ge 1$ such that		$ \frac{n}{\beta \NumOfB} \le  \#\cC_i  \le \frac{\beta n}{ \NumOfB} $ for all $i \in [\NumOfB]$, which implies that $ \#\cC_i  \asymp  \frac{n}{ \NumOfB}$ for all $i \in [\NumOfB]$.
	\end{assum}

For the sake of simplicity of arguments in proofs, this paper considers that number of clusters $K$ is fixed and makes the following homogeneity assumption on the  intra-block, inter-block  edge distributions:
	\begin{assum}[Homogeneity] \label{assum:homogeneous weighted SBM}
		
The edge weight probability distributions $\cP_n, \cQ_n$ are supported on $S \subset \Reals^1$, where $S$ may be $[0,1]$, $[0, \infty)$ or $\Reals^1$.
		For $i\le j \in [n]$,
		$$w_{ij} \sim \left\{ \begin{array}{cc}		0, & i=j;\\ \cP_n , & \cK(i)=\cK (j), i <j;\\  \cQ_n , & \cK (i)\ne \cK (j). 
		\end{array}  \right. $$ where  $b_{\cP}, \sigma_{\cP}^2$ are mean and variance of intra-block distribution $\cP_n$ and  $b_{\cP}, \sigma_{\cQ}^2$ are mean and variance of inter-block  distribution $\cQ_n$;. We assume that $\sigma_{\cP}^2 \asymp  b_{\cQ} $, $\sigma_{\cQ}^2 \asymp  b_{\cQ} $, where 
$\cP,\cQ$ are  symbols for intra-block and inter-block distributions,  not the parameters  \citep{xu2017optimal}. While subscripts $~_{n},~_{(n)}$ emphasize the dependency on $n$, we ignore these subscripts in  $b_{\cP}, b_{\cQ}, \sigma_{\cP}^2, \sigma_{\cQ}^2$ for the sake of simplicity.
		
		
	\end{assum}

		As an example, consider the  unweighted SBM $\cG_{\NumOfB}\left( p_n,q_n\right)$, we have the  adjacency matrix with entity  $w_{ij}, (i \le j)$
		$$ w_{ij} \sim  \left\{ \begin{array}{lc}
		0, & i = j; \\
		\cP_n = \BernP \left( p_n \right), & \cK (i) = \cK (j), i <  j; \\ 	 \cQ_n =  \BernP \left( q_n \right), & \cK(i) \ne \cK(j).
		\end{array}\right. 
		$$ namely, for all $ \bbA_n \in  \{0,1\}^{n \times n}$ such that $ \bbA_n^T = \bbA_n, a_{ii}=0, i =1,\ldots, n$, the probability
		$$
		\bbP \left(  \bbW_n = \bbA_n \right)=  \prod_{i = 1}^{n-1} \left[\prod_{\substack{j: j>i \\\cK (j) = \cK(i)}} p_{n} ^{A_{ij} }\left(1-  p_n\right)^{1 - A_{ij}} \cdot \prod_{\substack{j: j>i \\\cK (j) \ne \cK(i)}} q_{n} ^{A_{ij} }\left( 1-  q_{n} \right)^{1 - A_{ij}}\right].
		$$
		In this case, Assumption \ref{assum:homogeneous weighted SBM} holds with means $b_{\cP} = p_{n}, b_{\cQ} = q_{n}$,  variances $\sigma_{\cP}^2 = p_{n}\left(1-p_{n} \right), \sigma_{\cQ}^2 = q_{n}\left(1-q_{n}\right)$.

For a given network, we observe a symmetric weight matrix $\bbW_n \in \Reals^{n \times n} = (w_{ij})_{ n \times n}$. For all $i < j\in[n]$
\ignore{\footnote{For the case with $i \le j$ should be easier to discuss \citep{rohe2011spectral}; concentration inequality \citep{oliveira2009concentration}, etc.  is actually discussing the case $i \le j$ -- \citep{tang2018limit,tang2017semiparametric} describe "of an undirected graph with all zeros ($\bbE( w_{ii} = 0 ) = 1$) on the diagonal"; however, concentration inequality they rely on refers to \citep{oliveira2009concentration}. On the other hand, it is true that some work focuses on adjacency matrix $\bbW_n$ with $\bbE\left[ \bbW_n | \bbZ_n\right] = \bbZ_n \BExpect \bbZ_n^T$ \citep{bickel2016hypothesis,lei2016goodness}, etc.}, }
the entry $w_{ij}$ are generated independently according to 
$w_{ij} \sim \cB_{\cK(i) \cK(j), (n) }$.
The	expectation of the weight matrix $\bbW_n$ is
\begin{equation}
\bbE_n \triangleq \E \bbW_n =   \bbZ_n \BExpect \bbZ_n^T - \diag\left( \bbZ_n \BExpect \bbZ_n^T \right)  \in \Reals^{ n \times n}, \label{eq:prob matrix SBM}
\end{equation}
where the symmetric matrix $\BExpect  = \left( b_{\cP} - b_{\cQ}\right) \bbI_{\NumOfB} + b_{\cQ} \1_{\NumOfB}\1_{\NumOfB}^T \in \Reals_{\ge 0}^{\NumOfB \times \NumOfB }$ represents expectation of intra-block  and  inter-block  distributions and $\diag(\bbM) = \diag  \{ m_{11}, \ldots, m_{ss}  \}  $ represents a diagonal matrix consisting of diagonal entries of $\bbM \in \Reals^{s \times s}$.

	\subsection{SBMs with dense graphs}
This paper focuses on SBMs with dense graphs and with the assumption on signal-to-noise ratio  defined by  Renyi divergence. 
As for sparsity of the graph, sparsity factor is analogous to \cite{tang2017semiparametric,  tang2018limit}.  The Renyi divergence and the dense graphs are  assumed to satisfy Assumption  \ref{assum:region of interest Renyi divergence}:
	\begin{assum}[Region of interest]
	\label{assum:region of interest Renyi divergence}

	\begin{equation}
	 b_{\cP} > b_{\cQ} \gtrsim  \SparsityFactor, \frac{n\Irenyi \left(\cP_n \| \cQ_n \right)}{\NumOfB \log n} \gtrsim 1, 
	\label{eq:region of interest Renyi divergence}
	\end{equation}
	where $\SparsityFactor = n^{-\frac12 + \epsilon}$ is the asymptotic lower bound for graph sparsity for some $\epsilon >0$, and the  Renyi divergence of order $\frac12$ is defined as 
	$$
	\Irenyi  \left( \cP_n, \cQ_n \right) \triangleq -2 \log \int  \left(\frac{d \cP_n }{d \cQ_n} \right)^{ \frac12 } d\cQ_n,
	$$
	where the lower threshold for Renyi divergence might not be tight.
\end{assum}

It is worth noting that sparsity factor threshold is consistent with \cite{tang2017asymptotically}.  For unweighted SBMs, Lemma B.1 in \cite{zhang2016minimax} provides relation between Renyi divergence of order $\frac12$ and SNR,  and Assumption \ref{assum:region of interest Renyi divergence} reduces to 
			\begin{equation}p_n>q_n \gtrsim \SparsityFactor , 
			SNR \triangleq  \frac{(p_n-q_n)^2}{p_n}\gtrsim \frac{\NumOfB \log n }n ,  
				\label{eq:SBM with lower bound for SNR}
			\end{equation}
			where the SNR is the signal-to-noise ratio frequently discussed in the literature of community recovery in SBM \cite{abbe2017community,athreya2018statistical}. Our SNR refers to summary table of exact recovery on page 18 of  \cite{abbe2017community}.
			\ignore{Furthermore, consider SBM with $p_{n} = p$, $q_{n} = p - n^{- \alpha}$, we have 
		\begin{eqnarray*}
			&& \Irenyi\left(\BernP (p_{n}) \| \BernP (q_{n} ) \right)\\
			 & = & \log \left( \sqrt{p_n q_n} + \sqrt{ (1- p_n) (1-q_n))} \right)\\
			& = & \log \left(\sqrt{p\left(p - n^{-\alpha}\right)} + \sqrt{(1-p)\left(1- p + n^{-\alpha}\right)} \right) \\
			&= & - \frac{n^{-2\alpha}}{8(1-p)p} + o\left(n^{-2\alpha}\right).
		\end{eqnarray*}
		While \cite{xu2017optimal} discussed the threshold around $nI_{Renyi} \to \infty$, this will lead to a threshold discussion near $\alpha = \frac12$ although \cite{xu2017optimal} does not discuss SVD/ spectral clustering.}

	\section{Procrustes Transformation and Property of Singular Subspace Distance for One Network}	\label{subsec:asymp distri one view}

Since  the spectral clustering is used to identify the community memberships  of the nodes of the two SBMs, we first provide  Definition \ref{defn:singular value decomposition}:
\begin{defn}
	\label{defn:singular value decomposition}
	For a symmetric $n \times n$ matrix $\bbG_n$, singular value decomposition is denoted as
	\begin{equation}
	\bbG_n= \sum_{i =1}^{n} \lambda_i \v_i \v_i^T= \bbV_{\bbG_n} \DiaEig_{\bbG_n}  \bbV_{\bbG_n}^T + \bbV_{\bbG_n}^{\perp} \DiaEig_{\bbG_n}^{\perp} \left( \bbV_{\bbG_n}^{\perp}\right)^T,|\lambda_1| \ge  \ldots \ge |\lambda_n| ,
	\label{eq:singular value decomposition}
	\end{equation}
	where $\DiaEig_{\bbG_n} = \diag\left\{  \lambda_1, \ldots, \lambda_{ \NumOfB} \right\}$ contains leading $\NumOfB$ singular components of $\bbG_n$ while $\DiaEig_{\bbG_n}^{\perp}$ contains the rest. $\bbV_{ \bbG_n } \in \Reals^{ n \times \NumOfB}$ may not be unique (due to multiple root) but just pick one collection.  
	
\end{defn}

In this paper, the singular value decomposition is applied to  the observed connection matrix  $\bbG_n$=$\bbW_n$ or its expected values  $\bbG_n=\bbE_n$ \citep{tang2017semiparametric,tang2018limit}. \ignore{ or (we do not discuss Laplacian in this paper) their corresponding Laplacian matrix  $\cL (\bbW_n)$, $\cL(\bbE_n)$ \citep{tang2018limit}.}
	
	To begin with, we define the  orthogonal Procrustes transformation from matrix $\bbV_{1} \in \Reals^{ n \times \NumOfB}$ to $\bbV_{2} \in \Reals^{ n \times \NumOfB}$:
	\begin{equation}
	\Procrustes\left( \bbV_{1} , \bbV_{2} \right) \in \arg \inf_{\bbU \in \cO (\NumOfB)} \left\| \bbV_{1} \bbU -  \bbV_{2} \right\|_F \subset \Reals^{\NumOfB \times \NumOfB},
	\label{eq:Procrustes transformation}
	\end{equation}
	where we do not need to specify the relationship  between $\Procrustes\left( \bbV_{1} , \bbV_{2} \right)$ and 	$\Procrustes \left(  \bbV_{2} ,  \bbV_{1}\right)$. We further define 
	\begin{equation}
	\left\|  \sin\Theta \left( \bbV_1, \bbV_2\right)  \right\|_F\triangleq	\left\| \bbV_{1}  \Procrustes\left( \bbV_{1} , \bbV_{2} \right)-  \bbV_{2} \right\|_F 
	\label{eq:sine theta distance in Frobenius norm}
	\end{equation}
as the $\sin\Theta$ distance in Frobenius norm.

We first establish the distance between singular vectors of $\bbV_{\bbW_n }$  and    $\bbV_{\bbE_n }$ after the Procrustes transformation $\bbT$. One natural definition is the Frobenius norm of two matrices
$\left\| \bbV_{\bbW_n } \bbT   - \bbV_{\bbE_n } \right\|_F$. However, the mean and variance of this distance is complicated; details of this argument  can be found  in Appendix \ref{sec:traditional sine theta distance}. Instead, we consider a modified and re-scaled quantity defined as 
$$\left\| \left(\bbV_{\bbW_n } \bbT   - \bbV_{\bbE_n } \right) \DiaEig_{ \bbE_n} \right\|_F,$$
which has a simpler mean and variance.  We have the following asymptotic expansion for the singular value decompositions of  the SBMs: 
	\begin{lem}
		\label{lem:tang asymptotic expansion}
	\begin{eqnarray}  
&& \frac1{\sqrt{\NumOfB n b_{\cP}	}} \left\| \left(\bbV_{\bbW_n } \bbT_n   - \bbV_{\bbE_n } \right) \DiaEig_{ \bbE_n} \right\|_F \label{eq:tang asymptotic expansion}\\
& = & 
	\frac1{\sqrt{ \NumOfB n b_{\cP}	 } } \left\|   \left( \bbW_n  - \bbE_n  \right) \bbV_{\bbE_n }   \right\|_F + O_P \left(  \sqrt{\frac{\NumOfB  }{nb_{\cP}} } \log (n)  \right) =	O_P(1) , \nonumber
	\end{eqnarray}
			where transformation matrix $\bbT$ is Procrustes transformation $ \Procrustes \left( \bbV_{\bbW_n } ,  \bbV_{\bbE_n }   \right) $.
\end{lem}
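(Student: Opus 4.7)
The plan is to derive an algebraic identity that isolates $(\bbW_n-\bbE_n)\bbV_{\bbE_n}$ as the leading perturbation and to control all remaining contributions via a combination of random matrix concentration and a Davis--Kahan-type subspace bound, in the spirit of the RDPG asymptotic expansions of \cite{tang2017semiparametric,tang2017nonparametric} but tuned to the dense weighted SBM regime of Assumption \ref{assum:region of interest Renyi divergence}.

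The starting point is the orthogonal split
$\bbV_{\bbW_n}\bbT_n - \bbV_{\bbE_n} = (\bbI-\bbV_{\bbE_n}\bbV_{\bbE_n}^T)\bbV_{\bbW_n}\bbT_n - \bbV_{\bbE_n}(\bbI-\bbV_{\bbE_n}^T\bbV_{\bbW_n}\bbT_n)$. Because $\bbE_n$ has rank $\NumOfB$ with range spanned by $\bbV_{\bbE_n}$, the identity $(\bbI-\bbV_{\bbE_n}\bbV_{\bbE_n}^T)\bbE_n = 0$ combined with the eigenvalue equation $\bbW_n\bbV_{\bbW_n}=\bbV_{\bbW_n}\DiaEig_{\bbW_n}$ yields $(\bbI-\bbV_{\bbE_n}\bbV_{\bbE_n}^T)\bbV_{\bbW_n}\DiaEig_{\bbW_n} = (\bbI-\bbV_{\bbE_n}\bbV_{\bbE_n}^T)(\bbW_n-\bbE_n)\bbV_{\bbW_n}$. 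Right-multiplying by $\DiaEig_{\bbW_n}^{-1}\bbT_n\DiaEig_{\bbE_n}$, replacing $\bbV_{\bbW_n}$ by $\bbV_{\bbE_n}$ on the right (the remaining piece is quadratic), and invoking the near-orthogonality identity $\|\bbI-\bbV_{\bbE_n}^T\bbV_{\bbW_n}\bbT_n\|_F = O(\|\bbV_{\bbW_n}\bbT_n-\bbV_{\bbE_n}\|_F^2)$ for the second summand, one arrives at the expansion $(\bbV_{\bbW_n}\bbT_n - \bbV_{\bbE_n})\DiaEig_{\bbE_n} = (\bbW_n-\bbE_n)\bbV_{\bbE_n} + R_n$ in which every contribution to $R_n$ is of second order in the subspace distance or in the eigenvalue perturbation.

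To bound $R_n$ I would assemble four ingredients: (i) the spectral concentration $\|\bbW_n-\bbE_n\|_2 = O_P(\sqrt{nb_\cP})$, available via an Oliveira-type inequality in the dense regime $b_\cP \gtrsim n^{-1/2+\epsilon}$ with $\sigma_\cP^2,\sigma_\cQ^2 \asymp b_\cP$; (ii) the eigenvalue scaling $|\lambda_\NumOfB(\bbE_n)| \asymp nb_\cP/\NumOfB$ implied by the balanced block sizes in Assumption \ref{assum:size of each block} and the structure of $\BExpect$; (iii) the Davis--Kahan estimate $\|\bbV_{\bbW_n}\bbT_n - \bbV_{\bbE_n}\|_F = O_P(\NumOfB^{3/2}/\sqrt{nb_\cP})$; and (iv) Weyl's inequality $|\lambda_k(\bbW_n)-\lambda_k(\bbE_n)| \le \|\bbW_n-\bbE_n\|_2$ to handle eigenvalue differences. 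Plugging these into the pieces of $R_n$ and accounting for the extra factor $\|\DiaEig_{\bbE_n}\|_2 \asymp nb_\cP/\NumOfB$ on the right produces a residual of order $O_P(\NumOfB\log n)$ in Frobenius norm, i.e.\ the target rate after division by $\sqrt{\NumOfB n b_\cP}$. The leading term is sized by the direct second-moment computation $\E\|(\bbW_n-\bbE_n)\bbV_{\bbE_n}\|_F^2 \asymp \NumOfB n b_\cP$, arising from the row-wise variance bound and the unit norm of each column of $\bbV_{\bbE_n}$, which certifies the $O_P(1)$ size after rescaling.

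The main obstacle will be extracting the full second-order cancellation in the pieces of $R_n$ that involve $\bbE_n$ or the eigenvalue gap $\DiaEig_{\bbW_n}-\DiaEig_{\bbE_n}$, because the naive bounds $\|\bbE_n\|_2 \asymp nb_\cP$ and $\|\DiaEig_{\bbW_n}-\DiaEig_{\bbE_n}\|_F = O_P(\sqrt{nb_\cP})$ overshoot the target; the essential maneuver is to route every such term through the quadratic projection identity for $\bbI-\bbV_{\bbE_n}^T\bbV_{\bbW_n}\bbT_n$, which converts a nominally first-order quantity into a second-order one and thereby absorbs the large prefactor from $\bbE_n$. The logarithmic factor is the standard price for upgrading each in-probability bound to a high-probability statement simultaneously across the $O(n)$ rows of $\bbW_n-\bbE_n$ via a union bound.
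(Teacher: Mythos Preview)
Your proposal is correct and follows essentially the same route as the paper's proof: the orthogonal split through $\bbI-\bbV_{\bbE_n}\bbV_{\bbE_n}^T$, the eigenvalue identity $\bbW_n\bbV_{\bbW_n}=\bbV_{\bbW_n}\DiaEig_{\bbW_n}$, the rank-$\NumOfB$ fact $(\bbI-\bbV_{\bbE_n}\bbV_{\bbE_n}^T)\bbE_n=0$, spectral concentration of $\bbW_n-\bbE_n$, a Davis--Kahan bound, and a direct second-moment computation for the leading term are exactly the ingredients the paper assembles (following \cite{tang2018limit}). Your ``quadratic projection identity'' for $\bbI-\bbV_{\bbE_n}^T\bbV_{\bbW_n}\bbT_n$ is the paper's use of the Proposition~A.3 estimate $\|\Procrustes-\bbV_{\bbW_n}^T\bbV_{\bbE_n}\|_2=O_P((\NumOfB/nb_\cP)^3)$ from \cite{tang2017semiparametric}, and your swap $\bbV_{\bbW_n}\to\bbV_{\bbE_n}$ on the right corresponds to the paper's final insertion of $\bbV_{\bbE_n}\bbV_{\bbE_n}^T$ combined with the Davis--Kahan bound.
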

We provide a proof of this Lemma \ref{lem:tang asymptotic expansion} in section \ref{subsec:Proof of Lemma {lem:tang asymptotic expansion}} using the same technique as Theorem 2.1 of \cite{tang2018limit}. Lemma \ref{lem:tang asymptotic expansion} implies	
	\begin{eqnarray*}  
&& \frac1{ \NumOfB n b_{\cP} } \left\| \left(\bbV_{\bbW_n } \bbT_n   - \bbV_{\bbE_n } \right) \DiaEig_{ \bbE_n}  \right\|_F^2 = O_P(1)\\
&= & \frac1{\NumOfB n b_{\cP}	 }  \left\|   \left( \bbW_n  - \bbE_n  \right) \bbV_{\bbE_n }   \right\|_F^2 \\
		&  & 	+  	\frac{\left\|   \left( \bbW_n  - \bbE_n  \right) \bbV_{\bbE_n }   \right\|_F }{\sqrt{ \NumOfB n b_{\cP}	 } } \cdot O_P \left(  \sqrt{\frac{\NumOfB  }{nb_{\cP} } } \log (n)  \right)   + O_P \left(   \frac{\NumOfB   \left[\log (n)\right]^2   }{n b_{\cP}} \right) .
	\end{eqnarray*}

Theorem \ref{thm:asymptotic expansion one view}  shows  that the second term in the above asymptotic expansion can be removed.
	\begin{thm} \label{thm:asymptotic expansion one view}
		Under the Assumptions \ref{assum:size of each block}, \ref{assum:homogeneous weighted SBM}, \ref{assum:region of interest Renyi divergence}, we have
		\begin{eqnarray} \label{eq:simplification of Cos Sin matrix AJ}
		\frac1{\NumOfB n} \left\| \left(\bbV_{\bbW_n } \bbT   - \bbV_{\bbE_n } \right) \DiaEig_{ \bbE_n} \right\|_F^2  
& = &  \frac1{\NumOfB n} \left\|   \left( \bbW_n  - \bbE_n  \right) \bbV_{\bbE_n }   \right\|_F^2  + O_P \left( \frac{\NumOfB [\log (n)]^2 }{n} \right), \nonumber 
		\end{eqnarray}
		where transformation matrix $\bbT_n$ is Procrustes transformation $ \Procrustes \left( \bbV_{\bbW_n } ,  \bbV_{\bbE_n }   \right) $, we remove $b_{\cP}$ in the denominator in order to be consistent with later test statistic \eqref{eq:test statistic} in two-sample problem \eqref{eq:hypothesis test for SBM AJ}; we have
		$$\frac1{\NumOfB n} \left\| \left(\bbV_{\bbW_n} \bbT_n   - \bbV_{\bbE_n} \right) \DiaEig_{ \bbE_n} \right\|_F^2  = \Theta_P \left( \sigma_{\cQ}^2 +  \frac{\sigma_{\cP}^2  - \sigma_{\cQ}^2 }{\NumOfB} \right).$$
		
		Consequentially, variance of the linear representation dominates as well:
		\begin{eqnarray*}
			&& \Var \left[\frac1{ \NumOfB n} \left\| \left(\bbV_{\bbW_n} \bbT_n   - \bbV_{\bbE_n} \right) \DiaEig_{ \bbE_n} \right\|_F^2  \right] =  O \left(  \frac{b_{\cQ}^2}{nK}  \right) \\
			&= &  \Var\left[ \frac1{\NumOfB n}\left\|   \left( \bbW_n - \bbE_n \right) \bbV_{\bbE_n}   \right\|_F^2  \right]+ O \left(  \frac{\NumOfB^2 [\log (n)]^4 }{n^2}  \right).
		\end{eqnarray*}
	\end{thm}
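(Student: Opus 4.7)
\textbf{Step 1 (Square the expansion of Lemma \ref{lem:tang asymptotic expansion}).} The plan is to set $a_n = \frac{1}{\sqrt{\NumOfB n b_{\cP}}} \|(\bbV_{\bbW_n}\bbT_n - \bbV_{\bbE_n})\DiaEig_{\bbE_n}\|_F$ and $b_n = \frac{1}{\sqrt{\NumOfB n b_{\cP}}} \|(\bbW_n - \bbE_n)\bbV_{\bbE_n}\|_F$. Lemma \ref{lem:tang asymptotic expansion} gives $a_n = b_n + r_n$ with $r_n = O_P(\sqrt{\NumOfB/(n b_{\cP})}\log n)$ and $b_n = O_P(1)$. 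The algebraic identity $a_n^2 - b_n^2 = (a_n - b_n)(a_n + b_n)$ together with $a_n + b_n = 2 b_n + r_n = O_P(1)$ shows that $a_n^2 - b_n^2 = O_P(\sqrt{\NumOfB/(n b_{\cP})} \log n)$. Multiplying through by $b_{\cP}$ rescales to the $\frac{1}{\NumOfB n}$ normalization.

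\textbf{Step 2 (Matrix-level refinement for the error term).} The scalar-squaring argument of Step 1 leaves a cross term larger than the stated $O_P(\NumOfB[\log n]^2/n)$. The plan is to upgrade Lemma \ref{lem:tang asymptotic expansion} to a matrix-level expansion following the same technique as Theorem 2.1 of \cite{tang2018limit}: $(\bbV_{\bbW_n}\bbT_n - \bbV_{\bbE_n})\DiaEig_{\bbE_n} = (\bbW_n - \bbE_n)\bbV_{\bbE_n} + \R_n$, where the remainder $\R_n \in \Reals^{n\times \NumOfB}$ is controlled in Frobenius norm. Then the exact identity
\begin{equation*}
\|(\bbV_{\bbW_n}\bbT_n - \bbV_{\bbE_n})\DiaEig_{\bbE_n}\|_F^2 = \|(\bbW_n - \bbE_n)\bbV_{\bbE_n}\|_F^2 + 2\langle(\bbW_n-\bbE_n)\bbV_{\bbE_n}, \R_n\rangle_F + \|\R_n\|_F^2
\end{equation*}
reduces the problem to bounding $\|\R_n\|_F^2$ and the cross-inner-product. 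The squared-remainder term is handled by plugging in $\|\R_n\|_F = O_P(\NumOfB \log n)$ derived from the Davis-Kahan expansion in the dense regime. The cross term is the main obstacle (see below).

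\textbf{Step 3 (Mean of the main term).} Write $N = \bbW_n - \bbE_n$ and note that Assumption \ref{assum:size of each block} forces the rows of $\bbV_{\bbE_n}$ to be piecewise constant on the $\NumOfB$ blocks, so $(\bbV_{\bbE_n})_{j,l} = u_{l,\cK(j)}/\sqrt{n/\NumOfB}$ for some $\NumOfB\times\NumOfB$ orthogonal $U = (u_{l,c})$. Using $\E N_{ij}^2 = \sigma_{\cP}^2 \cdot \mathbf{1}\{\cK(i)=\cK(j), i\neq j\} + \sigma_{\cQ}^2 \cdot \mathbf{1}\{\cK(i)\neq \cK(j)\}$ and summing $\E\|N\bbV_{\bbE_n}\|_F^2 = \sum_l \E\|N\v_l\|^2$ yields $n\sigma_{\cP}^2 + n(\NumOfB-1)\sigma_{\cQ}^2$, which after division by $\NumOfB n$ is precisely $\sigma_{\cQ}^2 + (\sigma_{\cP}^2 - \sigma_{\cQ}^2)/\NumOfB$, giving the claimed $\Theta_P$ order once we add concentration.

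\textbf{Step 4 (Variance of the main term, and variance transfer).} The main term is a quadratic form $\tr(\bbV_{\bbE_n}^T N^2 \bbV_{\bbE_n}) = \sum_{i,j,j'} N_{ij} N_{ij'} (\bbV_{\bbE_n}\bbV_{\bbE_n}^T)_{jj'}$ in independent (off-diagonal) entries $N_{ij}$ whose variances are $O(b_{\cQ})$. A Hanson-Wright-type variance calculation (fourth-moment expansion, collecting only non-cancelling index patterns) gives $\Var\bigl[\tfrac{1}{\NumOfB n}\|N\bbV_{\bbE_n}\|_F^2\bigr] = O(b_{\cQ}^2/(n\NumOfB))$. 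The variance transfer in the theorem then follows from $\Var(X+Y) = \Var(X) + 2\Cov(X,Y) + \Var(Y)$ applied with $Y$ equal to the $O_P(\NumOfB[\log n]^2/n)$ error, bounding $\Var(Y)$ and $\Cov(X,Y)$ by Cauchy-Schwarz in the dense regime ensured by Assumption \ref{assum:region of interest Renyi divergence}.

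\textbf{Main obstacle.} Step 2, specifically the cross term $\langle (\bbW_n-\bbE_n)\bbV_{\bbE_n}, \R_n\rangle_F$, is the crux of the argument. A naive Cauchy-Schwarz produces a bound $O_P(\sqrt{\NumOfB b_{\cP}/n}\,\log n)$ that is weaker than the claimed $O_P(\NumOfB[\log n]^2/n)$. To close this gap, $\R_n$ must be decomposed into its leading perturbation-series contributions (e.g., the Neumann expansion $\sum_{k\ge 2} (\bbW_n - \bbE_n)^k \bbV_{\bbE_n} \DiaEig_{\bbE_n}^{-k}$), where each summand has enough independence from $N\bbV_{\bbE_n}$ to produce cancellation in expectation, with the tail of the series controlled by the spectral gap $\lambda_{\NumOfB}(\bbE_n) \asymp n b_{\cP}$ and the operator norm $\|N\|_2 = O_P(\sqrt{n b_{\cP}})$ from Assumption \ref{assum:region of interest Renyi divergence}. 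This is exactly where the dense-graph hypothesis is essential.
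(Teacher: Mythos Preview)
Your Steps 3 and 4 are essentially in line with the paper's calculations of the mean and variance of the linear term. Your Step 1 is, as you note, too weak, and you correctly isolate the cross term $\langle (\bbW_n-\bbE_n)\bbV_{\bbE_n},\R_n\rangle_F$ as the obstacle. The gap is in your proposed resolution of that obstacle.

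The paper does \emph{not} attack the cross term via a Neumann/perturbation series and cancellation in expectation. Instead it avoids the cross term altogether by changing the order of the argument. It first proves the identity for the auxiliary (non-orthogonal) transformation $\bbT=(\bbV_{\bbE_n}^T\bbV_{\bbW_n})^{-1}$, for which one has the exact algebraic identity
\[
\bbV_{\bbW_n}(\bbV_{\bbE_n}^T\bbV_{\bbW_n})^{-1}-\bbV_{\bbE_n}=(\bbI_n-\bbV_{\bbE_n}\bbV_{\bbE_n}^T)\bbV_{\bbW_n}(\bbV_{\bbE_n}^T\bbV_{\bbW_n})^{-1}.
\]
After inserting $\bbW_n\bbV_{\bbW_n}\DiaEig_{\bbW_n}^{-1}=\bbV_{\bbW_n}$, swapping $\DiaEig_{\bbW_n}^{-1}(\bbV_{\bbE_n}^T\bbV_{\bbW_n})^{-1}\DiaEig_{\bbE_n}$ for $(\bbV_{\bbE_n}^T\bbV_{\bbW_n})^{-1}$ up to a negligible error, and using $\bbE_n=\bbV_{\bbE_n}\bbV_{\bbE_n}^T\bbE_n$, one lands on $(\bbI_n-\bbV_{\bbE_n}\bbV_{\bbE_n}^T)(\bbW_n-\bbE_n)\bbV_{\bbW_n}(\bbV_{\bbE_n}^T\bbV_{\bbW_n})^{-1}$. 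Because $\bbI_n-\bbV_{\bbE_n}\bbV_{\bbE_n}^T$ is an orthogonal projection, the squared Frobenius norm decomposes \emph{exactly} by Pythagoras:
\[
\|(\bbI-\bbV_{\bbE_n}\bbV_{\bbE_n}^T)M\|_F^2=\|M\|_F^2-\|\bbV_{\bbE_n}^TM\|_F^2,
\]
with $M=(\bbW_n-\bbE_n)\bbV_{\bbW_n}(\bbV_{\bbE_n}^T\bbV_{\bbW_n})^{-1}$. The subtracted piece is $O_P(\NumOfB[\log n]^2/n)$ by the key bound $\|\bbV_{\bbE_n}^T(\bbW_n-\bbE_n)\bbV_{\bbW_n}\|_F=O_P(\NumOfB\log n)$, and a separate $O_P(\sqrt{\NumOfB})$ bound lets one replace $\bbV_{\bbW_n}(\bbV_{\bbE_n}^T\bbV_{\bbW_n})^{-1}$ by $\bbV_{\bbE_n}$. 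No cross term ever appears. The extension to $\bbT=\bbV_{\bbW_n}^T\bbV_{\bbE_n}$ and then to the Procrustes transformation is done by comparing the squared norms for two nearby $\bbT$'s; here the would-be cross term is $\langle \bbV_{\bbW_n}(\bbV_{\bbW_n}^T\bbV_{\bbE_n}-\tilde\bbT_n)\DiaEig_{\bbE_n},\,(\bbV_{\bbW_n}\bbV_{\bbW_n}^T-\bbI_n)\bbV_{\bbE_n}\DiaEig_{\bbE_n}\rangle$, which vanishes identically because $\bbV_{\bbW_n}^T(\bbV_{\bbW_n}\bbV_{\bbW_n}^T-\bbI_n)=0$.

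In short, your diagnosis of the difficulty is correct, but your cure (Neumann series with moment cancellation) is both harder and unnecessary. The paper's route---work at the level of squared norms, pick an auxiliary $\bbT$ that yields an exact projection identity, then transfer to Procrustes via an orthogonality trick---eliminates the cross term rather than bounding it.
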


	This asymptotic expansion lead to the  limiting distribution of 
	$ \frac1{ \NumOfB n} \left\| \left(\bbV_{\bbW} \bbT_n   - \bbV_{\bbE} \right) \DiaEig_{ \bbE} \right\|_F^2$ as stated in the  the following Theorem. 
	\begin{thm} 
		\label{thm:central limit theorem for frobenius norm}
		Under the assumption \ref{assum:size of each block}, \ref{assum:homogeneous weighted SBM}, \ref{assum:region of interest Renyi divergence},  we have 
		\begin{equation}
		\frac{ 	\frac1{\NumOfB n} \left\| \left(\bbV_{\bbW_n} \bbT_n   - \bbV_{\bbE_n} \right) \DiaEig_{ \bbE_n} \right\|_F^2  - \widetilde{\mu}_n}{ \sqrt{\widetilde{\Var_n} } } \to \cN(0,1),
		\end{equation}
				where transformation matrix $\bbT_n$ is Procrustes transformation $ \Procrustes \left( \bbV_{\bbW_n } ,  \bbV_{\bbE_n }   \right) $; the mean is $\widetilde{\mu}_n =   \sigma_{\cQ}^2 +  \frac{\sigma_{\cP}^2  - \sigma_{\cQ}^2 }{\NumOfB}$ and the variance is 
		$$\widetilde{\Var_n} =\frac2{n \NumOfB}  \left( \sigma_{\cQ}^4 + \frac{\sigma_{\cP}^4 - \sigma_{\cQ}^4 }{\NumOfB} \right)+ O\left( \frac{\NumOfB^2 [\log (n)]^4 }{n^2 } \right) .$$
		
		We use symbols $\widetilde{\mu}_n$, $\widetilde{\Var_n}$ just because symbols $\mu_n$, $\Var_n$ are reserved for the mean of variance of our test statistic $T_{n,\NumOfB}$ \eqref{eq:test statistic} in Theorem \ref{thm:asymptotic expansion two views}.
	\end{thm}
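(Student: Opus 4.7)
The plan is to exploit Theorem~\ref{thm:asymptotic expansion one view} so as to replace the nonlinear statistic by its linear surrogate
$L_n := \frac{1}{\NumOfB n}\left\|(\bbW_n - \bbE_n)\bbV_{\bbE_n}\right\|_F^2$,
and then to prove a CLT for $L_n$ by combining a direct moment calculation with a martingale (or equivalently a Lindeberg-exchange) argument. Theorem~\ref{thm:asymptotic expansion one view} gives
$\frac{1}{\NumOfB n}\|(\bbV_{\bbW_n}\bbT_n - \bbV_{\bbE_n})\DiaEig_{\bbE_n}\|_F^2 = L_n + R_n$
with $R_n = O_P(\NumOfB[\log n]^2/n)$. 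Since Assumption~\ref{assum:region of interest Renyi divergence} forces $b_{\cQ}\gtrsim n^{-1/2+\epsilon}$ and $\NumOfB$ is fixed, I would check that $\sqrt{\widetilde{\Var_n}}\asymp b_{\cQ}/\sqrt{n\NumOfB}$ strictly dominates $R_n$, so that Slutsky reduces the theorem to showing $(L_n-\widetilde{\mu}_n)/\sqrt{\widetilde{\Var_n}} \to \cN(0,1)$.

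Next I would analyse $L_n$ explicitly. Writing $A = \bbW_n - \bbE_n$, $V = \bbV_{\bbE_n}$, and $M = VV^T$, the key structural fact is that $\bbE_n$ is block-constant up to its diagonal, so the rows $V_{j,\cdot}$ depend on $j$ only through $\cK(j)$ and $\|V_{j,\cdot}\|^2 = M_{jj}$ takes at most $\NumOfB$ distinct values. Expanding
$L_n = \frac{1}{\NumOfB n}\sum_{i,j,j'}A_{ij}A_{ij'}M_{jj'}$,
invoking independence of $\{A_{ij}\}_{i\le j}$ with $\mathbb{E}[A_{ij}^2]\in\{\sigma_{\cP}^2,\sigma_{\cQ}^2\}$ depending on whether $\cK(i)=\cK(j)$, and exploiting the block-size balance of Assumption~\ref{assum:size of each block}, would recover $\mathbb{E}[L_n]=\widetilde{\mu}_n$ and $\Var[L_n]=\widetilde{\Var_n}$ to leading order (the variance bound has essentially already been supplied by Theorem~\ref{thm:asymptotic expansion one view}).

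For the CLT itself I would decompose $L_n = D_n + O_n$ where
$D_n = \frac{1}{\NumOfB n}\sum_{i,j}A_{ij}^2 M_{jj}$
and
$O_n = \frac{2}{\NumOfB n}\sum_i\sum_{j<j'}A_{ij}A_{ij'}M_{jj'}$.
The diagonal piece $D_n$ is a sum of independent bounded variables whose variance is strictly of smaller order than $\widetilde{\Var_n}$, so $D_n$ concentrates on $\widetilde{\mu}_n$ and contributes nothing to the limit. The bilinear piece $O_n$ is a symmetric quadratic form with vanishing diagonal in the independent entries $\{A_{ij}\}_{i<j}$. I would order these entries lexicographically, form the martingale difference array $\{\Delta_{ij}\}$ with respect to the natural filtration $\{\cF_{<ij}\}$, show via the block calculation above that the conditional quadratic variation $\sum_{i<j}\mathbb{E}[\Delta_{ij}^2\mid \cF_{<ij}]\to\widetilde{\Var_n}$, and verify the conditional Lindeberg condition using a fourth-moment bound on $A_{ij}$ combined with $b_{\cQ}\gtrsim\SparsityFactor$. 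The martingale CLT then yields $O_n/\sqrt{\widetilde{\Var_n}}\to\cN(0,1)$. An equivalent route would be Lindeberg exchange, replacing the $A_{ij}$ one at a time by Gaussians of matching first two moments and invoking the classical CLT for Gaussian quadratic forms, in line with the moment-matching programme announced in the introduction.

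The hard part will be the simultaneous control of the Lindeberg/moment-matching error in the sparse regime $b_{\cQ} \asymp n^{-1/2+\epsilon}$: the per-entry variance $\sigma_{\cQ}^2 \asymp b_{\cQ}$ vanishes with $n$, so the effective fourth moments of $A_{ij}$ must be weighed carefully against $M_{jj'}\asymp\NumOfB/n$ to prove both that the off-diagonal fluctuations dominate the diagonal ones and that truncation error is $o(\widetilde{\Var_n})$. It is precisely this balance that is governed by $n\Irenyi/(\NumOfB\log n)\gtrsim 1$ in Assumption~\ref{assum:region of interest Renyi divergence}, which ultimately keeps $R_n$ and any lower-order contributions negligible against $\sqrt{\widetilde{\Var_n}}$.
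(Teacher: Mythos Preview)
Your reduction to the linear surrogate $L_n$ via Theorem~\ref{thm:asymptotic expansion one view} and Slutsky, together with the block-wise mean/variance computation, mirrors the paper exactly. Where you diverge is in the CLT step: the paper does not run a martingale or Lindeberg-exchange argument but instead verifies directly that all standardised moments converge to Gaussian ones, i.e.\ $\lim_n \E\bigl[(W_n - \E W_n)^j/(\Var W_n)^{j/2}\bigr]$ equals $(j-1)!!$ for $j$ even and $0$ for $j$ odd, where $W_n = \NumOfB n\, L_n$. This is done with the word--sentence machinery of Anderson--Guionnet--Zeitouni: the $j$-th centred moment is expanded over sentences $\sentence=(\word_1,\dots,\word_j)$ built from words of type $x_{rt}^2-\E[x_{rt}^2]$ or $x_{rs}x_{rt}$, only ``weak CLT sentences'' (each edge visited at least twice, each word sharing an edge with some other word) survive, a counting bound on $\#\cA_{j,t}^n$ kills all sentences with $t<j$ letters, and for $t=j$ only perfect Wick pairings remain, whose contribution is identified with $(\Var W_n)^{j/2}$. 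Your route is more elementary and avoids the combinatorics entirely, at the price of separating out the off-diagonal quadratic form $O_n$ and checking a conditional Lindeberg condition in the sparse regime $b_{\cQ}\asymp n^{-1/2+\epsilon}$; the paper's moment approach, while heavier, handles the diagonal and off-diagonal pieces in a single expansion and makes the link to random-matrix CLTs explicit, which is the method announced in the introduction.
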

	
	\section{Two-sample Hypothesis Test  of Community Memberships Based on SBMs}\label{sec:two-sample}
	
	\subsection{A two-sample test problem}
	Consider the setting where we have two  independent networks with the same group of $n$ nodes and each is generated from a weighted SBM with underlying membership assignment $\bbZ_n^{(v)}, v=1,2$.  We are interested in testing  whether underlying block assignments are the same; in other words, testing 
	\begin{equation}
		H_0: \bbZ_n^{(1)} \upVdash \bbZ_n^{(2)}\mbox{ versus }H_1: \bbZ_n^{(1)} \not \upVdash \bbZ_n^{(2)}, \label{eq:original hypothesis test}
	\end{equation}
	where for two matrices $\bbM_1,\bbM_2 \in \Reals^{n \times \NumOfB}$, $\bbM_1 \upVdash  \bbM_2 $ means there exists $\bbU \in \cO(\NumOfB)$ such that $\bbM_1 = \bbM_2 \bbU$ and $\cO (\NumOfB)$ represents the set of $\NumOfB \times \NumOfB$ orthogonal matrices.

		For a weighted SBM, it is known that 
			\begin{equation}
			\bbV_{\bbE_n} \upVdash \bbZ_n ( \bbZ_n^T \bbZ_n)^{-\frac12}.
			\label{eq:normalized index parameter simeq AJ}
			\end{equation} 
In addition,   $\bbZ_n^{(1)} \upVdash \bbZ_n^{(2)}$ if and only if 
		$$\bbZ_n^{(1)} \left([\bbZ_n^{(1)}]^T \bbZ_n^{(1)} \right)^{-\frac12} \upVdash \bbZ_n^{(2)} \left([\bbZ_n^{(2)}]^T \bbZ_n^{(2)} \right)^{-\frac12}.$$  Since an orthogonal matrix is actually a permutation,  \eqref{eq:normalized index parameter simeq AJ}  implies that $H_0: \bbZ_n^{(1)} \upVdash \bbZ_n^{(2)}$  is equivalent to
	\begin{equation}
		H_0: \bbV_{\bbE_n^{(1)}} \upVdash \bbV_{\bbE_n^{(2)}} \mbox{ versus }H_1: \bbV_{\bbE_n^{(1)}} \not \upVdash \bbV_{\bbE_n^{(2)}}  . \label{eq:hypothesis test for SBM AJ}
	\end{equation}

	In order for this null hypothesis to be practically meaningful, we make an additional  Assumption  \ref{assum:underlying probability the same up to a scalar} on  the intra-block  and  inter-block  distributions:
	\begin{assum}
		\label{assum:underlying probability the same up to a scalar}
		For the  expectations of intra-block and  inter-block distributions stated in Assumption  \ref{assum:homogeneous weighted SBM}), we assume $\BExpect^{(2)} = \gamma \BExpect^{(1)}$, or equivalently, $\gamma =  \frac{b_{\cP}^{(1)}}{b_{\cP}^{(2)}} =  \frac{b_{\cQ}^{(1)}}{b_{\cQ}^{(2)}} $.
	\end{assum} 	
	The assumption may seem restrictive. However, if  the edge generating functions are different, the underlying network structures will be different and the the null is usually easy to reject.

	\subsection{Two-sample test statistic} \label{sec:test}
	
	Our proposed test statistic is  also based on the Procrustes transformation but we include an $\NumOfB \times \NumOfB$ matrix multiplier $\DiaEig_{\bbW_n^{(2)}}$ in order to simplify the calculations of the mean and variance (see Theorem \ref{theorem:mean for eigenspace distance}) of the test statistic:
	\begin{equation}
	T_{n,\NumOfB} = T_{n,\NumOfB} \left(\bbV_{\bbW_n^{(1)}}  ,\bbV_{ \bbW_n^{(2)} }\right) \triangleq  \frac1{n \NumOfB } \left\|\left(   \bbV_{\bbW_n^{(1)}}\bbT_n - \bbV_{\bbW_n^{(2)} }  \right)  \DiaEig_{\bbW_n^{(2)}} \right\|_F^2,  \label{eq:test statistic}
	\end{equation}
	where $\bbT_n$ is  the Procrustes transformation $\Procrustes \left( \bbV_{\bbW_n^{(1)}} , \bbV_{\bbW_n^{(2)} } \right)$.

It is important to point out the difference between our test statistic and the one in \cite{tang2017nonparametric,tang2017semiparametric}. 
First, our formulation of the null hypothesis test is  different from that of RDPGs since RDPGs are parametrized by latent position parameters.  \cite{tang2017nonparametric,tang2017semiparametric} developed a two-sample test on those latent position parameters.
Secondly, we  provide the  limiting distribution of our test statistic by using random matrix theory. In contrast,  \cite{tang2017nonparametric,tang2017semiparametric} proposed to apply bootstrap  to the  test statistic based on an upper bound estimation.

	\section{Asymptotic distribution of two-sample test statistic \eqref{eq:test statistic}}
		\label{sec:asymp distri two views}

\subsection{Asymptotic distribution of the proposed test}
Parallel  to the results in Theorem \ref{thm:asymptotic expansion one view}, we have the following asymptotic expansion for the two-sample test statistic $T_{n,\NumOfB}$:
	\begin{thm}
		\label{thm:asymptotic expansion two views}
		Assume that Assumptions \ref{assum:size of each block}, \ref{assum:homogeneous weighted SBM}, \ref{assum:region of interest Renyi divergence}, \ref{assum:underlying probability the same up to a scalar} hold. Under the null of \eqref{eq:hypothesis test for SBM AJ}, we have
		\begin{eqnarray}  
		T_{n,\NumOfB}&=&\frac1{ \NumOfB n}\left\| \left(  \bbV_{\bbW_n^{(1)}} \bbT_n - \bbV_{\bbW_n^{(2)} } \right) \DiaEig_{ \bbW_n^{(2)}}\right\|_F^2 
		\label{eq:asymptotic expansion in two laws} \\
		&= &    \frac1{\NumOfB n} \left\| \left( \gamma \bbW_n^{(1)} - \bbW_n^{(2)}  \right) \bbV_{\bbW_n^{(2)} }  \right\|_F^2 + O_P \left(  \frac{\NumOfB [\log (n)]^2 }n  \right), \nonumber 
		\end{eqnarray}
				where transformation matrix $\bbT_n$ is Procrustes transformation $ \Procrustes \left( \bbV_{\bbW_n^{(1)} } ,  \bbV_{\bbW_n^{(2)} }   \right) $.
		As a result, the  variance of the asymptotic expansion dominates as well,
		\begin{eqnarray*}
			\Var \left[T_{n,\NumOfB} \right] &  =  &  \Var\left[ \frac1{\NumOfB n} \left\| \left( \gamma  \bbW_n^{(1)} - \bbW_n^{(2)}  \right) \bbV_{\bbW^{(2)} }  \right\|_F^2 \right]+ O \left(  \frac{\NumOfB^2 [\log (n)]^4}{n^2}  \right) 
			=  O \left(  \frac{b_{\cQ}^2}{nK}  \right).
		\end{eqnarray*}
	\end{thm}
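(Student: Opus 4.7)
The plan is to mirror the proof architecture that yielded Theorem \ref{thm:asymptotic expansion one view}, exploiting a key observation: under the null and Assumption \ref{assum:underlying probability the same up to a scalar}, we have $\bbE_n^{(2)}=\gamma\bbE_n^{(1)}$, so $\gamma\bbW_n^{(1)}$ and $\bbW_n^{(2)}$ are two independent random matrices with the \emph{same} expectation $\bbE_n^{(2)}$. In particular they share a common population invariant subspace $U:=\bbV_{\bbE_n^{(2)}}$ with eigenvalues $\DiaEig_{\bbE_n^{(2)}}$, and $\bbV_{\bbW_n^{(1)}}=\bbV_{\gamma\bbW_n^{(1)}}$. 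Apply Lemma \ref{lem:tang asymptotic expansion} separately to $\gamma\bbW_n^{(1)}$ and $\bbW_n^{(2)}$, aimed at the common target $U$. Writing $S^{(v)}:=\Procrustes(\bbV_{\bbW_n^{(v)}},U)$ and $X^{(1)}=\gamma\bbW_n^{(1)}$, $X^{(2)}=\bbW_n^{(2)}$, this yields
\[
(\bbV_{\bbW_n^{(v)}}S^{(v)}-U)\DiaEig_{\bbE_n^{(2)}}=(X^{(v)}-\bbE_n^{(2)})U+R^{(v)},\qquad \|R^{(v)}\|_F=O_P(\NumOfB\log n),
\]
and subtracting gives the target-referenced linear form
\[
(\bbV_{\bbW_n^{(1)}}S^{(1)}-\bbV_{\bbW_n^{(2)}}S^{(2)})\DiaEig_{\bbE_n^{(2)}}=(\gamma\bbW_n^{(1)}-\bbW_n^{(2)})U+R^{(1)}-R^{(2)}.
\]

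The next step is to migrate this identity from the population quantities $(S^{(1)},S^{(2)},U,\DiaEig_{\bbE_n^{(2)}})$ to the sample quantities $(\bbT_n, \bbV_{\bbW_n^{(2)}},\DiaEig_{\bbW_n^{(2)}})$ appearing in the test statistic \eqref{eq:test statistic}. I would do this in three substitutions: (i) replace the composed Procrustes $S^{(1)}(S^{(2)})^{-1}$ by $\bbT_n=\Procrustes(\bbV_{\bbW_n^{(1)}},\bbV_{\bbW_n^{(2)}})$, using that both are orthogonal minimizers and differ on the scale of $\|\bbV_{\bbW_n^{(v)}}S^{(v)}-U\|_F=O_P(\sqrt{\NumOfB/(nb_{\cP})})$ by Davis--Kahan; (ii) replace $U$ on the right by $\bbV_{\bbW_n^{(2)}}$, with error controlled by $\|\gamma\bbW_n^{(1)}-\bbW_n^{(2)}\|_{\mathrm{op}}\cdot\|\bbV_{\bbW_n^{(2)}}S^{(2)}-U\|_F$ and hence of Frobenius order $O_P(\NumOfB\log n)$; (iii) replace $\DiaEig_{\bbE_n^{(2)}}$ on the left by $\DiaEig_{\bbW_n^{(2)}}$ via Weyl's inequality $\|\DiaEig_{\bbW_n^{(2)}}-\DiaEig_{\bbE_n^{(2)}}\|_{\mathrm{op}}=O_P(\sqrt{nb_{\cP}})$ together with the already-established smallness of $\bbV_{\bbW_n^{(1)}}\bbT_n-\bbV_{\bbW_n^{(2)}}$. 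After these substitutions one obtains
\[
(\bbV_{\bbW_n^{(1)}}\bbT_n-\bbV_{\bbW_n^{(2)}})\DiaEig_{\bbW_n^{(2)}}=(\gamma\bbW_n^{(1)}-\bbW_n^{(2)})\bbV_{\bbW_n^{(2)}}+R,\qquad \|R\|_F=O_P(\NumOfB\log n).
\]

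Finally, squaring in Frobenius norm and dividing by $\NumOfB n$ produces the claimed expansion together with a pure-error contribution $\|R\|_F^2/(\NumOfB n)=O_P(\NumOfB[\log n]^2/n)$ and a cross term $\tfrac{2}{\NumOfB n}\langle (\gamma\bbW_n^{(1)}-\bbW_n^{(2)})\bbV_{\bbW_n^{(2)}},R\rangle$; the latter must be handled by the same refined argument used for Theorem \ref{thm:asymptotic expansion one view}, exploiting conditional independence and near-orthogonality of $R$ with the linear term so that its magnitude is $O_P(\NumOfB[\log n]^2/n)$ rather than the cruder $O_P(\sqrt{b_{\cQ}\NumOfB\log^2 n/n})$ that Cauchy--Schwarz gives. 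The variance conclusion then follows because $\|(\gamma\bbW_n^{(1)}-\bbW_n^{(2)})\bbV_{\bbW_n^{(2)}}\|_F^2$ is a quadratic form in independent entries with variance on the order of $nK b_{\cQ}^2$, so after normalization by $(\NumOfB n)^{-2}$ its variance is $O(b_{\cQ}^2/(n\NumOfB))$, which dominates the $O(\NumOfB^2[\log n]^4/n^2)$ residual by Assumption \ref{assum:region of interest Renyi divergence}.

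The principal obstacle is step (i) together with the cross-term control: relating the data-to-data Procrustes $\bbT_n$ to the composition $S^{(1)}(S^{(2)})^{-1}$ of the two data-to-population Procrustes rotations, and showing that the cross term in the final squaring step is genuinely $O_P(\NumOfB[\log n]^2/n)$ rather than the looser bound from Cauchy--Schwarz. Both require the fine-grained eigenvector concentration that underlies Theorem \ref{thm:asymptotic expansion one view}, now applied to a difference of two independent networks whose noise interactions must be tracked through the Procrustes alignment.
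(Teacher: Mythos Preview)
Your route---linearize each network against the common population subspace $U=\bbV_{\bbE_n^{(2)}}$ via Lemma~\ref{lem:tang asymptotic expansion}, subtract, migrate to sample quantities, then square---is genuinely different from the paper's. The paper never forms a matrix-level linear approximation and then squares it. Instead it works directly with the difference of squared norms for the choice $\bbT_n=\bbV_{\bbW_n^{(1)}}^T\bbV_{\bbW_n^{(2)}}$: writing $(\bbV_{\bbW_n^{(1)}}\bbV_{\bbW_n^{(1)}}^T-\bbI_n)\bbW_n^{(2)}\bbV_{\bbW_n^{(2)}}$ and using the spectral decomposition of $\bbW_n^{(1)}$, the difference reduces (up to $O_P(\NumOfB[\log n]^2/n)$) to the single trace
\[
-\frac{\gamma}{\NumOfB n}\,\tr\!\left[\bbV_{\bbW_n^{(2)}}^T\bbV_{\bbW_n^{(1)}}^{\perp}\DiaEig_{\bbW_n^{(1)}}^{\perp}\bigl(\bbV_{\bbW_n^{(1)}}^{\perp}\bigr)^T\bbV_{\bbW_n^{(2)}}\DiaEig_{\bbW_n^{(2)}}\right],
\]
and this is bounded via a corollary of the one-sample analysis (Corollary~\ref{cor:term hard to handle by second singular value of Erdos Renyi graph}) stating that $\tr[\bbV_{\bbE}^T\bbV_{\bbW}^{\perp}\DiaEig_{\bbW}^{\perp}(\bbV_{\bbW}^{\perp})^T\bbV_{\bbE}]=O_P(\NumOfB^2[\log n]^2)$. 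The passage from $\bbV_{\bbW_n^{(1)}}^T\bbV_{\bbW_n^{(2)}}$ to the Procrustes $\bbT_n$ then uses the exact orthogonality $\bbV_{\bbW_n^{(1)}}^T(\bbV_{\bbW_n^{(1)}}\bbV_{\bbW_n^{(1)}}^T-\bbI_n)=0$, which kills the cross term when comparing the two choices.

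The gap in your argument is precisely the cross term you flag. Your appeal to ``the same refined argument used for Theorem~\ref{thm:asymptotic expansion one view}'' does not close it, because that proof does \emph{not} proceed by probabilistic cancellation or conditional independence: it uses the algebraic identities $\bbE_n=\bbV_{\bbE_n}\bbV_{\bbE_n}^T\bbE_n$ and $\bbV_{\bbW_n}^T(\bbV_{\bbW_n}\bbV_{\bbW_n}^T-\bbI_n)=0$ to make the problematic inner product vanish exactly, not approximately. In the two-sample setting the first identity fails ($\bbW_n^{(2)}\ne\bbV_{\bbW_n^{(2)}}\bbV_{\bbW_n^{(2)}}^T\bbW_n^{(2)}$), which the paper explicitly names as the essential difficulty and overcomes via Corollary~\ref{cor:term hard to handle by second singular value of Erdos Renyi graph}. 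Your remainder $R$ is an amalgam of Procrustes corrections, eigenvalue swaps, and basis replacements from steps (i)--(iii); none of these pieces is obviously orthogonal to $(\gamma\bbW_n^{(1)}-\bbW_n^{(2)})\bbV_{\bbW_n^{(2)}}$, and the conditional-independence heuristic you invoke does not apply cleanly since, for instance, the step-(ii) error $(\gamma\bbW_n^{(1)}-\bbW_n^{(2)})(\bbV_{\bbW_n^{(2)}}S^{(2)}-U)$ is correlated with the main term through both networks. To salvage your route you would need to exhibit the structure of $R$ explicitly and prove an analogue of Corollary~\ref{cor:term hard to handle by second singular value of Erdos Renyi graph}; at that point you are essentially reproducing the paper's machinery.
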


This asymptotic expansion  leads to the limiting distribution of our test statistic \eqref{eq:test statistic}, as stated in the Theorem \ref{thm:asym distri testing statistics AJ} below:
	
	\begin{thm} \label{thm:asym distri testing statistics AJ}
		Under the Assumptions \ref{assum:size of each block}, \ref{assum:homogeneous weighted SBM}, \ref{assum:region of interest Renyi divergence}, \ref{assum:underlying probability the same up to a scalar},   our proposed test statistic \eqref{eq:test statistic} have the following asymptotic distribution under the null of \eqref{eq:hypothesis test for SBM AJ}, 
		\begin{equation}  
		\frac{T_{n,\NumOfB}    - \mu_n }{\sqrt { \Var_n } } \to \cN(0,1), 
		\end{equation}
				where\ignore{ transformation matrix $\bbT_n$ is Procrustes transformation $ \Procrustes \left( \bbV_{\bbW_n^{(1)} } ,  \bbV_{\bbW_n^{(2)} }   \right) $;} the mean is $$ \mu_n =  \gamma^2  \sigma_{\cQ}^{2,(1)}  + \sigma_{\cQ}^{2,(2)} +  \frac{
			1  }{\NumOfB} \left[   \gamma \sigma_{\cP}^{2,(1)}+\sigma_{\cP}^{2,(2)}  - \left( \gamma^2 \sigma_{\cQ}^{2,(1)} + \sigma_{\cQ}^{2,(2)} \right) \right], $$ and the variance is 
\begin{eqnarray*} \Var_n
&=  & \frac{2}{n\NumOfB}\left[  \left(  \gamma^2 \sigma_{\cQ}^{2,(1)} + \sigma_{\cQ}^{2,(2)}\right)^2 +  \frac{\left( \gamma^2 \sigma_{\cP}^{2,(1)}  + \sigma_{\cP}^{2,(2)}\right)^2  - \left( \gamma^2 \sigma_{\cQ}^{2,(1)}  + \sigma_{\cQ}^{2,(2)}\right)^2}{\NumOfB}       \right]   \\
&& +O\left(\frac{ \NumOfB^2 [\log (n)]^4}{n^2  } \right).
\end{eqnarray*}
	\end{thm}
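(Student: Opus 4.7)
The plan is to combine the asymptotic expansion from Theorem~\ref{thm:asymptotic expansion two views} with a central limit theorem for a deterministic-basis quadratic form, following the moment-matching strategy the paper advertises.

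\emph{Step 1 --- reduction to the leading quadratic form.} Theorem~\ref{thm:asymptotic expansion two views} already reduces the problem to a CLT for $Y_n := \tfrac{1}{n\NumOfB}\|M\bbV_{\bbW_n^{(2)}}\|_F^2$ with $M := \gamma\bbW_n^{(1)} - \bbW_n^{(2)}$, since the remainder $O_P(\NumOfB[\log n]^2/n)$ is $o(\sqrt{\Var_n})$ whenever $b_{\cQ} \gtrsim n^{-1/2+\epsilon}$ and $\NumOfB$ is fixed (Assumption~\ref{assum:region of interest Renyi divergence}). Under the null and Assumption~\ref{assum:underlying probability the same up to a scalar}, $\E M = \gamma\bbE_n^{(1)} - \bbE_n^{(2)} = 0$ and the upper-triangular entries $\{M_{ij}\}_{i\le j}$ are independent with variances $\tau_{\cP}^2 := \gamma^2\sigma_{\cP}^{2,(1)} + \sigma_{\cP}^{2,(2)}$ (intra-block) or $\tau_{\cQ}^2 := \gamma^2\sigma_{\cQ}^{2,(1)} + \sigma_{\cQ}^{2,(2)}$ (inter-block).

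\emph{Step 2 --- replacing the random basis by the deterministic $\bbV_{\bbE_n^{(2)}}$.} Let $V_0 := \bbV_{\bbE_n^{(2)}}$, which by \eqref{eq:normalized index parameter simeq AJ} is (up to an orthogonal rotation, harmless inside a Frobenius norm) the normalised membership matrix $\bbZ_n^{(2)}([\bbZ_n^{(2)}]^T\bbZ_n^{(2)})^{-1/2}$, so each row of $V_0$ has exactly one nonzero entry of magnitude $1/\sqrt{|\cC_{\cK(\cdot)}|}\asymp\sqrt{\NumOfB/n}$ (Assumption~\ref{assum:size of each block}). I would show $Y_n = Y_n^\ast + o_P(\sqrt{\Var_n})$ with $Y_n^\ast := \tfrac{1}{n\NumOfB}\mathrm{tr}(V_0^T M^2 V_0)$, using Lemma~\ref{lem:tang asymptotic expansion} applied to $\bbW_n^{(2)}$ to bound $\|\Delta\|_F = O_P(\sqrt{\NumOfB/(nb_{\cP})})$ for $\Delta := \bbV_{\bbW_n^{(2)}}\bbT' - V_0$, together with spectral concentration $\|M\|_{\mathrm{op}} = O_P(\sqrt{nb_{\cQ}})$, and the identity $\|MV_1\|_F^2 - \|MV_0\|_F^2 = \|M\Delta\|_F^2 + 2\,\mathrm{tr}(V_0^T M^2\Delta)$.

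\emph{Step 3 --- mean, variance, and CLT for the deterministic-basis form.} Because each row of $V_0$ is supported on a single column,
\[
\E[Y_n^\ast] = \tfrac{1}{n\NumOfB}\sum_{i\ne j}\E[M_{ij}^2](V_0V_0^T)_{jj} = \tau_{\cQ}^2 + \tfrac{\tau_{\cP}^2-\tau_{\cQ}^2}{\NumOfB} + O(\NumOfB/n) = \mu_n + o(\sqrt{\Var_n}),
\]
matching the stated $\mu_n$ once one plugs $|\cC_k|\asymp n/\NumOfB$. A parallel enumeration of fourth moments of the independent upper-triangular entries yields $\Var[Y_n^\ast]=\Var_n(1+o(1))$. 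For the CLT itself I would apply a Lindeberg swap: the centred quantity $n\NumOfB(Y_n^\ast - \E Y_n^\ast)$ is a quadratic form in independent random variables whose kernel $V_0V_0^T$ is a projection of rank $\NumOfB$, so replacing each $M_{ij}$ by a Gaussian with matching first two moments changes the law by $o(\sqrt{\Var_n})$ under the third-moment bound inherited from the bounded support in Assumption~\ref{assum:homogeneous weighted SBM}. The Gaussian surrogate is a weighted sum of (dependent) $\chi^2$-type variables whose Lyapunov CLT is routine.

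\emph{Main obstacle.} The hardest point is Step~2: because $\bbV_{\bbW_n^{(2)}}$ and $M$ both involve $\bbW_n^{(2)}$, one cannot condition on the random basis, and a naive operator-norm bound for the cross term $\mathrm{tr}(V_0^T M^2\Delta)$ gives only $O_P(\sqrt{b_{\cQ}/n})$ after normalisation --- of the same order as $\sqrt{\Var_n}$, not smaller. The cure is to decouple by writing $M = \gamma\bbW_n^{(1)} - \bbW_n^{(2)}$: the $\gamma\bbW_n^{(1)}$ piece is independent of $\Delta$ and can be handled with a conditional argument, while the $\bbW_n^{(2)}$ piece requires a perturbative expansion of $\bbV_{\bbW_n^{(2)}}$ around $V_0$ in the style of Lemma~\ref{lem:tang asymptotic expansion} to produce explicit cancellations in the cross term. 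Pushing this decoupling beyond the leading order, uniformly in the sparse regime $b_{\cQ}\asymp n^{-1/2+\epsilon}$, is the substantive analytic work that goes beyond what Theorem~\ref{thm:asymptotic expansion two views} already supplies.
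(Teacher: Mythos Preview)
Your overall strategy is correct and closely parallels the paper, but your identification of the ``main obstacle'' rests on a reading of Theorem~\ref{thm:asymptotic expansion two views} that stops at its displayed statement. If you look at the proof sketch in Section~\ref{sec:proof sketches of Minh's dominant term} (equation~\eqref{eq:upper bound two views}) and the full argument in Section~\ref{subsec:proof theorem asymptotic expansion two views}, the expansion there is already against $\bbV_{\bbE_n^{(2)}}$, not $\bbV_{\bbW_n^{(2)}}$: the final step of that proof uses Corollary~\ref{cor:bbV bbW also OK rather than bbV bbP} together with the very decoupling $M = (\gamma\bbW_n^{(1)} - \bbE_n^{(2)}) + (\bbE_n^{(2)} - \bbW_n^{(2)})$ you propose, to swap the random basis for the deterministic one at cost $O_P(\NumOfB[\log n]^2/n)$. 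So your Step~2 is already absorbed into Theorem~\ref{thm:asymptotic expansion two views} as the paper actually proves it; the paper's own proof of Theorem~\ref{thm:asym distri testing statistics AJ} is literally one line (``exactly same as proof of Theorem~\ref{thm:central limit theorem for frobenius norm}'') because all that remains is a CLT for the deterministic-basis quadratic form $Y_n^\ast$.

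On that CLT you take a genuinely different route. The paper verifies $\E[((W_n - \E W_n)/\sqrt{\Var W_n})^j] \to (j-1)!!$ for even $j$ by a combinatorial enumeration of ``weak CLT sentences'' (Definitions~\ref{defn:words}--\ref{defn:weak CLT sentences}, Lemma~\ref{lem:set of CLT sentences cardinality}), exploiting the block structure of $\bbV_{\bbE_n}$ to make the word-counting tractable. Your Lindeberg swap followed by a Gaussian-quadratic-form CLT is a legitimate alternative and arguably more portable if one later relaxes the homogeneity Assumption~\ref{assum:homogeneous weighted SBM}; the moment method, on the other hand, handles the sparse regime $b_{\cQ} \asymp n^{-1/2+\epsilon}$ without any truncation step, since only moment ratios enter. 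Either approach suffices here, but your closing clause (``Lyapunov CLT is routine'') hides some work: the Gaussian surrogate is a rank-$\NumOfB$ quadratic form in a symmetric Gaussian matrix, not a sum of independent summands, so you would still need a martingale-difference or Stein-type argument rather than a plain Lyapunov condition.
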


	In practice, $\mu_n$, $\Var_n$ have to be  estimated and their estimates need to be corresponding well-behaved estimators:
	\begin{defn}[Well-behaved estimators] \label{defn:well behaved estimators description	}
		Define  the  well-behaved estimators as those that 
		\begin{equation}
			\widehat{   \mu_n  } -  \mu_n 	= o_P \left( b_{\cP} \right) ,   \widehat{\Var_n}   -  \Var_n =  o_P\left( \frac{b_{\cP}^2}{n \NumOfB}\right).
			\label{eq:well behaved estimators description	}
		\end{equation}

	\end{defn}
	
Such well-behaved estimates can be obtained by  plugging well-behaved estimators $ \widehat{ \gamma^2 \sigma_{\cQ}^{2,(1)} + \sigma_{\cQ}^{2,(2)}}$, $ \widehat{ \gamma^2 \sigma_{\cP}^{2,(1)} + \sigma_{\cP}^{2, (2)} }$ for $  \gamma^2 \sigma_{\cQ}^{2,(1)} + \sigma_{\cQ}^{2, (2)}$, $  \gamma^2 \sigma_{P_n }^{2,(1)} + \sigma_{\cP} ^{2,(2)}$  \citep{jog2015information,jog2015recovering,xu2017optimal,mossel2012reconstruction,mcsherry2001spectral,tang2017asymptotically}. We  have the following corollary \ref{cor:asym distri testing statistics AJ -- well behaved estimator} when estimates of means and variances are used in the test statistic.
	\begin{cor}
		\label{cor:asym distri testing statistics AJ -- well behaved estimator}
		Suppose that Assumptions \ref{assum:size of each block},, \ref{assum:homogeneous weighted SBM}, \ref{assum:region of interest Renyi divergence}, \ref{assum:underlying probability the same up to a scalar} hold, our proposed test statistic \eqref{eq:test statistic} have the following asymptotic distribution under the null hypothesis of \eqref{eq:hypothesis test for SBM AJ},   we have
		\begin{equation}  
			\frac{ T_{n,\NumOfB}     - \widehat{   \mu_n  } }{\sqrt{ \widehat{  \Var_n} }} \to \cN(0,1), 
		\end{equation}
		where the mean is $$\widehat{   \mu_n }=  \widehat{\gamma^2  \sigma_{\cQ}^{2,(1)}  + \sigma_{\cQ}^{2,(2)} } +  \frac{
			1  }{\NumOfB} \left[   \widehat{ \gamma \sigma_{ \cP }^{2,(1)}+\sigma_{\cP}^{2,(2)} } -\widehat{  \gamma^2 \sigma_{\cP}^{2,(1)} + \sigma_{\cQ}^{2,(2)} } \right], $$
		and the variance is  
\begin{eqnarray*}  \widehat{ \Var_n} 
	& =  & \frac{2}{n \NumOfB}\left[  \widehat{ \gamma \sigma_{\cQ}^{2,(1)}+\sigma_{\cQ}^{2,(2)} }^2 +  \frac{ \widehat{ \gamma \sigma_{\cP }^{2,(1)}+\sigma_{\cP}^{2,(2)} }^2  -\widehat{ \gamma \sigma_{\cQ}^{2,(1)}+\sigma_{\cQ}^{2,(2)} }^2}{\NumOfB}     \right] \\
	&&  +O\left(\frac{\NumOfB^2 [\log (n)]^4}{n^2  } \right)  .
\end{eqnarray*}
	\end{cor}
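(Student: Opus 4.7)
The plan is to deduce Corollary~\ref{cor:asym distri testing statistics AJ -- well behaved estimator} from Theorem~\ref{thm:asym distri testing statistics AJ} by a Slutsky-type argument, using exactly the rates packaged in Definition~\ref{defn:well behaved estimators description	}. The guiding intuition is that substituting $(\mu_n,\Var_n)$ by well-behaved estimators cannot shift the centering nor rescale the standard deviation by more than $o_P(1)$, so the Gaussian limit is preserved.

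The first step is the algebraic decomposition
\begin{equation*}
\frac{T_{n,\NumOfB} - \widehat{\mu_n}}{\sqrt{\widehat{\Var_n}}} \;=\; \frac{T_{n,\NumOfB} - \mu_n}{\sqrt{\Var_n}}\cdot \sqrt{\frac{\Var_n}{\widehat{\Var_n}}} \;+\; \frac{\mu_n - \widehat{\mu_n}}{\sqrt{\widehat{\Var_n}}},
\end{equation*}
which reduces the task to proving (i) $\widehat{\Var_n}/\Var_n \to 1$ in probability and (ii) $(\mu_n - \widehat{\mu_n})/\sqrt{\widehat{\Var_n}} = o_P(1)$. Combined with the $\cN(0,1)$ limit of the first factor given by Theorem~\ref{thm:asym distri testing statistics AJ}, Slutsky's theorem then delivers the conclusion.

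For (i), I would invoke the explicit expression of $\Var_n$ in Theorem~\ref{thm:asym distri testing statistics AJ}, together with Assumption~\ref{assum:homogeneous weighted SBM} (which yields $\sigma_{\cP}^2,\sigma_{\cQ}^2 \asymp b_{\cQ}$) and Assumption~\ref{assum:region of interest Renyi divergence}, to obtain $\Var_n = \Theta(b_{\cQ}^2/(n\NumOfB))$; in particular the remainder $O(\NumOfB^2 [\log n]^4 / n^2)$ is swamped by the leading term in the dense regime. The well-behaved condition $\widehat{\Var_n} - \Var_n = o_P(b_{\cP}^2/(n\NumOfB))$, combined with $b_{\cP} \asymp b_{\cQ}$, then forces $\widehat{\Var_n}/\Var_n - 1 = o_P(1)$, and continuous mapping gives $\sqrt{\Var_n/\widehat{\Var_n}} \to 1$ in probability. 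Step (ii) follows by plugging $\widehat{\mu_n} - \mu_n = o_P(b_{\cP})$ into the numerator and using the $\Theta_P(b_{\cQ}^2/(n\NumOfB))$ lower bound on $\widehat{\Var_n}$ just obtained; this is the main pressure point of the argument and is precisely the reason the rates in Definition~\ref{defn:well behaved estimators description	} are calibrated as stated. With (i) and (ii) in hand, Slutsky's theorem applied to the displayed decomposition yields the claimed $\cN(0,1)$ limit.
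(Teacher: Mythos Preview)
Your Slutsky decomposition is the natural approach, and step~(i) goes through as written (modulo the assertion $b_{\cP}\asymp b_{\cQ}$, which you invoke without justification; it is not explicit in Assumptions~\ref{assum:size of each block}--\ref{assum:underlying probability the same up to a scalar}, though it follows in the Bernoulli-type case from $\sigma_{\cP}^2\asymp b_{\cQ}$ in Assumption~\ref{assum:homogeneous weighted SBM}). The genuine gap is in step~(ii): the rate $\widehat{\mu_n}-\mu_n=o_P(b_{\cP})$ from Definition~\ref{defn:well behaved estimators description	} is \emph{not} sufficient to conclude $(\mu_n-\widehat{\mu_n})/\sqrt{\widehat{\Var_n}}=o_P(1)$. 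Since $\sqrt{\Var_n}\asymp b_{\cQ}/\sqrt{n\NumOfB}$, the ratio you form is only
\[
\frac{o_P(b_{\cP})}{\Theta_P\!\left(b_{\cQ}/\sqrt{n\NumOfB}\right)}=o_P\!\left(\sqrt{n\NumOfB}\,\frac{b_{\cP}}{b_{\cQ}}\right),
\]
which diverges with $n$. Concretely, with $b_{\cP},b_{\cQ}$ bounded away from zero and $\widehat{\mu_n}-\mu_n=1/\log n=o_P(b_{\cP})$, the additive term is of order $\sqrt{n}/\log n\to\infty$, so the Gaussian limit cannot survive.

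Your claim that the rates in Definition~\ref{defn:well behaved estimators description	} are ``precisely calibrated'' for this step is therefore backwards: what the Slutsky argument actually requires for the centering is $\widehat{\mu_n}-\mu_n=o_P\bigl(\sqrt{\Var_n}\bigr)=o_P\bigl(b_{\cP}/\sqrt{n\NumOfB}\bigr)$, a much stronger requirement than $o_P(b_{\cP})$. The paper gives no separate proof of the corollary, and as written the rate in Definition~\ref{defn:well behaved estimators description	} appears too weak for the stated conclusion; a correct argument would need either to strengthen that definition or to invoke the sharper convergence rates of the plug-in estimators cited just before the corollary.
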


\subsection{Asymptotic power of the proposed test}
\label{subsec:asmptotic power}
 
 	We evaluate the power of the proposed test by specifying the alternative using 
 the Hamming distance between the community memberships,  $\bbZ_n^{(1)}$ and $\bbZ_n^{(2)}$ of $n$ nodes 
 \begin{equation} \ell_n \left( \bbZ_n^{(1)}, \bbZ_n^{(2)} \right)  \triangleq \frac{1}{n} \min_{\Pi \in \cO\left( \NumOfB \right) }  d_H \left( \bbZ_n^{(1)}, \Pi \circ \bbZ_n^{(2)} \right),
 \label{eq:Hamming distance}
 \end{equation}
 where $d_H\left( \cdot , \cdot \right)$ denotes the Hamming distance, and  $\NumOfB$  is the permutation matrix.  We consider the following  hypothesis test with $\epsilon' >0$:
 \begin{equation} H_0': \ell_n = 0 \mbox{ v.s. } H_1': \ell_n \succeq  \frac{\NumOfB}{n^{1 - \epsilon'} \sqrt{ \mu_n} }, \label{eq:hypothesis test -- Hamming distance}
 \end{equation}
 where $\mu_n$ appears in Theorem \ref{thm:asym distri testing statistics AJ}. 
We further assume $\frac{n}{\NumOfB}$ is an integer and $\beta = 1 $ in Assumption \ref{assum:size of each block}. In this simple scenario with equal-size assumption,
  \begin{eqnarray*}
T_{n,\NumOfB}  	&=& \frac1{n\NumOfB}\left\|  \left[\bbV_{\bbW_n^{(1)}}  \Procrustes \left( \bbV_{\bbW_n^{(1)}}  , \bbV_{\bbW_n^{(2)}} \right)   - \bbV_{\bbW_n^{(2)}} \right] \Lambda_{\bbW_n^{(2)}}\right\|_F^2  \\
  	& \asymp_P & \frac1{n\NumOfB}\left\| \left[\bbZ_n^{(1)} \left( \left[ \bbZ_n^{(1)}\right]^T  \bbZ_n^{(1)}\right)^{-\frac12 }\Procrustes_{\bbZ}	-  \bbZ_n^{(2)} \left( \left[ \bbZ_n^{(2)}\right]^T  \bbZ_n^{(2)}\right)^{-\frac12 } \right] \Lambda_{\bbW_n^{(2)}} \right\|_F^2 \\
  	& = & \frac1{n\NumOfB}\left(\sqrt{\frac{\NumOfB}{n}} n\ell_n \cdot \frac{nb_{\cP}}{\NumOfB}\right)^2  = \frac{n^2 b_{\cP}^2 \ell_n^2}{\NumOfB^2} \succeq  n^{2 \epsilon'}\mu_{n}\succ  \mu_n,
  \end{eqnarray*}
where $$\Procrustes_{\bbZ} =  \Procrustes \left(\bbZ_n^{(1)} \left( \left[ \bbZ_n^{(1)}\right]^T  \bbZ_n^{(1)}\right)^{-\frac12 } , \bbZ_n^{(2)} \left( \left[ \bbZ_n^{(2)}\right]^T  \bbZ_n^{(2)}\right)^{-\frac12 } \right).$$
Consequentially, we have  the following results on the power of the proposed test:

	\begin{thm}[Asymptotic power guarantee] Assume that Assumptions \ref{assum:size of each block}, \ref{assum:homogeneous weighted SBM},  \ref{assum:region of interest Renyi divergence} and  \ref{assum:underlying probability the same up to a scalar} hold. In addition,  assume that $\frac{n}{\NumOfB}$ is an integer and $\beta = 1 $ in Assumption \ref{assum:size of each block}. Then under the alternative $H_1'$ of \eqref{eq:hypothesis test -- Hamming distance},
		$$
		T_{n,\NumOfB} \succeq_P n^{2\epsilon'} \mu_n \succ \mu_n,
		\label{eq:asymptotic power guarantee}
		$$
		where $\mu_n$ appears in Theorem \ref{thm:asym distri testing statistics AJ}; or equivalently,
				$$
\frac{		T_{n,\NumOfB} - \mu_n }{ \sqrt{\Var_n }}\succeq_P   n^{\frac12 + 2\epsilon'}\cdot \frac{\mu_n \sqrt{\NumOfB}}{b_{\cP}} \succeq n^{\frac12 + 2\epsilon'}\sqrt{\NumOfB}.
		$$
		
		Consequentially, for any two-sided $\alpha$ level test with $q_{\frac{\alpha}{2}}$,$ q_{\left(1-\frac{\alpha}{2}\right)}$ the $\frac{\alpha}{2}$-quantile and $\left(1-\frac{\alpha}{2}\right)$-quantile of Gaussian distribution, the probability under the alternative $H_1'$ of \eqref{eq:hypothesis test -- Hamming distance} satisfies
		
		$$
		\P_{H'_1} \left(q_{\frac{\alpha}{2}} < \frac{		T_{n,\NumOfB} - \mu_n }{ \sqrt{\Var_n }} < q_{\left(1-\frac{\alpha}{2}\right)}\right) \to 1.
		$$
		
		\label{thm:asymptotic power guarantee}
	\end{thm}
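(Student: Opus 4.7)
The plan is to reduce the test statistic to a computable population quantity via the single-view asymptotic expansion of Theorem \ref{thm:asymptotic expansion one view}, carry out the explicit Frobenius-norm calculation allowed by the equal-size block assumption, and then substitute the lower bound on $\ell_n$ from the alternative. First I would apply Theorem \ref{thm:asymptotic expansion one view} to each of $\bbW_n^{(1)}$ and $\bbW_n^{(2)}$ so as to replace $\bbV_{\bbW_n^{(v)}}$ by $\bbV_{\bbE_n^{(v)}}$ and $\DiaEig_{\bbW_n^{(2)}}$ by $\DiaEig_{\bbE_n^{(2)}}$; after squaring and dividing by $nK$ the induced remainders are absorbed into the $O_P(K[\log n]^2/n)$ term already present in Theorem \ref{thm:asymptotic expansion two views}. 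Using the identity \eqref{eq:normalized index parameter simeq AJ}, the population singular vectors $\bbV_{\bbE_n^{(v)}}$ agree with $\bbZ_n^{(v)}([\bbZ_n^{(v)}]^T\bbZ_n^{(v)})^{-1/2}$ up to an orthogonal factor, so the Procrustes infimum over $\cO(K)$ in $T_{n,K}$ collapses onto the permutation $\Procrustes_{\bbZ}$ that achieves the Hamming minimum in \eqref{eq:Hamming distance}.

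With $\beta=1$ and $n/K$ an integer, $[\bbZ_n^{(v)}]^T\bbZ_n^{(v)} = (n/K)\bbI_K$, so the leading-order expression reduces to
\[
T_{n,K} \asymp_P \frac{1}{nK}\Bigl\| \sqrt{K/n}\,\bigl(\bbZ_n^{(1)}\Procrustes_{\bbZ} - \bbZ_n^{(2)}\bigr)\DiaEig_{\bbE_n^{(2)}} \Bigr\|_F^2.
\]
The matrix $\bbZ_n^{(1)}\Procrustes_{\bbZ} - \bbZ_n^{(2)}$ has zero rows on the matched nodes and nonzero rows on the $n\ell_n$ mismatched ones, while the $K$ nonzero singular values of $\bbE_n^{(2)}$ are all of order $nb_{\cP}/K$ because $\BExpect^{(2)} = (b_{\cP}^{(2)}-b_{\cQ}^{(2)})\bbI_K + b_{\cQ}^{(2)}\1\1^T$ has one eigenvalue of order $b_{\cP}+(K-1)b_{\cQ}$ and $K-1$ eigenvalues of order $b_{\cP}-b_{\cQ}$, each scaled by $n/K$. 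Combining these two observations reproduces the scaling $T_{n,K}\asymp_P n^2 b_{\cP}^2\ell_n^2/K^2$ sketched in the theorem statement.

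Substituting the alternative hypothesis $\ell_n \succeq K/(n^{1-\epsilon'}\sqrt{\mu_n})$ and using $\mu_n \asymp b_{\cQ} \asymp b_{\cP}$ (which follows from $\sigma_{\cQ}^2\asymp b_{\cQ}$ and $\sigma_{\cP}^2\asymp b_{\cQ}$ in Assumption \ref{assum:homogeneous weighted SBM}), I obtain $T_{n,K} \succeq_P n^{2\epsilon'}b_{\cP}^2/\mu_n \asymp n^{2\epsilon'}\mu_n$, which is the first claim. Dividing by $\sqrt{\Var_n}$, whose order is $b_{\cP}/\sqrt{nK} \asymp \mu_n\sqrt{K/n}$ by Theorem \ref{thm:asym distri testing statistics AJ}, produces the standardized divergence rate $n^{1/2+2\epsilon'}\sqrt{K}\to\infty$. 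Since the standardized statistic diverges in probability, its value eventually lies outside every fixed bounded interval, so a two-sided level-$\alpha$ test rejects the null with probability tending to $1$ under $H_1'$.

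The main obstacle is the first step: Theorem \ref{thm:asymptotic expansion one view} governs the distance between sample and population subspaces within a single network, whereas $T_{n,K}$ uses the Procrustes alignment $\bbT_n = \Procrustes(\bbV_{\bbW_n^{(1)}},\bbV_{\bbW_n^{(2)}})$ between two \emph{sample} subspaces and therefore depends on both networks simultaneously. Chaining the two single-view bounds so that $\bbT_n$ may be replaced by a Procrustes matrix aligning the population subspaces (equivalently, by $\Procrustes_{\bbZ}$ up to orthogonal factors) requires a triangle-inequality argument on the $\sin\Theta$ distance from \eqref{eq:sine theta distance in Frobenius norm}, together with verifying that the cross remainder terms, after right-multiplication by $\DiaEig_{\bbE_n^{(2)}}$ of operator norm $\asymp nb_{\cP}/K$, remain of lower order than the leading signal $n^{2\epsilon'}\mu_n$. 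Once this reduction is secured, the remaining block-matrix computation and the substitution from $H_1'$ are elementary.
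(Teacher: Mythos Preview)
Your proposal is correct and follows essentially the same route as the paper. The paper's own argument (both in Section~\ref{subsec:asmptotic power} and in the appendix) is a terse two-line computation: it asserts $T_{n,\NumOfB}\asymp_P \frac{1}{n\NumOfB}\bigl\|[\bbZ_n^{(1)}(\cdot)^{-1/2}\Procrustes_{\bbZ}-\bbZ_n^{(2)}(\cdot)^{-1/2}]\Lambda_{\bbW_n^{(2)}}\bigr\|_F^2$ without further justification, then evaluates this as $(n\NumOfB)^{-1}\bigl(\sqrt{\NumOfB/n}\,n\ell_n\cdot nb_{\cP}/\NumOfB\bigr)^2=n^2b_{\cP}^2\ell_n^2/\NumOfB^2$ and substitutes the lower bound on $\ell_n$.

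You are therefore more explicit than the paper on precisely the point you flag as the obstacle: the reduction of $\bbT_n=\Procrustes(\bbV_{\bbW_n^{(1)}},\bbV_{\bbW_n^{(2)}})$ to $\Procrustes_{\bbZ}$. The paper simply hides this behind the symbol $\asymp_P$, whereas you correctly outline how it follows from chaining two applications of the single-view bound (Theorem~\ref{thm:asymptotic expansion one view} / Davis--Kahan, which hold for each network separately regardless of the null) through a triangle inequality on the $\sin\Theta$ distance. One minor remark: the paper keeps $\Lambda_{\bbW_n^{(2)}}$ rather than passing to $\Lambda_{\bbE_n^{(2)}}$, but since both have operator norm $\asymp nb_{\cP}/\NumOfB$ this makes no difference at the $\asymp_P$ level. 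Also note that the final displayed probability in the theorem statement is a typo in the paper (it should tend to $0$, or equivalently the rejection probability should tend to $1$); your concluding sentence states the correct intended conclusion.
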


	\section{Simulation Studies}
	\label{sec:simulation}

	\subsection{Type I errors}
	We first evaluate the type I  errors of the proposed test. 
Tables \ref{tab:Type I error unweighted SBM Kn=2}
show the empirical type I errors of the proposed tests for different families of weighted SBMs based on 4,000 replications.

The first model considers  unweighted SBMs with $p_n =0.5,q_n=0.1$ ($p_n \asymp1$) for different sample sizes $n=500, 1000, 2000$ and $4000$ and different parameter values of $\lambda=1.5,1.3,1.0,1.8$ and $0.7$.    Overall, the typer I errors are under control, except that when the sample size is small and $\lambda=0.7$.

	The second model considers  a  family of weighted SBMs with $\cP_n = \chi^2 (5), \cQ_n = \chi^2 \left(1 \right)$. In this case $b_{\cP} \asymp 1$,  similar type 1 errors are observed as the unweighted SBMs for different values of $\lambda$ and different sample sizes. 
	
	 The third model considers the unweighted SBM  with $p = 1.8n^{ - \frac16 }, q=0.36n^{-\frac16}$. This problem is more complex: type I error is expected to converge when $n\to \infty$, while sparsity makes the convergence rate slower. Overall, the type I errors are under control.

	\begin{table}[htbp] \caption{Type I error of two sided test with significance level $\alpha = 5\%$ on unweighted SBM with $p = 0.5, q=0.1, \NumOfB = 2, \# \cC_{ 1} =2 \# \cC_{ 2}$. Run 4000 times for each data point.}
		\label{tab:Type I error unweighted SBM Kn=2}
		\begin{center}
			\begin{tabular}{cccccc} \hline 
				$n$ & $\gamma = 1.5$&$\gamma = 1.3 $ & $\gamma= 1$ & $\gamma = 0.8$ &  $\gamma = 0.7$ 
				\\ \hline
				\multicolumn{1}{c}{} &\multicolumn{5}{c}{unweighted SBM with $p = 0.5, q=0.1$} \\
			\ignore{	\multirow{3}{*}{500} & $\left( \bbV_{\bbW^{(2)}}^T \bbV_{\bbW^{(1)}}\right)^{-1}$ &{6.1\%}& 6.2\%  &{6.8\%} & 9.4\% &{10.2\%} 
				\\ 
				&$\bbV_{\bbW^{(1)}}^T \bbV_{\bbW^{(2)}}$ &{4.9\%}& 4.9\% &{5.0\%} & 6.2\% &{9.5\%} 
				\\  &$\Procrustes \left(\bbV_{\bbW^{(1)}} , \bbV_{\bbW^{(2)}}\right)$ }
500 &5.3 & 5.2 & 4.7 &5.0  &9.7 
				\\ 
				\ignore{\multirow{3}{*}{1000} & $\left( \bbV_{\bbW^{(2)}}^T \bbV_{\bbW^{(1)}}\right)^{-1}$ & 5.7 & 5.3  & 6.2 &7.5 & 9.3 
				\\ 
				& $\bbV_{\bbW^{(1)}}^T \bbV_{\bbW^{(2)}}$& 4.4  &  4.9 &4.8 & 5.7 & 6.4 
				\\ 
				& $\Procrustes \left( \bbV_{\bbW^{(1)}} , \bbV_{\bbW^{(2)}}\right)$}
1000				& 4.9& 5.2 & 4.8 & 4.8  &7.3 
				\\
			\ignore{	\multirow{3}{*}{2000} & $\left( \bbV_{\bbW^{(2)}}^T \bbV_{\bbW^{(1)}}\right)^{-1}$ & 5.3 & 5.6  & 5.6 & 6.7 &6.6 
				\\ & $\bbV_{\bbW^{(1)}}^T \bbV_{\bbW^{(2)}}$ & 5.1 &   5.0 &5.1 & 4.3 & 5.7 
				\\  &$ \Procrustes \left( \bbV_{\bbW^{(1)}} , \bbV_{\bbW^{(2)}}\right)$}
2000				&  4.9 &5.5 & 5.3& 5.3 & 5.6 
				\\ 
			\ignore{	\multirow{3}{*}{4000} & $\left( \bbV_{\bbW^{(2)}}^T \bbV_{\bbW^{(1)}}\right)^{-1}$ &5.5 & 5.8  & 5.7 & 5.8 &  6.6 
				\\  
				& $\bbV_{\bbW^{(1)}}^T \bbV_{\bbW^{(2)}}$ & 4.6 & {5.7} & {5.3} &  4.8 &  5.8  
				\\ 
				& $\Procrustes \left( \bbV_{\bbW^{(1)}} , \bbV_{\bbW^{(2)}} \right)$}
4000				& 4.5  & 5.1 & 5.2 &4.7 & 6.1  
				\\ 
		\multicolumn{1}{c}{} &\multicolumn{5}{c}{weighted SBM with $\cP_n = \chi^2 (5), \cQ_n = \chi^2 \left(1 \right)$} \\			
				
500				  &5.3 & 5.7  & 4.7  &  5.2 & 5.4   
\\ 
\ignore{				\multirow{3}{*}{1000} & $\left( \bbV_{\bbW^{(2)}}^T \bbV_{\bbW_n^{(1)}}\right)^{-1}$& 5.5 &5.5& 5.5 &5.0 & 4.9 & 5.5 
	\\ 
	& $\bbV_{\bbW_n^{(1)}}^T \bbV_{\bbW_n^{(2)}}$ &4.3 & 5.4 &  5.2 & 4.3   & 4.9 &  4.6 
	\\ 
	& $\Procrustes \left( \bbV_{\bbW_n^{(1)}} , \bbV_{\bbW_n^{(2)}} \right)$ }
1000		&5.2 & 4.6 &5.2 & 5.9 &5.0 
\\ 
\ignore{				\multirow{3}{*}{2000} & $ \left(\bbV_{\bbW_n^{(2)}}^T \bbV_{\bbW_n^{(1)}} \right)^{-1 }$ &5.2 & 4.5 & 4.7 & 5.4 & 5.0 & 5.4 
	\\ 
	& $\bbV_{\bbW_n^{(1)}}^T \bbV_{\bbW_n^{(2)}}$ & 4.8  & 5.4 & 5.5  & 5.7  &5.0 & 5.4  
	\\ 
	& $ \Procrustes \left( \bbV_{\bbW_n^{(1)}} , \bbV_{\bbW_n^{(2)}} \right)$}
2000		&  5.2 & 4.7 &  5.4 & 5.3	 & 5.6   
\\ 
\ignore{					\multirow{3}{*}{4000} & $ \left(\bbV_{\bbW_n^{(2)}}^T \bbV_{\bbW_n^{(1)}} \right)^{-1 }$ &5.0 &  5.1 & {5.0} & 5.3 &{5.0} & 5.1 
	\\ 
	& $\bbV_{\bbW_n^{(1)}}^T \bbV_{\bbW_n^{(2)}}$ & 4.9  & 5.1   &{4.9} &  5.3   & {5.2} & 5.0  
	\\ 
	& $ \Procrustes \left(\bbV_{\bbW_n^{(1)}} , \bbV_{\bbW_n^{(2)}} \right) $
}			
4000	&  5.2 & {5.1} & 5.1  & {5.0} & 5.3   
\\

	\multicolumn{1}{c}{} &\multicolumn{5}{c}{unweighted SBM with $p = 1.8n^{ - \frac16 }, q=0.36n^{-\frac16}$} \\			
			
500				&5.0 & {5.0} &4.9& {5.3} &6.1 
\\ \ignore{
	\multirow{3}{*}{1000} & $\left( \bbV_{\bbW_n^{(2)}}^T \bbV_{\bbW_n^{(1)}}\right)^{-1}$ & 5.1 &{5.3} &5.4& {7.0} &8.0 
	\\ 
	& $\bbV_{\bbW_n^{(1)}}^T \bbV_{\bbW_n^{(2)}}$ &5.5 & {5.3}&5.0& {6.5} &7.4 
	\\ & $\Procrustes \left( \bbV_{\bbW_n^{(1)}} , \bbV_{\bbW_n^{(2)}} \right)$ }
1000				& 5.3 & {5.4} &5.7 & {6.2} & 6.5 
\\
\ignore{	\\ \hline \multirow{3}{*}{2000} & $\left( \bbV_{\bbW_n^{(2)}}^T \bbV_{\bbW_n^{(1)}}\right)^{-1}$ & 4.6 & {5.0} &5.5 & {6.0} &6.9 
	\\ & $\bbV_{\bbW_n^{(1)}}^T \bbV_{\bbW_n^{(2)}}$ & 4.7   & {4.8} & 4.3 &{5.9}  & 6.9 
	\\ &$\Procrustes \left( \bbV_{\bbW_n^{(1)}} , \bbV_{\bbW_n^{(2)}} \right) $}
2000				&5.1  &{5.1}&5.3 &{6.0} &6.4 
\\
\ignore{\hline
	\multirow{3}{*}{4000} & $\left( \bbV_{\bbW_n^{(2)}}^T \bbV_{\bbW_n^{(1)}}\right)^{-1}$ & 5.3  &{4.9 }&5.1& {6.0} &6.4 
	\\  & $\bbV_{\bbW_n^{(1)}}^T \bbV_{\bbW_n^{(2)}}$ & 5.1 & {5.0} & 5.1  &  {5.7} &6.3 
	\\  & $\Procrustes \left( \bbV_{\bbW_n^{(1)}} , \bbV_{\bbW_n^{(2)}} \right) $}
4000				& 5.0 & {5.1} &5.4 & {5.7} &5.9 
\\ \hline				
			\end{tabular}
		\end{center}
		
	\end{table}

\ignore{

	\begin{table}[htbp] 
		\caption{Type I error of two-sided test with significance level $\alpha = 5\%$ on weighted SBM with $, \NumOfB = 2, \# \cC_{ 1} = 2 \# \cC_{ 2}$. For first view, $\cP_n = \chi^2 (5), \cQ_n = \chi^2 \left(1 \right)$. Run 4000 times for each data point. }
		\label{tab:type I error weighted SBM Kn=2}
		\begin{center}
			\begin{tabular}{ccccccc} \hline 
				$n$ &  $\gamma = 1.5$&$\gamma = 1.3$ &   $\gamma= 1$ & $\gamma = 0.8$  &  $\gamma = 0.7$ 
				\\ \hline
	\ignore{			\multirow{3}{*}{500} & $\left( \bbV_{\bbW_n^{(2)}}^T \bbV_{\bbW_n^{(1)}}\right)^{-1}$ &5.9\% & 5.4\% & 5.0\% &5.9\%  & {7.0\%}  & 7.5\% 
				\\ 
				&$\bbV_{\bbW_n^{(1)}}^T \bbV_{\bbW_n^{(2)}}$& 4.9\%&  5.3\% & 5.4\% &  4.6\%  & {5.5\%} &  6.1\% 
				\\ 
				&$\Procrustes \left( \bbV_{\bbW_n^{(1)}} , \bbV_{\bbW_n^{(2)}}\right)$}
500				  &5.3\% & 5.7\%  & 4.7\%  &  5.2\% & 5.4\%   
				\\ 
\ignore{				\multirow{3}{*}{1000} & $\left( \bbV_{\bbW^{(2)}}^T \bbV_{\bbW_n^{(1)}}\right)^{-1}$& 5.5\% &5.5\%& 5.5\% &5.0\% & 4.9\% & 5.5\% 
				\\ 
				& $\bbV_{\bbW_n^{(1)}}^T \bbV_{\bbW_n^{(2)}}$ &4.3\% & 5.4\% &  5.2\% & 4.3\%   & 4.9\% &  4.6\% 
				\\ 
				& $\Procrustes \left( \bbV_{\bbW_n^{(1)}} , \bbV_{\bbW_n^{(2)}} \right)$ }
	1000		&5.2\% & 4.6\% &5.2\% & 5.9\% &5.0\% 
				\\ 
\ignore{				\multirow{3}{*}{2000} & $ \left(\bbV_{\bbW_n^{(2)}}^T \bbV_{\bbW_n^{(1)}} \right)^{-1 }$ &5.2\% & 4.5\% & 4.7\% & 5.4\% & 5.0\% & 5.4\% 
				\\ 
				& $\bbV_{\bbW_n^{(1)}}^T \bbV_{\bbW_n^{(2)}}$ & 4.8\%  & 5.4\% & 5.5\%  & 5.7\%  &5.0\% & 5.4\%  
				\\ 
				& $ \Procrustes \left( \bbV_{\bbW_n^{(1)}} , \bbV_{\bbW_n^{(2)}} \right)$}
	2000		&  5.2\% & 4.7\% &  5.4\% & 5.3\%	 & 5.6\%   
				\\ 
\ignore{					\multirow{3}{*}{4000} & $ \left(\bbV_{\bbW_n^{(2)}}^T \bbV_{\bbW_n^{(1)}} \right)^{-1 }$ &5.0\% &  5.1\% & {5.0\%} & 5.3\% &{5.0\%} & 5.1\% 
					\\ 
				& $\bbV_{\bbW_n^{(1)}}^T \bbV_{\bbW_n^{(2)}}$ & 4.9\%  & 5.1\%   &{4.9\%} &  5.3\%   & {5.2\%} & 5.0\%  
				\\ 
				& $ \Procrustes \left(\bbV_{\bbW_n^{(1)}} , \bbV_{\bbW_n^{(2)}} \right) $
}			
4000	&  5.2\% & {5.1\%} & 5.1\%  & {5.0\%} & 5.3\%   
				\\ \hline
			\end{tabular}
		\end{center}
	\end{table}

		Tables \ref{tab:Type I error unweighted SBM Kn=2  n -1/6} presents the type I errors for sparser  unweighted SBMs with $p = 1.8n^{ - \frac16 }, q=0.36n^{-\frac16}$. This should be more complex: type I error should converge when $n\to \infty$, while sparsity makes the convergence rate slower. Overall, the typer I errors are under control.

		\begin{table}[htbp] \caption{Type I error of two sided test with significance level $\alpha = 5\%$ on unweighted SBM with intra inter probabilities $p = 1.8n^{ - \frac16 }, q=0.36n^{-\frac16}$, block sizes $ \NumOfB = 2, \# \cC_{ 1} =2 \# \cC_{ 2}$. Run 4000 times for each data point.}
		\label{tab:Type I error unweighted SBM Kn=2  n -1/6}
		\begin{center}
			\begin{tabular}{cccccc} \hline 
				$n$ &  $\gamma = 1.5$& $\gamma = 1.3$& $\gamma= 1$  & $\gamma = 0.8$ & $\gamma = 0.7$
				\\ \hline
	\ignore{			\multirow{3}{*}{500} & $\left( \bbV_{\bbW_n^{(2)}}^T \bbV_{\bbW_n^{(1)}}\right)^{-1}$ &4.5\% & {5.2\%} &5.9\%& {6.2\%} &6.9\% 
				\\ &$\bbV_{\bbW_n^{(1)}}^T \bbV_{\bbW_n^{(2)}}$ &5.1\% & {5.1\%} &  4.6\% & {6.9\%} & 8.0\%  
				\\  &$\Procrustes \left(\bbV_{\bbW_n^{(1)}} , \bbV_{\bbW_n^{(2)}} \right)$ }
500				&5.0\% & {5.0\%} &4.9\%& {5.3\%} &6.1\% 
				\\ \ignore{
				\multirow{3}{*}{1000} & $\left( \bbV_{\bbW_n^{(2)}}^T \bbV_{\bbW_n^{(1)}}\right)^{-1}$ & 5.1\% &{5.3\%} &5.4\%& {7.0\%} &8.0\% 
				\\ 
				& $\bbV_{\bbW_n^{(1)}}^T \bbV_{\bbW_n^{(2)}}$ &5.5\% & {5.3\%}&5.0\%& {6.5\%} &7.4\% 
				\\ & $\Procrustes \left( \bbV_{\bbW_n^{(1)}} , \bbV_{\bbW_n^{(2)}} \right)$ }
1000				& 5.3\% & {5.4\%} &5.7\% & {6.2\%} & 6.5\% 
\\
			\ignore{	\\ \hline \multirow{3}{*}{2000} & $\left( \bbV_{\bbW_n^{(2)}}^T \bbV_{\bbW_n^{(1)}}\right)^{-1}$ & 4.6\% & {5.0\%} &5.5\% & {6.0\%} &6.9\% 
				\\ & $\bbV_{\bbW_n^{(1)}}^T \bbV_{\bbW_n^{(2)}}$ & 4.7\%   & {4.8\%} & 4.3\% &{5.9\%}  & 6.9\% 
				\\ &$\Procrustes \left( \bbV_{\bbW_n^{(1)}} , \bbV_{\bbW_n^{(2)}} \right) $}
2000				&5.1\%  &{5.1\%}&5.3\% &{6.0\%} &6.4\% 
\\
				\ignore{\hline
				\multirow{3}{*}{4000} & $\left( \bbV_{\bbW_n^{(2)}}^T \bbV_{\bbW_n^{(1)}}\right)^{-1}$ & 5.3\%  &{4.9\% }&5.1\%& {6.0\%} &6.4\% 
				\\  & $\bbV_{\bbW_n^{(1)}}^T \bbV_{\bbW_n^{(2)}}$ & 5.1\% & {5.0\%} & 5.1\%  &  {5.7\%} &6.3\% 
				\\  & $\Procrustes \left( \bbV_{\bbW_n^{(1)}} , \bbV_{\bbW_n^{(2)}} \right) $}
4000				& 5.0\% & {5.1\%} &5.4\% & {5.7\%} &5.9\% 
				\\ \hline
			\end{tabular}
		\end{center}
		
	\end{table}
}

	\subsection{Empirical power}
	\label{sec:asymptotic power w.r.t. an interpretable alternative}

Table \ref{tab: empirical rejection rates} shows the empirical power for two different models. The first model  assumes that   $p = 0.5, q=0.5-200^{-\frac13} = 0.329$, which corresponds to  a SNR =0.0578.  The second model assumed that   $p = 0.5, q_n=p - n^{-\frac13}(\ge 0.329)$, which gives a SNR=$2n^{-\frac23}$. For each scenario, we fix Hamming distance $\ell_0$ and increase number of nodes $n$. As expected, as the Hamming distance $\ell_0$ between the two community memberships increases, we observe increased power of our proposed test. 
	
	\begin{table}
		\caption{Empirical  power  of the test with two-sided $\alpha =5\%$. Two unweighted SBMs with two blocks of equal sizes and  $\gamma = 1$. }
		\label{tab: empirical rejection rates}
		\begin{center}
			\begin{tabular}{cccccccccc} \hline  
				\multicolumn{1}{c}{} & \multicolumn{9}{c}{$\ell_n \equiv \ell_0$}\\
			$\ell_0$&0.0\%& 0.1\% &0.2\%& 0.4\% & 0.5\% & 1.0\% &1.6\%&  2.0\% & 5.0\% \\ 
			\cline{1-10}
				\multicolumn{10}{c}{$p = 0.5, q=0.5-200^{-\frac13} = 0.329$, $SNR =  0.0578$}\\ 
				200&5.7 & --& -- & -- &17.0 & 43.1& 70.7&87.0 & 100.0  \\ 
				500& 4.7 &--& 27.3 &  72.5 & 73.5 &  93.3 &  99.8 & 100.0 & 100.0  \\ 
				1000 &4.8 & 49.1 & 95.5 &  100.0& 100.0& 100.0& 100.0& 100.0& 100.0\\ 
				2000		& 5.3&99.8 & 100.0 &  100.0)& 100.0& 100.0& 100.0& 100.0& 100.0\\ 
				\multicolumn{10}{c}{$p = 0.5, q_n=p - n^{-\frac13}(\ge 0.329)$,  $SNR = 2n^{-\frac23}$}\\ 
				200&5.7 & --& -- & -- &17.0 & 43.1& 70.7&87.0 & 100.0  \\ 
				500& 5.1 & --& 13.6 &31.1 & 32.9&92.9&97.1 &100.0&100.0 \\ 
				1000 & 5.1 & 14.2 & 25.9 & 70.2 & 86.6 &100.0 & 100.0&100.0&100.0   \\		
				2000		&5.0& 20.4&60.8 & 99.2&100.0&100.0&100.0& 100.0&100.0\\ 
				\hline
			\end{tabular}
		\end{center}
	\end{table}
	%
	%
	%

	\section{Real Data Example  -- Enron Email dataset}\label{data}

	\begin{table}
		\caption{Enron dataset: persons' names and their positions for selected nodes number.}
		\label{tab:Enron node number correspondence}
		\begin{tabular}{ccc} \hline
			node number & name&  position \\ \hline
4& badeer-r& Director  \\  
16& causholli-m& Employee  \\  
18& cuilla-m& Manager \\  
32& forney-j& Manager  \\ 
34& gang-l& Employee  \\  
84& motley-m& Director  \\  
93& presto-k&  Vice  President   \\  
94 & quenet-j & Trader \\ 
107& salisbury-h& Employee  \\  
110& scholtes-d& Trader  \\ 
114& schwieger-j& Trader  \\  
120& slinger-r& Trader  \\  
122& solberg-g& Employee  \\ 
127& stepenovitch-j&  Vice  President   \\  
128& stokley-c& Employee \\  
132& tholt-j& Vice President  \\ 
138& ward-k& Employee  \\  
\hline

		\end{tabular}
	\end{table}
To demonstrate the proposed test, we analyzed the Enron email network data   (May 7th, 2015  version,  \href{https://www.cs.cmu.edu/~enron/}{https://www.cs.cmu.edu/$\sim$enron/}). The dataset includes email communication data of  150 users, mostly were in senior management positions,  including CEO (4), manager (8), trader (2), president (2), vice president (16), others (57). 
For each email, we have information on  sender, list of recipients and  the email date.  The email links were included  as long as they were sent to some of the 89 users. To construct the weights,  if A sent an email to B and C, both weights for edge (A,B) and edge (A,C) was increased by 1.  
Since the original Enron email network  were directed, we converted it into undirected network by setting the weight  $w_{new}^{(v)}(A,B)\leftarrow \min \left\{w_{old}^{(v)}(A,B) + w_{old}^{(v)}(B,C),  127 \right\}$.

There were a total of  11539 emails communications (without self-loops) among the 150 users    between 1998 and 2001, represented by  a directed graph with maximal weight $\max_{A,B} w_{old}^{all}(A,B) = 361$. We performed spectral clustering analysis  based on the Laplacian of the weight matrix  $\cL \left(\bbW_n^{(v)}\right)$ and applied k-means clustering methods.	Similar to  \cite{xu2013dynamic}, we set number of clusters $K=2$.

	\subsection{Comparing email networks before and after August 2000}
	We first compared the email networks before and after August 2000, where 
7534 emails and 4005 emails were observed, respectively.  Our test statistic \eqref{eq:test statistic} did not reject the the null $H_0$ \eqref{eq:hypothesis test for SBM AJ}, indicating no significant difference of the community memberships among the users before and after August 2000.

Figure \ref{fig:Enron dataset} shows  a visualization of the two email networks  with coordinates generated by Fruchterman-Reingold force-directed algorithm. 
	The weight matrix $\bbW_n^{(1)}$ from 1998 to Aug. 2000 results in two clusters with sizes 10 and 140. The  smaller cluster has nodes $[4,16,18, 34, 84, 107, 110, 114, 128,132]$\ignore{; nodes $[4, 16, 34, 107,  120]$ randomly appear in this cluster (since the clustering algorithm itself have randomness)}.
		Similarly, 
weight matrix $\bbW_n^{(2)}$ from Sept. 2000 to 2001 also resulted in two clusters with size $11$ and $139$, where the smaller cluster has nodes $[4, 16, 18, 34, 84, 93,  110, 114, 120,128,132]$\ignore{;  nodes [4, 16, 34, 107,  120] randomly appear in this cluster (since the clustering algorithm itself have randomness)}.
Nodes $[4,16,18,34, 84, 110, 114, 128,132]$ appeared in both small clusters. They include traders $[110, 114]$,  Manager [18],  Director [84] and a  Vice President [128] (see table \ref{tab:Enron node number correspondence}).
\ignore{The relative positions for nodes [4, 16, 34, 107, 120] in both networks are similar. }

	\begin{figure}[ht]
		\caption{Estimated Enron email networks before and after August 2000.}
		\label{fig:Enron dataset}
		\begin{center}
			\begin{tabular}{cc}
			\includegraphics[height=0.5\textheight, width=0.49\textwidth]{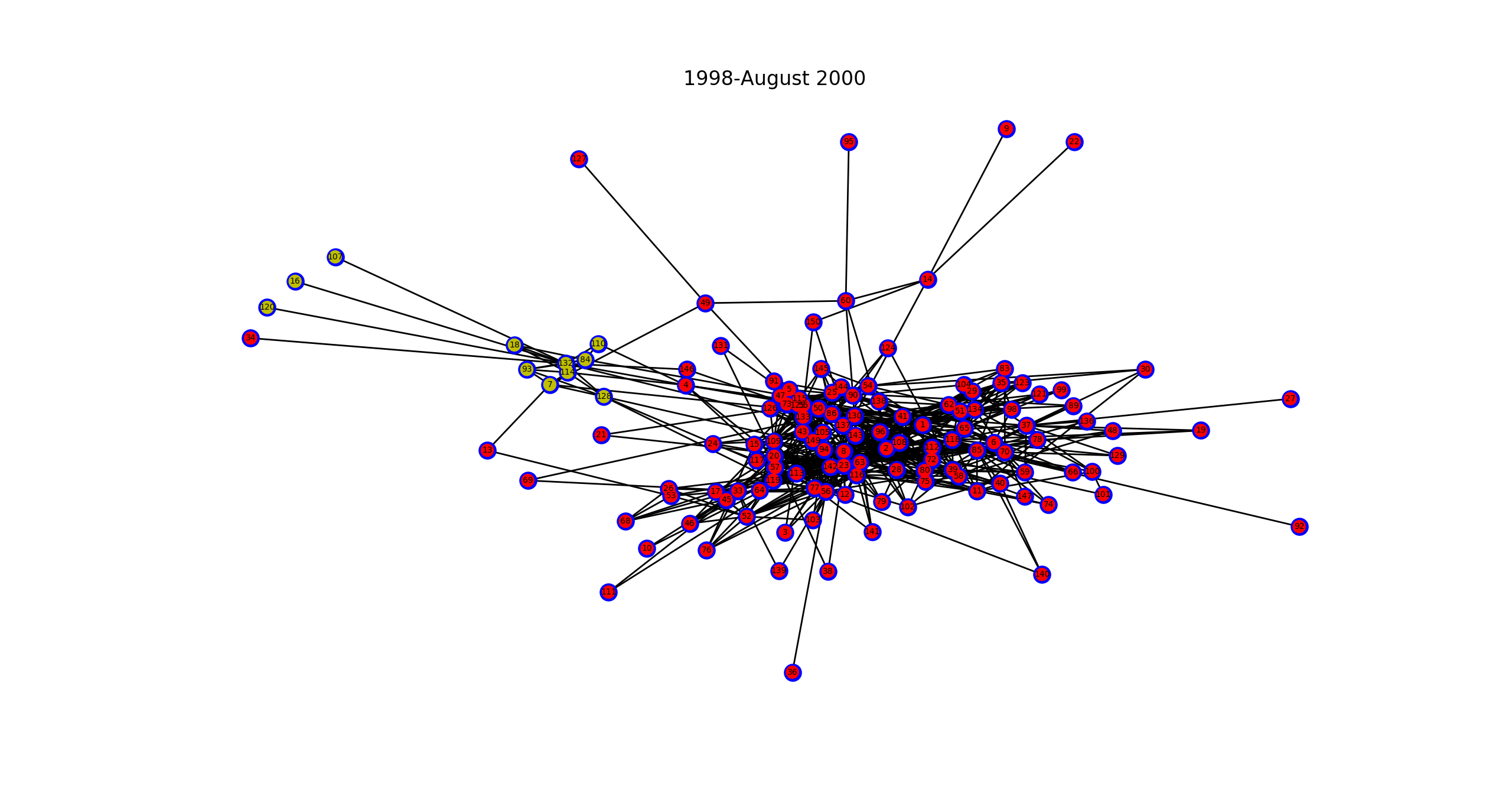} &
			\includegraphics[height=0.5\textheight, width=0.49\textwidth]{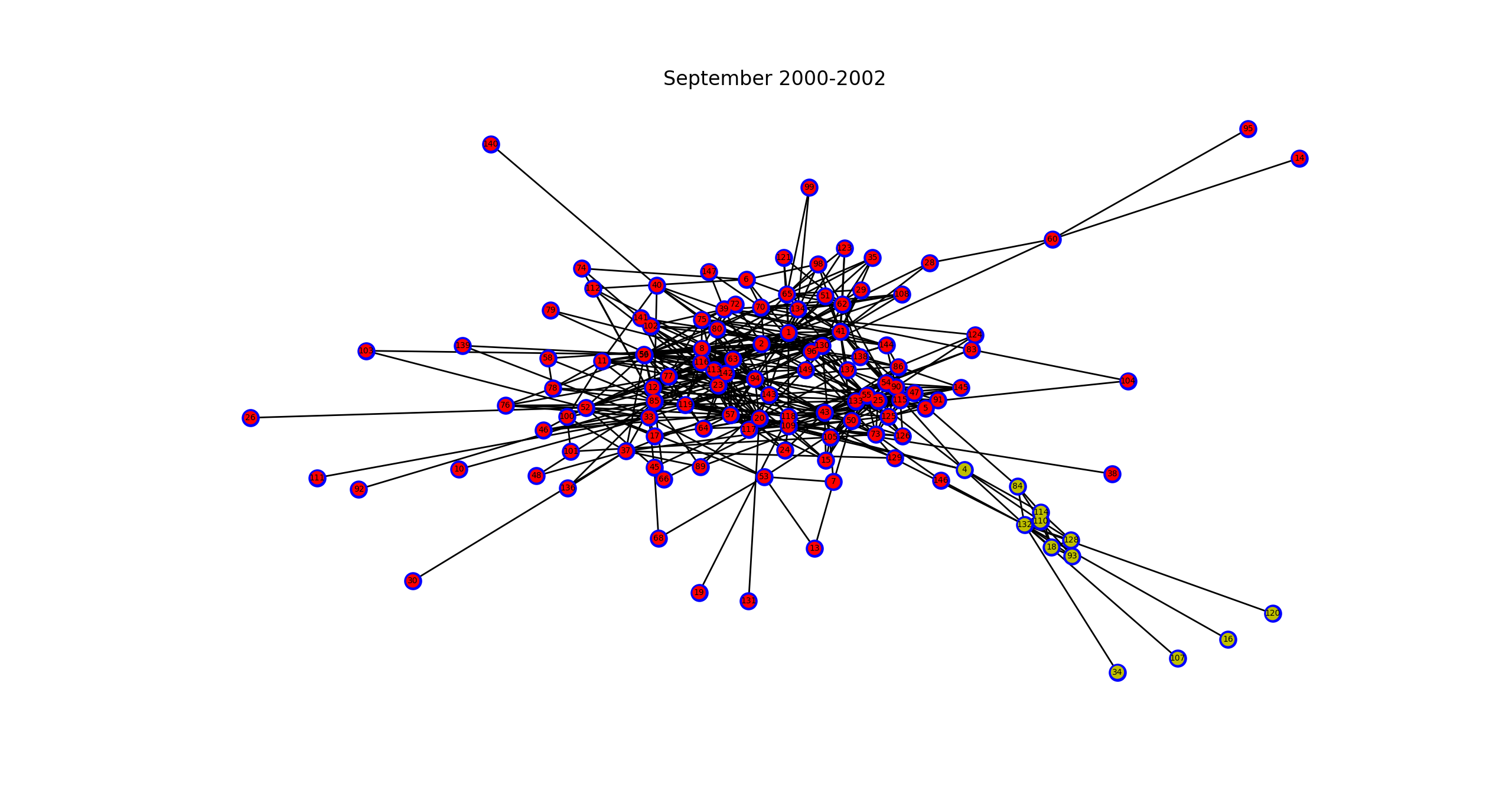}
			\end{tabular}
		\end{center}
		
	\end{figure}
	
	\subsection{Comparing email networks before and after December  2001}		
		We then compared the email networks before and after December 2001, where 
	7713 emails and 3826 emails were observed, respectively.  Our test statistic \eqref{eq:test statistic} rejected the null, indicating that the community memberships were different before and after December 2001.  The date was chosen  since CEO Jeffrey Skilling resigned on  Aug. 14th, 2001 and 
	the number of emails sent by week  revealed  peaks in email activity around Nov. 9th 2001 and  end of Dec 2001.

	\ignore{	A large increase in the probabilities of edges from CEO is found at week 89, in which CEO Jeffrey Skilling resigned on  Aug. 14th, 2001. 
	The number of emails sent by week  reveals peaks in email activity around Nov. 9th 2001, end of Dec 2001 but not around CEO Skilling’s resignation.}

Figure \ref{fig:Enron dataset2}  shows  the two estimated  email networks  with coordinates generated by Fruchterman-Reingold force-directed algorithm.  
	We observed that weight matrix $\bbW_n^{(1)}$ from 1998 to 2000 resulted in a smaller cluster with nodes $[4,16,18,34, 93,107,110,114,128,132]$\ignore{; nodes $[4, 16, 34, 107, 120]$ randomly appear in this cluster (since the clustering algorithm itself have randomness)}.
In contrast, weight matrix $ \bbW_n^{(2)}$ in 2001 results in two clusters: the smaller cluster has nodes \\$[3, 32, 93, 110, 114,121, 122, 132, 138]$\ignore{; some random nodes $[3, 121]$ appear}.
 Nodes [93, 110, 132]  were shared between the two smaller communities, which includes 2 Vice Presidents [93, 132]  and a  Trader [110] (see Table \ref{tab:Enron node number correspondence}).

%

	\begin{figure}[ht]
	\caption{Estimated  Enron email networks before and after December  2001.}
	\label{fig:Enron dataset2}
	\begin{center}
			\begin{tabular}{cc}
		\includegraphics[height=0.5\textheight, width=0.49\textwidth]{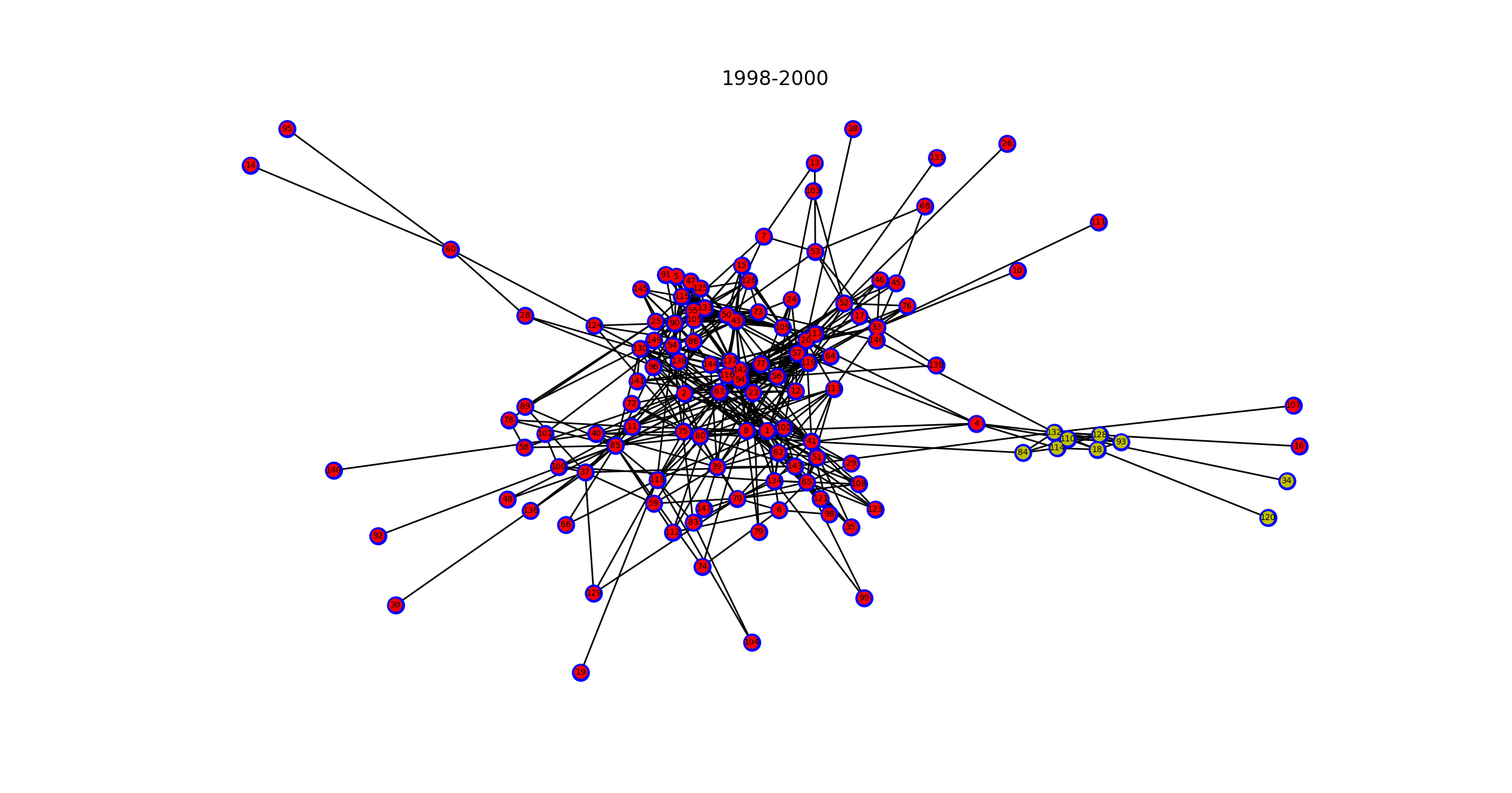}
		\includegraphics[height=0.5\textheight, width=0.49\textwidth]{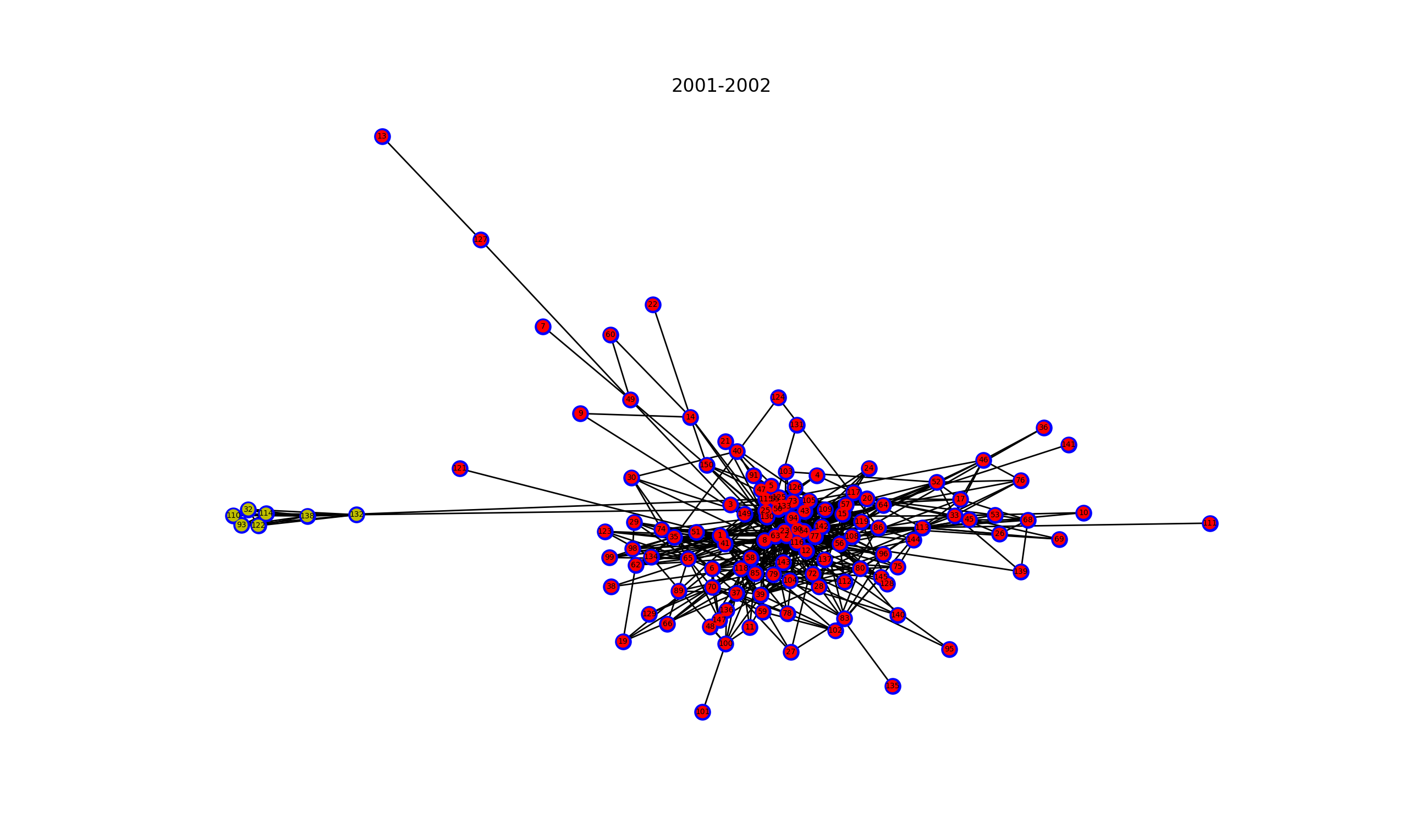}

	\end{tabular}	\end{center}
\end{figure}

\ignore{
To conclude, our test statistic \eqref{eq:test statistic}'s performance (accept or reject) in two time breaks coincides with paired F-scores, size of joint nodes of smaller clusters, relative coordinates (from singular vectors) in two time breaks. CEO Skilling's resignation is consistent with change in edge connection for node 119\ignore{, which also affects other nodes' coordinates}.
}

	\section{Discussions} \label{sec:discuss}

	We have developed a statistical test for equivalent community memberships based on stochastic block models and derived its asymptotic null distribution under the dense graph assumption 
	(Assumption \ref{assum:region of interest Renyi divergence}).  This assumption is needed to obtain the  dominant representation for the subspace distance. 	 
		In order to detect the community structures,  we also require that the intra- and inter-cluster probability distributions are not  too close. While this assumption is reasonable,  it would be  interesting to further investigate the case when the intra- and inter- distributions $\cP_n,\cQ_n $ are close to each other as $n \to \infty$. 
	Like \cite{tang2017semiparametric}, we also assume that the distributions that generate the two weighted networks only differ by a scalar.  For the case when we have two communities for each network, our test statistic has correct type I errors and large power in detecting the difference in community memberships. 	When $\NumOfB >2$, estimation of the community memberships becomes more difficult, which can lead to  slower convergence rates (see Table \ref{tab:type I error unweighted SBM Kn>2} in Supplemental Materials), although  type I errors are still approximately under control.

The test procedure we developed is based on  community recovery from the observed weighted adjacency matrices \citep{bickel2016hypothesis,jog2015information,lei2015consistency,lei2016goodness,xu2017optimal}. Alternatively, one can also apply  spectral clustering method based on singular components of normalized Laplacian \citep{rohe2011spectral,sarkar2015role} of the corresponding network graphs. 	It is also interesting to consider  kernelized spectral clustering of samples from a finite mixture of nonparametric distributions \citep{schiebinger2015geometry}.
As a future research topic, it is interesting to investigate whether the asymptotic results still hold when these alternative clustering methods are applied. 

		Assumptions \ref{assum:homogeneous weighted SBM} and  \ref{assum:underlying probability the same up to a scalar} and including $\DiaEig_{\bbW_n^{(2)}}$ in the proposed test statistic \eqref{eq:test statistic} are all imposed to simplify the mean and variance of test statistic \eqref{eq:test statistic}.  If we  impose the ``equal-size" assumption (see section \ref{subsec:asmptotic power}),  where  $\frac{n}{\NumOfB}$ is assumed to be an integer and $\beta =1 $ in Assumption \ref{assum:size of each block} \citep{banerjee2018contiguity,banerjee2017optimal}, we may  relax   Assumptions \ref{assum:homogeneous weighted SBM} and  \ref{assum:underlying probability the same up to a scalar} and eliminate the  adjustment of multiplying $\DiaEig_{\bbW_n^{(2)}}$ in test statistic \eqref{eq:test statistic}. This assumption  may also possibly relax the requirement that the two  distributions that generate the  weighted networks differ only  by a scalar.

	%


\appendix

\section{Notations}

We clarify some notations to facilitate readers' understanding of statements and proofs of the theorems.

Besides Frobenius norm, we also use  2-norm in the proofs, which is a special case of induced norms \eqref{defn: induced norms}:
\begin{equation}
\|\bbM\|_2 \triangleq \sup_{\|\x\|_2 =1 ,\x \in \Reals^{d} } \|\bbM \x\|_2 = \sqrt{\rho \left(\bbM^T\bbM \right) }, \bbM \in \Reals^{m \times d}.
\label{eq:2-2 norm}
\end{equation}
where $\rho:\Reals^{m\times m} \to \Reals^1$ refers to spectral radius of a square matrix: \begin{equation} \rho(\bbM) \triangleq \max \left\{ \left| \lambda_{1} \left( \bbM \right)\right| , \ldots, \left| \lambda_{m} \left( \bbM \right)\right|  \right\}= \lim_{ k \to \infty} \| \bbM^k \|^{\frac1k},
\label{eq:spec radius}
\end{equation}
for any consistent matrix norm $\|\cdot \|$.  In general,  $\rho(\bbM) \le \| \bbM \|_2$, for symmetric matrix $\bbM = \bbM^T \in \Reals^{m \times m}$, $\| \bbM \|_2 = \rho(\bbM)$. 

When writing $O_P(\cdot ), O(\cdot ), o_P(\cdot ),o(\cdot )$ with respect to a $m \times d $ matrix, we generally refers (if no any other special instructions) to the order with respect to its Frobenius norm. $f(n) = \Theta(n^{- \alpha})$ or $f(n) \asymp n^{ - \alpha}$ mean that there exists constant $C>1$ such that $\frac{n^{- \alpha}}{C} \le f(n) \le C n^{- \alpha}$. The equivalent symbols  used in this work are summarized in Table \ref{tab:summary equivalent symbols}.

\begin{table}
	\caption{Equivalent symbols used in different scenarios.}
	\label{tab:summary equivalent symbols}
	\begin{tabular}{cccccc} \hline
		$O_P(\cdot)$ & $O(\cdot)$ & $o_P(\cdot)$ & $o(\cdot)$ &  $\Theta_P(n^{ - \alpha})$ & $\Theta(n^{ - \alpha})$ \\ \hline
		$\preceq_P$ &$\preceq $ & -- & -- & $\asymp_P$ & $\asymp$\\  \hline 
	\end{tabular}
\end{table}

\section{Proof sketch for the asymptotic expansions in Theorem 2 and Theorem 4}
\label{sec:proof sketches of Minh's dominant term}
We present in this section a sketch of the main ideas in the proofs of Theorem \ref{thm:asymptotic expansion one view} and Theorem \ref{thm:asymptotic expansion two views}.

Compared to the proof of Lemma \ref{lem:tang asymptotic expansion} that uses the same technique as  in \cite{tang2017semiparametric,tang2018limit},
it suffices to provide upper bounds for the difference of the squares of Frobenous norms. 

For the case of one-network in Theorem \ref{thm:asymptotic expansion one view}, we obtain the following bound
\begin{equation}
\frac{ \left\| \left(\bbV_{\bbW_n}\bbT_n- \bbV_{\bbE_n} \right) \DiaEig_{ \bbE_n} \right\|_F^2  }{\NumOfB n}  - \frac{ \left\|   \left( \bbW_n - \bbE_n \right) \bbV_{\bbE_n}   \right\|_F^2 }{\NumOfB n} = O_P \left( \frac{\NumOfB^{2} [\log(n)]^2 }{n} \right), \label{eq:upper bound one view}
\end{equation} 
 where $\bbT_n = \Procrustes \left(  \bbV_{\bbW_n}, \bbV_{\bbE_n}  \right), \bbT_n = \bbV_{\bbW_n}^T\bbV_{\bbE_n}$ or $\left(\bbV_{\bbE_n}^T\bbV_{\bbW_n}\right)^{-1}$. We consider three different $\bbT_n$s for two reasons. First, the proof of \eqref{eq:upper bound one view} can be simplified by focusing on $\bbT_n =\left(\bbV_{\bbE_n}^T\bbV_{\bbW_n}\right)^{-1}$. Second,  result for \eqref{eq:upper bound one view} with $\bbT_n = \bbV_{\bbW_n}^T\bbV_{\bbE_n}$ has a direct Corollary \ref{cor:term hard to handle by second singular value of Erdos Renyi graph} that can simplify the proof of Theorem \ref{thm:asymptotic expansion two views} since it  simplifies the proof for \eqref{eq:upper bound two views}.
 
For the two-sample case in Theorem \ref{thm:asymptotic expansion two views}, under the null hypothesis \eqref{eq:hypothesis test for SBM AJ}, we have the bound for  the  difference of the squares of Frobenous norms  
\begin{eqnarray}
&&	\frac1{ \NumOfB n}\left\| \left(  \bbV_{\bbW_n^{(1)}} \bbT_n - \bbV_{\bbW_n^{(2)} } \right) \DiaEig_{ \bbW_n^{(2)}}\right\|_F^2 
-   \frac1{\NumOfB n} \left\| \left( \gamma \bbW_n^{(1)} - \bbW_n^{(2)}  \right) \bbV_{\bbE_n^{(2)} }  \right\|_F^2 \label{eq:upper bound two views}\\
&= & O_P \left( \frac{\NumOfB^{2} [\log(n)]^2 }{n} \right), \nonumber 
\end{eqnarray}	
where $$\bbT_n = \Procrustes \left( \bbV_{\bbW_n^{(1)}} , \bbV_{\bbW_n^{(2)}} \right), \bbV_{\bbW_n^{(1)}}^T\bbV_{\bbW_n^{(2)}}, \mbox{ or } \left(\bbV_{\bbW_n^{(2)}}^T\bbV_{\bbW_n^{(1)}}\right)^{-1}.$$

To conclude, these two bounds are used to prove asymptotic expansions in Theorem \ref{thm:asymptotic expansion one view} and Theorem \ref{thm:asymptotic expansion two views}, respectively.

 \ignore{For simplicity, we ignore the subscript $n$ in matrices $\bbW_n, \bbE_n, \bbW_n^{(1)},\bbW_n^{(2)}$.}

\subsection{Proof sketch for the asymptotic expansion in Theorem \ref{thm:asymptotic expansion one view}}
 To prove  \eqref{eq:upper bound one view}, we first focus on proving the result  for $\bbT_n = \left(\bbV_{\bbW_n}^T\bbV_{\bbE_n}\right)^{-1}$ in Section \ref{subsec:bbT VE VW inverse}.  
 Its  proof is briefly  sketched as the following:
 
 {\allowdisplaybreaks
 	\begin{eqnarray*}
 		&&\frac1{\NumOfB n} \left\| \left(\bbV_{\bbW_n} \left( \bbV_{ \bbE_n}^T \bbV_{\bbW_n} \right)^{-1 }  - \bbV_{\bbE_n} \right) \DiaEig_{ \bbE_n} \right\|_F^2
 		\\
 		&\xlongequal{\text{\eqref{eq: part using the fact that E = VEVET E}}} &  \frac1{\NumOfB n} \left\| \left( \bbI_n   - \bbV_{\bbE_n} \bbV_{ \bbE_n}^T \right) \left( \bbW_n - \bbE_n \right) \bbV_{\bbW_n}  \left( \bbV_{ \bbE_n}^T \bbV_{\bbW_n} \right)^{-1 }  \right\|_F^2
 		 + O_P \left( \frac{\NumOfB^2\log (n)}{n^2b_{\cP}^{\frac32}} \right)  \nonumber \\
 		& = & \frac1{\NumOfB n} \left\|   \left( \bbW_n - \bbE_n \right) \bbV_{\bbW_n}  \left( \bbV_{ \bbE_n}^T \bbV_{\bbW_n} \right)^{-1 }  \right\|_F^2  \nonumber
 		\\
 		&&  - \frac1{\NumOfB n} \left\|  \bbV_{ \bbE}^T \left( \bbW_n - \bbE_n \right) \bbV_{\bbW_n}  \left( \bbV_{ \bbE_n}^T \bbV_{\bbW_n} \right)^{-1 }  \right\|_F^2 +O_P\left( \frac{\NumOfB^2\log (n)}{n^2b_{\cP}^{\frac32}} \right) \nonumber\\
 		& \xlongequal{\text{\eqref{eq:term Wigner order}}} &  \frac1{\NumOfB n} \left\|   \left( \bbW_n - \bbE_n \right) \bbV_{\bbW_n}  \left( \bbV_{ \bbE_n}^T \bbV_{\bbW_n} \right)^{-1 }  \right\|_F^2 + O_P \left( \frac{\NumOfB [\log (n) ]^2  }n\right) \nonumber\\
 		& \xlongequal{\text{\eqref{eq:my B8 AJ}}} &   \frac1{\NumOfB n} \left\|   \left( \bbW_n - \bbE_n \right) \bbV_{\bbE_n}   \right\|_F^2 + O_P \left( \frac{\NumOfB [\log (n)]^2    }{n} \right)  \nonumber.
 	\end{eqnarray*}
 }
 
For $\bbT_n = \Procrustes \left( \bbV_{\bbW_n} , \bbV_{\bbE_n} \right)$,  we need to consider the difference of two squares of Frobenius norm,
$$
\frac1{ \NumOfB n}\left\| \left(\bbV_{\bbW_n} \bbV_{ \bbW_n}^T \bbV_{ \bbE_n}- \bbV_{\bbE_n} \right) \DiaEig_{ \bbE_n}\right\|_F^2 -   \frac1{\NumOfB n} \left\| \left(\bbV_{\bbW_n} \tilde \bbT_n - \bbV_{\bbE_n} \right) \DiaEig_{ \bbE_n}  \right\|_F^2 ,
$$
 that is, left-hand side of \eqref{eq:upper bound one view} with $\bbT_n =  \bbV_{ \bbW_n}^T \bbV_{ \bbE_n}$ and $\bbT_n = \tilde \bbT_n$ where \eqref{eq:requirement for tilde Tn} holds:
$$\left\| \bbV_{ \bbW_n}^T \bbV_{ \bbE_n} - \tilde \bbT_n\right\|_F = O_P\left(\frac{\NumOfB^{2 }}{ nb_{\cP}}\log (n) \right).
$$
The difference can be upper-bounded as \eqref{eq:differences of two frobenius norm equivalence of 3T's}
 {\allowdisplaybreaks
\begin{eqnarray*}
	&& \frac1{ \NumOfB n}\left\| \left(\bbV_{\bbW_n} \bbV_{ \bbW_n}^T \bbV_{ \bbE_n}- \bbV_{\bbE_n} \right) \DiaEig_{ \bbE_n}\right\|_F^2 -   \frac1{\NumOfB n} \left\| \left(\bbV_{\bbW_n} \tilde \bbT_n - \bbV_{\bbE_n} \right) \DiaEig_{ \bbE_n}  \right\|_F^2 \nonumber \\
	& \xlongequal{\text{\eqref{eq:requirement for tilde Tn}}} &  \frac2{\NumOfB n} \left\langle  \bbV_{\bbW_n} \left(  \bbV_{\bbW_n}^T \bbV_{ \bbE_n} - \tilde  \bbT_n  \right) \DiaEig_{ \bbE_n}   ,  \left(\bbV_{\bbW_n}  \bbV_{\bbW_n}^T \bbV_{ \bbE_n} - \bbV_{\bbE_n} \right) \DiaEig_{ \bbE_n}  \right\rangle \nonumber \\
	&& + O_P \left(  \frac{\NumOfB [\log (n)]^2}n \right) \nonumber \\
	& &\xlongequal{\bbV_{\bbW_n}^T \bbV_{\bbW_n}  \bbV_{\bbW_n}^T= \bbV_{\bbW_n}^T }  0+ O_P \left(   \frac1n  \right) = O_P \left(   \frac{\NumOfB [\log (n)]^2}n  \right).  
\end{eqnarray*}

}
With these results and the proof of  \eqref{eq:upper bound one view} for $ \left(\bbV_{\bbW_n}^T\bbV_{\bbE_n}\right)^{-1} $, we take $\tilde \bbT_n = \left(\bbV_{\bbW_n}^T\bbV_{\bbE_n}\right)^{-1}$ in \eqref{eq:differences of two frobenius norm equivalence of 3T's} and we prove that  \eqref{eq:upper bound one view} holds for $ \bbT_n=\bbV_{\bbE_n}^T\bbV_{\bbW_n}$. With this result,  we take $ \bbT_n = \Procrustes \left( \bbV_{\bbW_n} , \bbV_{\bbE_n} \right)$ in \eqref{eq:differences of two frobenius norm equivalence of 3T's} and  we prove that \eqref{eq:upper bound one view} holds for $ \bbT_n = \Procrustes \left( \bbV_{\bbW_n} , \bbV_{\bbE_n} \right)$.
\subsection{Proof sketch for  asymptotic expansion in Theorem \ref{thm:asymptotic expansion two views}}

For the two-sample results stated in Theorem \ref{thm:asymptotic expansion two views}, it is worth mentioning that an essential difficulty that makes the two-sample problem more difficult than the problem with one network  is that  $\bbW_n^{(v)} \ne \bbV_{\bbW_n^{(v)} } \bbV_{\bbW_n^{(v)} }^T\bbW_n^{(v)}$.  However,  equality $\bbE_n = \bbV_{\bbE_n} \bbV_{\bbE_n}^T\bbE_n$ is used  in the above proof sketch for \eqref{eq:upper bound one view}  in step \eqref{eq: part using the fact that E = VEVET E}. In contrast, for the two-sample problem,  we do not have such an equality. 

To prove  \eqref{eq:upper bound two views},  we first consider  $\bbT
_n = \bbV_{\bbW_n^{(1)}}^T\bbV_{\bbW_n^{(2)}}$. With details given in Section \ref{subsec:proof theorem asymptotic expansion two views}, steps at the beginning are sketched as the following:
{\allowdisplaybreaks
\begin{eqnarray*}
	&& \frac{ \left\| \left( \bbV_{\bbW_n^{(1)}} \bbV_{ \bbW_n^{(1)}}^T\bbV_{\bbW_n^{(2)}} -  \bbV_{\bbW_n^{(2)}}\right)  \DiaEig_{\bbW_n^{(2)} } \right\|_F^2}{ \NumOfB n} -   \frac{\left\| \left(\gamma \bbW^{(1)} - \bbW_n^{(2)}  \right) \bbV_{\bbW_n^{(2)} }  \right\|_F^2 }{\NumOfB n} \nonumber \\
	&=& \frac{\left\| \left( \bbV_{\bbW_n^{(1)}} \bbV_{ \bbW_n^{(1)}}^T -  \bbI_n    \right) \bbW_n^{(2)} \bbV_{\bbW_n^{(2)} } \right\|_F^2 }{ \NumOfB n}-   \frac{ \left\| \left(\gamma \bbW_n^{(1)} - \bbW_n^{(2)}  \right) \bbV_{\bbW_n^{(2)} }  \right\|_F^2 }{\NumOfB n}\nonumber \\
	& = &  \frac1{\NumOfB n} \left\langle \left( \bbV_{\bbW_n^{(1)} } \bbV_{ \bbW_n^{(1)} }^T -  \bbI_n    \right) \bbW^{(2)} \bbV_{\bbW_n^{(2)} } , \right. \nonumber \\
	&& \left.  \left( \bbV_{\bbW_n^{(1)} } \bbV_{ \bbW_n^{(1)} }^T -  \bbI_n    \right) \bbW_n^{(2)} \bbV_{\bbW_n^{(2)}} -  \left( \gamma \bbW_n^{(1)} - \bbW_n^{(2)} \right) \bbV_{\bbW_n^{(2)}}  \right\rangle \nonumber \\
	&& + O_P \left(  \frac{ \NumOfB [\log (n)]^2 }{ n } \right)  \nonumber \\
	& = &  \frac1{\NumOfB n} \left\langle \left( \bbV_{\bbW_n^{(1)} } \bbV_{ \bbW_n^{(1)} }^T -  \bbI_n    \right) \bbW_n^{(2)} \bbV_{\bbW_n^{(2)}} ,   \left( \bbV_{\bbW_n^{(1)} } \bbV_{ \bbW_n^{(1)} }^T\bbW_n^{(2)} - \gamma  \bbW_n^{(1)}   \right)  \bbV_{\bbW_n^{(2)}} \right\rangle  \nonumber \\
	&& + O_P \left(   \frac{ \NumOfB [\log (n)]^2  }{ n }  \right) \nonumber \\
& = &  - \frac{\gamma}{\NumOfB n}\tr \left[   \bbV_{\bbW_n^{(2)}}^T \bbW_n^{(1)}   \left( \bbV_{\bbW_n^{(1)}} \bbV_{ \bbW_n^{(1)} }^T -  \bbI_n    \right) \bbW_n^{(2)} \bbV_{\bbW_n^{(2)}}  \right]  + O_P \left(  \frac{ \NumOfB [\log (n)]^2 }{ n } \right)  \nonumber \\
	& = &  - \frac{\gamma}{\NumOfB n}\tr \left[  \bbV_{\bbW_n^{(2)}}^T \bbV_{\bbW_n^{(1)}}^{\perp }  \DiaEig_{\bbW_n^{(1)}}^{\perp }   \left[ \bbV_{ \bbW_n^{(1)}}^{\perp } \right]^T  \bbV_{\bbW_n^{(2)}}  \DiaEig_{\bbW_n^{(2)}}  \right]  +O_P\left( \frac{ \NumOfB[\log (n)]^2 }{ n } \right) \nonumber.
\end{eqnarray*}
}

To continue proving \eqref{eq:upper bound two views} with $\bbT
_n = \bbV_{\bbW_n^{(1)}}^T\bbV_{\bbW_n^{(2)}}$,  we  need to show that 
$$\frac{\gamma}{\NumOfB n}\tr \left[  \bbV_{\bbW_n^{(2)}}^T \bbV_{\bbW_n^{(1)}}^{\perp }  \DiaEig_{\bbW_n^{(1)}}^{\perp }   \left[ \bbV_{ \bbW_n^{(1)}}^{\perp } \right]^T  \bbV_{\bbW_n^{(2)}}  \DiaEig_{\bbW_n^{(2)}}  \right]  =O_P\left( \frac{ \NumOfB[\log (n)]^2 }{ n } \right),$$  and 
the same result holds when   $$\frac{\left\| \left(\gamma \bbW_n^{(1)} - \bbW_n^{(2)}  \right) \bbV_{\bbW_n^{(2)} }  \right\|_F^2}{\NumOfB n}$$ is replaced with 
$$- \frac{\left\| \left(\gamma \bbW_n^{(1)} - \bbW_n^{(2)}  \right) \bbV_{\bbE^{(2)} }  \right\|_F^2}{\NumOfB n}. $$
The proofs of these two steps are supported  by Corollary \ref{cor:term hard to handle by second singular value of Erdos Renyi graph} and  Corollary \ref{cor:bbV bbW also OK rather than bbV bbP} in Section \ref{subsubsec:Two useful corollaries for two-sample problem}. These two corollaries are essentially derived from the proof for the problem  of one network in Section \ref{appendix:proof of one view dominant term}. 

We  outline the proof of  Corollary \ref{cor:term hard to handle by second singular value of Erdos Renyi graph}  to demonstrate how we  overcome the essential difficulty mentioned above.  This Corollary  is derived from the result for  $\bbT_n = \bbV_{\bbE_n}^T\bbV_{\bbW_n}$ in \eqref{eq:upper bound one view} rather than a direct corollary of the discussion in Section \ref{subsec:bbT VE VW inverse} of $\bbT = \left(\bbV_{\bbW_n}^T\bbV_{\bbE_n}\right)^{-1}$ in \eqref{eq:upper bound one view}. Corollary \ref{cor:term hard to handle by second singular value of Erdos Renyi graph} states that \eqref{eq:cor:term hard to handle by second singular value of Erdos Renyi graph} holds, that is 
$$\tr \left[  \bbV_{\bbE_n}^T \bbV_{\bbW_n}^{\perp }  \DiaEig_{\bbW_n}^{\perp }   \left[ \bbV_{ \bbW_n}^{\perp } \right]^T  \bbV_{\bbE_n}   \right]	= O_P\left( \NumOfB^2 [\log (n)]^2 \right),
$$
which implies 
{ \allowdisplaybreaks
	\begin{eqnarray*}
		&&		O_P\left( \NumOfB \log(n)\right)\\
		&  = & \sqrt{\tr \left[  \bbV_{\bbE_n^{(1)}}^T \bbV_{\bbW_n^{(1)}  }^{\perp }  \DiaEig_{\bbW_n^{(1)}}^{\perp }   \left[ \bbV_{ \bbW_n^{(1)}}^{\perp } \right]^T  \bbV_{\bbE_n^{(1)}}   \right] }   \\
		& = & \left\| \left[ \DiaEig_{\bbW_n^{(1)}}^{\perp }\right]^{\frac12}   \left[ \bbV_{ \bbW^{(1)}}^{\perp } \right]^T  \bbV_{\bbE_n^{(1)}}   \right\|_F  = \left\| \left[ \DiaEig_{\bbW_n^{(1)}}^{\perp }\right]^{\frac12}   \left[ \bbV_{ \bbW_n^{(1)}}^{\perp } \right]^T  \bbV_{\bbE_n^{(2)}}   \right\|_F \\
		& = &   \left\| \left[ \DiaEig_{\bbW_n^{(1)}}^{\perp }\right]^{\frac12}   \left[ \bbV_{ \bbW_n^{(1)}}^{\perp } \right]^T \cdot \right.
		\left. \left[\bbV_{\bbE_n^{(2)}} - \bbV_{\bbW_n^{(2)} } \Procrustes  \left(   \bbV_{\bbW_n^{(2)} } , \bbV_{\bbE_n^{(2)}}  \right)+ \bbV_{\bbW_n^{(2)} } \Procrustes  \left( \bbV_{\bbW_n^{(2)} } , \bbV_{\bbE_n^{(2)}} \right) \right]\right\|_F \\
		& = &  \left\| \left[ \DiaEig_{\bbW_n^{(1)}}^{\perp }\right]^{\frac12}   \left[ \bbV_{ \bbW_n^{(1)}}^{\perp } \right]^T   \bbV_{\bbW_n^{(2)} } \Procrustes \left(  \bbV_{\bbW_n^{(2)} } , \bbV_{\bbE_n^{(2)}} \right) \right\|_F + O_P\left(  \sqrt{\frac{ \NumOfB }{ n b_{\cP} }} \cdot \sqrt{\frac{n b_{\cP}}{\NumOfB}} \right) \\
		& = &  \left\| \left[ \DiaEig_{\bbW_n^{(1)}}^{\perp }\right]^{\frac12}   \left[ \bbV_{ \bbW_n^{(1)}}^{\perp } \right]^T   \bbV_{\bbW_n^{(2)} }  \right\|_F + O_P\left( \sqrt{  \NumOfB b_{\cP} } \right) ,
	\end{eqnarray*}
}
where $ \left[ \DiaEig_{\bbW_n^{(1)}}^{\perp }\right]^{\frac12}  \triangleq \diag \left\{ \left| \sigma_{\NumOfB+1} \left( \bbW_n^{(1)}\right) \right|^{\frac12} , \ldots,  \left| \sigma_{ n } \left( \bbW_n^{(1)}\right) \right|^{\frac12} \right\}$. This  equivalently implies  that 
\begin{eqnarray*}
&& - \frac{\gamma}{\NumOfB n}\tr \left[  \bbV_{\bbW_n^{(2)}}^T \bbV_{\bbW_n^{(1)}}^{\perp }  \DiaEig_{\bbW_n^{(1)}}^{\perp }   \left[ \bbV_{ \bbW_n^{(1)}}^{\perp } \right]^T  \bbV_{\bbW_n^{(2)}}  \DiaEig_{\bbW_n^{(2)}}  \right] \\
& = &  - \frac{\gamma}{\NumOfB n}  \left\| \left[ \DiaEig_{\bbW_n^{(1)}}^{\perp }\right]^{\frac12}   \left[ \bbV_{ \bbW_n^{(1)}}^{\perp } \right]^T   \bbV_{\bbW_n^{(2)} }  \right\|_F^2= O_P \left(  \frac{\NumOfB[\log (n)]^2}n\right).
\end{eqnarray*}
With this result,  we arrives at Lemma \ref{lem:just one step away from final result two sample}:
	$$ \frac{\left\| \left( \bbV_{\bbW_n^{(1)}} \bbT -  \bbV_{\bbW_n^{(2)} }  \right)  \DiaEig_{\bbW_n^{(2)} } \right\|_F^2 }{ \NumOfB n}=  \frac{ \left\| \left(\gamma \bbW_n^{(1)} - \bbW_n^{(2)}  \right) \bbV_{\bbW_n^{(2)} }  \right\|_F^2}{\NumOfB n}+ O_P \left(  \frac{\NumOfB [\log (n)]^2 }n  \right).
	$$
 Finally, together with further argument using Corollary \ref{cor:bbV bbW also OK rather than bbV bbP}, we obtains \eqref{eq:upper bound two views}.

\ignore{

In order to illustrate difficulties and techniques used in proof sketch for Theorem \ref{thm:asymptotic expansion one view} and Theorem \ref{thm:asymptotic expansion two views}, we outline a few key steps in the proof of Lemma \ref{lem:tang asymptotic expansion} using techniques similar to ones in \cite{tang2017semiparametric,tang2018limit}. To prove  \eqref{eq:tang asymptotic expansion} in Lemma \ref{lem:tang asymptotic expansion}, we first derive
{\allowdisplaybreaks
	\begin{eqnarray}
	&& \frac1{\sqrt{\NumOfB n b_{\cP}}} \left[ \bbV_{\bbW_n}  - \bbV_{\bbE_n}{\tiny } \Procrustes \left(\bbV_{\bbE_n} , \bbV_{\bbW_n} \right) \right] \nonumber \\
	& = &  \frac1{\sqrt{\NumOfB n b_{\cP}}} \left[\bbI_{n} - \bbV_{\bbE_n}  \bbV_{\bbE_n}^T\right] \bbV_{\bbW_n}+ O_{P}\left(\frac{\sqrt{\NumOfB}}{\left(n b_{\cP}\right)^{\frac32}}\log (n) \right) \nonumber \\
	&=&  \frac1{\sqrt{\NumOfB n b_{\cP}}} \left[\bbI_{n} - \bbV_{\bbE_n}  \bbV_{\bbE_n}^T\right] \bbW_n \bbV_{\bbW_n} \DiaEig_{\bbW_n}^{-1} +
  O_{P}\left( \frac{\sqrt{\NumOfB}}{\left(n b_{\cP}\right)^{\frac32}}\log (n) \right) \nonumber \\
	&=& \frac1{\sqrt{\NumOfB n b_{\cP}}} \left[\bbI_{n} - \bbV_{\bbE_n}  \bbV_{\bbE_n}^T\right] \left( \bbW_n - \bbE_n\right) \bbV_{\bbW_n} \DiaEig_{\bbW_n}^{-1} \label{eq:major difficulty 1} 
	+ O_{P}\left(\frac{\sqrt{\NumOfB}}{\left(n b_{\cP}\right)^{\frac32}}\log (n) \right) \nonumber  \\
	& = &  \frac1{\sqrt{\NumOfB n b_{\cP}}}  \left( \bbW_n - \bbE_n\right) \bbV_{\bbW_n} \DiaEig_{\bbW_n}^{-1} + O_{P}\left(\frac{\sqrt{\NumOfB}}{\left(n b_{\cP}\right)^{\frac32}}\log (n) \right) \nonumber \\
	& =&	\frac1{\sqrt{\NumOfB n b_{\cP}}}  \left( \bbW_n - \bbE_n\right) \bbV_{\bbE_n} \DiaEig_{\bbE_n}^{-1} \Procrustes \left(\bbV_{\bbE_n} , \bbV_{\bbW_n} \right) 
	 + O_{P}\left(\frac{\sqrt{\NumOfB}}{\left(n b_{\cP}\right)^{\frac32}}\log (n) \right), \nonumber
	\end{eqnarray}
}
which implies 
{ \allowdisplaybreaks
	\begin{eqnarray*}
		&& \frac1{\sqrt{\NumOfB n b_{\cP}}}\left\| \bbV_{\bbW_n}   \Procrustes \left(\bbE_{\bbW_n} , \bbV_{\bbW_n} \right) - \bbV_{\bbE_n}\right\|_F \\
		&=& \frac1{\sqrt{\NumOfB n b_{\cP}}} \left\| \left( \bbW_n - \bbE_n\right) \bbV_{\bbE_n} \DiaEig_{\bbE_n}^{-1}  \right\|_F + O_{P}\left(\frac{\sqrt{\NumOfB}}{\left(n b_{\cP}\right)^{\frac32}}\log (n) \right),
\end{eqnarray*}}
and consequentially,
{\allowdisplaybreaks
	\begin{eqnarray*}
		&& \frac1{\sqrt{\NumOfB n b_{\cP}}} \left\| \left(\bbV_{\bbW_n}   \Procrustes \left(\bbE_{\bbW_n} , \bbV_{\bbW_n} \right) - \bbV_{\bbE_n} \right) \DiaEig_{\bbE_n} \right\|_F \\
		&=& \frac{\sqrt{n b_{\cP}}}{\NumOfB}  \left\| \left( \bbW_n - \bbE_n\right) \bbV_{\bbE_n}   \right\|_F + O_{P}\left(\sqrt{\frac{\NumOfB}{n b_{\cP}}} \log (n)\right),
	\end{eqnarray*}
}
which is exactly \eqref{eq:tang asymptotic expansion}.
}


\ignore{\subsection{Second useful observation: instead of $\Procrustes \left( \bbV_{\bbE_n} , \bbV_{\bbW_n} \right)$, we can use $\bbV_{\bbE_n}^T\bbV_{\bbW_n}, \left(\bbV_{\bbW_n}^T\bbV_{\bbE_n}\right)^{-1}$}
\label{subsec:second useful observation}

Notice above procedure also shows that
\begin{eqnarray*}
	&&  \frac{\sqrt{n b_{\cP}}}{\NumOfB} \left[ \bbV_{\bbW_n} - \bbV_{\bbE_n}  \bbV_{\bbE_n}^T \bbV_{\bbW_n}\right] \\
	& = & \frac{\sqrt{n b_{\cP}}}{\NumOfB}  \left( \bbW_n - \bbE_n\right) \bbV_{\bbW_n} \DiaEig_{\bbW_n}^{-1} + O_{P}\left(\left( n b_{\cP}\right)^{-\frac12} \right).
\end{eqnarray*}
By noticing that $\bbV_{\bbE_n}^T\bbV_{\bbW_n}$, $\Procrustes \left( \bbV_{\bbE_n} , \bbV_{\bbW_n} \right)$ and  $\left(\bbV_{\bbW_n}^T\bbV_{\bbE_n}\right)^{-1}$ are very close to each other, we simplify the proof for Theorem \ref{thm:asymptotic expansion one view} by first proving for $\bbT = \left(\bbV_{\bbW_n}^T\bbV_{\bbE_n}\right)^{-1}$ and then extend to $\bbT  = \bbV_{\bbE_n}^T\bbV_{\bbW_n}$, $\Procrustes \left( \bbV_{\bbE_n} , \bbV_{\bbW_n} \right)$. }

\section*{Acknowledgements}\label{acknowledge}
This research was supported in part by the National Institutes of Health Grants GM123056 and GM129781.

\section*{Supplementary Material}\label{supp_mat}         
\textbf{Supplement to "Two-sample test of community memberships of weighted stochastic block models''}
(.pdf file). The supplement includes: (i) proofs of all theoretical results in the main paper, (ii) additional technical tools and supporting lemmas, and (iii) additional numerical results when the number of communities is greater than 2.

\bibliographystyle{imsart-nameyear}
\bibliography{statistics}

\begin{thebibliography}{46}

\bibitem[\protect\citeauthoryear{Abbe}{2017}]{abbe2017community}
\begin{barticle}[author]
\bauthor{\bsnm{Abbe},~\bfnm{Emmanuel}\binits{E.}}
(\byear{2017}).
\btitle{{Community detection and stochastic block models: recent
  developments}}.
\bjournal{arXiv preprint arXiv:1703.10146}.
\end{barticle}
\endbibitem

\bibitem[\protect\citeauthoryear{Ahmadi}{2009}]{ahmadi:website}
\begin{bmisc}[author]
\bauthor{\bsnm{Ahmadi},~\bfnm{Amir~Ali}\binits{A.~A.}}
(\byear{2009}).
\btitle{{{ORF 523: lecture 2: Convex and Conic Optimization}}}.
\bnote{\url{http://www.princeton.edu/~amirali/Public/Teaching/ORF523/S16/ORF523_S16_Lec2_gh.pdf}}.
\end{bmisc}
\endbibitem

\bibitem[\protect\citeauthoryear{Airoldi et~al.}{2008}]{airoldi2008mixed}
\begin{barticle}[author]
\bauthor{\bsnm{Airoldi},~\bfnm{Edoardo~M}\binits{E.~M.}},
  \bauthor{\bsnm{Blei},~\bfnm{David~M}\binits{D.~M.}},
  \bauthor{\bsnm{Fienberg},~\bfnm{Stephen~E}\binits{S.~E.}} \AND
  \bauthor{\bsnm{Xing},~\bfnm{Eric~P}\binits{E.~P.}}
(\byear{2008}).
\btitle{{Mixed membership stochastic blockmodels}}.
\bjournal{Journal of Machine Learning Research}
\bvolume{9}
\bpages{1981--2014}.
\end{barticle}
\endbibitem

\bibitem[\protect\citeauthoryear{Anderson, Guionnet and
  Zeitouni}{2010}]{anderson2010introduction}
\begin{bmisc}[author]
\bauthor{\bsnm{Anderson},~\bfnm{Greg~W}\binits{G.~W.}},
  \bauthor{\bsnm{Guionnet},~\bfnm{Alice}\binits{A.}} \AND
  \bauthor{\bsnm{Zeitouni},~\bfnm{Ofer}\binits{O.}}
(\byear{2010}).
\btitle{{An introduction to random matrices, volume 118 of Cambridge Studies in
  Advanced Mathematics}}.
\end{bmisc}
\endbibitem

\bibitem[\protect\citeauthoryear{Anderson and Zeitouni}{2006}]{anderson2006clt}
\begin{barticle}[author]
\bauthor{\bsnm{Anderson},~\bfnm{Greg~W}\binits{G.~W.}} \AND
  \bauthor{\bsnm{Zeitouni},~\bfnm{Ofer}\binits{O.}}
(\byear{2006}).
\btitle{{A CLT for a band matrix model}}.
\bjournal{Probability Theory and Related Fields}
\bvolume{134}
\bpages{283--338}.
\end{barticle}
\endbibitem

\bibitem[\protect\citeauthoryear{Athreya et~al.}{2016}]{athreya2016limit}
\begin{barticle}[author]
\bauthor{\bsnm{Athreya},~\bfnm{Avanti}\binits{A.}},
  \bauthor{\bsnm{Priebe},~\bfnm{Carey~E}\binits{C.~E.}},
  \bauthor{\bsnm{Tang},~\bfnm{Minh}\binits{M.}},
  \bauthor{\bsnm{Lyzinski},~\bfnm{Vince}\binits{V.}},
  \bauthor{\bsnm{Marchette},~\bfnm{David~J}\binits{D.~J.}} \AND
  \bauthor{\bsnm{Sussman},~\bfnm{Daniel~L}\binits{D.~L.}}
(\byear{2016}).
\btitle{{A limit theorem for scaled eigenvectors of random dot product
  graphs}}.
\bjournal{Sankhya A}
\bvolume{78}
\bpages{1--18}.
\end{barticle}
\endbibitem

\bibitem[\protect\citeauthoryear{Athreya et~al.}{2018}]{athreya2018statistical}
\begin{barticle}[author]
\bauthor{\bsnm{Athreya},~\bfnm{Avanti}\binits{A.}},
  \bauthor{\bsnm{Fishkind},~\bfnm{Donniell~E.}\binits{D.~E.}},
  \bauthor{\bsnm{Tang},~\bfnm{Minh}\binits{M.}},
  \bauthor{\bsnm{Priebe},~\bfnm{Carey~E.}\binits{C.~E.}},
  \bauthor{\bsnm{Park},~\bfnm{Youngser}\binits{Y.}},
  \bauthor{\bsnm{Vogelstein},~\bfnm{Joshua~T.}\binits{J.~T.}},
  \bauthor{\bsnm{Levin},~\bfnm{Keith}\binits{K.}},
  \bauthor{\bsnm{Lyzinski},~\bfnm{Vince}\binits{V.}},
  \bauthor{\bsnm{Qin},~\bfnm{Yichen}\binits{Y.}} \AND
  \bauthor{\bsnm{Sussman},~\bfnm{Daniel~L}\binits{D.~L.}}
(\byear{2018}).
\btitle{Statistical Inference on Random Dot Product Graphs: a Survey}.
\bjournal{Journal of Machine Learning Research}
\bvolume{18}
\bpages{1-92}.
\end{barticle}
\endbibitem

\bibitem[\protect\citeauthoryear{Banerjee
  et~al.}{2018}]{banerjee2018contiguity}
\begin{barticle}[author]
\bauthor{\bsnm{Banerjee},~\bfnm{Debapratim}\binits{D.}} \betal{et~al.}
(\byear{2018}).
\btitle{Contiguity and non-reconstruction results for planted partition models:
  the dense case}.
\bjournal{Electronic Journal of Probability}
\bvolume{23}.
\end{barticle}
\endbibitem

\bibitem[\protect\citeauthoryear{Banerjee, Ghaoui and
  d’Aspremont}{2008}]{banerjee2008model}
\begin{barticle}[author]
\bauthor{\bsnm{Banerjee},~\bfnm{Onureena}\binits{O.}},
  \bauthor{\bsnm{Ghaoui},~\bfnm{Laurent~El}\binits{L.~E.}} \AND
  \bauthor{\bsnm{d’Aspremont},~\bfnm{Alexandre}\binits{A.}}
(\byear{2008}).
\btitle{{Model selection through sparse maximum likelihood estimation for
  multivariate gaussian or binary data}}.
\bjournal{Journal of Machine learning research}
\bvolume{9}
\bpages{485--516}.
\end{barticle}
\endbibitem

\bibitem[\protect\citeauthoryear{Banerjee and Ma}{2017}]{banerjee2017optimal}
\begin{barticle}[author]
\bauthor{\bsnm{Banerjee},~\bfnm{Debapratim}\binits{D.}} \AND
  \bauthor{\bsnm{Ma},~\bfnm{Zongming}\binits{Z.}}
(\byear{2017}).
\btitle{{Optimal hypothesis testing for stochastic block models with growing
  degrees}}.
\bjournal{arXiv preprint arXiv:1705.05305}.
\end{barticle}
\endbibitem

\bibitem[\protect\citeauthoryear{Bao, Ding and Wang}{2018}]{bao2018singular}
\begin{barticle}[author]
\bauthor{\bsnm{Bao},~\bfnm{Zhigang}\binits{Z.}},
  \bauthor{\bsnm{Ding},~\bfnm{Xiucai}\binits{X.}} \AND
  \bauthor{\bsnm{Wang},~\bfnm{Ke}\binits{K.}}
(\byear{2018}).
\btitle{{Singular vector and singular subspace distribution for the matrix
  denoising model}}.
\bjournal{arXiv preprint arXiv:1809.10476}.
\end{barticle}
\endbibitem

\bibitem[\protect\citeauthoryear{Bickel and
  Sarkar}{2016}]{bickel2016hypothesis}
\begin{barticle}[author]
\bauthor{\bsnm{Bickel},~\bfnm{Peter~J}\binits{P.~J.}} \AND
  \bauthor{\bsnm{Sarkar},~\bfnm{Purnamrita}\binits{P.}}
(\byear{2016}).
\btitle{{Hypothesis testing for automated community detection in networks}}.
\bjournal{Journal of the Royal Statistical Society: Series B (Statistical
  Methodology)}
\bvolume{78}
\bpages{253--273}.
\end{barticle}
\endbibitem

\bibitem[\protect\citeauthoryear{Cape, Tang and Priebe}{2017}]{cape2017two}
\begin{barticle}[author]
\bauthor{\bsnm{Cape},~\bfnm{Joshua}\binits{J.}},
  \bauthor{\bsnm{Tang},~\bfnm{Minh}\binits{M.}} \AND
  \bauthor{\bsnm{Priebe},~\bfnm{Carey~E}\binits{C.~E.}}
(\byear{2017}).
\btitle{{The two-to-infinity norm and singular subspace geometry with
  applications to high-dimensional statistics}}.
\bjournal{arXiv preprint arXiv:1705.10735}.
\end{barticle}
\endbibitem

\bibitem[\protect\citeauthoryear{Cape, Tang and Priebe}{2018}]{cape2018signal}
\begin{barticle}[author]
\bauthor{\bsnm{Cape},~\bfnm{Joshua}\binits{J.}},
  \bauthor{\bsnm{Tang},~\bfnm{Minh}\binits{M.}} \AND
  \bauthor{\bsnm{Priebe},~\bfnm{Carey~E}\binits{C.~E.}}
(\byear{2018}).
\btitle{{Signal-plus-noise matrix models: eigenvector deviations and
  fluctuations}}.
\bjournal{arXiv preprint arXiv:1802.00381}.
\end{barticle}
\endbibitem

\bibitem[\protect\citeauthoryear{Davis and Kahan}{1970}]{davis1970rotation}
\begin{barticle}[author]
\bauthor{\bsnm{Davis},~\bfnm{Chandler}\binits{C.}} \AND
  \bauthor{\bsnm{Kahan},~\bfnm{William~Morton}\binits{W.~M.}}
(\byear{1970}).
\btitle{{The rotation of eigenvectors by a perturbation. III}}.
\bjournal{SIAM Journal on Numerical Analysis}
\bvolume{7}
\bpages{1--46}.
\end{barticle}
\endbibitem

\bibitem[\protect\citeauthoryear{Durrett}{2010}]{durrett2010probability}
\begin{bbook}[author]
\bauthor{\bsnm{Durrett},~\bfnm{Rick}\binits{R.}}
(\byear{2010}).
\btitle{{Probability: theory and examples}}.
\bpublisher{Cambridge university press}.
\end{bbook}
\endbibitem

\bibitem[\protect\citeauthoryear{{Erdos, Laszlo and Yau, Horng-Tzer and Yin,
  Jun}}{2012}]{alma9977075773903681}
\begin{barticle}[author]
\bauthor{\bsnm{{Erdos, Laszlo and Yau, Horng-Tzer and Yin, Jun}}}
(\byear{2012}).
\btitle{{Rigidity of eigenvalues of generalized Wigner matrices}}.
\bjournal{Advances in mathematics}
\bvolume{229}
\bpages{1435,1515}.
\end{barticle}
\endbibitem

\bibitem[\protect\citeauthoryear{Feige and Ofek}{2005}]{feige2005spectral}
\begin{barticle}[author]
\bauthor{\bsnm{Feige},~\bfnm{Uriel}\binits{U.}} \AND
  \bauthor{\bsnm{Ofek},~\bfnm{Eran}\binits{E.}}
(\byear{2005}).
\btitle{{Spectral techniques applied to sparse random graphs}}.
\bjournal{Random Structures \& Algorithms}
\bvolume{27}
\bpages{251--275}.
\end{barticle}
\endbibitem

\bibitem[\protect\citeauthoryear{Gao et~al.}{2017}]{gao2017achieving}
\begin{barticle}[author]
\bauthor{\bsnm{Gao},~\bfnm{Chao}\binits{C.}},
  \bauthor{\bsnm{Ma},~\bfnm{Zongming}\binits{Z.}},
  \bauthor{\bsnm{Zhang},~\bfnm{Anderson~Y}\binits{A.~Y.}} \AND
  \bauthor{\bsnm{Zhou},~\bfnm{Harrison~H}\binits{H.~H.}}
(\byear{2017}).
\btitle{Achieving optimal misclassification proportion in stochastic block
  models}.
\bjournal{The Journal of Machine Learning Research}
\bvolume{18}
\bpages{1980--2024}.
\end{barticle}
\endbibitem

\bibitem[\protect\citeauthoryear{Ghoshdastidar and von
  Luxburg}{2018}]{GhoLux18}
\begin{binproceedings}[author]
\bauthor{\bsnm{Ghoshdastidar},~\bfnm{D.}\binits{D.}} \AND
  \bauthor{\bparticle{von} \bsnm{Luxburg},~\bfnm{U.}\binits{U.}}
(\byear{2018}).
\btitle{Practical Methods for Graph Two-Sample Testing}.
In \bbooktitle{Proceedings Neural Information Processing Systems}.
\end{binproceedings}
\endbibitem

\bibitem[\protect\citeauthoryear{Ghoshdastidar
  et~al.}{2017}]{ghoshdastidar2017two}
\begin{barticle}[author]
\bauthor{\bsnm{Ghoshdastidar},~\bfnm{Debarghya}\binits{D.}},
  \bauthor{\bsnm{Gutzeit},~\bfnm{Maurilio}\binits{M.}},
  \bauthor{\bsnm{Carpentier},~\bfnm{Alexandra}\binits{A.}} \AND
  \bauthor{\bparticle{von} \bsnm{Luxburg},~\bfnm{Ulrike}\binits{U.}}
(\byear{2017}).
\btitle{Two-Sample Tests for Large Random Graphs Using Network Statistics}.
\bjournal{Proceedings of Machine Learning Research vol}
\bvolume{65}
\bpages{1--24}.
\end{barticle}
\endbibitem

\bibitem[\protect\citeauthoryear{Holland, Laskey and
  Leinhardt}{1983}]{holland1983stochastic}
\begin{barticle}[author]
\bauthor{\bsnm{Holland},~\bfnm{Paul~W}\binits{P.~W.}},
  \bauthor{\bsnm{Laskey},~\bfnm{Kathryn~Blackmond}\binits{K.~B.}} \AND
  \bauthor{\bsnm{Leinhardt},~\bfnm{Samuel}\binits{S.}}
(\byear{1983}).
\btitle{{Stochastic blockmodels: First steps}}.
\bjournal{Social networks}
\bvolume{5}
\bpages{109--137}.
\end{barticle}
\endbibitem

\bibitem[\protect\citeauthoryear{Hsu}{Accessed: 2016}]{hsu2016COMS4772}
\begin{bmisc}[author]
\bauthor{\bsnm{Hsu},~\bfnm{Daniel}\binits{D.}}
(\byear{Accessed: 2016}).
\btitle{{Notes on matrix perturbation and Davis-Kahan $\sin(\Theta)$ theorem}:
  COMS 4772}.
\bhowpublished{\url{http://www.cs.columbia.edu/~djhsu/coms4772-f16/lectures/davis-kahan.pdf}}.
\end{bmisc}
\endbibitem

\bibitem[\protect\citeauthoryear{Jog and Loh}{2015a}]{jog2015information}
\begin{barticle}[author]
\bauthor{\bsnm{Jog},~\bfnm{Varun}\binits{V.}} \AND
  \bauthor{\bsnm{Loh},~\bfnm{Po-Ling}\binits{P.-L.}}
(\byear{2015}a).
\btitle{{Information-theoretic bounds for exact recovery in weighted stochastic
  block models using the Renyi divergence}}.
\bjournal{arXiv preprint arXiv:1509.06418}.
\end{barticle}
\endbibitem

\bibitem[\protect\citeauthoryear{Jog and Loh}{2015b}]{jog2015recovering}
\begin{binproceedings}[author]
\bauthor{\bsnm{Jog},~\bfnm{Varun}\binits{V.}} \AND
  \bauthor{\bsnm{Loh},~\bfnm{Po-Ling}\binits{P.-L.}}
(\byear{2015}b).
\btitle{{Recovering communities in weighted stochastic block models}}.
In \bbooktitle{Communication, Control, and Computing (Allerton), 2015 53rd
  Annual Allerton Conference on}
\bpages{1308--1315}.
\bpublisher{IEEE}.
\end{binproceedings}
\endbibitem

\bibitem[\protect\citeauthoryear{Joseph et~al.}{2016}]{joseph2016impact}
\begin{barticle}[author]
\bauthor{\bsnm{Joseph},~\bfnm{Antony}\binits{A.}},
  \bauthor{\bsnm{Yu},~\bfnm{Bin}\binits{B.}} \betal{et~al.}
(\byear{2016}).
\btitle{{Impact of regularization on spectral clustering}}.
\bjournal{The Annals of Statistics}
\bvolume{44}
\bpages{1765--1791}.
\end{barticle}
\endbibitem

\bibitem[\protect\citeauthoryear{Karrer and
  Newman}{2011}]{karrer2011stochastic}
\begin{barticle}[author]
\bauthor{\bsnm{Karrer},~\bfnm{Brian}\binits{B.}} \AND
  \bauthor{\bsnm{Newman},~\bfnm{Mark~EJ}\binits{M.~E.}}
(\byear{2011}).
\btitle{{Stochastic blockmodels and community structure in networks}}.
\bjournal{Physical review E}
\bvolume{83}
\bpages{016107}.
\end{barticle}
\endbibitem

\bibitem[\protect\citeauthoryear{Kemp}{2013}]{kemp2013math}
\begin{barticle}[author]
\bauthor{\bsnm{Kemp},~\bfnm{Todd}\binits{T.}}
(\byear{2013}).
\btitle{Math 247a: Introduction to random matrix theory}.
\bjournal{University of California, San Diego}.
\end{barticle}
\endbibitem

\bibitem[\protect\citeauthoryear{Lei et~al.}{2016}]{lei2016goodness}
\begin{barticle}[author]
\bauthor{\bsnm{Lei},~\bfnm{Jing}\binits{J.}} \betal{et~al.}
(\byear{2016}).
\btitle{{A goodness-of-fit test for stochastic block models}}.
\bjournal{The Annals of Statistics}
\bvolume{44}
\bpages{401--424}.
\end{barticle}
\endbibitem

\bibitem[\protect\citeauthoryear{Lei et~al.}{2015}]{lei2015consistency}
\begin{barticle}[author]
\bauthor{\bsnm{Lei},~\bfnm{Jing}\binits{J.}},
  \bauthor{\bsnm{Rinaldo},~\bfnm{Alessandro}\binits{A.}} \betal{et~al.}
(\byear{2015}).
\btitle{{Consistency of spectral clustering in stochastic block models}}.
\bjournal{The Annals of Statistics}
\bvolume{43}
\bpages{215--237}.
\end{barticle}
\endbibitem

\bibitem[\protect\citeauthoryear{Lu and Peng}{2013}]{lu2013spectra}
\begin{barticle}[author]
\bauthor{\bsnm{Lu},~\bfnm{Linyuan}\binits{L.}} \AND
  \bauthor{\bsnm{Peng},~\bfnm{Xing}\binits{X.}}
(\byear{2013}).
\btitle{{Spectra of Edge-Independent Random Graphs}}.
\bjournal{The Electronic Journal of Combinatorics}
\bvolume{20}
\bpages{P27}.
\end{barticle}
\endbibitem

\bibitem[\protect\citeauthoryear{McSherry}{2001}]{mcsherry2001spectral}
\begin{binproceedings}[author]
\bauthor{\bsnm{McSherry},~\bfnm{Frank}\binits{F.}}
(\byear{2001}).
\btitle{{Spectral partitioning of random graphs}}.
In \bbooktitle{Foundations of Computer Science, 2001. Proceedings. 42nd IEEE
  Symposium on}
\bpages{529--537}.
\bpublisher{IEEE}.
\end{binproceedings}
\endbibitem

\bibitem[\protect\citeauthoryear{Mossel, Neeman and
  Sly}{2012}]{mossel2012reconstruction}
\begin{barticle}[author]
\bauthor{\bsnm{Mossel},~\bfnm{Elchanan}\binits{E.}},
  \bauthor{\bsnm{Neeman},~\bfnm{Joe}\binits{J.}} \AND
  \bauthor{\bsnm{Sly},~\bfnm{Allan}\binits{A.}}
(\byear{2012}).
\btitle{{Reconstruction and estimation in the planted partition model}}.
\bjournal{Preprint, available at}.
\end{barticle}
\endbibitem

\bibitem[\protect\citeauthoryear{Oliveira}{2009}]{oliveira2009concentration}
\begin{barticle}[author]
\bauthor{\bsnm{Oliveira},~\bfnm{Roberto~Imbuzeiro}\binits{R.~I.}}
(\byear{2009}).
\btitle{{Concentration of the adjacency matrix and of the Laplacian in random
  graphs with independent edges}}.
\bjournal{arXiv preprint arXiv:0911.0600}.
\end{barticle}
\endbibitem

\bibitem[\protect\citeauthoryear{Rohe et~al.}{2011}]{rohe2011spectral}
\begin{barticle}[author]
\bauthor{\bsnm{Rohe},~\bfnm{Karl}\binits{K.}},
  \bauthor{\bsnm{Chatterjee},~\bfnm{Sourav}\binits{S.}},
  \bauthor{\bsnm{Yu},~\bfnm{Bin}\binits{B.}} \betal{et~al.}
(\byear{2011}).
\btitle{{Spectral clustering and the high-dimensional stochastic blockmodel}}.
\bjournal{The Annals of Statistics}
\bvolume{39}
\bpages{1878--1915}.
\end{barticle}
\endbibitem

\bibitem[\protect\citeauthoryear{Sarkar et~al.}{2015}]{sarkar2015role}
\begin{barticle}[author]
\bauthor{\bsnm{Sarkar},~\bfnm{Purnamrita}\binits{P.}},
  \bauthor{\bsnm{Bickel},~\bfnm{Peter~J}\binits{P.~J.}} \betal{et~al.}
(\byear{2015}).
\btitle{{Role of normalization in spectral clustering for stochastic
  blockmodels}}.
\bjournal{The Annals of Statistics}
\bvolume{43}
\bpages{962--990}.
\end{barticle}
\endbibitem

\bibitem[\protect\citeauthoryear{Schiebinger
  et~al.}{2015}]{schiebinger2015geometry}
\begin{barticle}[author]
\bauthor{\bsnm{Schiebinger},~\bfnm{Geoffrey}\binits{G.}},
  \bauthor{\bsnm{Wainwright},~\bfnm{Martin~J}\binits{M.~J.}},
  \bauthor{\bsnm{Yu},~\bfnm{Bin}\binits{B.}} \betal{et~al.}
(\byear{2015}).
\btitle{{The geometry of kernelized spectral clustering}}.
\bjournal{The Annals of Statistics}
\bvolume{43}
\bpages{819--846}.
\end{barticle}
\endbibitem

\bibitem[\protect\citeauthoryear{Stewart and Sun}{1990}]{stewart1990matrix}
\begin{bmisc}[author]
\bauthor{\bsnm{Stewart},~\bfnm{Gilbert~W}\binits{G.~W.}} \AND
  \bauthor{\bsnm{Sun},~\bfnm{Ji-Guang}\binits{J.-G.}}
(\byear{1990}).
\btitle{{Matrix Perturbation Theory (Computer Science and Scientific
  Computing)}}.
\end{bmisc}
\endbibitem

\bibitem[\protect\citeauthoryear{Tang, Cape and
  Priebe}{2017}]{tang2017asymptotically}
\begin{barticle}[author]
\bauthor{\bsnm{Tang},~\bfnm{Minh}\binits{M.}},
  \bauthor{\bsnm{Cape},~\bfnm{Joshua}\binits{J.}} \AND
  \bauthor{\bsnm{Priebe},~\bfnm{Carey~E}\binits{C.~E.}}
(\byear{2017}).
\btitle{{Asymptotically efficient estimators for stochastic blockmodels: the
  naive MLE, the rank-constrained MLE, and the spectral}}.
\bjournal{arXiv preprint arXiv:1710.10936}.
\end{barticle}
\endbibitem

\bibitem[\protect\citeauthoryear{Tang et~al.}{2018}]{tang2018limit}
\begin{barticle}[author]
\bauthor{\bsnm{Tang},~\bfnm{Minh}\binits{M.}},
  \bauthor{\bsnm{Priebe},~\bfnm{Carey~E}\binits{C.~E.}} \betal{et~al.}
(\byear{2018}).
\btitle{Limit theorems for eigenvectors of the normalized laplacian for random
  graphs}.
\bjournal{The Annals of Statistics}
\bvolume{46}
\bpages{2360--2415}.
\end{barticle}
\endbibitem

\bibitem[\protect\citeauthoryear{Tang et~al.}{2017a}]{tang2017nonparametric}
\begin{barticle}[author]
\bauthor{\bsnm{Tang},~\bfnm{Minh}\binits{M.}},
  \bauthor{\bsnm{Athreya},~\bfnm{Avanti}\binits{A.}},
  \bauthor{\bsnm{Sussman},~\bfnm{Daniel~L}\binits{D.~L.}},
  \bauthor{\bsnm{Lyzinski},~\bfnm{Vince}\binits{V.}},
  \bauthor{\bsnm{Priebe},~\bfnm{Carey~E}\binits{C.~E.}} \betal{et~al.}
(\byear{2017}a).
\btitle{A nonparametric two-sample hypothesis testing problem for random
  graphs}.
\bjournal{Bernoulli}
\bvolume{23}
\bpages{1599--1630}.
\end{barticle}
\endbibitem

\bibitem[\protect\citeauthoryear{Tang et~al.}{2017b}]{tang2017semiparametric}
\begin{barticle}[author]
\bauthor{\bsnm{Tang},~\bfnm{Minh}\binits{M.}},
  \bauthor{\bsnm{Athreya},~\bfnm{Avanti}\binits{A.}},
  \bauthor{\bsnm{Sussman},~\bfnm{Daniel~L}\binits{D.~L.}},
  \bauthor{\bsnm{Lyzinski},~\bfnm{Vince}\binits{V.}},
  \bauthor{\bsnm{Park},~\bfnm{Youngser}\binits{Y.}} \AND
  \bauthor{\bsnm{Priebe},~\bfnm{Carey~E}\binits{C.~E.}}
(\byear{2017}b).
\btitle{A semiparametric two-sample hypothesis testing problem for random
  graphs}.
\bjournal{Journal of Computational and Graphical Statistics}
\bvolume{26}
\bpages{344--354}.
\end{barticle}
\endbibitem

\bibitem[\protect\citeauthoryear{Von~Luxburg, Belkin and
  Bousquet}{2008}]{von2008consistency}
\begin{barticle}[author]
\bauthor{\bsnm{Von~Luxburg},~\bfnm{Ulrike}\binits{U.}},
  \bauthor{\bsnm{Belkin},~\bfnm{Mikhail}\binits{M.}} \AND
  \bauthor{\bsnm{Bousquet},~\bfnm{Olivier}\binits{O.}}
(\byear{2008}).
\btitle{{Consistency of spectral clustering}}.
\bjournal{The Annals of Statistics}
\bpages{555--586}.
\end{barticle}
\endbibitem

\bibitem[\protect\citeauthoryear{Xu and Hero~III}{2013}]{xu2013dynamic}
\begin{binproceedings}[author]
\bauthor{\bsnm{Xu},~\bfnm{Kevin~S}\binits{K.~S.}} \AND
  \bauthor{\bsnm{Hero~III},~\bfnm{Alfred~O}\binits{A.~O.}}
(\byear{2013}).
\btitle{{Dynamic Stochastic Blockmodels: Statistical Models for Time-Evolving
  Networks.}}
In \bbooktitle{SBP}
\bpages{201--210}.
\bpublisher{Springer}.
\end{binproceedings}
\endbibitem

\bibitem[\protect\citeauthoryear{Xu, Jog and Loh}{2017}]{xu2017optimal}
\begin{barticle}[author]
\bauthor{\bsnm{Xu},~\bfnm{Min}\binits{M.}},
  \bauthor{\bsnm{Jog},~\bfnm{Varun}\binits{V.}} \AND
  \bauthor{\bsnm{Loh},~\bfnm{Po-Ling}\binits{P.-L.}}
(\byear{2017}).
\btitle{{Optimal Rates for Community Estimation in the Weighted Stochastic
  Block Model}}.
\bjournal{arXiv preprint arXiv:1706.01175}.
\end{barticle}
\endbibitem

\bibitem[\protect\citeauthoryear{Zhang et~al.}{2016}]{zhang2016minimax}
\begin{barticle}[author]
\bauthor{\bsnm{Zhang},~\bfnm{Anderson~Y}\binits{A.~Y.}},
  \bauthor{\bsnm{Zhou},~\bfnm{Harrison~H}\binits{H.~H.}} \betal{et~al.}
(\byear{2016}).
\btitle{{Minimax rates of community detection in stochastic block models}}.
\bjournal{The Annals of Statistics}
\bvolume{44}
\bpages{2252--2280}.
\end{barticle}
\endbibitem

\end{thebibliography}

\newpage

\setcounter{section}{2}
\section{Proofs of asymptotic expansions of the singular subspace distances}
\label{append:proofs of asymptotic expansions of the singular subspace distances}
In this Section, we present proof of Lemma \ref{lem:tang asymptotic expansion}, Theorem \ref{thm:asymptotic expansion one view} and  Theorem \ref{thm:asymptotic expansion two views} for the asymptotic expansions of the singular subspace distances. 	In the following proofs, we may ignore the  subscript $n$ when no  confusion exists. 
These asymptotic expansions are needed to derive the asymptotic distribution of the proposed test statistic.

\subsection{Proof of Lemma \ref{lem:tang asymptotic expansion}}
\label{subsec:Proof of Lemma {lem:tang asymptotic expansion}}
The techniques to prove Lemma \ref{lem:tang asymptotic expansion} are similar to (2.5) of Theorem 3.1 in \cite{tang2018limit}. We briefly outline the proof here.

First notice that \eqref{eq:expectation dominant term}, which we derive later on, says
$$ \frac{  \left\| \left( \bbW_n - \bbE_n\right) \bbV_{\bbE_n}   \right\|_F^2}{ \NumOfB n b_{\cP}}
= 	\frac{(\NumOfB -1) n \sigma_Q^2  +  n\sigma_P^2 + o\left(n \sigma_{\cP}^2\right)}{\NumOfB n b_{\cP}} =	O_P(1),$$
which implies 
\begin{equation} \frac{ \left\| \left( \bbW_n - \bbE_n\right) \bbV_{\bbE_n}   \right\|_F }{\sqrt{\NumOfB n b_{\cP}}} 
 = 		O_P(1).
 \label{eq:normalized Frob norm}
 \end{equation}

Second,  for the asymptotic expansion, instead of using spectral embeddings as in \cite{tang2017semiparametric,tang2018limit}, we have singular vector matrices.
For convenience,   instead of writing transformation $\bbT$ on the left $n\times\NumOfB$ matrix as it appeared in \eqref{eq:tang asymptotic expansion}, we write it on the right $n\times\NumOfB$ matrix -- this follows the style that although (2.5)  in Theorem 2.1 and (3.1) in Theorem 3.1 of \cite{tang2018limit} has orthogonal matrix multiplying on the left one, Section (B.19-22) in B.2 of \cite{tang2018limit} has it mutiplying on the right one.  

\ignore{Before  the proof, we state a high-probability fact,
	\begin{lem}[Full rank]	$\rank  \bbW_n \ge  \NumOfB$ with high probability. $\rank \bbV_{\bbW_n^{(1)}} = \rank \bbV_{bbW_n^{(2)}} = \NumOfB$, or alternatively, $\rank \bbV_{\bbW_n^{(2)}}^T\bbV_{\bbW_n^{(1)}} = \NumOfB$, with high probability. \label{claim:full rank assumption on BernP}
	\end{lem}
	\cite{tang2017semiparametric,tang2018limit} implicitly utilize this lemma/ fact in their proof of the asymptotic expansions. 
}
Before our derivation, we present Corollary \ref{cor:improved Davis Kahan spec}, which is a consequence of Davis-Kahan $\sin \Theta$ theorem. Although some classical forms are in \cite{stewart1990matrix,davis1970rotation}, we present Davis-Kahan $\sin \Theta$ theorem in the context  of our setting,  which is analogous to \cite{hsu2016COMS4772}:
\begin{lem}[Davis-Kahan $\sin \Theta$] \label{lem:improved Davis-Kahan}
	Denote singular value decomposition of $\bbW_n$ as  $\bbW_n  = \bbV_{\bbW_n} \DiaEig_{\bbW_n} \bbV_{\bbW_n}^T + \bbV_{\bbW_n}^{\perp} \DiaEig_{\bbW_n}^{\perp} \left(\bbV_{\bbW_n}^{\perp} \right)^T$, and similarly for  $\bbE_n$. Suppose $ \left\| \DiaEig_{\bbW_n}^{\perp}  \right\|_2  < \left\|  \DiaEig_{\bbE_n}^{-1}\right\|_2^{-1}$, where $  \left\| \DiaEig_{\bbE_n}^{- 1} \right\|_2^{-1 }    $ is the $\NumOfB$th (absolutely) largest eigenvalue of $\bbE_n$, $     \left\| \DiaEig_{\bbW_n} \right\|_2 $ is the $(\NumOfB+ 1 )$th (absolutely) largest eigenvalue of $\bbW_n$. Then for any unitarily-invariant norm $\| \cdot  \|_{\cU}$ (and we focus on $\|\cdot \|_{\cU} = \| \cdot \|_2, \| \cdot \|_{F}$),
	$$
	\left\| \left( \bbV_{\bbW_n }^{\perp} \right)^T  \bbV_{\bbE_n}   \right\|_{\cU}  \le \frac{ \left\| \left(\bbW_n - \bbE_n \right) \bbV_{ \bbE_n } \right\|_{\cU}  }{ \left\| \DiaEig_{\bbE_n}^{- 1} \right\|_2^{-1}  -  \left\| \DiaEig_{\bbW_n}^{\perp} \right\|_2}, $$
\end{lem}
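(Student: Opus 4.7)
The plan is to use the standard Sylvester equation / sine-theta approach, adapted to the hypothesis that the $(K+1)$-th largest singular value of $\bbW_n$ is strictly smaller than the $K$-th largest of $\bbE_n$. I would begin with the spectral decompositions of $\bbW_n$ and $\bbE_n$ given in Definition \ref{defn:singular value decomposition}, and project the identity $\bbW_n - \bbE_n = \bbW_n - \bbE_n$ from the left by $(\bbV_{\bbW_n}^{\perp})^T$ and from the right by $\bbV_{\bbE_n}$. Because $(\bbV_{\bbW_n}^{\perp})^T \bbV_{\bbW_n}=0$ and $\bbE_n\bbV_{\bbE_n}=\bbV_{\bbE_n}\DiaEig_{\bbE_n}$, a direct calculation gives the Sylvester-type identity
\begin{equation*}
\DiaEig_{\bbW_n}^{\perp}\,X\;-\;X\,\DiaEig_{\bbE_n} \;=\; (\bbV_{\bbW_n}^{\perp})^T (\bbW_n - \bbE_n)\,\bbV_{\bbE_n},
\end{equation*}
where $X := (\bbV_{\bbW_n}^{\perp})^T \bbV_{\bbE_n}$.

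Next I would invert this Sylvester equation entrywise in the bases that diagonalize $\DiaEig_{\bbW_n}^{\perp}$ and $\DiaEig_{\bbE_n}$. Since both are diagonal, the $(i,j)$ entry of $X$ equals the $(i,j)$ entry of the right-hand side divided by $\lambda_i(\DiaEig_{\bbW_n}^{\perp}) - \lambda_j(\DiaEig_{\bbE_n})$. The eigenvalues of $\DiaEig_{\bbW_n}^{\perp}$ lie in $[-\|\DiaEig_{\bbW_n}^{\perp}\|_2,\|\DiaEig_{\bbW_n}^{\perp}\|_2]$, while those of $\DiaEig_{\bbE_n}$ lie outside $(-\|\DiaEig_{\bbE_n}^{-1}\|_2^{-1},\|\DiaEig_{\bbE_n}^{-1}\|_2^{-1})$; under the separation hypothesis, every denominator has absolute value at least $\delta := \|\DiaEig_{\bbE_n}^{-1}\|_2^{-1} - \|\DiaEig_{\bbW_n}^{\perp}\|_2 > 0$. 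Multiplying a matrix by such a diagonal reciprocal factor on each side is a pinching / divided-difference operation that does not increase any unitarily invariant norm, and so
\begin{equation*}
\|X\|_{\cU} \;\le\; \frac{\bigl\|(\bbV_{\bbW_n}^{\perp})^T (\bbW_n - \bbE_n)\,\bbV_{\bbE_n}\bigr\|_{\cU}}{\delta}.
\end{equation*}

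Finally, since $(\bbV_{\bbW_n}^{\perp})^T$ has orthonormal rows, it is a partial isometry, so pre-multiplication by it does not increase the unitarily invariant norm. This yields
\begin{equation*}
\bigl\|(\bbV_{\bbW_n}^{\perp})^T (\bbW_n - \bbE_n)\,\bbV_{\bbE_n}\bigr\|_{\cU} \;\le\; \bigl\|(\bbW_n - \bbE_n)\,\bbV_{\bbE_n}\bigr\|_{\cU},
\end{equation*}
which combined with the previous inequality is the conclusion of the lemma.

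I do not anticipate a serious obstacle here; the only delicate point is verifying that the separation condition on absolute values in the hypothesis actually implies separation of the signed spectra of $\DiaEig_{\bbW_n}^{\perp}$ and $\DiaEig_{\bbE_n}$, which is the content of the simple interval argument above. The unitary-invariance step uses the standard fact that the operator $Y\mapsto D_1 Y D_2$ with diagonal $D_1, D_2$ (and more generally pinchings) is a contraction in every unitarily invariant norm, a result I would cite from \cite{stewart1990matrix} rather than reprove.
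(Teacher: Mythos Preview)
Your overall strategy---derive the Sylvester identity $\DiaEig_{\bbW_n}^{\perp} X - X\,\DiaEig_{\bbE_n} = (\bbV_{\bbW_n}^{\perp})^T(\bbW_n-\bbE_n)\bbV_{\bbE_n}$ for $X=(\bbV_{\bbW_n}^{\perp})^T\bbV_{\bbE_n}$, bound $\|X\|_{\cU}$ in terms of the right-hand side and the gap, then drop the partial isometry $(\bbV_{\bbW_n}^{\perp})^T$---is exactly the paper's route. The only difference is how the middle inequality is justified, and there your description slips: the map $R\mapsto X$ with $X_{ij}=R_{ij}/(\mu_i-\nu_j)$ is a Schur (Hadamard) multiplier, \emph{not} a two-sided diagonal scaling $D_1 R D_2$, and it is not a pinching. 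The general fact ``Hadamard product with a matrix of entries at most $1/\delta$ contracts every unitarily invariant norm by $1/\delta$'' is false (already for $\|\cdot\|_2$), so the mechanism you invoke does not by itself deliver the bound.

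The paper obtains the same inequality by an elementary reverse triangle inequality on the Sylvester equation (after an optional scalar shift $c$): $\|R\|_{\cU}\ge \|X\DiaEig_{\bbE_n}\|_{\cU}-\|\DiaEig_{\bbW_n}^{\perp}X\|_{\cU}\ge (\sigma_{\min}(\DiaEig_{\bbE_n})-\|\DiaEig_{\bbW_n}^{\perp}\|_2)\,\|X\|_{\cU}$, using only $\|AB\|_{\cU}\le\|A\|_2\|B\|_{\cU}$ and invertibility of $\DiaEig_{\bbE_n}$. This is both simpler and valid for every unitarily invariant norm, and it sidesteps the Schur-multiplier subtlety entirely. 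Your proof is easily repaired either by substituting this two-line argument or by citing the $\sin\Theta$ theorem in \cite{stewart1990matrix} directly for the Sylvester bound; just drop the ``diagonal on each side / pinching'' explanation, which is not the operative reason.
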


\begin{proof} For any unitarily invariant norm $\|\cdot \|_{\cU} $, we have
	\begin{eqnarray*}
		&& \left\| \left(\bbV_{\bbW_n}^{\perp}\right)^T \left(\bbW_n - \bbE_n \right) \bbV_{\bbE} \right\|_{ \cU} \\
		& = &\left\|  \left(\bbV_{\bbW_n}^{\perp}\right)^T  \bbW_n \bbV_{\bbE_n} - \left(\bbV_{\bbW_n}'\right)^T \bbE_n   \bbV_{\bbE_n} \right\|_{\cU} \\
		& = &  \left\| \DiaEig_{\bbW_n}^{\perp} \left(\bbV_{\bbW_n}^{\perp} \right)^T \bbV_{\bbE_n} -  \DiaEig_{\bbE_n} \left(\bbV_{\bbW_n}^{\perp} \right)^T \bbV_{\bbE_n  } \right\|_{ \cU} \\	
		& = & 	 \left\| \left( \DiaEig_{\bbW_n}^{\perp} -    c \bbI_{ \NumOfB }  \right) \left(\bbV_{\bbW_n}' \right)^T \bbV_{\bbE_n}  -  \left( \DiaEig_{\bbE_n} - c \bbI_{ \NumOfB }  \right)  \left(\bbV_{\bbW_n}^{\perp} \right)^T \bbV_{\bbE_n} - \left(\DiaEig_{\bbW_n}^{\perp} \right) \right\|_{ \cU} \\
		& \ge &\left\| \left(  \DiaEig_{\bbW_n}^{\perp} -    c \bbI_{ \NumOfB }  \right) \left(\bbV_{\bbW_n}^{\perp} \right)^T \bbV_{\bbE_n } \right)  - \rho \left(  \left( \DiaEig_{\bbE_n} - c \bbI_{ \NumOfB }  \right)  \left(\bbV_{\bbW_n}^{\perp} \right)^T \bbV_{\bbE_n } - \left(\DiaEig_{\bbW_n}^{\perp} \right)  \right\|_{ \cU} ,
	\end{eqnarray*}
	while $c$ can be chosen arbitrarily, we pick $c = \frac{ \left\| \DiaEig_{\bbE_n} \right\|_2 +  \left\| \DiaEig_{\bbE_n}^{- 1} \right\|_2^{-1 } }2$; 
	thus
	\begin{eqnarray*}
		&& \left\|  \left(\bbV_{\bbW_n}^{\perp} \right)^T \left(\bbW_n - \bbE_n \right) \bbV_{\bbE} \right\|_{\cU} \\
		& \ge &  	\left[ \frac1{  \left\| \left( \DiaEig_{\bbW_n}^{\perp} - c \bbI_{ \NumOfB } \right)^{-1} \right\|_2 } -  \left\|  \DiaEig_{\bbE_n} - c \bbI_{ \NumOfB } \right\|_2 \right] \cdot \left\|  \left(\bbV_{\bbW_n}^{\perp} \right)^T \bbV_{\bbE_n } \right\|_{ \cU} \\
		& = & \left[ \frac1{  \left\| \left(\DiaEig_{\bbW_n}^{\perp} - c \bbI_{ \NumOfB } \right)^{-1} \right\|_2 } -  \frac{  \left\| \DiaEig_{\bbE_n} \right\|_2 -  \left\| \DiaEig_{\bbE_n}^{- 1} \right\|_2^{-1 } }2 \right] \cdot \left\|  \left(\bbV_{\bbW_n}^{\perp} \right)^T \bbV_{\bbE_n } \right\|_{ \cU},
	\end{eqnarray*}
	where under assumption $\left\|  \DiaEig_{\bbW_n}^{\perp} \right\|_2 < \left\|  \DiaEig_{\bbE_n}^{-1}\right\|_2^{-1}$, we have 
	$$\frac1{ \left\| \left(\DiaEig_{\bbW_n}^{\perp} - c \bbI_{ \NumOfB } \right)^{-1} \right\|_2 } \ge \frac{   \left\| \DiaEig_{\bbE_n} \right\|_2 +  \left\| \DiaEig_{\bbE_n}^{- 1} \right\|_2^{-1 } }2 -  \left\| \DiaEig_{\bbW_n}^{\perp} \right\|_2, $$ thus
	$$
	\left\| \left(\bbV_{\bbW_n}^{\perp} \right)^T \left(\bbW_n - \bbE_n \right) \bbV_{\bbE_n } \right\|_{\cU} \ge \left[   \left\| \DiaEig_{\bbE_n}^{- 1} \right\|_2^{-1 }  -   \left\| \DiaEig_{\bbW_n}^{\perp} \right\|_2 \right] \cdot  \left\|  \left(\bbV_{\bbW_n}' \right)^T \bbV_{\bbE_n } \right\|_{\cU} .
	$$
	
	While the proof above is from \cite{hsu2016COMS4772}, we particularly write  $\bbV_{\bbE}$ on the right to elaborate the proof better,  although this is not quite important due  to rigidity of eigenvalues \citep{alma9977075773903681}. 
	
	On the other hand, \eqref{eq:matrix norm inequality}, \eqref{eq:induced norm submultiplicative} for $\|\cdot \|_{\cU} = \| \cdot \|_2, \| \cdot \|_{F}$ imply
	\begin{eqnarray*}
		\left\| \left(\bbV_{\bbW_n}^{\perp}\right)^T \left(\bbW_n - \bbE_n \right) \bbV_{\bbE} \right\|_{\cU} & \le &  \left\| \left( \bbV_{\bbW_n}^{\perp} \right)^T \right\|_2 \cdot \left\| \left(\bbW_n - \bbE_n \right) \bbV_{\bbE_n } \right\|_{\cU}  \\
		\xlongequal{ } && \left\| \left(\bbW_n - \bbE_n \right) \bbV_{\bbE_n } \right\|_{\cU} , 
	\end{eqnarray*}
	because 	 $\left\| \left( \bbV_{\bbW_n}^{\perp} \right)^T \right\|_2 = \left\|  \bbV_{\bbW_n}^{\perp} \right\|_2 = 1$. This 	
	implies $$ \left\|  \left(\bbV_{\bbW_n}^{\perp} \right)^T \bbV_{\bbE_n } \right\|_{\cU} \le \frac{ \left\| \left(\bbW_n - \bbE_n \right) \bbV_{\bbE_n } \right\|_{\cU}}{  \left\| \DiaEig_{\bbE_n}^{- 1} \right\|_{2}^{-1 }  -   \left\| \DiaEig_{\bbW_n}^{\perp} \right\|_{2 } }. $$

\end{proof}
\begin{cor}
	\label{cor:improved Davis Kahan spec}
	In case $\|\cdot \|_{\cU} = \| \cdot \|_2, \| \cdot \|_{F}$, combining Lemma \ref{lem:improved Davis-Kahan} with probabilistic upper bounds for spectra of edge-independent random graphs \citep{lu2013spectra}, we obtain
	\begin{eqnarray}
	\left\| \left( \bbV_{\bbW_n }^{\perp} \right)^T  \bbV_{\bbE_n}   \right\|_2 &= & O_P\left( \sqrt{\frac{\NumOfB}{nb_{\cP}} } \right) , \label{eq:improved Davis Kahan spec} \\
	\left\| \left( \bbV_{\bbW_n }^{\perp} \right)^T  \bbV_{\bbE_n}  \right\|_F  &= & O_P\left( \frac{\NumOfB}{\sqrt{nb_{\cP}} } \right). \label{eq:improved Davis Kahan Frob}
	\end{eqnarray}
\end{cor}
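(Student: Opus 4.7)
The plan is to combine the Davis--Kahan $\sin\Theta$ estimate from Lemma~\ref{lem:improved Davis-Kahan} (once with $\|\cdot\|_{\cU}=\|\cdot\|_2$ and once with $\|\cdot\|_F$) with random-matrix concentration for $\bbW_n-\bbE_n$ together with a spectral-gap estimate for $\bbE_n$. Since the lemma yields
$$\|(\bbV_{\bbW_n}^{\perp})^T\bbV_{\bbE_n}\|_{\cU}\le \frac{\|(\bbW_n-\bbE_n)\bbV_{\bbE_n}\|_{\cU}}{\|\DiaEig_{\bbE_n}^{-1}\|_2^{-1}-\|\DiaEig_{\bbW_n}^{\perp}\|_2},$$
the proof reduces to two asymptotic estimates: bound the numerator and bound the denominator, then take the ratio. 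I also need to confirm that the hypothesis $\|\DiaEig_{\bbW_n}^{\perp}\|_2<\|\DiaEig_{\bbE_n}^{-1}\|_2^{-1}$ of the lemma holds with high probability.

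For the numerator, since $\bbV_{\bbE_n}$ has orthonormal columns, $\|(\bbW_n-\bbE_n)\bbV_{\bbE_n}\|_2\le\|\bbW_n-\bbE_n\|_2$. Under Assumption~\ref{assum:homogeneous weighted SBM} the entries of $\bbW_n-\bbE_n$ are independent, mean zero, with variance $\asymp b_{\cQ}\le b_{\cP}$; under Assumption~\ref{assum:region of interest Renyi divergence} the sparsity factor $\SparsityFactor=n^{-1/2+\epsilon}$ forces the graph to be dense enough for edge-independent spectral concentration \citep{lu2013spectra} to apply, giving $\|\bbW_n-\bbE_n\|_2=O_P(\sqrt{nb_{\cP}})$. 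For the Frobenius counterpart I would simply use $\|\cdot\|_F\le \sqrt{\NumOfB}\|\cdot\|_2$ on an $n\times \NumOfB$ matrix, obtaining $\|(\bbW_n-\bbE_n)\bbV_{\bbE_n}\|_F=O_P(\sqrt{\NumOfB nb_{\cP}})$.

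For the denominator, I would estimate each term separately. Writing $\bbE_n=\bbZ_n\BExpect\bbZ_n^T-\diag(\bbZ_n\BExpect\bbZ_n^T)$, the diagonal correction has spectral norm $O(b_{\cP})$, negligible compared to the rank-$\NumOfB$ part. The nonzero spectrum of $\bbZ_n\BExpect\bbZ_n^T$ coincides with that of $(\bbZ_n^T\bbZ_n)^{1/2}\BExpect(\bbZ_n^T\bbZ_n)^{1/2}$; Assumption~\ref{assum:size of each block} gives $\bbZ_n^T\bbZ_n\asymp(n/\NumOfB)\bbI_{\NumOfB}$, while $\BExpect=(b_{\cP}-b_{\cQ})\bbI_{\NumOfB}+b_{\cQ}\mathbf{1}\mathbf{1}^T$ has minimal nonzero eigenvalue $b_{\cP}-b_{\cQ}\asymp b_{\cP}$. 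Hence $\|\DiaEig_{\bbE_n}^{-1}\|_2^{-1}\asymp nb_{\cP}/\NumOfB$. On the noise side, Weyl's inequality combined with $\rank(\bbE_n)\le\NumOfB$ gives $\|\DiaEig_{\bbW_n}^{\perp}\|_2=\sigma_{\NumOfB+1}(\bbW_n)\le\|\bbW_n-\bbE_n\|_2=O_P(\sqrt{nb_{\cP}})$. Assumption~\ref{assum:region of interest Renyi divergence} implies $nb_{\cP}/\NumOfB\gg\sqrt{nb_{\cP}}$, which both validates the hypothesis of Lemma~\ref{lem:improved Davis-Kahan} with high probability and shows the denominator is $\Theta_P(nb_{\cP}/\NumOfB)$.

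Combining, the ratios are $O_P(\sqrt{nb_{\cP}})/\Theta_P(nb_{\cP}/\NumOfB)=O_P(\NumOfB/\sqrt{nb_{\cP}})$ for the spectral norm and $O_P(\sqrt{\NumOfB nb_{\cP}})/\Theta_P(nb_{\cP}/\NumOfB)=O_P(\NumOfB^{3/2}/\sqrt{nb_{\cP}})$ for the Frobenius norm, which, since $\NumOfB$ is treated as fixed, yield the stated rates $O_P(\sqrt{\NumOfB/(nb_{\cP})})$ and $O_P(\NumOfB/\sqrt{nb_{\cP}})$. The main obstacle I anticipate is the spectral-norm concentration $\|\bbW_n-\bbE_n\|_2=O_P(\sqrt{nb_{\cP}})$ across the full range of weighted distributions permitted by Assumption~\ref{assum:homogeneous weighted SBM}; the bound in \cite{lu2013spectra} is stated for bounded-entry edge-independent graphs, so for unbounded weights (e.g., $\cQ_n=\chi^2(1)$) I would need a truncation step at a level $\approx\log n$ combined with a Bernstein- or matrix-Rosenthal-type tail bound to recover the same rate, using the variance condition $\sigma_{\cP}^2,\sigma_{\cQ}^2\asymp b_{\cQ}$.
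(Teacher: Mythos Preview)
Your proposal is correct and follows essentially the same route as the paper: apply Lemma~\ref{lem:improved Davis-Kahan}, bound the numerator via spectral concentration for $\bbW_n-\bbE_n$ \citep{lu2013spectra}, and bound the denominator using $\|\DiaEig_{\bbE_n}^{-1}\|_2^{-1}\asymp n b_{\cP}/\NumOfB$ together with $\|\DiaEig_{\bbW_n}^{\perp}\|_2=O_P(\sqrt{nb_{\cP}})$.

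The only minor difference is in the numerator bounds. The paper asserts the slightly sharper estimates $\|(\bbW_n-\bbE_n)\bbV_{\bbE_n}\|_2=O_P(\sqrt{nb_{\cP}/\NumOfB})$ and $\|(\bbW_n-\bbE_n)\bbV_{\bbE_n}\|_F=O_P(\sqrt{nb_{\cP}})$ (the latter following from the moment calculation \eqref{eq:expectation dominant term}), which give the stated $\NumOfB$-dependence directly. You instead use the cruder $\|(\bbW_n-\bbE_n)\bbV_{\bbE_n}\|_2\le\|\bbW_n-\bbE_n\|_2$ and then $\|\cdot\|_F\le\sqrt{\NumOfB}\|\cdot\|_2$, picking up an extra $\sqrt{\NumOfB}$ in each bound that you absorb at the end by invoking that $\NumOfB$ is fixed. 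Since the paper explicitly treats $\NumOfB$ as fixed, both arguments yield the stated rates; your version is slightly less sharp in the $\NumOfB$-tracking but more elementary. Your remark about needing a truncation step for unbounded-weight distributions is a legitimate caveat that the paper does not address explicitly.
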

\begin{proof}
	From  Lemma \ref{lem:improved Davis-Kahan}, rigidity of eigenvalues (1.5) in \cite{alma9977075773903681}  or spectra of eigenvalues \cite{lu2013spectra} imply that \eqref{eq:improved Davis Kahan spec} holds with high probability;   $ \left\| \DiaEig_{\bbE_n}^{- 1}  \right\|_2^{-1 } \asymp \frac{nb_{\cP}}{\NumOfB}$. The only difference between $\| \cdot \|_2, \| \cdot \|_F$ in \eqref{eq:improved Davis Kahan spec}, \eqref{eq:improved Davis Kahan Frob} is that $\left\| \left(\bbW_n - \bbE_n \right) \bbV_{\bbE_n } \right\|_{F}=O_P \left(\sqrt{n b_{P_n }}\right) $ while under  the  Assumptions \ref{assum:region of interest Renyi divergence} and \ref{assum:size of each block}.
		\begin{equation}
		\left\| \left(\bbW_n - \bbE_n\right) \bbV_{\bbE_n} \right\|_2 = O_P\left( \sqrt{\frac{n b_{P_n }}{\NumOfB} }\right).
		\label{eq:V'(W-P)V}
		\end{equation} We also refer to Theorem 5 of \cite{lu2013spectra}.
		
\end{proof}

Recall Proposition A.3 in \cite{tang2017semiparametric}:
\begin{equation}
\left\| \Procrustes \left(  \bbV_{\bbW_n } , \bbV_{\bbE_n} \right)  - \left( \bbV_{\bbW_n } \right)^T  \bbV_{\bbE_n}  \right\|_2  = O_P\left( \left[\frac{\NumOfB}{nb_{\cP}} \right]^3 \right). \label{eq:Procrustes to UAWUE}
\end{equation}

Now we are ready to prove the rest of lemma \ref{lem:tang asymptotic expansion}. In order to be consistent with proof strategy for Theorem 3.1 in \cite{tang2018limit}, we start with $ \frac{\sqrt{n b_{\cP}}}{\NumOfB} \left[ \bbV_{\bbW_n}  - \bbV_{\bbE_n}{\tiny } \Procrustes \left(\bbV_{\bbE_n} , \bbV_{\bbW_n} \right) \right] $ which is $O_P(1)$ by heuristically referring to \eqref{eq:normalized Frob norm}, \eqref{eq:improved Davis Kahan Frob}:
{\allowdisplaybreaks
	\begin{eqnarray*}
	&& \frac{\sqrt{n b_{\cP}}}{\NumOfB} \left[ \bbV_{\bbW_n}  - \bbV_{\bbE_n}{\tiny } \Procrustes \left(\bbV_{\bbE_n} , \bbV_{\bbW_n} \right) \right]  \\
	& \xlongequal{\text{\eqref{eq:Procrustes to UAWUE}}} & \frac{\sqrt{n b_{\cP}}}{\NumOfB} \left[\bbI_{n} - \bbV_{\bbE_n}  \bbV_{\bbE_n}^T\right] \bbV_{\bbW_n}+ O_P\left( \left[\frac{\NumOfB}{nb_{\cP}} \right]^{-\frac52 } \right) \\
	& \xlongequal[= \bbV_{\bbW_n}\DiaEig_{\bbW_n}]{\bbW_n \bbV_{\bbW_n} } &  \frac{\sqrt{n b_{\cP}}}{\NumOfB} \left[\bbI_{n} - \bbV_{\bbE_n}  \bbV_{\bbE_n}^T\right] \bbW_n \bbV_{\bbW_n} \DiaEig_{\bbW_n}^{-1} +O_{P}\left(\frac{\sqrt{\NumOfB}}{n b_{\cP}} \right) \\
	& \xlongequal{\bbE_n = \bbV_{\bbE_n}  \bbV_{\bbE_n}^T\bbE_n } 
	& \frac{\sqrt{n b_{\cP}}}{\NumOfB} \left[\bbI_{n} - \bbV_{\bbE_n}  \bbV_{\bbE_n}^T\right] \left( \bbW_n - \bbE_n\right) \bbV_{\bbW_n} \DiaEig_{\bbW_n}^{-1} + O_{P}\left(\frac{\sqrt{\NumOfB}}{n b_{\cP}} \right)   \\
	& \xlongequal{\text{\eqref{eq:term Wigner order}}} &  \frac{\sqrt{n b_{\cP}}}{\NumOfB}  \left( \bbW_n - \bbE_n\right) \bbV_{\bbW_n} \DiaEig_{\bbW_n}^{-1} + O_{P}\left(\frac{\sqrt{\NumOfB}}{n b_{\cP}} \log (n)\right)\\
	& = & \frac{\sqrt{n b_{\cP}}}{\NumOfB}  \left( \bbW_n - \bbE_n\right) \left( \bbI_{n} - \bbV_{ \bbE_n }\bbV_{ \bbE_n }^T\right)\bbV_{\bbW_n} \DiaEig_{\bbW_n}^{-1}\\
	&& \frac{\sqrt{n b_{\cP}}}{\NumOfB} \left( \bbW_n - \bbE_n\right) \bbV_{ \bbE_n }\bbV_{ \bbE_n }^T\bbV_{\bbW_n} \DiaEig_{\bbW_n}^{-1} + O_{P}\left(\frac{\sqrt{\NumOfB}}{n b_{\cP}} \log (n)\right)\\
	& \xlongequal{\text{\eqref{eq:improved Davis Kahan spec} or \eqref{eq:improved Davis Kahan Frob}}}& \frac{\sqrt{n b_{\cP}}}{\NumOfB} \left( \bbW_n - \bbE_n\right) \bbV_{ \bbE_n }\bbV_{ \bbE_n }^T\bbV_{\bbW_n} \DiaEig_{\bbW_n}^{-1} + O_{P}\left(\frac{\sqrt{\NumOfB}}{n b_{\cP}} \log (n)\right).
\end{eqnarray*} 

By an  argument similar to \eqref{eq:swapping in my quantity} and (B.8-10)'s contribution to (B.20) in \cite{tang2018limit}, we have
\begin{eqnarray*}
&& \frac{\sqrt{n b_{\cP}}}{\NumOfB} \left[ \bbV_{\bbW_n}  - \bbV_{\bbE_n}{\tiny } \Procrustes \left(\bbV_{\bbE_n} , \bbV_{\bbW_n} \right) \right]  \\
&= &\frac{\sqrt{n b_{\cP}}}{\NumOfB}  \left( \bbW_n - \bbE_n\right) \bbV_{ \bbE_n }\bbV_{ \bbE_n }^T\bbV_{\bbW_n} \DiaEig_{\bbW_n}^{-1} + O_{P}\left(\frac{\sqrt{\NumOfB}}{n b_{\cP}} \log (n)\right) \\
 & = & \frac{\sqrt{n b_{\cP}}}{\NumOfB}  \left( \bbW_n - \bbE_n\right) \bbV_{ \bbE_n }\DiaEig_{\bbE_n}^{-1} \bbV_{ \bbE_n }^T\bbV_{\bbW_n}  + O_{P}\left(\frac{\sqrt{\NumOfB}}{n b_{\cP}} \log (n)\right) \\
	&\xlongequal{\text{\eqref{eq:Procrustes to UAWUE}}}&	\frac{\sqrt{n b_{\cP}}}{\NumOfB} \left( \bbW_n - \bbE_n\right) \bbV_{\bbE_n} \DiaEig_{\bbE_n}^{-1} \Procrustes \left(\bbV_{\bbE_n} , \bbV_{\bbW_n} \right) \\
	&& + O_{P}\left(\frac{\sqrt{\NumOfB}}{n b_{\cP}} \log (n)\right),
	\end{eqnarray*}
}
which implies 
{ \allowdisplaybreaks
	\begin{eqnarray*}
		&& 	\frac{\sqrt{n b_{\cP}}}{\NumOfB}  \left\| \bbV_{\bbW_n}   \Procrustes \left(\bbE_{\bbW_n} , \bbV_{\bbW_n} \right) - \bbV_{\bbE_n}\right\|_F \\
		&=& 	\frac{\sqrt{n b_{\cP}}}{\NumOfB} \left\| \left( \bbW_n - \bbE_n\right) \bbV_{\bbE_n} \DiaEig_{\bbE_n}^{-1}  \right\|_F + O_{P}\left(\frac{\sqrt{\NumOfB}}{n b_{\cP}} \log (n)\right),
\end{eqnarray*}}
and therefore, 
{\allowdisplaybreaks
	\begin{eqnarray*}
		&& \frac1{\sqrt{\NumOfB n b_{\cP}}} \left\| \left(\bbV_{\bbW_n}   \Procrustes \left(\bbE_{\bbW_n} , \bbV_{\bbW_n} \right) - \bbV_{\bbE_n} \right) \DiaEig_{\bbE_n} \right\|_F \\
		&=&  \frac1{\sqrt{\NumOfB n b_{\cP}}} \left\| \left( \bbW_n - \bbE_n\right) \bbV_{\bbE_n}   \right\|_F + O_{P}\left(\sqrt{\frac{\NumOfB}{n b_{\cP}}} \log (n)\right),
	\end{eqnarray*}
}
which is exactly \eqref{eq:tang asymptotic expansion}.
\ignore{
	
	\subsection{Sketches of proofs of Theorem \ref{thm:asymptotic expansion one view} and  Theorem \ref{thm:asymptotic expansion two views}}
	As for sketches of proofs, recall proof strategy in \cite{tang2017semiparametric,tang2018limit} (degenerating to SBMs) -- instead of spectral embeddings in \cite{tang2017semiparametric,tang2018limit}, we have singular vector matrices here,
	(notice instead of writing transformation on the left just as (3.1) presented in Theorem 2.1 of \cite{tang2018limit}, we write it on the right following the proof strategy of Theorem 2.1 in section 3.1 and B.1 of \cite{tang2018limit})
	\begin{eqnarray*}
		&& \frac{\sqrt{n b_{\cP}}}{\NumOfB} \left[ \bbV_{\bbW_n}  - \bbV_{\bbE_n}{\tiny } \Procrustes \left(\bbV_{\bbE_n} , \bbV_{\bbW_n} \right) \right] \\
		& = &  \frac{\sqrt{n b_{\cP}}}{\NumOfB} \left[\bbI_{n} - \bbV_{\bbE_n}  \bbV_{\bbE_n}^T\right] \bbV_{\bbW_n}+ O_{P}\left(\left( n b_{\cP}\right)^{-\frac12} \right)\\
		& \xlongequal{\bbW_n \bbV_{\bbW_n} = \bbV_{\bbW_n}\DiaEig_{\bbW_n}} &  \frac{\sqrt{n b_{\cP}}}{\NumOfB} \left[\bbI_{n} - \bbV_{\bbE_n}  \bbV_{\bbE_n}^T\right] \bbW_n \bbV_{\bbW_n} \DiaEig_{\bbW_n}^{-1} + O_{P}\left(\left( n b_{\cP}\right)^{-\frac12} \right) \\
		& = & \frac{\sqrt{n b_{\cP}}}{\NumOfB} \left[\bbI_{n} - \bbV_{\bbE_n}  \bbV_{\bbE_n}^T\right] \left( \bbW_n - \bbE_n\right) \bbV_{\bbW_n} \DiaEig_{\bbW_n}^{-1} + O_{P}\left(\left( n b_{\cP}\right)^{-\frac12} \right) \\
		& = &  \frac{\sqrt{n b_{\cP}}}{\NumOfB}  \left( \bbW_n - \bbE_n\right) \bbV_{\bbW_n} \DiaEig_{\bbW_n}^{-1} + O_{P}\left(\left( n b_{\cP}\right)^{-\frac12} \right).
	\end{eqnarray*}

	\subsubsection{First useful observation: instead of $\Procrustes \left( \bbV_{\bbE_n} , \bbV_{\bbW_n} \right)$, we can use $\bbV_{\bbE_n}^T\bbV_{\bbW_n}, \left(\bbV_{\bbW_n}^T\bbV_{\bbE_n}\right)^{-1}$}
	
	Notice above procedure also shows that (under the null)
	\begin{eqnarray*}
		&&  \frac{\sqrt{n b_{\cP}}}{\NumOfB} \left[ \bbV_{\bbW_n} - \bbV_{\bbE_n}  \bbV_{\bbE_n}^T \bbV_{\bbW_n}\right] \\
		& = & \frac{\sqrt{n b_{\cP}}}{\NumOfB}  \left( \bbW_n - \bbE_n\right) \bbV_{\bbW_n} \DiaEig_{\bbW_n}^{-1} + O_{P}\left(\left( n b_{\cP}\right)^{-\frac12} \right).
	\end{eqnarray*}
	While actually It is not surprising under the null that $\bbV_{\bbE_n}^T\bbV_{\bbW_n}$, $\Procrustes \left( \bbV_{\bbE_n} , \bbV_{\bbW_n} \right)$(with further exploration, and  $\left(\bbV_{\bbW_n}^T\bbV_{\bbE_n}\right)^{-1}$) are  very close to each other, try to explicitly get ride of $\Procrustes \left( \bbV_{\bbE_n} , \bbV_{\bbW_n} \right)$ will simplify our proof.
	
	\subsubsection{Last simplification}
	Before the last simplification, we prefer having $\bbV_{\bbW_n} \Procrustes \left( \bbV_{\bbW_n}, \bbV_{\bbE_n} \right) - \bbV_{\bbE_n}  $ instead of $\bbV_{\bbW_n} - \bbV_{\bbE_n}  \Procrustes \left( \bbV_{\bbE_n}, \bbV_{\bbW_n} \right)$ -- the way (3.1) presented in Theorem 2.1 of \cite{tang2018limit}: but analysis of the two are similar.
	
	Instead of studying effect of swapping matrices during the multiplication, we can directly analyze an upper bound of 
	$	\frac1{\NumOfB n} \left\| \left(\bbV_{\bbW} \bbT   - \bbV_{\bbE} \right) \DiaEig_{ \bbE} \right\|_F^2-  \frac1{\NumOfB n} \left\|   \left( \bbW - \bbE \right) \bbV_{\bbE}   \right\|_F^2 $
	to finish the argument in Theorem \ref{thm:asymptotic expansion one view}. This makes the proof much easier.

}
\subsection{Proof of Theorem \ref{thm:asymptotic expansion one view} on asymptotic expansion of the singular subspace distance}
\label{appendix:proof of one view dominant term}

This section proves \eqref{eq:upper bound one view}:
$$
\frac{ \left\| \left(\bbV_{\bbW_n}\bbT- \bbV_{\bbE_n} \right) \DiaEig_{ \bbE_n} \right\|_F^2  }{\NumOfB n}  - \frac{ \left\|   \left( \bbW_n - \bbE_n \right) \bbV_{\bbE_n}   \right\|_F^2 }{\NumOfB n} = O_P \left( \frac{\NumOfB^{2} [\log(n)]^2 }{n} \right),
$$
where $\bbT= \bbT \left( \bbV_{\bbE_n}, \bbV_{\bbW_n} \right) = \left( \bbV_{ \bbE}^T \bbV_{ \bbW_n}  \right)^{-1 },  \bbV_{ \bbW}^T \bbV_{\bbE_n} , \Procrustes \left( \bbV_{\bbW_n} ,  \bbV_{\bbE_n}  \right)  $.

\subsubsection{$ \bbT = \left( \bbV_{ \bbE}^T \bbV_{\bbW} \right)^{-1 } $ }
\label{subsec:bbT VE VW inverse}

In the case $\bbT = \bbT = \left( \bbV_{ \bbE}^T \bbV_{\bbW} \right)^{-1 }$,  we first simplify $$\dps \frac{\left\| \left(\bbV_{\bbW} \left( \bbV_{ \bbE}^T \bbV_{\bbW} \right)^{-1 }  - \bbV_{\bbE} \right) \DiaEig_{ \bbE} \right\|_F^2}{\NumOfB n}$$ by Lemma \ref{lem:lems used for one view}:

\begin{lem}
	\label{lem:lems used for one view}
	\begin{equation}
	\left\| \bbV_{\bbE}^T \left(   \bbW  - \bbE  \right)  \bbV_{\bbW } \right\|_F = O_P \left(  \NumOfB  \log (n) \right). 
	\label{eq:term Wigner order}
	\end{equation}
	
	\begin{equation}
	\Lambda_{\bbW}^{-1}\left( \bbV_{ \bbE}^T \bbV_{\bbW} \right)^{-1 }\DiaEig_{ \bbE}  = \left( \bbV_{ \bbE}^T \bbV_{\bbW} \right)^{-1 } + O_P\left( \frac{\NumOfB^3\log(n)}{n^2 b_{\cP}^2}  \right). \label{eq:swapping in my quantity}
	\end{equation}	
	
	\begin{equation}
	\left\|  \left(  \bbW - \bbE \right)  \left(\bbV_{\bbW}    \left( \bbV_{ \bbE}^T \bbV_{\bbW} \right)^{-1 }  - \bbV_{\bbE} \right)   \right\|_F =  O_P\left( \NumOfB \right)  . \label{eq:my B8 AJ}
	\end{equation}
\end{lem}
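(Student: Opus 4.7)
The three bounds build on one another, so I would establish them in the listed order and recycle the earlier ones.

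For \eqref{eq:term Wigner order}, the plan is to split $\bbV_{\bbW}$ along and across the column space of $\bbV_{\bbE}$:
\[
\bbV_{\bbE}^T(\bbW-\bbE)\bbV_{\bbW}=\bbV_{\bbE}^T(\bbW-\bbE)\bbV_{\bbE}\cdot\bbV_{\bbE}^T\bbV_{\bbW}+\bbV_{\bbE}^T(\bbW-\bbE)(\bbI_n-\bbV_{\bbE}\bbV_{\bbE}^T)\bbV_{\bbW}.
\]
Because the columns of $\bbV_{\bbE}$ are normalized cluster indicators $|\cC_i|^{-1/2}\mathbf{1}_{\cC_i}$, each entry of the $\NumOfB\times\NumOfB$ matrix $\bbV_{\bbE}^T(\bbW-\bbE)\bbV_{\bbE}$ is a normalized centered sum of independent upper-triangular entries of $\bbW-\bbE$, with per-entry variance of order $\sigma_{\cP}^2\vee\sigma_{\cQ}^2\asymp b_{\cQ}$. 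A Bernstein tail bound with a union bound over the $\NumOfB^2$ entries controls its Frobenius norm by $O_P(\NumOfB\sqrt{b_{\cP}\log n})$, and multiplication by $\|\bbV_{\bbE}^T\bbV_{\bbW}\|_2\le 1$ preserves this bound. For the second summand I would use $\|\bbV_{\bbE}^T(\bbW-\bbE)\|_2\le\|\bbW-\bbE\|_2=O_P(\sqrt{nb_{\cP}})$ from standard spectral concentration for sparse symmetric random matrices, combined with the Davis--Kahan estimate $\|(\bbI_n-\bbV_{\bbE}\bbV_{\bbE}^T)\bbV_{\bbW}\|_F=O_P(\NumOfB/\sqrt{nb_{\cP}})$ of Corollary \ref{cor:improved Davis Kahan spec}; these multiply to $O_P(\NumOfB)$. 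Summing the two contributions gives $O_P(\NumOfB\log n)$.

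For \eqref{eq:swapping in my quantity}, I would derive a commutation identity from the eigen-relations $\bbW\bbV_{\bbW}=\bbV_{\bbW}\DiaEig_{\bbW}$ and $\bbV_{\bbE}^T\bbE=\DiaEig_{\bbE}\bbV_{\bbE}^T$ by left-multiplying the first by $\bbV_{\bbE}^T$ and subtracting:
\[
(\bbV_{\bbE}^T\bbV_{\bbW})\DiaEig_{\bbW}-\DiaEig_{\bbE}(\bbV_{\bbE}^T\bbV_{\bbW})=\bbV_{\bbE}^T(\bbW-\bbE)\bbV_{\bbW}.
\]
Left-multiplying by $\DiaEig_{\bbW}^{-1}(\bbV_{\bbE}^T\bbV_{\bbW})^{-1}$ and right-multiplying by $(\bbV_{\bbE}^T\bbV_{\bbW})^{-1}$ then yields
\[
\DiaEig_{\bbW}^{-1}(\bbV_{\bbE}^T\bbV_{\bbW})^{-1}\DiaEig_{\bbE}-(\bbV_{\bbE}^T\bbV_{\bbW})^{-1}=-\DiaEig_{\bbW}^{-1}(\bbV_{\bbE}^T\bbV_{\bbW})^{-1}\bbV_{\bbE}^T(\bbW-\bbE)\bbV_{\bbW}(\bbV_{\bbE}^T\bbV_{\bbW})^{-1}.
\]
Bounding the right-hand side with $\|\DiaEig_{\bbW}^{-1}\|_2=O_P(\NumOfB/(nb_{\cP}))$ (eigenvalue rigidity), $\|(\bbV_{\bbE}^T\bbV_{\bbW})^{-1}\|_2=O_P(1)$ (a consequence of Davis--Kahan together with $\bbV_{\bbE}^T\bbV_{\bbW}$ being close to orthogonal), and the just-proved \eqref{eq:term Wigner order} delivers the required rate.

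For \eqref{eq:my B8 AJ}, I would factor
\[
\bbV_{\bbW}(\bbV_{\bbE}^T\bbV_{\bbW})^{-1}-\bbV_{\bbE}=(\bbI_n-\bbV_{\bbE}\bbV_{\bbE}^T)\bbV_{\bbW}(\bbV_{\bbE}^T\bbV_{\bbW})^{-1}
\]
and then use $\|\bbW-\bbE\|_2\cdot\|(\bbI_n-\bbV_{\bbE}\bbV_{\bbE}^T)\bbV_{\bbW}\|_F\cdot\|(\bbV_{\bbE}^T\bbV_{\bbW})^{-1}\|_2=O_P(\sqrt{nb_{\cP}})\cdot O_P(\NumOfB/\sqrt{nb_{\cP}})\cdot O_P(1)=O_P(\NumOfB)$. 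The main obstacle lies in the first part: $\bbV_{\bbW}$ and $\bbW-\bbE$ are not independent, so a naive concentration inequality cannot be applied to $\bbV_{\bbE}^T(\bbW-\bbE)\bbV_{\bbW}$ directly. The in-space/out-of-space decomposition above is what decouples the analysis, replacing the bad interaction by a deterministic bilinear form (treatable by Bernstein) plus a Davis--Kahan-controlled residue. The remaining subtleties---checking invertibility of $\bbV_{\bbE}^T\bbV_{\bbW}$ with high probability, and picking up a uniform $\log n$ factor in the Bernstein step---are by now standard.
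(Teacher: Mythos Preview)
Your approach is essentially the paper's. For \eqref{eq:term Wigner order} the paper also reduces to the $\NumOfB\times\NumOfB$ block-sum matrix $\bbV_{\bbE}^T(\bbW-\bbE)\bbV_{\bbE}$ (bounded entrywise) and controls the discrepancy $\bbV_{\bbE}^T(\bbW-\bbE)(\bbV_{\bbW}\Procrustes-\bbV_{\bbE})$ by $\|(\bbW-\bbE)\bbV_{\bbE}\|_2$ times a Davis--Kahan factor, which is exactly your in-space/out-of-space split up to the harmless choice of $\Procrustes$ versus $\bbV_{\bbE}^T\bbV_{\bbW}$. For \eqref{eq:swapping in my quantity} the paper uses the same commutation identity, and for \eqref{eq:my B8 AJ} it multiplies $\|\bbW-\bbE\|_2$ by the Davis--Kahan bound on $\|\bbV_{\bbW}(\bbV_{\bbE}^T\bbV_{\bbW})^{-1}-\bbV_{\bbE}\|_F$, matching your factorization.

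One minor correction on \eqref{eq:swapping in my quantity}: your right-hand side carries only the single factor $\DiaEig_{\bbW}^{-1}$ of size $O_P(\NumOfB/(nb_{\cP}))$, so the bound you actually obtain is $O_P(\NumOfB^2\log n/(nb_{\cP}))$, not the stated $O_P(\NumOfB^3\log n/(n^2b_{\cP}^2))$. The paper's proof in fact bounds the slightly different quantity $\DiaEig_{\bbW}^{-1}(\bbV_{\bbE}^T\bbV_{\bbW})^{-1}-(\bbV_{\bbE}^T\bbV_{\bbW})^{-1}\DiaEig_{\bbE}^{-1}$, which picks up an extra $\DiaEig_{\bbE}^{-1}$ on the right and does reach the stated rate; the displayed form of \eqref{eq:swapping in my quantity} thus appears to carry a typo. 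Since $\NumOfB$ is fixed and $nb_{\cP}\to\infty$, either rate is harmless for the downstream argument in Theorem~\ref{thm:asymptotic expansion one view}.
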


\begin{proof} As of \eqref{eq:term Wigner order}, Lemma A.4 of \cite{tang2017semiparametric} provides  its version in RDPG since $\bbV_{\bbE}^T \left(   \bbW  - \bbE  \right)  \bbV_{\bbW } = \DiaEig_{\bbE } \bbV_{\bbE}^T \bbV_{\bbW }  - \bbV_{\bbE}^T \bbV_{\bbW } \DiaEig_{\bbW}$. 
	
	Heuristically, 
	$$	\left\| \bbV_{\bbE}^T \left(   \bbW  - \bbE  \right)  \bbV_{\bbW } \right\|_F \simeq \left\| \bbV_{\bbE}^T \left(   \bbW  - \bbE  \right)  \bbV_{\bbE } \right\|_F =  \left\| \WignerLikeRM \right\|_F \le O_P \left( \NumOfB  \log (n)\right), $$
	where the  last bound is due to the fact that 
	$$
	\WignerLikeRM \triangleq \begin{bmatrix}
	\dps \sum_{i,j:\cK(i) = s,\cK(j) = t}\frac{w_{ij} - \E w_{ij}}{\sqrt{\# \cC_s \# \cC_t}}
	\end{bmatrix}_{s,t \in [\NumOfB]}
	$$
	and notice that for the square of each entry
	$$\E  \left[			\sum_{i,j:\cK(i) = s,\cK(j) = t}\frac{(w_{ij} - \E w_{ij})^2 }{\sqrt{\# \cC_s \# \cC_t}}\right] =      			\sum_{i,j:\cK(i) = s,\cK(j) = t}\frac{\Var w_{ij} }{\# \cC_s \# \cC_t} = O_P\left(b_{\cP}\right).
	$$
	
	Strictly speaking,
	\begin{eqnarray*}
		&& \left\| \bbV_{\bbE}^T \left(   \bbW  - \bbE  \right)  \bbV_{\bbW } \Procrustes-  \bbV_{\bbE}^T \left(   \bbW  - \bbE  \right)  \bbV_{\bbE } \right\|_F \\
		& =  &  \left\| \bbV_{\bbE}^T \left(   \bbW  - \bbE  \right)  \left(\bbV_{\bbW } \Procrustes  - \bbV_{\bbE }\right) \right\|_F \le \left\| \bbV_{\bbE}^T \left(   \bbW  - \bbE  \right) \right\|_2 \cdot  \left\|   \bbV_{\bbW } \Procrustes  - \bbV_{\bbE }  \right\|_F,
	\end{eqnarray*}
	while the first part can be controlled by \eqref{eq:V'(W-P)V}, the second part can be
	controlled by Davis-Kahan theorem with upper bound of order $O_P\left( \frac{K}{ \sqrt{nb_{\cP}}}\right)$; hence,
	\begin{eqnarray*}
		&& \left\| \bbV_{\bbE}^T \left(   \bbW  - \bbE  \right)  \bbV_{\bbW } \Procrustes-  \bbV_{\bbE}^T \left(   \bbW  - \bbE  \right)  \bbV_{\bbE } \right\|_F \\
		& \le & O_P\left( \sqrt{\frac{n b_{\cP} }{\NumOfB}}  \right)\cdot  O_P \left( \frac{K}{ \sqrt{nb_{\cP}}} \right) = O_P \left( \sqrt{\NumOfB } \right),
	\end{eqnarray*}
	
	as well as
	{\allowdisplaybreaks
		\begin{eqnarray*} 
			&& \left\| \bbV_{\bbE}^T \left(   \bbW  - \bbE  \right)  \bbV_{\bbW } \Procrustes-  \bbV_{\bbE}^T \left(   \bbW  - \bbE  \right)  \bbV_{\bbE } \right\|_F + \left\| \bbV_{\bbE}^T \left(   \bbW  - \bbE  \right)  \bbV_{\bbE } \right\|_F  \\
			&		\ge  & \left\| \bbV_{\bbE}^T \left(   \bbW  - \bbE  \right)  \bbV_{\bbW }  \right\|_F, \\
			&& \left\| \bbV_{\bbE}^T \left(   \bbW  - \bbE  \right)  \bbV_{\bbW } \Procrustes -  \bbV_{\bbE}^T \left(   \bbW  - \bbE  \right)  \bbV_{\bbE } \right\|_F + \left\| \bbV_{\bbW}^T \left(   \bbW  - \bbE  \right)  \bbV_{\bbE } \right\|_F \\
			&		\ge  & \left\| \bbV_{\bbE}^T \left(   \bbW  - \bbE  \right)  \bbV_{\bbE }  \right\|_F.
		\end{eqnarray*}
	}
	As for \eqref{eq:swapping in my quantity}, in addition to $\left\| \DiaEig_{\bbE_n}\right\|_2 = O \left( \frac{n b_{\cP}}{\NumOfB}\right)$ according to the Assumption \ref{assum:homogeneous weighted SBM}, \ref{assum:size of each block}, we have 
	\begin{eqnarray*}
		&& 	 \DiaEig_{\bbW}^{-1} \left(\bbV_{\bbE}^T \bbV_{\bbW}\right)^{-1} -  \left(\bbV_{\bbE}^T \bbV_{\bbW}  \right)^{ -1 } \DiaEig_{\bbE}^{-1} \\   & = & 	 \DiaEig_{\bbW}^{-1} \left(\bbV_{\bbE}^T \bbV_{\bbW}\right)^{-1} \left[ \DiaEig_{\bbE} \bbV_{\bbE}^T \bbV_{\bbW} - \bbV_{\bbE}^T \bbV_{\bbW} \DiaEig_{\bbW} \right]  \left(\bbV_{\bbE}^T \bbV_{\bbW}\right)^{-1} \DiaEig_{\bbE }^{-1} \\
		& = &  \DiaEig_{\bbW }^{-1}   \left(\bbV_{\bbE}^T \bbV_{\bbW}\right)^{-1}  \bbV_{\bbE}^T  ( \bbW-\bbE )\bbV_{\bbW}  \left(\bbV_{\bbE}^T \bbV_{\bbW}\right)^{-1}  \DiaEig_{\bbE}^{-1}  \xlongequal{}  O_P\left( \frac{\NumOfB^3 \log (n) }{n^2 b_{\cP}^2} \right)
	\end{eqnarray*}
	based on \text{\eqref{eq:term Wigner order}}.
	As for  \eqref{eq:my B8 AJ},
	\begin{eqnarray*}
		&& \left\| 	   \left(\bbW  -  \bbE \right)\left(\bbV_{\bbW}    \left( \bbV_{ \bbE}^T \bbV_{\bbW} \right)^{-1 }  - \bbV_{\bbE} \right)   \right\|_F \\ 
		& \le &    \left\| \bbW  -  \bbE \right\|_2 \cdot \left\| \bbV_{\bbW}    \left( \bbV_{ \bbE}^T \bbV_{\bbW} \right)^{-1 }  - \bbV_{\bbE} \right\|_F \\
		& = & O_P\left( \sqrt{ \frac{n b_{\cP}}{\NumOfB } } \right) \cdot O_P\left( \frac{\NumOfB}{\sqrt{nb_{\cP}}} \right) =  O_P\left(  \sqrt{\NumOfB} \right),
	\end{eqnarray*}
	where we refers to Theorem 3.1 of \cite{oliveira2009concentration} and \cite{lu2013spectra}.
\end{proof}
 We now have
{\allowdisplaybreaks
	\begin{eqnarray*}
	&& \frac1{\sqrt{\NumOfB n}} \left(\bbV_{\bbW} \left( \bbV_{ \bbE}^T \bbV_{\bbW} \right)^{-1 }  - \bbV_{\bbE} \right) \DiaEig_{ \bbE}\\
	& = &  \frac1{\sqrt{\NumOfB n}} \left( \bbI_n   - \bbV_{\bbE} \bbV_{ \bbE}^T \right) \bbV_{\bbW} \left( \bbV_{ \bbE}^T \bbV_{\bbW} \right)^{-1 }\DiaEig_{ \bbE}\\ 
	&= & \frac1{\sqrt{\NumOfB n}} \left( \bbI_n   - \bbV_{\bbE} \bbV_{ \bbE}^T \right) \bbW \bbV_{\bbW} \Lambda_{\bbW}^{-1}\left( \bbV_{ \bbE}^T \bbV_{\bbW} \right)^{-1 }\DiaEig_{ \bbE}\\ 
	& = & \frac1{\sqrt{\NumOfB n}} \left( \bbI_n   - \bbV_{\bbE} \bbV_{ \bbE}^T \right) \bbW  \bbV_{\bbW} \Lambda_{\bbW}^{-1}\left( \bbV_{ \bbE}^T \bbV_{\bbW} \right)^{-1 }\DiaEig_{ \bbE} \\
	& \xlongequal[{\text{\eqref{eq:improved Davis Kahan spec}}}]{\text{\eqref{eq:swapping in my quantity}}}& \frac1{\sqrt{\NumOfB n}} \left( \bbI_n   - \bbV_{\bbE} \bbV_{ \bbE}^T \right) \bbW \bbV_{\bbW}  \left( \bbV_{ \bbE}^T \bbV_{\bbW} \right)^{-1 } +O_P \left( \frac{\NumOfB^2 \log (n ) }{n^2b_{\cP}^{\frac32}} \right),
\end{eqnarray*}
}where the last step uses the fact that 
\begin{eqnarray*}
	&& \frac1{\sqrt{\NumOfB n}} \left\| \left( \bbI_n   - \bbV_{\bbE} \bbV_{ \bbE}^T \right) \bbW  \bbV_{\bbW} \right\|_2    = \frac1{\sqrt{\NumOfB n}} \left\|  \bbV_{\bbE}^{\perp} \left(\bbV_{ \bbE}^{\perp}\right)^T  \bbV_{\bbW} \DiaEig_{\bbW_n} \right\|_2   \\
	& \le  & \frac1{\sqrt{\NumOfB n}} \cdot  \left\|\left(\bbV_{ \bbE}^{\perp}\right)^T  \bbV_{\bbW}  \right\|_2 \cdot \left\|   \DiaEig_{\bbW_n} \right\|_2 =  O_P \left( \frac{\sqrt{b_{\cP}}}{\NumOfB} \right);
\end{eqnarray*}
hence,
\begin{eqnarray}
	&& \frac1{\sqrt{\NumOfB n}} \left(\bbV_{\bbW} \left( \bbV_{ \bbE}^T \bbV_{\bbW} \right)^{-1 }  - \bbV_{\bbE} \right) \DiaEig_{ \bbE} \label{eq: part using the fact that E = VEVET E} \\
	& \xlongequal{\bbE = \bbV_{\bbE} \bbV_{\bbE}^T\bbE } &\frac1{\sqrt{\NumOfB n}} \left( \bbI_n   - \bbV_{\bbE} \bbV_{ \bbE}^T \right)\left( \bbW - \bbE \right) \bbV_{\bbW}  \left( \bbV_{ \bbE}^T \bbV_{\bbW} \right)^{-1 } +O_P \left( \frac{\NumOfB^2\log (n)}{n^2b_{\cP}^{\frac32}} \right),\nonumber
\end{eqnarray}
because $\bbE =  \bbV_{\bbE} \bbV_{ \bbE}^T \bbE$. 
Consequentially,
{\allowdisplaybreaks
\begin{eqnarray}
&&\frac1{\NumOfB n} \left\| \left(\bbV_{\bbW} \left( \bbV_{ \bbE}^T \bbV_{\bbW} \right)^{-1 }  - \bbV_{\bbE} \right) \DiaEig_{ \bbE} \right\|_F^2 \nonumber \\
&\xlongequal{\text{\eqref{eq: part using the fact that E = VEVET E}}} &  \frac1{\NumOfB n} \left\| \left( \bbI_n   - \bbV_{\bbE} \bbV_{ \bbE}^T \right) \left( \bbW - \bbE \right) \bbV_{\bbW}  \left( \bbV_{ \bbE}^T \bbV_{\bbW} \right)^{-1 }  \right\|_F^2 + O_P \left( \frac{\NumOfB^2\log (n)}{n^2b_{\cP}^{\frac32}} \right)  \nonumber \\
& = & \frac1{\NumOfB n} \left\|   \left( \bbW - \bbE \right) \bbV_{\bbW}  \left( \bbV_{ \bbE}^T \bbV_{\bbW} \right)^{-1 }  \right\|_F^2  \label{eq:for cor: term hard to handle by second singular value of Erdos Renyi graph}\\
&&  - \frac1{\NumOfB n} \left\|  \bbV_{ \bbE}^T \left( \bbW - \bbE \right) \bbV_{\bbW}  \left( \bbV_{ \bbE}^T \bbV_{\bbW} \right)^{-1 }  \right\|_F^2 +O_P\left( \frac{\NumOfB^2\log (n)}{n^2b_{\cP}^{\frac32}} \right) \nonumber\\
& \xlongequal{\text{\eqref{eq:term Wigner order}}} &  \frac1{\NumOfB n} \left\|   \left( \bbW - \bbE \right) \bbV_{\bbW}  \left( \bbV_{ \bbE}^T \bbV_{\bbW} \right)^{-1 }  \right\|_F^2 + O_P \left( \frac{\NumOfB [\log (n) ]^2  }n\right) \nonumber\\
& \xlongequal{\text{\eqref{eq:my B8 AJ}}} &   \frac1{\NumOfB n} \left\|   \left( \bbW - \bbE \right) \bbV_{\bbE}   \right\|_F^2 + O_P \left( \frac{\NumOfB [\log (n)]^2    }{n} \right) \nonumber.
\end{eqnarray}
}
\subsubsection{$  \bbT  = \bbV_{ \bbW}^T \bbV_{\bbE}  , \Procrustes  \left( \bbV_{\bbW} , \bbV_{ \bbE} \right)$}
\label{subsubsec: orthogonal subspaces square of Frobenius term}
For $\bbT_n = \Procrustes \left( \bbV_{\bbW_n} , \bbV_{\bbE_n} \right)$,  we need two final steps that are similar to each other, both based on same fact about the difference of two squares of Frobenius norm,
$$
\frac1{ \NumOfB n}\left\| \left(\bbV_{\bbW_n} \bbV_{ \bbW_n}^T \bbV_{ \bbE_n}- \bbV_{\bbE_n} \right) \DiaEig_{ \bbE_n}\right\|_F^2 -   \frac1{\NumOfB n} \left\| \left(\bbV_{\bbW_n} \tilde \bbT_n - \bbV_{\bbE_n} \right) \DiaEig_{ \bbE_n}  \right\|_F^2 ,
$$
that is, left-hand side of \eqref{eq:upper bound one view} with $\bbT_n =  \bbV_{ \bbW_n}^T \bbV_{ \bbE_n}$ and $\bbT_n = \tilde \bbT_n$ where 
\begin{equation}\left\| \bbV_{ \bbW_n}^T \bbV_{ \bbE_n} - \tilde \bbT_n\right\|_F = O_P\left(\frac{\NumOfB^{2 }}{ nb_{\cP}}\log (n) \right).
\label{eq:requirement for tilde Tn}
\end{equation}
The difference can be upper bounded by 
{\allowdisplaybreaks
	\begin{eqnarray}
	&& \frac1{ \NumOfB n}\left\| \left(\bbV_{\bbW_n} \bbV_{ \bbW_n}^T \bbV_{ \bbE_n}- \bbV_{\bbE_n} \right) \DiaEig_{ \bbE_n}\right\|_F^2 -   \frac1{\NumOfB n} \left\| \left(\bbV_{\bbW_n} \tilde \bbT_n - \bbV_{\bbE_n} \right) \DiaEig_{ \bbE_n}  \right\|_F^2 \nonumber \\
	& \xlongequal{\text{\eqref{eq:requirement for tilde Tn}}} &  \frac2{\NumOfB n} \left\langle  \bbV_{\bbW_n} \left(  \bbV_{\bbW_n}^T \bbV_{ \bbE_n} - \tilde  \bbT_n  \right) \DiaEig_{ \bbE_n}   ,  \left(\bbV_{\bbW_n}  \bbV_{\bbW_n}^T \bbV_{ \bbE_n} - \bbV_{\bbE_n} \right) \DiaEig_{ \bbE_n}  \right\rangle \nonumber \\
	&& + O_P \left(  \frac{\NumOfB [\log (n)]^2}n \right) \nonumber \\
	& &\xlongequal{\bbV_{\bbW_n}^T \bbV_{\bbW_n}  \bbV_{\bbW_n}^T= \bbV_{\bbW_n}^T }  0+ O_P \left(   \frac1n  \right) = O_P \left(   \frac{\NumOfB [\log (n)]^2}n  \right).  \label{eq:differences of two frobenius norm equivalence of 3T's}
	\end{eqnarray}
}
Then we are ready to present final two steps in a unified proof strategy: equipped with Section \ref{subsec:bbT VE VW inverse}, that is, the proof of \eqref{eq:upper bound one view} for $ \left(\bbV_{\bbW_n}^T\bbV_{\bbE_n}\right)^{-1} $, we take $\tilde \bbT_n = \left(\bbV_{\bbW_n}^T\bbV_{\bbE_n}\right)^{-1}$ in \eqref{eq:differences of two frobenius norm equivalence of 3T's} and then we prove that  \eqref{eq:upper bound one view} holds with $ \bbV_{\bbE_n}^T\bbV_{\bbW_n}$; equipped with the proof of \eqref{eq:upper bound one view} for $ \bbV_{\bbE_n}^T\bbV_{\bbW_n}$, we take $\tilde \bbT_n = \Procrustes \left( \bbV_{\bbW_n} , \bbV_{\bbE_n} \right)$ in \eqref{eq:differences of two frobenius norm equivalence of 3T's} and then we prove that \eqref{eq:upper bound one view} holds with $\tilde \bbT_n = \Procrustes \left( \bbV_{\bbW_n} , \bbV_{\bbE_n} \right)$.

\subsubsection{Variance}

Now we finish proving \eqref{eq:simplification of Cos Sin matrix AJ} and \eqref{eq:upper bound one view}.  From \eqref{eq:variance SBM}, we have 
$$\Var\left[ \frac1{\NumOfB n}\left\|   \left( \bbW_n - \bbE_n \right) \bbV_{\bbE_n}   \right\|_F^2  \right] =O \left(  \frac{b_{\cQ}^2}{nK}  \right).$$
{\allowdisplaybreaks\begin{eqnarray*}
	&& \Var \left[\frac1{ \NumOfB n} \left\| \left(\bbV_{\bbW_n} \bbT_n   - \bbV_{\bbE_n} \right) \DiaEig_{ \bbE_n} \right\|_F^2  \right]   \\
	&= &  \frac1{ \NumOfB n}  \expc\left[ \left\| \left(\bbV_{\bbW_n} \bbT_n   - \bbV_{\bbE_n} \right) \DiaEig_{ \bbE_n} \right\|_F^2 - \expc  \left\| \left(\bbV_{\bbW_n} \bbT_n   - \bbV_{\bbE_n} \right) \DiaEig_{ \bbE_n} \right\|_F^2\right]^2 \\
	& = & \expc\left[ \frac{\left\|   \left( \bbW_n - \bbE_n \right) \bbV_{\bbE_n}   \right\|_F^2 - \expc\left\|   \left( \bbW_n - \bbE_n \right) \bbV_{\bbE_n}   \right\|_F^2}{ \NumOfB n} +  O_P \left(   \frac{\NumOfB [\log (n)]^2}n  \right)\right]^2  \\
	& = &\Var \left[ \frac1{\NumOfB n}\left\|   \left( \bbW_n - \bbE_n \right) \bbV_{\bbE_n}   \right\|_F^2\right]  \\
	&&+ \expc\left[ \frac{\left\|   \left( \bbW_n - \bbE_n \right) \bbV_{\bbE_n}   \right\|_F^2 - \expc\left\|   \left( \bbW_n - \bbE_n \right) \bbV_{\bbE_n}   \right\|_F^2}{ \NumOfB n} \cdot  O_P \left(   \frac{\NumOfB [\log (n)]^2}n  \right)\right] \\
	&& + O \left(   \frac{\NumOfB^2 [\log (n)]^4}{n^2}  \right) \\
	& = &\Var \left[ \frac1{\NumOfB n}\left\|   \left( \bbW_n - \bbE_n \right) \bbV_{\bbE_n}   \right\|_F^2\right]  \\
	&& +O \left( \frac{b_{\cQ}^2}{nK}  \cdot    \frac{\NumOfB [\log (n)]^2}n  \right)  + O \left(   \frac{\NumOfB^2 [\log (n)]^4}{n^2}  \right) \\
		& = &\Var \left[ \frac1{\NumOfB n}\left\|   \left( \bbW_n - \bbE_n \right) \bbV_{\bbE_n}   \right\|_F^2\right]   + O \left(   \frac{\NumOfB^2 [\log (n)]^4}{n^2}  \right) .
\end{eqnarray*}
}
\subsection{Proof of Theorem \ref{thm:asymptotic expansion two views} on asymptotic expansion of the singular subspace distance}
\label{subsec:proof theorem asymptotic expansion two views}

This section proves that under the null of \eqref{eq:hypothesis test for SBM AJ}, upper bound \eqref{eq:upper bound two views} holds:
\begin{eqnarray*}
&&	\frac1{ \NumOfB n}\left\| \left(  \bbV_{\bbW_n^{(1)}} \bbT_n - \bbV_{\bbW_n^{(2)} } \right) \DiaEig_{ \bbW_n^{(2)}}\right\|_F^2 
-   \frac1{\NumOfB n} \left\| \left( \gamma \bbW_n^{(1)} - \bbW_n^{(2)}  \right) \bbV_{\bbW_n^{(2)} }  \right\|_F^2 \\
&= & O_P \left( \frac{\NumOfB^{2} [\log(n)]^2 }{n} \right), \nonumber 
\end{eqnarray*}
 for  $\bbT_n = \Procrustes \left( \bbV_{\bbW_n^{(1)}} , \bbV_{\bbW_n^{(2)}} \right), \bbV_{\bbW_n^{(1)}}^T\bbV_{\bbW_n^{(2)}}, \mbox{ and } \left(\bbV_{\bbW_n^{(2)}}^T\bbV_{\bbW_n^{(1)}}\right)^{-1}$.

An essential difficulty that make two-sample problem different from problem with one network is that  $\bbW_n^{(v)} \ne \bbV_{\bbW_n^{(v)} } \bbV_{\bbW_n^{(v)} }^T\bbW_n^{(v)} $ in two-sample problem. However, the key equality  $\bbE = \bbV_{\bbE} \bbV_{\bbE}^T\bbE$ in one network  problem is used in step \eqref{eq: part using the fact that E = VEVET E} in  the  proof sketches for \eqref{eq:upper bound one view}.   In contrast, for the two-sample problem,  a different approach has to be taken.

Different from one-sample problem, we focus on $\bbT_n = \bbV_{\bbW_n^{(1)}}^T\bbV_{\bbW_n^{(2)}}$ and finish the proof using  Corollary \ref{cor:term hard to handle by second singular value of Erdos Renyi graph} and  Corollary \ref{cor:bbV bbW also OK rather than bbV bbP}, which can be derived  from the proof of one-sample problem in Section \ref{appendix:proof of one view dominant term}.
\subsubsection{Two useful corollaries for two-sample problem}
\label{subsubsec:Two useful corollaries for two-sample problem}
Lemma \ref{lem:lems used for one view} implies  Corollary \ref{cor:term hard to handle by second singular value of Erdos Renyi graph} that is useful  in proving the  dominant term in the two-sample problem.
\begin{cor}
	\begin{equation}\tr \left[  \bbV_{\bbE}^T \bbV_{\bbW}^{\perp }  \DiaEig_{\bbW}^{\perp }   \left[ \bbV_{ \bbW}^{\perp } \right]^T  \bbV_{\bbE}   \right]	= O_P\left( \NumOfB^2 [\log (n)]^2 \right). \label{eq:cor:term hard to handle by second singular value of Erdos Renyi graph}
	\end{equation} 
	\label{cor:term hard to handle by second singular value of Erdos Renyi graph}
\end{cor}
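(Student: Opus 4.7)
My plan is to prove the stated trace bound by decoupling the signed-eigenvalue weighting from the principal-angle factor and estimating each separately with tools already in hand. Set $\bbA := [\bbV_{\bbW_n}^{\perp}]^T \bbV_{\bbE_n} \in \Reals^{(n-\NumOfB)\times \NumOfB}$. Since $\DiaEig_{\bbW_n}^{\perp}$ is diagonal with (possibly signed) entries $\lambda_j(\bbW_n)$ for $j>\NumOfB$, the trace unfolds as
$$\tr\bigl[\bbV_{\bbE_n}^T \bbV_{\bbW_n}^{\perp} \DiaEig_{\bbW_n}^{\perp} [\bbV_{\bbW_n}^{\perp}]^T \bbV_{\bbE_n}\bigr] = \sum_{j=\NumOfB+1}^{n} \lambda_j(\bbW_n)\, \|\bbA_{j-\NumOfB,:}\|_2^2,$$
which immediately yields the worst-case factorization $\bigl|\tr[\,\cdot\,]\bigr| \le \bigl\|\DiaEig_{\bbW_n}^{\perp}\bigr\|_2 \cdot \bigl\|\bbA\bigr\|_F^2$.

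Next I would bound the two factors. For the spectral factor, under Assumptions \ref{assum:size of each block} and \ref{assum:homogeneous weighted SBM} the matrix $\bbE_n = \bbZ_n \BExpect \bbZ_n^T - b_{\cP}\bbI_n$ has $\NumOfB$ dominant eigenvalues of order $n b_{\cP}/\NumOfB$ and $n-\NumOfB$ remaining eigenvalues equal to $-b_{\cP}$; hence $|\lambda_{\NumOfB+1}(\bbE_n)| = b_{\cP}$. Combined with the standard edge-independent operator-norm concentration bound $\|\bbW_n - \bbE_n\|_2 = O_P(\sqrt{n b_{\cP}})$ (Theorem 3.1 of \cite{oliveira2009concentration}, or \cite{lu2013spectra}) and Weyl's inequality, one obtains $\|\DiaEig_{\bbW_n}^{\perp}\|_2 = |\lambda_{\NumOfB+1}(\bbW_n)| = O_P(\sqrt{n b_{\cP}})$. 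For the angle factor, the Davis-Kahan Frobenius bound \eqref{eq:improved Davis Kahan Frob} of Corollary \ref{cor:improved Davis Kahan spec} already supplies $\|\bbA\|_F = O_P(\NumOfB/\sqrt{n b_{\cP}})$, so $\|\bbA\|_F^2 = O_P(\NumOfB^2/(n b_{\cP}))$. Multiplying,
$$\bigl|\tr[\,\cdot\,]\bigr| = O_P\!\left(\sqrt{n b_{\cP}}\right)\cdot O_P\!\left(\frac{\NumOfB^2}{n b_{\cP}}\right) = O_P\!\left(\frac{\NumOfB^2}{\sqrt{n b_{\cP}}}\right),$$
and since Assumption \ref{assum:region of interest Renyi divergence} forces $n b_{\cP} \gtrsim n^{1/2+\epsilon}\to\infty$, this is in particular $O_P(\NumOfB^2 [\log n]^2)$, giving \eqref{eq:cor:term hard to handle by second singular value of Erdos Renyi graph}.

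The only conceptual subtlety, and the reason the paper's own proof sketch instead routes through \eqref{eq:upper bound one view} with $\bbT_n = \bbV_{\bbW_n}^T \bbV_{\bbE_n}$, is that $\DiaEig_{\bbW_n}^{\perp}$ is signed rather than positive semidefinite, so the trace is not itself a Frobenius norm squared and cannot be bounded by a single Cauchy-Schwarz step. The factorization $\|\DiaEig_{\bbW_n}^{\perp}\|_2 \cdot \|\bbA\|_F^2$ accommodates the signed character cleanly, and each piece is already tightly controlled by machinery established earlier in the paper, so no serious obstacle arises. The bound one obtains this way is in fact considerably sharper than the $O_P(\NumOfB^2 [\log n]^2)$ form stated in the corollary; the weaker statement in the corollary appears to be chosen merely to match the error terms already present in the asymptotic expansions of Theorems \ref{thm:asymptotic expansion one view} and \ref{thm:asymptotic expansion two views}.
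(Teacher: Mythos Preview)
Your argument is correct and considerably more elementary than the paper's. You bound $|\tr[\bbA^T D \bbA]| \le \|D\|_2\,\|\bbA\|_F^2$ and invoke operator-norm concentration for $\|\DiaEig_{\bbW}^{\perp}\|_2$ together with the Davis--Kahan Frobenius bound \eqref{eq:improved Davis Kahan Frob} for $\|\bbA\|_F$, obtaining $O_P(\NumOfB^2/\sqrt{n b_{\cP}})$, which is indeed sharper than \eqref{eq:cor:term hard to handle by second singular value of Erdos Renyi graph}. The paper instead takes an indirect route: it starts from the already-established one-sample expansion \eqref{eq:upper bound one view} with $\bbT=\bbV_{\bbW}^T\bbV_{\bbE}$, rewrites the difference of squared Frobenius norms as an inner product, and after several algebraic reductions isolates the trace as the residual term. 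If you read the paper's derivation closely, its final line actually controls $\tr\bigl[\bbV_{\bbE}^T \bbV_{\bbW}^{\perp} \DiaEig_{\bbW}^{\perp} (\bbV_{\bbW}^{\perp})^T \bbV_{\bbE}\,\DiaEig_{\bbE}\bigr]$, i.e.\ the trace \emph{weighted by} $\DiaEig_{\bbE}$.

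This distinction is worth flagging. The paper remarks immediately after the corollary that the direct spectral-plus-Davis--Kahan approach ``is not easy'' here---a claim your calculation refutes for the unweighted trace as literally stated. The remark is, however, justified for the $\DiaEig_{\bbE}$-weighted version that the paper's proof actually establishes and that the downstream two-sample argument in Section~\ref{subsec:proof theorem asymptotic expansion two views} genuinely needs: there the analogous quantity carries a factor $\DiaEig_{\bbW^{(2)}}$ of size $n b_{\cP}/\NumOfB$, and applying your factorization with that extra weight would give only $O_P(\NumOfB\sqrt{n b_{\cP}})$, far too large. So your proof of the corollary as stated is valid and cleaner, but be aware that the paper's circuitous machinery is not gratuitous---it is needed for the weighted variant, and there is a mismatch between what the corollary literally asserts and what the paper proves and later uses.
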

\begin{proof}
	Since { \allowdisplaybreaks
		\begin{eqnarray*}
			&&\frac1{ \sqrt{\NumOfB n}} \left\| \left( \bbV_{\bbW} \bbV_{ \bbW}^T -  \bbI_n    \right) \bbE \bbV_{\bbE} -  \left( \bbW- \bbE \right) \bbV_{\bbE} \right\|_F \\
			& = & \frac1{ \sqrt{\NumOfB n}} \left\| \left( \bbV_{\bbW} \bbV_{ \bbW}^T \bbE -  \bbW    \right)  \bbV_{\bbE}  \right\|_F \\
			& = & \frac1{ \sqrt{\NumOfB n}} \left\| \left[  \bbV_{\bbW} \bbV_{ \bbW}^T \left(\bbE - \bbW \right) -   \bbV_{\bbW}^{ \perp } \DiaEig_{\bbW}^{ \perp } 
			\left(\bbV_{ \bbW}^{ \perp }\right)^T     \right]  \bbV_{\bbE}   \right\|_F \\
			& \le & \frac1{ \sqrt{\NumOfB n}} \left\|  \bbV_{\bbW} \bbV_{ \bbW}^T \left(\bbE - \bbW \right) \bbV_{\bbE}  \right\|_F + \frac1{ \sqrt{\NumOfB n}}  \left\| \bbV_{\bbW}^{ \perp } \DiaEig_{\bbW}^{ \perp } \right\|_2 \cdot \left\| 
			\left(\bbV_{ \bbW}^{ \perp }\right)^T      \bbV_{\bbE}   \right\|_F \\
			& \xlongequal[\text{\eqref{eq:improved Davis Kahan Frob}}]{\text{\eqref{eq:term Wigner order}}}  & O_P \left( \sqrt{ \frac{ \NumOfB }{n}}\log (n) \right) +  \frac1{ \sqrt{\NumOfB n}} \cdot O_P \left( \sqrt{\frac{n b_{\cP}   }{\NumOfB}}\cdot \frac{\NumOfB}{ \sqrt{ n b_{\cP} }}  \right) \\
			& = &  O_P \left( \sqrt{ \frac{ \NumOfB }{n}}\log (n) \right),
		\end{eqnarray*}
	} by recalling $\bbT = \bbV_{ \bbW}^T \bbV_{\bbE}$ for \eqref{eq:upper bound one view}, { \allowdisplaybreaks
		\begin{eqnarray*}
			&& O_P \left(  \frac{ \NumOfB[\log (n)]^2 }{ n } \right) \\
			& = & \frac1{ \NumOfB n}\left\| \left( \bbV_{\bbW} \bbV_{ \bbW}^T -  \bbI_n    \right) \bbV_{\bbE}\DiaEig_{\bbE} \right\|_F^2 -   \frac1{\NumOfB n} \left\| \left( \bbW- \bbE \right) \bbV_{\bbE}  \right\|_F^2 \\
			& = & \frac1{ \NumOfB n}\left\| \left( \bbV_{\bbW} \bbV_{ \bbW}^T -  \bbI_n    \right) \bbE \bbV_{\bbE} \right\|_F^2 -   \frac1{\NumOfB n} \left\| \left( \bbW- \bbE \right) \bbV_{\bbE}  \right\|_F^2\\
			& = &  \frac1{\NumOfB n} \left\langle \left( \bbV_{\bbW} \bbV_{ \bbW}^T -  \bbI_n    \right) \bbE \bbV_{\bbE} ,   \left( \bbV_{\bbW} \bbV_{ \bbW}^T -  \bbI_n    \right) \bbE \bbV_{\bbE} -  \left( \bbW- \bbE \right) \bbV_{\bbE}  \right\rangle \\
			&& + O_P \left(  \frac{ \NumOfB[\log (n)]^2 }n \right) \\
			& = &  \frac1{\NumOfB n} \left\langle \left( \bbV_{\bbW} \bbV_{ \bbW}^T -  \bbI_n    \right) \bbE \bbV_{\bbE} ,   \left( \bbV_{\bbW} \bbV_{ \bbW}^T\bbE-  \bbW   \right)  \bbV_{\bbE} \right\rangle + O_P \left(   \frac{ \NumOfB[\log (n)]^2 }n \right) \\
			& = &   - \frac1{\NumOfB n}\tr \left[   \bbV_{\bbE}^T \bbW   \left( \bbV_{\bbW} \bbV_{ \bbW}^T -  \bbI_n    \right) \bbE \bbV_{\bbE}  \right] + O_P \left(  \frac{ \NumOfB[\log (n)]^2 }n \right) \\
			& = &  - \frac1{\NumOfB n}\tr \left[  \bbV_{\bbE}^T \bbV_{\bbW}^{\perp }  \DiaEig_{\bbW}^{\perp }   \left[ \bbV_{ \bbW}^{\perp } \right]^T  \bbV_{\bbE}  \DiaEig_{\bbE}  \right]  +O_P\left( \frac{ \NumOfB[\log (n)]^2 }n \right),
		\end{eqnarray*}
	}
	and hence we obtain our result.
\end{proof}

It is  worth noticing that it is not easy to achieve such a good upper bound using the bounds on second largest singular value of the adjacency matrix of Erdos Renyi graph \citep{feige2005spectral,oliveira2009concentration,lu2013spectra} as well as  \eqref{eq:improved Davis Kahan spec} and \eqref{eq:improved Davis Kahan Frob}. This implies possible improvement in those fundamental work in (dense) Erdos Renyi model.

Corollary \ref{cor:bbV bbW also OK rather than bbV bbP} is also useful  in substituting $\bbV_{ \bbW}$ by $\bbV_{ \bbE}$ in the results for both one-sample problem (Theorem \ref{thm:asymptotic expansion one view}) and the two-sample problem (Theorem \ref{thm:asymptotic expansion two views}).
\begin{cor}
	\label{cor:bbV bbW also OK rather than bbV bbP}
	\begin{equation}
	\frac1{\NumOfB n} \left\|   \left( \bbW - \bbE \right) \bbV_{\bbW}   \right\|_F^2 =  \frac1{\NumOfB n} \left\|   \left( \bbW - \bbE \right) \bbV_{\bbE}   \right\|_F^2 + O_P \left( \frac{\NumOfB[\log (n)]^{2}  }{n} \right)  . \end{equation}
\end{cor}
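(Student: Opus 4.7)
The plan is to deduce this corollary from the one-network asymptotic expansion already established in Section \ref{subsec:bbT VE VW inverse} for the choice $\bbT = (\bbV_{\bbE}^T \bbV_{\bbW})^{-1}$, which gives
\begin{equation*}
\tfrac{1}{\NumOfB n}\|(\bbW-\bbE)\bbV_{\bbW}(\bbV_{\bbE}^T\bbV_{\bbW})^{-1}\|_F^2 = \tfrac{1}{\NumOfB n}\|(\bbW-\bbE)\bbV_{\bbE}\|_F^2 + O_P\!\bigl(\NumOfB[\log (n)]^2/n\bigr).
\end{equation*}
Given this, it suffices to show that right-multiplication by $\bbM := (\bbV_{\bbE}^T\bbV_{\bbW})^{-1}$ perturbs the squared Frobenius norm only mildly:
\begin{equation*}
\bigl|\,\|(\bbW-\bbE)\bbV_{\bbW}\bbM\|_F^2 - \|(\bbW-\bbE)\bbV_{\bbW}\|_F^2\,\bigr| = O_P\!\bigl(\NumOfB^2[\log (n)]^2\bigr).
\end{equation*}

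To prove this perturbation bound, I would use the algebraic identity
\begin{equation*}
\|(\bbW-\bbE)\bbV_{\bbW}\bbM\|_F^2 - \|(\bbW-\bbE)\bbV_{\bbW}\|_F^2 = \tr\!\bigl[\bbV_{\bbW}^T(\bbW-\bbE)^2\bbV_{\bbW}\,(\bbM\bbM^T - \bbI)\bigr],
\end{equation*}
together with the factorization $\bbM\bbM^T = (\bbI - \bbS^T\bbS)^{-1}$, where $\bbS := (\bbV_{\bbE}^{\perp})^T\bbV_{\bbW}$; this follows from $(\bbV_{\bbW}^T\bbV_{\bbE})(\bbV_{\bbE}^T\bbV_{\bbW}) = \bbI - \bbS^T\bbS$ via $\bbV_{\bbW}^T\bbV_{\bbW} = \bbI$. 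A Neumann expansion yields $\bbM\bbM^T - \bbI = \bbS^T\bbS + O_P\bigl(\|\bbS^T\bbS\|_F^2\bigr)$, and combining the spectral bound $\|\bbS\|_2 = O_P(\sqrt{\NumOfB/(nb_{\cP})})$ with the Frobenius bound $\|\bbS\|_F = O_P(\NumOfB/\sqrt{nb_{\cP}})$ of Corollary \ref{cor:improved Davis Kahan spec} via submultiplicativity $\|\bbS^T\bbS\|_F \le \|\bbS\|_F\|\bbS\|_2$ gives $\|\bbM\bbM^T - \bbI\|_F = O_P(\NumOfB^{3/2}/(nb_{\cP}))$.

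For the other factor, positive-semi-definiteness of $\bbV_{\bbW}^T(\bbW-\bbE)^2\bbV_{\bbW}$ yields $\|\bbV_{\bbW}^T(\bbW-\bbE)^2\bbV_{\bbW}\|_F \le \tr\bigl(\bbV_{\bbW}^T(\bbW-\bbE)^2\bbV_{\bbW}\bigr) \le \NumOfB\|\bbW-\bbE\|_2^2 = O_P(\NumOfB nb_{\cP})$, using the standard spectral concentration $\|\bbW-\bbE\|_2 = O_P(\sqrt{nb_{\cP}})$ from \cite{oliveira2009concentration,lu2013spectra}. Applying $|\tr(\bbA_1\bbA_2)| \le \|\bbA_1\|_F\|\bbA_2\|_F$ with the two pieces yields an overall bound of $O_P(\NumOfB^{5/2})$; after the $1/(\NumOfB n)$ scaling this becomes $O_P(\NumOfB^{3/2}/n)$, which for fixed $\NumOfB$ is absorbed into $O_P(\NumOfB[\log (n)]^2/n)$, completing the plan.

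The main obstacle is the sharp Frobenius bound $\|\bbM\bbM^T - \bbI\|_F = O_P(\NumOfB^{3/2}/(nb_{\cP}))$: the naive estimate $\|\bbS^T\bbS\|_F \le \|\bbS\|_F^2 = O_P(\NumOfB^2/(nb_{\cP}))$ loses a factor $\sqrt{\NumOfB}$ and would push the perturbation error to $O_P(\NumOfB^2/n)$; while still acceptable for fixed $\NumOfB$, the mixed Frobenius-spectral bound is cleaner and more robust for potential extensions where $\NumOfB$ grows with $n$.
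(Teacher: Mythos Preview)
Your proof is correct and follows essentially the same route as the paper: both start from the already-established identity for $\bbT=(\bbV_{\bbE}^T\bbV_{\bbW})^{-1}$ in Section~\ref{subsec:bbT VE VW inverse} and then show that the near-orthogonal factor $\bbV_{\bbE}^T\bbV_{\bbW}$ perturbs the squared Frobenius norm only at the claimed order, using the Davis--Kahan bounds of Corollary~\ref{cor:improved Davis Kahan spec} together with the spectral concentration $\|\bbW-\bbE\|_2=O_P(\sqrt{nb_{\cP}})$. The only cosmetic difference is that you remove the factor $\bbM=(\bbV_{\bbE}^T\bbV_{\bbW})^{-1}$ directly via the Neumann expansion of $\bbM\bbM^T-\bbI=(\bbI-\bbS^T\bbS)^{-1}-\bbI$, whereas the paper instead sandwiches the right-hand side by $\bbV_{\bbW}^T\bbV_{\bbE}$ and $\bbV_{\bbE}^T\bbV_{\bbW}$ and controls the resulting projection difference $\bbV_{\bbE}\bbV_{\bbE}^T\bbV_{\bbW}\bbV_{\bbW}^T\bbV_{\bbE}\bbV_{\bbE}^T-\bbV_{\bbE}\bbV_{\bbE}^T$; these are mirror images of the same estimate. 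One minor remark: Corollary~\ref{cor:improved Davis Kahan spec} is stated for $(\bbV_{\bbW}^{\perp})^T\bbV_{\bbE}$, while you apply it to $\bbS=(\bbV_{\bbE}^{\perp})^T\bbV_{\bbW}$, but the two share the same singular values (principal angles), so the bounds carry over.
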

\begin{proof}
	While proving Theorem \ref{thm:asymptotic expansion one view} for $ \bbT = \left( \bbV_{ \bbE}^T \bbV_{\bbW} \right)^{-1 } $ in Section \ref{subsec:bbT VE VW inverse}, \eqref{eq:for cor: term hard to handle by second singular value of Erdos Renyi graph} implies 
	$$
	\frac{ \left\|   \left( \bbW - \bbE \right) \bbV_{\bbW}  \left( \bbV_{ \bbE}^T \bbV_{\bbW} \right)^{-1 }  \right\|_F^2}{\NumOfB n} \xlongequal[\text{\eqref{eq:my B8 AJ}}]{\text{\eqref{eq:term Wigner order}}}  \frac{  \left\|   \left( \bbW - \bbE \right) \bbV_{\bbE}   \right\|_F^2}{\NumOfB n} + O_P \left( \frac{\NumOfB[ \log (n) ]^2 }{n} \right),
	$$
	or alternatively,
	\begin{eqnarray}
	&&\frac{\tr \left[\left( \bbV_{ \bbW}^T \bbV_{\bbE} \right)^{-1 }     \bbV_{\bbW}^T   \left( \bbW - \bbE \right)^2 \bbV_{\bbW}  \left( \bbV_{ \bbE}^T \bbV_{\bbW} \right)^{-1 }  \right]}{\NumOfB n} \label{eq:first step of cor: term hard to handle by second singular vlaue of ER graph} \\
	& \xlongequal[\text{\eqref{eq:my B8 AJ}}]{\text{\eqref{eq:term Wigner order}}} &  \frac{\tr  \left[   \bbV_{\bbE}^T \left( \bbW - \bbE \right)^2  \bbV_{\bbE}   \right]}{\NumOfB n} + O_P \left( \frac{\NumOfB [ \log (n) ]^2 }{n} \right). \nonumber
	\end{eqnarray}
The step  above can be directly verified.
	
	Multiplying  on the left by $ \bbV_{ \bbW}^T \bbV_{\bbE}$ and right by $ \bbV_{ \bbE}^T \bbV_{\bbW}$, we obtain 
	{ \allowdisplaybreaks
	\begin{eqnarray*}
		&& \frac{ \left\|   \left( \bbW - \bbE \right) \bbV_{\bbW}   \right\|_F^2}{\NumOfB n}  =\frac{\tr \left[     \bbV_{\bbW}^T   \left( \bbW - \bbE \right)^2 \bbV_{\bbW}   \right]}{\NumOfB n} \\
		& \xlongequal{\text{\eqref{eq:first step of cor: term hard to handle by second singular vlaue of ER graph}}} &  \frac{\tr  \left[ \bbV_{ \bbW}^T \bbV_{\bbE}  \bbV_{\bbE}^T \left( \bbW - \bbE \right)^2  \bbV_{\bbE}  \bbV_{ \bbE}^T \bbV_{\bbW}  \right]}{\NumOfB n} + O_P \left( \frac{\NumOfB[\log (n)]^{2}  }{n} \right) \\
		& = &  \frac{  \left\|   \left( \bbW - \bbE \right) \bbV_{\bbE} \bbV_{ \bbE}^T \bbV_{\bbW}  \right\|_F^2}{\NumOfB n} + O_P \left( \frac{\NumOfB[\log (n)]^{2}  }{n} \right)  \\
		& = &  \frac{  \left\|   \left( \bbW - \bbE \right) \bbV_{\bbE} \right\|_F^2}{\NumOfB n}  + O_P \left( \frac{\NumOfB[\log (n)]^{2}  }{n} \right) \\
		&& +  \frac{ \tr   \left\{    \left( \bbW - \bbE \right) \left[  \bbV_{\bbE} \bbV_{ \bbE}^T \bbV_{\bbW}  \bbV_{\bbW}^T  \bbV_{\bbE} \bbV_{ \bbE}^T  - \bbV_{\bbE}\bbV_{\bbE}^T \right]  \left( \bbW - \bbE \right)  \right\}}{\NumOfB n}, 
	\end{eqnarray*}
}	where notice 
{\allowdisplaybreaks	\begin{eqnarray*}
		&& \left\|  \bbV_{\bbE} \bbV_{ \bbE}^T \bbV_{\bbW}  \bbV_{\bbW}^T  \bbV_{\bbE} \bbV_{ \bbE}^T  - \bbV_{\bbE}\bbV_{\bbE}^T \right\|_F \\
		&= & \left\|  \bbV_{\bbE} \bbV_{ \bbE}^T\left( \bbV_{\bbW}  \bbV_{\bbW}^T - \bbI_n\right)  \bbV_{\bbE} \bbV_{ \bbE}^T   \right\|_F \\
		& = &  \left\|  \bbV_{\bbE} \bbV_{ \bbE}^T\bbV_{\bbW}^{\perp}  \left[\bbV_{\bbW}^{\perp }\right]^T   \bbV_{\bbE} \bbV_{ \bbE}^T   \right\|_F  \xlongequal{\text{\eqref{eq:improved Davis Kahan spec}}}  O_P \left( \frac{\NumOfB}n \right),
	\end{eqnarray*}
}	together with the fact that $ \left\| \bbW - \bbE \right\|_2 =O_P \left( \sqrt{ \frac{n b_{\cP}}{\NumOfB}} \right)$, we obtain 
	\begin{eqnarray*}
		\frac{ \left\|   \left( \bbW - \bbE \right) \bbV_{\bbW}   \right\|_F^2}{\NumOfB n} 	& = &  \frac{  \left\|   \left( \bbW - \bbE \right) \bbV_{\bbE} \right\|_F^2}{\NumOfB n}  + O_P \left( \frac{\NumOfB[\log (n)]^{2}  }{n} \right) + O_P\left(\frac{b_{\cP}}{\NumOfB n}\right)  \\
		& = &  \frac{  \left\|   \left( \bbW - \bbE \right) \bbV_{\bbE} \right\|_F^2}{\NumOfB n}  + O_P \left( \frac{\NumOfB[\log (n)]^{2}  }{n} \right) .
	\end{eqnarray*}

\end{proof}

It is worth noting that Corollary \ref{cor:bbV bbW also OK rather than bbV bbP} is not easy to prove by \cite{feige2005spectral,oliveira2009concentration,lu2013spectra} as well as \eqref{eq:improved Davis Kahan spec} and \eqref{eq:improved Davis Kahan Frob}. We will use this technique  overcome a similar difficulty \eqref{eq:last obstacle in two views} in proving Theorem \ref{thm:asymptotic expansion two views} for the two-sample problem.

A similar lemma to Lemma \ref{lem:lems used for one view} for the one-sample problem also holds for the  two-sample problem.  
\begin{lem}
	\label{lem:lems used for two views}
	\begin{equation}
	\left\| \bbV_{\bbW^{(2)}}^T \left(  \gamma  \bbW^{(1)}  - \bbW^{(2)}  \right)  \bbV_{\bbW^{(1)} } \right\|_F = O_P \left(  \NumOfB  \log (n) \right),
	\label{eq:term Wigner order 2 view}
	\end{equation}
	
	\begin{equation}
	\Lambda_{\bbW^{(2)}}^{-1}\left( \bbV_{ \bbW^{(2)} }^T \bbV_{\bbW^{(1)}} \right)^{-1 }\DiaEig_{ \bbW^{(2)}}  = \left( \bbV_{ \bbW^{(2)}}^T \bbV_{\bbW^{(1)} } \right)^{-1 } +  O_P\left( \frac{ \NumOfB^3 \log (n)}{ n b_{\cP}^2 } \right),\label{eq:swapping in my quantity 2 view}
	\end{equation}	
	
	\begin{equation}
	\left\|  \left( \gamma  \bbW^{(1)} - \bbW^{(2)} \right)  \left(\bbV_{\bbW^{(1)} }    \left( \bbV_{ \bbW^{(2)}}^T \bbV_{\bbW^{(2)} } \right)^{-1 }  - \bbV_{\bbW^{(2)}} \right)   \right\|_F =  O_P\left(    \sqrt{\NumOfB} \right)  . \label{eq:my B8 AJ 2 view}
	\end{equation}
\end{lem}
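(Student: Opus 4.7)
The plan is to imitate the proof of Lemma \ref{lem:lems used for one view}, working under the null of \eqref{eq:hypothesis test for SBM AJ} and exploiting two structural facts. First, Assumption \ref{assum:underlying probability the same up to a scalar} together with the null yields $\bbE_n^{(2)} = \gamma \bbE_n^{(1)}$ (modulo the negligible diagonal correction), so that $\gamma \bbW^{(1)} - \bbW^{(2)} = \gamma(\bbW^{(1)} - \bbE^{(1)}) - (\bbW^{(2)} - \bbE^{(2)})$ is a sum of two \emph{independent} centered noise matrices whose operator norms are $O_P(\sqrt{nb_{\cP}/\NumOfB})$ by \citep{oliveira2009concentration,lu2013spectra}. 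Second, $\bbV_{\bbE^{(2)}} = \bbV_{\bbE^{(1)}} \bbU$ for some $\bbU \in \cO(\NumOfB)$, and by Davis--Kahan (Corollary \ref{cor:improved Davis Kahan spec}) each sample singular subspace $\bbV_{\bbW^{(v)}}$ lies within $O_P(\NumOfB/\sqrt{nb_{\cP}})$ in Frobenius norm of its population counterpart after an orthogonal alignment. These two observations will drive all three bounds.

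To prove \eqref{eq:term Wigner order 2 view}, I would decompose $\gamma \bbW^{(1)} - \bbW^{(2)}$ as above and bound the two terms $\bbV_{\bbW^{(2)}}^T (\bbW^{(v)} - \bbE^{(v)}) \bbV_{\bbW^{(1)}}$, $v=1,2$, separately. Replacing the sample singular subspaces by their population counterparts introduces error terms controlled via $\|\bbW^{(v)}-\bbE^{(v)}\|_2 = O_P(\sqrt{nb_{\cP}/\NumOfB})$ together with the Davis--Kahan displacement above; each such error is of strictly smaller order than $\NumOfB\log n$. The remaining main term reduces via $\bbV_{\bbE^{(2)}} = \bbV_{\bbE^{(1)}} \bbU$ to $\bbU^T \bbV_{\bbE^{(1)}}^T (\bbW^{(v)} - \bbE^{(v)}) \bbV_{\bbE^{(1)}}$, a $\NumOfB \times \NumOfB$ matrix whose entries are centered Wigner-like sums over $\Theta(n^2)$ independent edges; its Frobenius norm is $O_P(\NumOfB \log n)$ by exactly the one-sample argument behind \eqref{eq:term Wigner order}.

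For \eqref{eq:swapping in my quantity 2 view}, set $M = \bbV_{\bbW^{(2)}}^T \bbV_{\bbW^{(1)}}$ and use the identity $\DiaEig_{\bbW^{(2)}}^{-1} M^{-1} \DiaEig_{\bbW^{(2)}} - M^{-1} = \DiaEig_{\bbW^{(2)}}^{-1} M^{-1} \bigl[ \DiaEig_{\bbW^{(2)}} M - M \DiaEig_{\bbW^{(2)}} \bigr] M^{-1}$. Using $\DiaEig_{\bbW^{(v)}} \bbV_{\bbW^{(v)}}^T = \bbV_{\bbW^{(v)}}^T \bbW^{(v)}$, the commutator rewrites as $\bbV_{\bbW^{(2)}}^T (\bbW^{(2)} - \gamma \bbW^{(1)}) \bbV_{\bbW^{(1)}} - M \bigl( \DiaEig_{\bbW^{(2)}} - \gamma \DiaEig_{\bbW^{(1)}} \bigr)$. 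The first piece is $O_P(\NumOfB \log n)$ by part (1); the second requires a first-order eigenvalue perturbation for SBMs (each $\lambda_i(\bbW^{(v)}) - \lambda_i(\bbE^{(v)})$ is a centered quadratic form in the noise on a unit-norm eigenvector, hence $O_P(\sqrt{b_{\cP}})$ per eigenvalue) together with $\DiaEig_{\bbE^{(2)}} = \gamma \DiaEig_{\bbE^{(1)}}$. Multiplying by $\|\DiaEig_{\bbW^{(2)}}^{-1}\|_2 \asymp \NumOfB/(nb_{\cP})$ and $\|M^{-1}\|_2 = O(1)$ yields the stated $O_P(\NumOfB^3 \log(n)/(nb_{\cP}^2))$. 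Finally, \eqref{eq:my B8 AJ 2 view} (where the term $\bigl(\bbV_{\bbW^{(2)}}^T \bbV_{\bbW^{(2)}}\bigr)^{-1}$ in the display should read $\bigl(\bbV_{\bbW^{(2)}}^T \bbV_{\bbW^{(1)}}\bigr)^{-1}$, matching the one-sample analog) follows from $\|AB\|_F \leq \|A\|_2 \|B\|_F$ with $\|A\|_2 = \|\gamma \bbW^{(1)}-\bbW^{(2)}\|_2 = O_P(\sqrt{nb_{\cP}/\NumOfB})$ and $\|B\|_F = \|\bbV_{\bbW^{(1)}}(\bbV_{\bbW^{(2)}}^T \bbV_{\bbW^{(1)}})^{-1} - \bbV_{\bbW^{(2)}}\|_F = O_P(\NumOfB/\sqrt{nb_{\cP}})$, the latter being the two-sample Procrustes-type displacement which is again controlled by Davis--Kahan applied through the shared (up to $\bbU$) population subspace.

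The main obstacle is the commutator step in part (2): in the one-sample setting the two diagonal matrices sandwiching $M$ are different ($\DiaEig_{\bbE}$ and $\DiaEig_{\bbW}$) and their cross-combination telescopes immediately into $-\bbV_{\bbE}^T(\bbW - \bbE)\bbV_{\bbW}$. In the two-sample case the \emph{same} $\DiaEig_{\bbW^{(2)}}$ appears on both sides, so to invoke the eigenvalue equation for $\bbW^{(1)}$ one must insert $\gamma \DiaEig_{\bbW^{(1)}}$ by hand, which forces the extra term $M\bigl(\DiaEig_{\bbW^{(2)}} - \gamma \DiaEig_{\bbW^{(1)}}\bigr)$; controlling this at the required scale needs the sharper first-order eigenvalue perturbation rather than the loose Weyl bound, and this is the most delicate part of the proof.
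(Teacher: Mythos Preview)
For \eqref{eq:term Wigner order 2 view} and \eqref{eq:my B8 AJ 2 view} your argument is essentially the paper's: reduce to the population subspace by Davis--Kahan and bound, respectively, the $\NumOfB\times\NumOfB$ Wigner-type block and the product $\|\cdot\|_2\cdot\|\cdot\|_F$. You also correctly flag the typo $\bigl(\bbV_{\bbW^{(2)}}^T\bbV_{\bbW^{(2)}}\bigr)^{-1}$ in \eqref{eq:my B8 AJ 2 view}.

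For \eqref{eq:swapping in my quantity 2 view} your route diverges from the paper's, and the divergence is caused by a typo in the statement. By analogy with the one-sample \eqref{eq:swapping in my quantity}, the left-hand side should read $\Lambda_{\bbW^{(1)}}^{-1}\bigl(\bbV_{\bbW^{(2)}}^T\bbV_{\bbW^{(1)}}\bigr)^{-1}\DiaEig_{\bbW^{(2)}}$, not $\Lambda_{\bbW^{(2)}}^{-1}(\cdots)\DiaEig_{\bbW^{(2)}}$. The paper's own proof indeed works with $\DiaEig_{\bbW^{(1)}}^{-1}M^{-1}-M^{-1}\DiaEig_{\bbW^{(2)}}^{-1}$: with $\DiaEig_{\bbW^{(1)}}$ on one side and $\DiaEig_{\bbW^{(2)}}$ on the other, the commutator $\DiaEig_{\bbW^{(2)}}M-M\DiaEig_{\bbW^{(1)}}$ telescopes via the two eigenvalue equations directly to $\bbV_{\bbW^{(2)}}^T(\bbW^{(2)}-\bbW^{(1)})\bbV_{\bbW^{(1)}}$, which (after the $\gamma$ that the paper itself handles loosely) is exactly the quantity bounded in \eqref{eq:term Wigner order 2 view}. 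No extra term appears, and multiplying by the two $\DiaEig^{-1}$ factors gives the claimed order in one line.

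You took the statement at face value, correctly observed that with the \emph{same} $\DiaEig_{\bbW^{(2)}}$ on both sides the commutator $\DiaEig_{\bbW^{(2)}}M-M\DiaEig_{\bbW^{(2)}}$ does not telescope, and proposed to control the leftover piece $M\bigl(\DiaEig_{\bbW^{(2)}}-\gamma\DiaEig_{\bbW^{(1)}}\bigr)$ through a first-order eigenvalue expansion. That workaround is fragile here: under Assumptions~\ref{assum:size of each block}--\ref{assum:homogeneous weighted SBM} the population matrix $\bbE_n$ can have a repeated eigenvalue of multiplicity $\NumOfB-1$ (equal block sizes) or arbitrarily small gaps among the $\NumOfB-1$ smaller eigenvalues (nearly equal blocks), so the simple-eigenvalue expansion $\lambda_i(\bbW)-\lambda_i(\bbE)=v_i^T(\bbW-\bbE)v_i+O(\|\bbW-\bbE\|_2^2/\text{gap})$ need not deliver the rate you claim. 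Once the typo is recognized and the asymmetric identity is used, this ``main obstacle'' disappears entirely and the paper's short argument goes through.
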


\begin{proof} \eqref{eq:term Wigner order 2 view} is similar to the argument for \eqref{eq:term Wigner order}. As for \eqref{eq:swapping in my quantity 2 view}, in addition to $\rho\left( \DiaEig_{\bbE_n}\right) = O \left( \frac{n}{\NumOfB}\right)$ according to Assumptions \ref{assum:homogeneous weighted SBM} and \ref{assum:size of each block},  notice
	\begin{eqnarray*}
		&& 	 \DiaEig_{\bbW^{(1)} }^{-1} \left(\bbV_{\bbW^{(2)} }^T \bbV_{\bbW^{(1)} }\right)^{-1} -  \left(\bbV_{\bbW^{(2)} }^T \bbV_{\bbW^{(1)}}  \right)^{ -1 } \DiaEig_{\bbW^{(2)}}^{-1} \\   & = & 	 \DiaEig_{\bbW^{(1)} }^{-1} \left(\bbV_{\bbW^{(2)} }^T \bbV_{\bbW^{(1)} }\right)^{-1} \left[ \DiaEig_{\bbW^{(2)} } \bbV_{\bbW^{(2)} }^T \bbV_{\bbW^{(1)} } - \bbV_{\bbW^{(2)} }^T \bbV_{\bbW^{(1)}} \DiaEig_{\bbW^{(1)} } \right]  \left(\bbV_{\bbW^{(2)} }^T \bbV_{\bbW^{(1)} }\right)^{-1} \DiaEig_{\bbW^{(2)} }^{-1} \\
		& = &  \DiaEig_{\bbW^{(1)}}^{-1}   \left(\bbV_{\bbW^{(2)}}^T \bbV_{\bbW^{(1)}}\right)^{-1}  \bbV_{\bbW^{(2)}}^T  ( \bbW^{(1)} -\bbW^{(2)})\bbV_{\bbW^{(1)} }  \left(\bbV_{\bbW^{(2)}}^T \bbV_{\bbW^{(1)} }\right)^{-1}  \DiaEig_{\bbW^{(2)} }^{-1}  \\
		&  \xlongequal{\text{\eqref{eq:term Wigner order 2 view}}} &  O_P\left( \frac{\NumOfB^3 \log (n)}{n^2 b_{\cP}^2 }  \right).
	\end{eqnarray*}
	
As of \eqref{eq:my B8 AJ},
	\begin{eqnarray*}
		&& \left\| 	   \left( \gamma \bbW^{(1)}  -  \bbW^{(2)}  \right)\left(\bbV_{\bbW^{(1)} }    \left( \bbV_{ \bbW^{(2)}}^T \bbV_{\bbW^{(1)} } \right)^{-1 }  - \bbV_{\bbW^{(2)} } \right)   \right\|_F \\ 
		& \le &   \rho \left(\gamma  \bbW^{(1)}   -  \bbW^{(2)}  \right) \cdot \left\| \bbV_{\bbW^{(1)} }    \left( \bbV_{ \bbW^{(2)} }^T \bbV_{\bbW^{(1)} } \right)^{-1 }  - \bbV_{\bbW^{(2)}} \right\|_F \\
		& = & O_P\left(  \sqrt{\frac{n b_{\cP}}{\NumOfB}} \right) \cdot O_P\left( \sqrt{ \frac{\NumOfB}{n b_{\cP}} } \right) =  O_P\left(  \sqrt{\NumOfB} \right).
	\end{eqnarray*}
	where we refers to Theorem 3.1 of \cite{oliveira2009concentration} and \cite{lu2013spectra}.
\end{proof}

Since { \allowdisplaybreaks
	\begin{eqnarray*}
		&&\frac1{ \sqrt{\NumOfB n}} \left\| \left( \bbV_{\bbW^{(1)} } \bbV_{ \bbW^{(1)}  }^T \bbV_{\bbW^{(2)}} - \bbV_{\bbW^{(2)}}\right) \DiaEig_{\bbW^{(2)}} -  \left( \gamma \bbW^{(1)}- \bbW^{(2)} \right) \bbV_{\bbW^{(2)}} \right\|_F \\
		&&\xlongequal{ \bbV_{\bbW^{(2)}} \DiaEig_{\bbW^{(2)}} =  \bbW^{(2)} \bbV_{\bbW^{(2)}}} \\ &&\frac1{ \sqrt{\NumOfB n}} \left\| \left( \bbV_{\bbW^{(1)} } \bbV_{ \bbW^{(1)}  }^T -  \bbI_n    \right) \bbW^{(2)} \bbV_{\bbW^{(2)}} -  \left( \gamma \bbW^{(1)}- \bbW^{(2)} \right) \bbV_{\bbW^{(2)}} \right\|_F \\
		& = & \frac1{ \sqrt{\NumOfB n}} \left\| \left( \bbV_{\bbW^{(1)}} \bbV_{ \bbW^{(1)} }^T \bbW^{(2)} - \gamma  \bbW^{(1)}    \right)  \bbV_{\bbW^{(2)}}  \right\|_F \\
		& = & \frac1{ \sqrt{\NumOfB n}} \left\| \left[  \bbV_{\bbW^{(1)} } \bbV_{ \bbW^{(1)} }^T \left(\bbW^{(2)} -  \gamma \bbW^{(1)} \right) -   \bbV_{\bbW^{(1)}}^{ \perp } \DiaEig_{\bbW^{(1)}}^{ \perp } 
		\left(\bbV_{ \bbW^{(1)}}^{ \perp }\right)^T     \right]  \bbV_{\bbW^{(2)}}   \right\|_F \\
		& \le & \frac{\left\|  \bbV_{\bbW^{(1)}} \bbV_{ \bbW^{(1)}}^T \left(\bbW^{(2)} -\gamma  \bbW^{(1)} \right) \bbV_{\bbW^{(2)}}  \right\|_F}{ \sqrt{\NumOfB n}}  + \frac{  \left\| \bbV_{\bbW^{(1)} }^{ \perp } \DiaEig_{\bbW^{(1)}}^{ \perp } 
			\left(\bbV_{ \bbW}^{ \perp }\right)^T      \bbV_{\bbW^{(2)}}   \right\|_F }{ \sqrt{\NumOfB n}}\\
		& \xlongequal{\text{\eqref{eq:term Wigner order 2 view}}}  & O_P \left( \frac{ \NumOfB \log (n) }{\sqrt{n}} \right) +  \frac1{ \sqrt{\NumOfB n}} \cdot O_P \left(\sqrt{ \frac{n b_{\cP} }{ \NumOfB} } \right) \cdot O_P\left( \sqrt{\frac{\NumOfB}{ n b_{\cP}}} \right)\\
		& =& O_P \left( \frac{ \NumOfB \log (n) }{\sqrt{n}} \right),
	\end{eqnarray*} 
}by taking the difference of two squares of Frobenius norm,
{\allowdisplaybreaks\begin{eqnarray*}
&& \frac{ \left\| \left( \bbV_{\bbW^{(1)}} \bbV_{ \bbW^{(1)}}^T\bbV_{\bbW^{(2)}} -  \bbV_{\bbW^{(2)}}\right)  \DiaEig_{\bbW^{(2)} } \right\|_F^2}{ \NumOfB n} -   \frac{\left\| \left(\gamma \bbW^{(1)} - \bbW^{(2)}  \right) \bbV_{\bbW^{(2)} }  \right\|_F^2 }{\NumOfB n} \nonumber \\
&=& \frac{\left\| \left( \bbV_{\bbW^{(1)}} \bbV_{ \bbW^{(1)}}^T -  \bbI_n    \right) \bbW^{(2)} \bbV_{\bbW^{(2)} } \right\|_F^2 }{ \NumOfB n}-   \frac{ \left\| \left(\gamma \bbW^{(1)} - \bbW^{(2)}  \right) \bbV_{\bbW^{(2)} }  \right\|_F^2 }{\NumOfB n}\nonumber \\
& = &  \frac1{\NumOfB n} \left\langle \left( \bbV_{\bbW^{(1)} } \bbV_{ \bbW^{(1)} }^T -  \bbI_n    \right) \bbW^{(2)} \bbV_{\bbW^{(2)} } , \right. \nonumber \\
&& \left.  \left( \bbV_{\bbW^{(1)} } \bbV_{ \bbW^{(1)} }^T -  \bbI_n    \right) \bbW^{(2)} \bbV_{\bbW^{(2)}} -  \left( \gamma \bbW^{(1)} - \bbW^{(2)} \right) \bbV_{\bbW^{(2)}}  \right\rangle \nonumber \\
&& + O_P \left(  \frac{ \NumOfB [\log (n)]^2 }{ n } \right)  \nonumber \\
& = &  \frac1{\NumOfB n} \left\langle \left( \bbV_{\bbW^{(1)} } \bbV_{ \bbW^{(1)} }^T -  \bbI_n    \right) \bbW^{(2)} \bbV_{\bbW^{(2)}} ,   \left( \bbV_{\bbW^{(1)} } \bbV_{ \bbW^{(1)} }^T\bbW^{(2)} - \gamma  \bbW^{(1)}   \right)  \bbV_{\bbW^{(2)}} \right\rangle  \nonumber \\
&& + O_P \left(   \frac{ \NumOfB [\log (n)]^2  }{ n }  \right) \nonumber \\
& = &  - \frac{\gamma}{\NumOfB n}\tr \left[   \bbV_{\bbW^{(2)}}^T \bbW^{(1)}   \left( \bbV_{\bbW^{(1)}} \bbV_{ \bbW^{(1)} }^T -  \bbI_n    \right) \bbW^{(2)} \bbV_{\bbW^{(2)}}  \right]  + O_P \left(  \frac{ \NumOfB [\log (n)]^2 }{ n } \right)  \nonumber \\
& = &  - \frac{\gamma}{\NumOfB n}\tr \left[  \bbV_{\bbW^{(2)}}^T \bbV_{\bbW^{(1)}}^{\perp }  \DiaEig_{\bbW^{(1)}}^{\perp }   \left[ \bbV_{ \bbW^{(1)}}^{\perp } \right]^T  \bbV_{\bbW^{(2)}}  \DiaEig_{\bbW^{(2)}}  \right]  +O_P\left( \frac{ \NumOfB[\log (n)]^2 }{ n } \right) \nonumber.
\end{eqnarray*}
}
On the other hand, Corollary \ref{cor:term hard to handle by second singular value of Erdos Renyi graph} implies 
{ \allowdisplaybreaks
	\begin{eqnarray*}
		&&		O_P\left( \NumOfB \log(n)\right)\\
		&  = & \sqrt{\tr \left[  \bbV_{\bbE^{(1)}}^T \bbV_{\bbW^{(1)}  }^{\perp }  \DiaEig_{\bbW^{(1)}}^{\perp }   \left[ \bbV_{ \bbW^{(1)}}^{\perp } \right]^T  \bbV_{\bbE^{(1)}}   \right] }  \\
		& = &  \left\| \left[ \DiaEig_{\bbW^{(1)}}^{\perp }\right]^{\frac12}   \left[ \bbV_{ \bbW^{(1)}}^{\perp } \right]^T  \bbV_{\bbE^{(1)}}   \right\|_F  = \left\| \left[ \DiaEig_{\bbW^{(1)}}^{\perp }\right]^{\frac12}   \left[ \bbV_{ \bbW^{(1)}}^{\perp } \right]^T  \bbV_{\bbE^{(2)}}   \right\|_F \\
		& = &   \left\| \left[ \DiaEig_{\bbW^{(1)}}^{\perp }\right]^{\frac12}   \left[ \bbV_{ \bbW^{(1)}}^{\perp } \right]^T \cdot \right.\\
		&& \left. \left[\bbV_{\bbE^{(2)}} - \bbV_{\bbW^{(2)} } \Procrustes  \left(   \bbV_{\bbW^{(2)} } , \bbV_{\bbE^{(2)}}  \right)+ \bbV_{\bbW^{(2)} } \Procrustes  \left( \bbV_{\bbW^{(2)} } , \bbV_{\bbE^{(2)}} \right) \right]\right\|_F \\
		& = &  \left\| \left[ \DiaEig_{\bbW^{(1)}}^{\perp }\right]^{\frac12}   \left[ \bbV_{ \bbW^{(1)}}^{\perp } \right]^T   \bbV_{\bbW^{(2)} } \Procrustes \left(  \bbV_{\bbW^{(2)} } , \bbV_{\bbE^{(2)}} \right) \right\|_F + O_P\left(  \sqrt{\frac{ \NumOfB }{ n b_{\cP} }} \cdot \sqrt{\frac{n b_{\cP}}{\NumOfB}} \right) \\
		& = &  \left\| \left[ \DiaEig_{\bbW^{(1)}}^{\perp }\right]^{\frac12}   \left[ \bbV_{ \bbW^{(1)}}^{\perp } \right]^T   \bbV_{\bbW^{(2)} }  \right\|_F + O_P\left( \sqrt{  \NumOfB b_{\cP} } \right) ,
	\end{eqnarray*}
}
where $ \left[ \DiaEig_{\bbW^{(1)}}^{\perp }\right]^{\frac12}  \triangleq \diag \left\{ \left| \sigma_{\NumOfB+1} \left( \bbW^{(1)}\right) \right|^{\frac12} , \ldots,  \left| \sigma_{ n } \left( \bbW^{(1)}\right) \right|^{\frac12} \right\}$; or equivalently,
\begin{eqnarray*}
	&& - \frac{\gamma}{\NumOfB n}\tr \left[  \bbV_{\bbW^{(2)}}^T \bbV_{\bbW^{(1)}}^{\perp }  \DiaEig_{\bbW^{(1)}}^{\perp }   \left[ \bbV_{ \bbW^{(1)}}^{\perp } \right]^T  \bbV_{\bbW^{(2)}}  \DiaEig_{\bbW^{(2)}}  \right] \\
	& = &  - \frac{\gamma}{\NumOfB n}  \left\| \left[ \DiaEig_{\bbW^{(1)}}^{\perp }\right]^{\frac12}   \left[ \bbV_{ \bbW^{(1)}}^{\perp } \right]^T   \bbV_{\bbW^{(2)} }  \right\|_F^2= O_P \left(  \frac{\NumOfB[\log (n)]^2}n\right),
\end{eqnarray*}
hence  by using the same arguments as in Section \ref{subsubsec: orthogonal subspaces square of Frobenius term}, we also finish proof for $\bbT = \left(\bbW^{(2)} \bbW^{(1)} \right)^{ -1 }, \Procrustes  \left( \bbW^{(1)}, \bbW^{(2)}\right)$. The results are summarized in  Lemma \ref{lem:just one step away from final result two sample} in a similar form as  Theorem \ref{thm:asymptotic expansion one view}. 
\begin{lem} \label{lem:just one step away from final result two sample}
	$$ \frac{\left\| \left( \bbV_{\bbW^{(1)}} \bbT -  \bbV_{\bbW^{(2)} }  \right)  \DiaEig_{\bbW^{(2)} } \right\|_F^2 }{ \NumOfB n}=  \frac{ \left\| \left(\gamma \bbW^{(1)} - \bbW^{(2)}  \right) \bbV_{\bbW^{(2)} }  \right\|_F^2}{\NumOfB n}+ O_P \left(  \frac{\NumOfB [\log (n)]^2 }n  \right).
	$$
\end{lem}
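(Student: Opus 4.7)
The plan is to establish the lemma first for the canonical choice $\bbT = \bbV_{\bbW^{(1)}}^T \bbV_{\bbW^{(2)}}$, and then to transfer the conclusion to $\bbT = (\bbV_{\bbW^{(2)}}^T \bbV_{\bbW^{(1)}})^{-1}$ and to $\bbT = \Procrustes(\bbV_{\bbW^{(1)}}, \bbV_{\bbW^{(2)}})$ via the same ``difference-of-two-Frobenius-squares'' device used in the one-sample setting of Section~\ref{subsubsec: orthogonal subspaces square of Frobenius term}, imitating \eqref{eq:differences of two frobenius norm equivalence of 3T's}. The only additional input needed for that transfer is that any two of the three candidate transformations differ in Frobenius norm by $O_P(\NumOfB^{2}\log n / (n b_{\cP}))$; this is provided by \eqref{eq:swapping in my quantity 2 view} together with a two-sample version of Proposition A.3 in \cite{tang2017semiparametric}, so the three choices of $\bbT$ deliver equivalent bounds.

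For the canonical case, I would expand $\|\cdot\|_F^2$ on the left-hand side using $\bbV_{\bbW^{(2)}}\DiaEig_{\bbW^{(2)}} = \bbW^{(2)}\bbV_{\bbW^{(2)}}$ to identify $(\bbV_{\bbW^{(1)}}\bbV_{\bbW^{(1)}}^T\bbV_{\bbW^{(2)}} - \bbV_{\bbW^{(2)}})\DiaEig_{\bbW^{(2)}}$ with $(\bbV_{\bbW^{(1)}}\bbV_{\bbW^{(1)}}^T - \bbI_n)\bbW^{(2)}\bbV_{\bbW^{(2)}}$. Subtracting the target $\|(\gamma\bbW^{(1)} - \bbW^{(2)})\bbV_{\bbW^{(2)}}\|_F^2/(\NumOfB n)$ and polarizing yields a cross inner product. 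Exploiting the idempotency $(\bbV_{\bbW^{(1)}}\bbV_{\bbW^{(1)}}^T)^2 = \bbV_{\bbW^{(1)}}\bbV_{\bbW^{(1)}}^T$ and absorbing an $O_P(\NumOfB[\log n]^2/n)$ piece using \eqref{eq:term Wigner order 2 view} reduces the whole discrepancy to the single trace $-\tfrac{\gamma}{\NumOfB n}\tr\bigl[\bbV_{\bbW^{(2)}}^T \bbV_{\bbW^{(1)}}^{\perp}\DiaEig_{\bbW^{(1)}}^{\perp}[\bbV_{\bbW^{(1)}}^{\perp}]^T \bbV_{\bbW^{(2)}}\DiaEig_{\bbW^{(2)}}\bigr]$, and controlling this trace is the whole game.

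The main obstacle is precisely this trace, because in contrast to the one-network case we do not have $\bbW^{(2)} = \bbV_{\bbW^{(2)}}\bbV_{\bbW^{(2)}}^T \bbW^{(2)}$, so the orthogonal complement cannot be annihilated by a projector identity. My workaround is to feed Corollary~\ref{cor:term hard to handle by second singular value of Erdos Renyi graph}, applied to network~$1$, to get $\|[\DiaEig_{\bbW^{(1)}}^{\perp}]^{1/2}[\bbV_{\bbW^{(1)}}^{\perp}]^T \bbV_{\bbE^{(1)}}\|_F = O_P(\NumOfB \log n)$. Under the null \eqref{eq:hypothesis test for SBM AJ} and Assumption~\ref{assum:underlying probability the same up to a scalar}, $\bbV_{\bbE^{(1)}} \upVdash \bbV_{\bbE^{(2)}}$, so the same bound holds with $\bbV_{\bbE^{(2)}}$ in place of $\bbV_{\bbE^{(1)}}$. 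Writing $\bbV_{\bbE^{(2)}} = \bbV_{\bbW^{(2)}}\Procrustes(\bbV_{\bbW^{(2)}}, \bbV_{\bbE^{(2)}}) + R_n$ with $\|R_n\|_F = O_P(\sqrt{\NumOfB/(nb_{\cP})})$ by Davis--Kahan, and using $\|[\DiaEig_{\bbW^{(1)}}^{\perp}]^{1/2}\|_2 = O_P(\sqrt{nb_{\cP}/\NumOfB})$ from the spectral bounds of \cite{oliveira2009concentration,lu2013spectra}, I can swap $\bbV_{\bbE^{(2)}}$ for $\bbV_{\bbW^{(2)}}$ at a cost of only $O_P(\sqrt{\NumOfB b_{\cP}})$ inside the weighted Frobenius norm. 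Squaring, dividing by $\NumOfB n$, and using $\|\DiaEig_{\bbW^{(2)}}\|_2 = O_P(n b_{\cP}/\NumOfB)$ then delivers the claimed $O_P(\NumOfB[\log n]^2/n)$ bound for the canonical case, and the transfer argument of paragraph one completes the lemma. The delicate point is the half-power weighting $[\DiaEig_{\bbW^{(1)}}^{\perp}]^{1/2}$: an operator-norm Davis--Kahan bound alone would contribute a factor of $\sqrt{nb_{\cP}/\NumOfB}$ that swamps $[\log n]^2$, so the whole argument hinges on the sharper trace-style estimate supplied by Corollary~\ref{cor:term hard to handle by second singular value of Erdos Renyi graph}.
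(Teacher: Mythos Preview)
Your proposal is correct and follows essentially the same route as the paper: first handle the canonical choice $\bbT = \bbV_{\bbW^{(1)}}^T\bbV_{\bbW^{(2)}}$ by polarizing the difference of Frobenius squares, use \eqref{eq:term Wigner order 2 view} to isolate the single trace $\tr\bigl[\bbV_{\bbW^{(2)}}^T\bbV_{\bbW^{(1)}}^{\perp}\DiaEig_{\bbW^{(1)}}^{\perp}[\bbV_{\bbW^{(1)}}^{\perp}]^T\bbV_{\bbW^{(2)}}\DiaEig_{\bbW^{(2)}}\bigr]$, control this trace via Corollary~\ref{cor:term hard to handle by second singular value of Erdos Renyi graph} applied to network~$1$ combined with the null identification $\bbV_{\bbE^{(1)}}\upVdash\bbV_{\bbE^{(2)}}$ and a Davis--Kahan swap from $\bbV_{\bbE^{(2)}}$ to $\bbV_{\bbW^{(2)}}$, and finally transfer to the other two choices of $\bbT$ by the difference-of-squares device of Section~\ref{subsubsec: orthogonal subspaces square of Frobenius term}. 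This is precisely the paper's argument, including your correct identification of the ``essential difficulty'' that $\bbW^{(v)} \ne \bbV_{\bbW^{(v)}}\bbV_{\bbW^{(v)}}^T\bbW^{(v)}$ and the crucial role of the half-power weighting $[\DiaEig_{\bbW^{(1)}}^{\perp}]^{1/2}$ in making Corollary~\ref{cor:term hard to handle by second singular value of Erdos Renyi graph} sharp enough.
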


Lastly,  we need $\bbV_{\bbE^{(2)}}$ instead of $\bbV_{\bbW^{(2)}}$ and we take advantage of Corollary \ref{cor:bbV bbW also OK rather than bbV bbP}:
{\allowdisplaybreaks
\begin{eqnarray}
&& \frac{ \left\| \left(\gamma \bbW^{(1)} - \bbW^{(2)}  \right) \bbV_{\bbW^{(2)} }  \right\|_F^2}{\NumOfB n}  \label{eq:last obstacle in two views}
\\  & =&  \frac{ \left\| \left(\gamma \bbW^{(1)} -  \bbE^{(2)} + \bbE^{(2)} - \bbW^{(2)}  \right) \bbV_{\bbW^{(2)} }  \right\|_F^2}{\NumOfB n}  \nonumber \\
& = &  \frac{ \left\| \left(\gamma \bbW^{(1)} -  \bbE^{(2)} \right) \bbV_{\bbW^{(2)} }  \right\|_F^2}{\NumOfB n} +  \frac{ \left\| \left(  \bbE^{(2)} - \bbW^{(2)}  \right) \bbV_{\bbW^{(2)} }  \right\|_F^2}{\NumOfB n}  \nonumber \\
&& + \frac2{n\NumOfB}\tr  \left[ \bbV_{\bbW^{(2)} } \left(\gamma \bbW^{(1)} -  \bbE^{(2)} \right)  \left(  \bbE^{(2)} - \bbW^{(2)}  \right) \bbV_{\bbW^{(2)} }  \right], \nonumber 
\end{eqnarray}
}
where in the cross term, $\gamma \bbW^{(1)} -  \bbE^{(2)}$ has  zero mean, and is independent of the rest and the cross term is a linear function of $\gamma \bbW^{(1)} -  \bbE^{(2)}.$
The central limit theorem implies $$ \frac2{n\NumOfB}\tr  \left[ \bbV_{\bbW^{(2)} } \left(\gamma \bbW^{(1)} -  \bbE^{(2)} \right)  \left(  \bbE^{(2)}- \bbW^{(2)}  \right) \bbV_{\bbW^{(2)} }  \right] = O_P \left( \frac1{n\NumOfB}\right).$$ Hence,  using a similar argument as in the proof of Corollary \ref{cor:bbV bbW also OK rather than bbV bbP}, we obtain
\begin{eqnarray*}
	&& \frac{ \left\| \left(\gamma \bbW^{(1)} - \bbW^{(2)}  \right) \bbV_{\bbW^{(2)} }  \right\|_F^2}{\NumOfB n} \\  
	& = &	\frac{ \left\| \left(\gamma \bbW^{(1)} -  \bbE \right) \bbV_{\bbE^{(2)} }  \right\|_F^2}{\NumOfB n} +  \frac{ \left\| \left(  \bbE^{(2)}- \bbW^{(2)}  \right) \bbV_{\bbE^{(2)} }  \right\|_F^2}{\NumOfB n}  +O_P \left( \frac{\NumOfB [\log (n)]^2}{n}\right) \\
	& = &  \frac{ \left\| \left(\gamma \bbW^{(1)} - \bbW^{(2)}  \right) \bbV_{\bbE^{(2)}}   \right\|_F^2}{\NumOfB n} +O_P \left( \frac{\NumOfB [\log (n)]^2}{n}\right),
\end{eqnarray*}
 we  then  finish  the  proof of Theorem \ref{thm:asymptotic expansion two views}.

\section{Proofs of central limit theorems via moment matching}

\subsection{Proof of Theorem \ref{thm:central limit theorem for frobenius norm} on the asymptotic distribution of the singular subspace distance} 
\label{subsec: proof of CLT for customized normalization}

Using the same techniques as (2.1.46) of \cite{anderson2010introduction}, that is, Section 3.3.5 ``the moment problem" in \cite{durrett2010probability}, it suffices to verify that 
\begin{equation}
\lim_{ n \to \infty}\expc \left( \frac{ W_n - \expc W_n }{\sqrt{ \Var W_n}} \right)^j = \left\{ \begin{array}{cc} 0 , & \text{ if }j\text{ is odd};\\ (j-1)!! , & \text{ if }j\text{ is even}.\\ 
\end{array}\right.
\label{eq:moment method verification}
\end{equation}
where right hand side of \eqref{eq:moment method verification} coincides with the moments of the Gaussian distribution $\Phi$.

Same as Theorem 2.1.31 in \cite{anderson2010introduction},  the first step is to evaluate the  mean and variance of $$W_{n} \triangleq \left\| \left(\bbW_n - \bbE_n \right) \bbZ_n \left( \bbZ_n^T \bbZ_n\right)^{- \frac12} \right\|_F^2 .$$ For the sake of convenience, denote $\bbX_n \triangleq \bbW_n - \bbE_n$ with each entry with zero mean.  The $ik$-th entry of $\left(\bbW_n - \bbE_n \right) \bbZ_n \left( \bbZ_n^T \bbZ_n\right)^{- \frac12}$ is 
\begin{equation}
\frac1{ \sqrt{\# \cC_{k} } }  \sum_{t \in \cC_k} x_{it}, i \in [n],k\in \left[\NumOfB\right]
\label{eq:ikth entry W-P Z inv(sqrt(ZtZ))}
\end{equation}

Hence,{ \allowdisplaybreaks
	\begin{eqnarray}
&& \expc W_n \label{eq:expectation dominant term} \\		& = & \sum_{i =1}^n \sum_{k =1 }^{\NumOfB} \frac1{\# \cC_{k} } \expc  \left( \sum_{t \in \cC_k} x_{it} \right)^2 = \sum_{i , k =1 }^{\NumOfB}  \frac1{\# \cC_{k} } \sum_{r \in \cC_i}  \expc  \left( \sum_{t \in \cC_k} x_{rt}^2  \right) \nonumber\\
		& = & \sum_{i , k =1 }^{\NumOfB}  \frac1{\# \cC_{k} } \sum_{r \in \cC_i} \sum_{t \in \cC_k} \sigma_{ik}^2 - \sum_{  i = 1}^{\NumOfB} \frac{\sigma_{ii}^2}{\# \cC_i}\cdot \# \cC_i  =  \sum_{i , k =1 }^{\NumOfB}  \# \cC_i  \sigma_{ik}^2 \sum_{  i = 1}^{\NumOfB}  \sigma_{ii}^2 \nonumber  \\
		&\xlongequal{} &  (\NumOfB -1) n \sigma_Q^2  +  n\sigma_P^2 - K \sigma_{\cP}^2 = (\NumOfB -1) n \sigma_Q^2  +  n\sigma_P^2 + o\left(n \sigma_{\cP}^2\right) , \nonumber
	\end{eqnarray}
	based on \mbox{Assumption \ref{assum:homogeneous weighted SBM}}}
where  the negligible term $o\left(n \sigma_{\cP}^2\right)$ is  due to the fact that $x_{ii} = 0$ rather than $x_{ii} \sim \cP$. Due to the same reason, we may treat $x_{ii} \sim \cP$ in later calculations for the sake of simplicity.

In term of the  variance, only the terms with each edge appearing at least twice are relevant, 
{ \allowdisplaybreaks
	\begin{eqnarray}
		&& \Var W_n = \Var \left\| \left(\bbW_n - \bbE_n \right) \bbZ_n \left( \bbZ_n^T \bbZ_n\right)^{- \frac12}\right\|_F^2 \label{eq:variance SBM}  \\
		& = & \expc \left[ \sum_{i , k =1 }^{\NumOfB}  \frac1{\# \cC_{k} } \sum_{r \in \cC_i}    \left( \sum_{t \in \cC_k} x_{rt} \right)^2 - \expc\left[ \sum_{i , k =1 }^{\NumOfB}  \frac1{\# \cC_{k} } \sum_{r \in \cC_i}    \left( \sum_{t \in \cC_k} x_{rt} \right)^2\right]\right]^2 \nonumber \\
		& = & 	\expc \left[ \sum_{i , k =1 }^{\NumOfB}  \frac1{\# \cC_{k} } \sum_{r \in \cC_i}  \left( \sum_{ t \in \cC_k} \left[ x_{r t }^2 - \expc \left(x_{r t }^2 \right) \right] + 2 \sum_{s<t \in \cC_k} x_{rs}x_{ r t} \right)  \right]^2 \nonumber \\
		& = & 	\expc \left\{  \sum_{i , k =1 }^{\NumOfB}  \frac1{\# \cC_{k} } \sum_{r \in \cC_i}  \sum_{ t \in \cC_k} \left[ x_{r t }^2 - \expc \left(x_{r t }^2 \right) \right] \right\}^2  	+ 4 \expc \left[ \sum_{i , k =1 }^{\NumOfB}  \frac1{\# \cC_{k} } \sum_{r \in \cC_i}   \sum_{s<t \in \cC_k} x_{rs}x_{ r t}   \right]^2 \nonumber \\
		& = & 	4 \expc \left\{  \sum_{i < k =1 }^{\NumOfB}  \frac1{\# \cC_{k} } \sum_{r \in \cC_i}  \sum_{ t \in \cC_k} \left[ x_{r t }^2 - \expc \left(x_{r t }^2 \right) \right] \right\}^2  \nonumber \\
		&&+ 	\expc \left\{  \sum_{i =1 }^{\NumOfB}  \frac1{\# \cC_{i} } \sum_{r, t \in \cC_i} \left[ x_{r t }^2 - \expc \left(x_{r t }^2 \right) \right] \right\}^2 \nonumber   \\
		&& + \sum_{i , k = 1}^{\NumOfB }  \frac4{ \left(\# \cC_k\right)^2 } \sum_{r \in \cC_i }  \sum_{s<t \in \cC_k} \left[x_{rs}^2  - \expc (x_{rs}^2 )\right] \left[x_{rt}^2 - \expc  (x_{rt}^2    )\right] \nonumber \\
		& = & 	 \expc \left\{  \sum_{i < k =1 }^{\NumOfB} \left( \frac1{\# \cC_{i} }+\frac1{\# \cC_{k} } \right) \sum_{r \in \cC_i}  \sum_{ t \in \cC_k} \left[ x_{r t }^2 - \expc \left(x_{r t }^2 \right) \right] \right\}^2 \nonumber  \\
		&& + 	4 \expc \left\{  \sum_{i =1 }^{\NumOfB}  \frac1{\# \cC_{i} } \sum_{r< t \in \cC_i} \left[ x_{r t }^2 - \expc \left(x_{r t }^2 \right) \right] \right\}^2 \nonumber  \\
		&&  + \sum_{i  = 1}^{\NumOfB }  \# \cC_i   \left[ 2 \left( \NumOfB - \sum_{k=1}^{ \NumOfB} \frac1{\# \cC_k} \right) \sigma_Q^4 +  \frac{ 2 (\# \cC_i -1)}{\# \cC_i} (\sigma_P^4 - \sigma_Q^4) \right] \nonumber \\
		& = & 	\sum_{i < k =1 }^{\NumOfB}  \left( \frac1{\# \cC_{i} }+\frac1{\# \cC_{k} } \right)^2 \sum_{r \in \cC_i}  \sum_{ t \in \cC_k}\expc \left[ x_{r t }^2 - \expc \left(x_{r t }^2 \right) \right]^2    \nonumber \\
		&&+ 	   \sum_{i =1 }^{\NumOfB}  \frac4{ (\# \cC_{i})^2 } \sum_{r< t \in \cC_i} \expc\left[ x_{r t }^2 - \expc \left(x_{r t }^2 \right) \right]^2  + 2n\left(\NumOfB  -  \sum_{k=1}^{ \NumOfB} \frac{ 1  }{\# \cC_k} \right) \sigma_Q^4 \nonumber \\
		&& + 2\left(n - \NumOfB \right) (\sigma_P^4 - \sigma_Q^4)  	\sum_{i < k =1 }^{\NumOfB}  \left(2 + \frac{\# \cC_{i}^2 + \# \cC_{k}^2}{\# \cC_{i} \# \cC_{k} } \right) r_Q^4 + 	   \sum_{i =1 }^{\NumOfB}  \left(2- \frac2{ \# \cC_{i} }\right) r_P^4 \nonumber \\
		&&  + 2n \left(\NumOfB -  \sum_{k=1}^{ \NumOfB} \frac{ 1  }{\# \cC_k}\right) \sigma_Q^4 + 2\left(n - \NumOfB \right) (\sigma_P^4 - \sigma_Q^4)\nonumber  \\
		&= & n r_Q^4	\sum_{k =1 }^{\NumOfB} \frac{1}{\# \cC_{k} }  + 	   \sum_{i =1 }^{\NumOfB}  \left(2- \frac2{ \# \cC_{i} }\right) r_P^4   + 2 n \left(\NumOfB -  \sum_{k=1}^{ \NumOfB} \frac{ 1  }{\# \cC_k}\right) \sigma_Q^4  \nonumber \\ && + 2\left(n - \NumOfB \right) (\sigma_P^4 - \sigma_Q^4) \nonumber \\
		& = & 2 n \NumOfB  \left( \sigma_Q^4 + \frac{\sigma_P^4 - \sigma_Q^4 }{\NumOfB}  \right) + O\left( \NumOfB^2 \right),\nonumber
	\end{eqnarray}
	based on \mbox{Assumption \ref{assum:size of each block}}.	}

To conclude the proof, we need to  show
{ \allowdisplaybreaks
	\begin{eqnarray*}
		&& \lim_{ n \to \infty}  \expc \left( \frac{ W_n - \expc W_n }{\Var W_n} \right)^j   =  	\lim_{ n \to \infty}\expc \left( \frac{ W_n - \expc W_n }{ \sqrt{ 2 \NumOfB n \sigma_Q^4 + 2  n \left( \sigma_P^4 - \sigma_Q^4 \right) }  } \right)^j  \\
		& = & 	\lim_{ n \to \infty} \expc \left[\frac{ \dps   \sum_{i, k =1 }^{\NumOfB} \frac1{\# \cC_{k} }  \sum_{ r \in \cC_i} \left( \sum_{ t \in \cC_k} x_{rt }\right)^2 - \expc W_n}{\sqrt{ 2 \NumOfB n \sigma_Q^4 + 2  n \left( \sigma_P^4 - \sigma_Q^4 \right) }   }\right]^j = \left\{ \begin{array}{cc} 0 , & \text{ if }j\text{ is odd};\\ (j-1)!! , & \text{ if }j\text{ is even}.\\ 
		\end{array}\right.. 
	\end{eqnarray*}
}

\subsubsection{Limit calculations} 
Consider the enumerator,
{\allowdisplaybreaks
	\begin{eqnarray}
	&& \E\left[  \left(W_n - \E W_n\right)^j \right] \nonumber \\
	& = &  \E \left[   \sum_{i, k =1 }^{\NumOfB} \frac1{\# \cC_{k} }  \sum_{ r \in \cC_i} \left( \sum_{ t \in \cC_k} x_{rt }^2 + 2  \sum_{ s< t \in \cC_k} x_{rs }x_{rt}\right) -\expc  W_n  \right]^j \nonumber  \\
	&  = & \E \left[     \sum_{i =1 }^{\NumOfB} \frac1{\# \cC_{i} }  \sum_{ r \in \cC_i} \left( \sum_{ t \in \cC_i } \left(x_{rt }^2  - \expc \left[x_{rt}^2 \right] \right)+ 2  \sum_{ s< t \in \cC_i} x_{rs }x_{rt}\right)   \right. \nonumber \\
	& & \left. +   \sum_{i \ne  k =1 }^{\NumOfB} \frac1{\# \cC_{k} }  \sum_{ r \in \cC_i} \left( \sum_{ t \in \cC_k} \left(x_{rt }^2  - \expc \left[x_{rt}^2 \right] \right)+ 2  \sum_{ s< t \in \cC_k} x_{rs }x_{rt}\right)  \right]^j  \nonumber \\
	& = &\E \left\{    \sum_{i  =1 }^{\NumOfB}   \frac1{\# \cC_{i} } \sum_{r \in \cC_i} \sum_{ t \in \cC_i} \left(x_{rt }^2  - \expc \left[x_{rt}^2 \right] \right) \right. \label{eq:denominator in moment matching} \\
	&&  +  \sum_{i < k = 1 }^{\NumOfB }  \left(\frac1{\# \cC_{i } }+  \frac1{\# \cC_{k} }\right) \sum_{r \in \cC_i}\sum_{ t \in \cC_k} \left(x_{rt }^2  - \expc \left[x_{rt}^2 \right] \right)    \nonumber \\
	& & \left. +  \sum_{i =1 }^{\NumOfB} \frac2{\# \cC_{i} }  \sum_{ r \in \cC_i}  \sum_{ s< t \in \cC_i } x_{rs }x_{rt}  +   \sum_{i \ne  k =1 }^{\NumOfB} \frac2{\# \cC_{k} }  \sum_{ r \in \cC_i}  \sum_{ s< t \in \cC_k} x_{rs }x_{rt}  \right\}^j. \nonumber  \end{eqnarray}
}
Now it is natural to introduce ``words" and ``sentences".

\subsubsection{Words, sentences and their graphs}
We give a very brief introduction to words, sentences and their equivalence classes essential for the combinatorial analysis of random matrices. The definitions are used in  \cite{anderson2006clt}, Section 2.1, although we have more weights here, and we have an $n \times \NumOfB$ rectangular matrix.

\begin{defn}[Words]
	\label{defn:words} Given the set of letters $ [n] =\{1,2, \ldots, n\}$.
	Set of words are of the kind $x_{rt}^2 - \expc \left[ x_{rt}^2 \right], r,t \in [n]$ (two letters) or $x_{rs}x_{rt}, s,r,t \in [n]$ (three letters). 
	
\end{defn}

The interior of the  last representation in \eqref{eq:denominator in moment matching} has each word to be different and weights of words are all of order $\Theta\left( \frac{\NumOfB}{n }\right)$. Further,  the sum of weights for words of type $x_{rt }^2  - \expc \left[x_{rt}^2 \right] $ is $\frac{\NumOfB n}{2} + n$ while the sum of weights for words of type $x_{rs }x_{rt} $ is $n(n-\NumOfB -1)$:
\begin{eqnarray}
\sum_{i  =1 }^{\NumOfB}   \frac1{\# \cC_{i} } \sum_{r \in \cC_i} \sum_{ t \in \cC_i}1 +  \sum_{i < k = 1 }^{\NumOfB }  \left(\frac1{\# \cC_{i } }+  \frac1{\# \cC_{k} }\right) \sum_{r \in \cC_i}\sum_{ t \in \cC_k} 1 & = & \frac{\NumOfB n}{2} + n, \label{eq:ignore words of first type}  \\
\sum_{i =1 }^{\NumOfB} \frac2{\# \cC_{i} }  \sum_{ r \in \cC_i}  \sum_{ s< t \in \cC_i } 1 + \sum_{i \ne  k =1 }^{\NumOfB} \frac2{\# \cC_{k} }  \sum_{ r \in \cC_i}  \sum_{ s< t \in \cC_k}1 & = & n(n-\NumOfB -1). \nonumber
\end{eqnarray}
which heuristically implies that we can ignore words of type $x_{rt }^2  - \expc \left[x_{rt}^2 \right]$. This  argument  appears in the procedure of evaluating $\Var W_n$ as well.

\ignore{On the other hand, our version is a simpler version since our \texttt{letter-word-sentence} majorly only involves \texttt{word} consisting of 1 or 2 letters, i.e. of length 2. Letters are also called edge in the graph $\cG= \langle [n],\bbX\rangle$ (while set of nodes is $[n]$, $
	\bbX$ is the matrix of (centered) weights).}

\begin{defn}[Sentences]
	\label{defn:sentences}
	A sentence $\sentence$ is an ordered collection of  words $\word_1, \word_2, \ldots, \word_m$, at least one word long. 
	
\end{defn}
\begin{defn}[Weak CLT sentences] \label{defn:weak CLT sentences}
	A sentence $\sentence= [\word_i]_{i=1}^m$ is called a weak CLT sentence if the following hold
	\begin{enumerate}
		\item for each edge of the graph, $\sentence$ visits at least twice or does not visit it (that is, no such edge that $\sentence$ only visits once);
		\item For each $i\in [m]$, there is another $j \in [m] \setminus \{i \}$ such that $\word_i, \word_j$ have at least one edge in common. 
	\end{enumerate}
\end{defn}

Since we  deal with linear spectral  statistics, our definition of ``weak CLT sentences" is different from \cite{anderson2006clt,banerjee2008model,anderson2010introduction} in the sense that we have no ``closed words". 
\begin{defn}[Graph associated with words, sentences]   Let $G_{\word} = \langle V_{\word}, E_{\word} \rangle$ be the (undirected) graph associated with word $\word$. For word $\word = x_{rt }^2  - \expc \left[x_{rt}^2 \right] $, set $V_{\word} = \{r,t\}$ and multiset (rather than a set) $E_{\word} = \{ \{r,t\},  \{r,t\} \}$ where edge appears twice; for word $x_{rs }x_{rt}$, $V_{\word} = \{r,s,t\}$ and $E_{\word} = \{ \{r,s\}, \{r,t \} \}$.
	
	Let $G_{\sentence} = \langle V_{\sentence}, E_{\sentence} \rangle$ be the graph of a sentence $\sentence = \left( \word_1, \word_2 ,  \ldots , \word_j\right) $ where $\dps V_{\sentence} = \bigcup_{i=1}^j V_{\word_i} \subset [n]$ is the set of all letters, and $E_{\sentence} $ is (multiset) union of $E_{\word_i}, i \in [j]$; by multiset union, we mean we keep duplicates since each edge may appear several times.
\end{defn}


Finally, analogous to (2.1.49) in \cite{anderson2010introduction},  we re-state Lemma 4.3 in \cite{banerjee2018contiguity} or lemma A.5 in \cite{banerjee2017optimal} but focus only on our scenario:
\begin{lem} \label{lem:set of CLT sentences cardinality}
	Let $ \cA_{j,t}^{n}$ be the set of weak CLT sentences $\sentence = \left(\word_1 , \word_2 ,\ldots , \word_j \right)$ such that $\# V_{\sentence} = t$ and the letter set is $[n]$. Then
	\begin{equation}
	\#  \cA_{j,t}^n \le 8^j n^t (3C_1)^{C_2 j} (3j )^{3(3j -2t)},
	\end{equation}
	where $C_1, C_2 >0$ are numeric constants.
\end{lem}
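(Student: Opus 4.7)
\textbf{Proof proposal for Lemma \ref{lem:set of CLT sentences cardinality}.}

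The plan is to follow the encoding strategy used for analogous counts in Wigner-type CLTs (cf.\ Lemma 4.3 of \cite{banerjee2018contiguity} and Lemma A.5 of \cite{banerjee2017optimal}), adapting it to the two word-types here. The idea is to decompose each weak CLT sentence $\sentence = (\word_1,\dots,\word_j)$ into (a) a \emph{shape}, recording the word-types and the equivalence relation on letter positions, and (b) a \emph{labeling}, assigning letters from $[n]$ to the equivalence classes. Since the shape has $t$ equivalence classes, the number of labelings is at most $n(n-1)\cdots(n-t+1) \le n^{t}$, which yields the $n^{t}$ factor immediately. It thus remains to bound the number of shapes by $8^{j}(3C_{1})^{C_{2}j}(3j)^{3(3j-2t)}$.

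Next I would build the shape sequentially. There are $2$ choices of word-type per word (type-1 contributes $2$ letter-positions and a doubled edge, type-2 contributes $3$ letter-positions and two edges sharing a root), giving a factor $2^{j}\le 8^{j}$. Read the words in order $i=1,\dots,j$; for each letter-position declare whether the letter is \emph{new} or \emph{old}. A new letter opens a new equivalence class; an old letter must be matched to one of the (at most $t$) previously opened classes. Each word has at most $3$ positions, so the total number of positions is at most $3j$, of which exactly $t$ are new and $\le 3j-t$ are old. Without any structural constraint this would cost $t^{3j-t}\le (3j)^{3j-t}$, which is too weak.

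The work is to sharpen the old-position count using the weak CLT condition. Let $e$ denote the number of distinct edges of $G_{\sentence}$; since every edge is revisited at least once, the multiset $E_{\sentence}$ of cardinality $\le 2j$ satisfies $2e\le 2j$, i.e.\ $e\le j$. Moreover the "each word shares an edge with another word" clause makes the \emph{word-adjacency graph} connected in components of size $\ge 2$, which via a standard spanning-forest argument bounds $t$ in terms of $e$ and the number of components. Consequently, for each old position, once the edge it helps form is matched (by the double-visit requirement) to a previously seen edge, one of its endpoints is determined and the other has only $O(1)$ choices among the existing classes. Encoding each such edge-matching event and its two endpoints costs $O(1)$ symbols per event from an alphabet of size $\le 3j$, and the total number of events one needs to record is bounded by the excess $3j-2t$ (this is the deficit between the $3j$ position slots and twice the $t$ distinct letters, and is exactly what a careful edge-bookkeeping in the style of the cited papers yields). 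This produces a factor of at most $(3j)^{3(3j-2t)}$, with the constant $3$ in the exponent absorbing the "up to three endpoints per event" bookkeeping. The per-word local structure (how the word's positions connect to the running skeleton, which type-1 vs.\ type-2 attachment pattern is used, etc.) introduces a further universal factor $(3C_{1})^{C_{2}j}$.

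The main obstacle is the third paragraph: turning the qualitative weak CLT constraints into the quantitative $(3j)^{3(3j-2t)}$ bound. My strategy is to reduce to the exact encoding given in Banerjee et al.\ by identifying type-1 words with a doubled edge (where both endpoints coincide with an already-seen edge) and type-2 words with a "cherry" attached at a known letter plus one extra endpoint, so that both types fit into the framework where a weak CLT sentence is reconstructed from a spanning structure plus $O(3j-2t)$ additional edge-identification symbols. Once this reduction is made, the bound follows by multiplying: $2^{j}$ types, $(3C_{1})^{C_{2}j}$ local attachments, $(3j)^{3(3j-2t)}$ identification encodings, and $n^{t}$ labelings, giving the claim.
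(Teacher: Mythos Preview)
The paper does not actually prove this lemma: the text immediately preceding it reads ``we re-state Lemma 4.3 in \cite{banerjee2018contiguity} or Lemma A.5 in \cite{banerjee2017optimal} but focus only on our scenario,'' and no argument follows. So there is no ``paper's own proof'' to compare against; the result is quoted from the cited references.

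Your proposal is a reasonable outline of the encoding argument in those references, and goes further than the paper does in spelling out where each factor comes from (the $n^{t}$ from labeling, the $2^{j}$ from word-types, and the $(3j)^{3(3j-2t)}$ from edge-identification bookkeeping). However, it is not a self-contained proof: you correctly flag the crucial step---converting the weak CLT constraints into the exponent $3(3j-2t)$---as ``the main obstacle,'' and your resolution is to ``reduce to the exact encoding given in Banerjee et al.'' That is precisely the same deferral the paper makes. If your goal is merely to match the paper, you have already done more than it does; if your goal is an independent proof, the gap is the spanning-forest/edge-bookkeeping argument that pins down why the number of identification symbols is bounded by a constant multiple of $3j-2t$ rather than the naive $3j-t$, and that argument would need to be written out in full rather than invoked.
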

$ \cA_{j,t}^{n}$ is related to (2.1.49) in \cite{anderson2010introduction} and (4.7) in \cite{kemp2013math} but is different in the sense that we do not define equivalent classes. Following (2.1.48) and (2.1.50) in \cite{anderson2010introduction}, we can turn
\eqref{eq:denominator in moment matching} into 
{\allowdisplaybreaks
	\begin{eqnarray*}
		&& \expc \left[   \left(W_n -\expc  W_n\right)^j  \right] \\
		& = &\E \left\{    \sum_{i  =1 }^{\NumOfB}   \frac1{\# \cC_{i} } \sum_{r \in \cC_i} \sum_{ t \in \cC_i} \left(x_{rt }^2  - \expc \left[x_{rt}^2 \right] \right) \right. \\
		&&  +  \sum_{i < k = 1 }^{\NumOfB }  \left(\frac1{\# \cC_{i } }+  \frac1{\# \cC_{k} }\right) \sum_{r \in \cC_i}\sum_{ t \in \cC_k} \left(x_{rt }^2  - \expc \left[x_{rt}^2 \right] \right)    \\
		& & \left. +  \sum_{i =1 }^{\NumOfB} \frac2{\# \cC_{i} }  \sum_{ r \in \cC_i}  \sum_{ s< t \in \cC_i } x_{rs }x_{rt}  +   \sum_{i \ne  k =1 }^{\NumOfB} \frac2{\# \cC_{k} }  \sum_{ r \in \cC_i}  \sum_{ s< t \in \cC_k} x_{rs }x_{rt}  \right\}^j \\
		& = & \sum_{t = 1}^{2j} \sum_{  \sentence = \left( \word_1 , \word_2 , \ldots , \word_j \right) \in \cA_{j,t}^{n}} c(\sentence)\cdot \E \left[\prod_{i =1}^j \word_i \right] \\
		& &\xlongequal[\mbox{ at least twice\ignore{Exercise 4.3.1 in \cite{kemp2013math}}}]{\mbox{each edge is visited}} 
		\sum_{t = 1}^{j} \sum_{  \sentence = \left( \word_1 , \word_2 , \ldots , \word_j \right) \in \cA_{j,t}^{n}} c(\sentence)\cdot \E \left[\prod_{i =1}^j \word_i \right],
	\end{eqnarray*}
}
where $\dps c(\sentence) = \prod_{i=1}^jc(\word_i)$ is multiplication of coefficients in front of words $\word_i$ in \eqref{eq:denominator in moment matching}, that is, multiplication of several coefficients (duplicates allowed):
$ \frac1{\# \cC_{i} },  \left(\frac1{\# \cC_{i } }+  \frac1{\# \cC_{k} }\right), \frac2{\# \cC_{i} },\frac2{\# \cC_{k} }$.\ignore{; further, it is worth noticing that 
$$
\frac{\dps \frac{n!}{(n-t)!} \cdot  \sum_{  \sentence = \left( \word_1 , \word_2 , \ldots , \word_j \right) \in \cA_{j,t}^{n}} c(\sentence)\cdot \E \left[\prod_{i =1}^j \word_i \right]}{\dps \frac{n! \NumOfB^j}{(n-t)! n^{j }}\cdot  \sum_{  \sentence = \left( \word_1 , \word_2 , \ldots , \word_j \right) \in \cA_{j,t}^{n}} \E \left[\prod_{i =1}^j \word_i \right]},
$$
should be \textcolor{red}{upper and lower bounded by positive constants} that are independent of $n$. } Lemma \ref{lem:set of CLT sentences cardinality} implies
\begin{eqnarray*}
\sum_{  \sentence = \left( \word_1 , \word_2 , \ldots , \word_j \right) \in \cA_{j,t}^{n}} c(\sentence)\cdot \E \left[\prod_{i =1}^j \word_i \right] & \le & \frac{\NumOfB^j }{n^j}\# \cA_{j,t}^{n} \cdot \max_{\cS \in \cA_{j,t}^{n}}\left\{ \frac{n^j c(\sentence)}{\NumOfB^j} \E \left[\prod_{i =1}^j \word_i \right]  \right\}  \\
& \le & C\max_{\cS \in \cA_{j,t}^{n}}\left\{ \E \left[\prod_{i =1}^j \word_i \right]  \right\} \cdot \frac{\NumOfB^j }{n^j}\# \cA_{j,t}^{n} ,
\end{eqnarray*}
goes to $0$ as $n \to \infty$ as long as $t<j$; $C$ is a constant independent of $n$.
As a result, 
\begin{eqnarray}
&& \lim_{ n \to \infty} \expc \left[   \left(\frac{W_n -\expc  W_n}{\sqrt{\Var W_n }}\right)^j  \right] \\
& = &  \left\{ \begin{array}{cc}
0 , & \text{if } j\text{ is odd}; \\ \dps \lim_{ n \to \infty}  \left( \Var W_n\right)^{-\frac{j}2} \sum_{  \sentence = \left( \word_1 , \word_2 , \ldots , \word_j \right) \in \cA_{j,j}^{n}} c(\sentence)\cdot \E \left[\prod_{i =1}^j \word_i \right], & \text{if } j\text{ is even}. \nonumber
\end{array}\right.
\end{eqnarray}

For $j$ even, first thing that is analogous to is that $ \sentence  \in \cA_{j,j}^{n}$ can be viewed as an ordered sequence of distinct $\word_1', \ldots, \word_{ \frac{j}2}'$, each of which appears twice in $\sentence$ ($\word_i'$ does not necessarily have to be $i$th word in $\sentence$). 
{\allowdisplaybreaks
	\begin{eqnarray*}
		&&	 \lim_{ n \to \infty} \expc \left[   \left(\frac{W_n -\expc  W_n}{\sqrt{\Var W_n }}\right)^j  \right]  \\
		& = &  \lim_{ n \to \infty}  \left( \Var W_n\right)^{-\frac{j}2}  \sum_{  \sentence = \left( \word_1 , \word_2 , \ldots , \word_j \right) \in \cA_{j,j}^{n}} c(\sentence)\cdot  \expc \left[\prod_{i =1}^j \word_i \right] \\
		& = &  \lim_{ n \to \infty}  \left( \Var W_n\right)^{-\frac{j}2}  \sum_{\substack{ \sentence \in {\cA}_{j,j}^{n}\mbox{ is an ordered }\\ \mbox{sequence of distinct }\word_1', \ldots, \word_{ \frac{j}2}' \\ \mbox{ each appears twice}}}\prod_{i =1}^{\frac{j}2} \left[ c\left( \word_i' \right)   \right]^2   \expc \left[ \left( \word_i' \right)^2 \right]
	\end{eqnarray*}
}

It remains to calculate $$\dps \sum_{\substack{ \sentence \in {\cA}_{j,j}^{n}\mbox{ is an ordered }\\ \mbox{sequence of distinct }\word_1', \ldots, \word_{ \frac{j}2}' \\ \mbox{ each appears twice}}}\prod_{i =1}^{\frac{j}2} \left[ c\left( \word_i' \right)   \right]^2   \expc \left[ \left( \word_i' \right)^2 \right].$$ 
Similar to (2.1.52) of \cite{anderson2010introduction}, we introduce permutation  $\pi: [j] \to [j]$, all of whose cycles have length $2$ (that is, a matching), such that the connected components of $G_{\sentence}$ are the graphs $\left\{G_{\left( \word_i, \word_{ \pi(i)}\right) } \right\}$; letting $\Sigma_j$ denote the collection of all possible matchings. In this sense, the way we determine $\cS$ is to determine $\pi\in \Sigma_j$ and determine $\frac{j}2$ distinct words $\word_1'. \ldots, \word_{ \frac{j}2}'$; Dyck path \citep{kemp2013math} may be an alternative structure to explain the procedure of determination. One thus obtains that for $j$ even,
{ \allowdisplaybreaks
	\begin{eqnarray*}
&&	 \lim_{ n \to \infty} \expc \left[   \left(\frac{W_n -\expc  W_n}{\sqrt{\Var W_n }}\right)^j  \right] \\
&=&  \lim_{ n \to \infty}  \left( \Var W_n\right)^{-\frac{j}2}  \sum_{  \sentence 	= \left( \word_1 , \word_2 , \ldots , \word_j \right) \in \cA_{j,j}^{n}} c(\sentence)\cdot  \expc \left[\prod_{i =1}^j \word_i \right] \\
& = &  \lim_{ n \to \infty}  \left( \Var W_n\right)^{-\frac{j}2}  \sum_{\substack{ \sentence \in {\cA}_{j,j}^{n}\mbox{ is an ordered }\\ \mbox{sequence of distinct }\word_1', \ldots, \word_{ \frac{j}2}' \\ \mbox{ each appears twice}}}\prod_{i =1}^{\frac{j}2} \left[ c\left( \word_i' \right)   \right]^2   \expc \left[ \left( \word_i' \right)^2 \right] \\
& = &  \lim_{ n \to \infty}  \left( \Var W_n\right)^{-\frac{j}2}  \sum_{\pi \in \Sigma_j } \sum_{\word_1', \ldots, \word_{ \frac{j}2}'\mbox{distinct}} \prod_{i =1}^{\frac{j}2} \left[ c\left( \word_i' \right)   \right]^2   \expc \left[ \left( \word_i' \right)^2 \right] \\
& = &  \lim_{ n \to \infty}  \left( \Var W_n\right)^{-\frac{j}2}  \# \Sigma_j \cdot \sum_{\word_1', \ldots, \word_{ \frac{j}2}'\mbox{distinct}} \prod_{i =1}^{\frac{j}2} \left[ c\left( \word_i' \right)   \right]^2   \expc \left[ \left( \word_i' \right)^2 \right] \\
& = &  (j-1)!! \cdot \lim_{ n \to \infty}  \left( \Var W_n\right)^{-\frac{j}2}  \sum_{\word_1', \ldots, \word_{ \frac{j}2}'\mbox{distinct}} \prod_{i =1}^{\frac{j}2} \left[ c\left( \word_i' \right)   \right]^2   \expc \left[ \left( \word_i' \right)^2 \right],
\end{eqnarray*}
} 

Finally, we propose and apply a novel combinatorial technique  to evaluate 
$$
\lim_{ n \to \infty}  \left( \Var W_n\right)^{-\frac{j}2}  \sum_{\word_1', \ldots, \word_{ \frac{j}2}'\mbox{distinct}} \prod_{i =1}^{\frac{j}2} \left[ c\left( \word_i' \right)   \right]^2   \expc \left[ \left( \word_i' \right)^2 \right]
	$$
that does not appear in \cite{anderson2010introduction,kemp2013math}. The technique  is just to apply the form of \eqref{eq:denominator in moment matching} and a procedure of calculating $\Var W_n$ to give a sufficient approximation as $n \to \infty$ of 
$$\dps \sum_{\word_1', \ldots, \word_{ \frac{j}2}'\mbox{distinct}} \prod_{i =1}^{\frac{j}2} \left[ c\left( \word_i' \right)   \right]^2   \expc \left[ \left( \word_i' \right)^2 \right]$$. The the approximation is just 
\begin{eqnarray*}
	& &\E \left\{    \sum_{i  =1 }^{\NumOfB}   \frac1{\left(\# \cC_{i}\right)^2 } \sum_{r \in \cC_i} \sum_{ t \in \cC_i} \expc\left(x_{rt }^2  - \expc \left[x_{rt}^2 \right] \right)^2 \right. \\
&&  +  \sum_{i < k = 1 }^{\NumOfB }  \left(\frac1{\# \cC_{i } }+  \frac1{\# \cC_{k} }\right)^2 \sum_{r \in \cC_i}\sum_{ t \in \cC_k} \expc\left(x_{rt }^2  - \expc \left[x_{rt}^2 \right] \right)^2    \\
& & \left. +  \sum_{i =1 }^{\NumOfB} \frac4{\left(\# \cC_{i} \right)^2 }  \sum_{ r \in \cC_i}  \sum_{ s< t \in \cC_i } \expc x_{rs }^2x_{rt}^2  +   \sum_{i \ne  k =1 }^{\NumOfB} \frac4{\left(\# \cC_{k}\right)^2 }  \sum_{ r \in \cC_i}  \sum_{ s< t \in \cC_k} \expc x_{rs }^2x_{rt}^2  \right\}^{\frac{j}2},
\end{eqnarray*}
which coincidentally can be further simplified by  calculating $\Var W_n$. As a result,
{\allowdisplaybreaks
	\begin{eqnarray*}
&& \lim_{ n \to \infty} \expc \left[   \left(\frac{W_n -\expc  W_n}{\sqrt{\Var W_n }}\right)^j  \right]  \\
& = &  (j-1)!! \cdot \lim_{ n \to \infty}  \left( \Var W_n\right)^{-\frac{j}2}  \sum_{\word_1', \ldots, \word_{ \frac{j}2}'\mbox{distinct}} \prod_{i =1}^{\frac{j}2} \left[ c\left( \word_i' \right)   \right]^2   \expc \left[ \left( \word_i' \right)^2 \right] \\
& = &  (j-1)!! \cdot \lim_{ n \to \infty}  \left( \Var W_n\right)^{-\frac{j}2}  \E \left\{    \sum_{i  =1 }^{\NumOfB}   \frac1{\left(\# \cC_{i}\right)^2 } \sum_{r \in \cC_i} \sum_{ t \in \cC_i} \expc\left(x_{rt }^2  - \expc \left[x_{rt}^2 \right] \right)^2 \right. \\
&&  +  \sum_{i < k = 1 }^{\NumOfB }  \left(\frac1{\# \cC_{i } }+  \frac1{\# \cC_{k} }\right)^2 \sum_{r \in \cC_i}\sum_{ t \in \cC_k} \expc\left(x_{rt }^2  - \expc \left[x_{rt}^2 \right] \right)^2    \\
& & \left. +  \sum_{i =1 }^{\NumOfB} \frac4{\left(\# \cC_{i} \right)^2 }  \sum_{ r \in \cC_i}  \sum_{ s< t \in \cC_i } \expc x_{rs }^2x_{rt}^2  +   \sum_{i \ne  k =1 }^{\NumOfB} \frac4{\left(\# \cC_{k}\right)^2 }  \sum_{ r \in \cC_i}  \sum_{ s< t \in \cC_k} \expc x_{rs }^2x_{rt}^2  \right\}^{\frac{j}2} \\
& =&  (j-1)!! \left\{ \lim_{ n \to \infty}  \left( \Var W_n\right)^{-\frac{j}2} \right.\\
&& \left. \cdot 	\left\{\expc \left[ \sum_{i , k =1 }^{\NumOfB}  \frac1{\# \cC_{k} } \sum_{r \in \cC_i}  \left( \sum_{ t \in \cC_k} \left[ x_{r t }^2 - \expc \left(x_{r t }^2 \right) \right] + 2 \sum_{s<t \in \cC_k} x_{rs}x_{ r t} \right)  \right]^2\right\}^{\frac{j}2} \right\} \\
& = &  (j-1)!! \lim_{ n \to \infty} \left[ \left( \Var W_n\right)^{-\frac{j}2} \cdot \left( \Var W_n\right)^{\frac{j}2}  \right] = (j-1)!!.
\end{eqnarray*}
}

 \ignore{For the sake of simplicity, we ignore words of first type by argument \eqref{eq:ignore words of first type}, that is to say, 
{\allowdisplaybreaks
	\begin{eqnarray*}
		&&	 \lim_{ n \to \infty} \expc \left[   \left(\frac{W_n -\expc  W_n}{\sqrt{\Var W_n }}\right)^j  \right]  \\
		& = & \lim_{n \to \infty} \frac1{\sqrt{\Var W_n }^j} \E \left\{    \sum_{i  =1 }^{\NumOfB}   \frac1{\# \cC_{i} } \sum_{r \in \cC_i} \sum_{ t \in \cC_i} \left(x_{rt }^2  - \expc \left[x_{rt}^2 \right] \right) \right. \\
		&&  +  \sum_{i < k = 1 }^{\NumOfB }  \left(\frac1{\# \cC_{i } }+  \frac1{\# \cC_{k} }\right) \sum_{r \in \cC_i}\sum_{ t \in \cC_k} \left(x_{rt }^2  - \expc \left[x_{rt}^2 \right] \right)    \nonumber \\
		& & \left. +  \sum_{i =1 }^{\NumOfB} \frac2{\# \cC_{i} }  \sum_{ r \in \cC_i}  \sum_{ s< t \in \cC_i } x_{rs }x_{rt}  +   \sum_{i \ne  k =1 }^{\NumOfB} \frac2{\# \cC_{k} }  \sum_{ r \in \cC_i}  \sum_{ s< t \in \cC_k} x_{rs }x_{rt}  \right\}^j \\
		& = & \lim_{n \to \infty} \frac1{\sqrt{\Var W_n }^j} \E \left\{        \sum_{i =1 }^{\NumOfB} \frac2{\# \cC_{i} }  \sum_{ r \in \cC_i}  \sum_{ s< t \in \cC_i } x_{rs }x_{rt}  +   \sum_{i \ne  k =1 }^{\NumOfB} \frac2{\# \cC_{k} }  \sum_{ r \in \cC_i}  \sum_{ s< t \in \cC_k} x_{rs }x_{rt}  \right\}^j \\
		& = &  \lim_{ n \to \infty}  \frac{n!}{(n-j	)! \sqrt{\Var W_n}^j} \sum_{  \sentence = \left( \word_1 , \word_2 , \ldots , \word_j \right) \in \tilde{\cA}_{j,j}^{n}} c(\sentence)\cdot  \expc \left[\prod_{i =1}^j \word_i \right] \\
		& = &  \lim_{ n \to \infty}  \frac{n!}{(n-j	)! \sqrt{\Var W_n}^j} \sum_{\substack{ \sentence \in \tilde{\cA}_{j,j}^{n}\mbox{ is an ordered }\\ \mbox{sequence of } \word_1, \ldots, \word_{ \frac{j}2} \\ \mbox{ each appears twice}}}\prod_{i =1}^{\frac{j}2} \left[ c\left( \word_i \right)   \right]^2   \expc \left[ \word_i^2 \right]
	\end{eqnarray*}
}

Without loss of generality, we assume $\#\cC_{i_1} +\#\cC_{k_1} \ne \cC_{i_2} +\#\cC_{k_2}   $ for all pairs $(i_1,k_1)\ne (i_2,k_2)$ where $i_1 <k_1,i_2 < k_2$; this implies $\#\cC_{ i_1} \ne \#\cC_{ i_2}$ for $i_1 \ne i_2$. The combinatorics can be calculated as the following: 
\begin{enumerate} 
	\item 	There are $\frac{\# \cC_m (\# \cC_m  - 1) n  }2$ number of words $\word$ with $ \expc \left( \word^2 \right)  = \frac{4}{(\# \cC_m )^2}$ .  
	\item amongst $\frac{j}2$ edges, there are $j_m$ number of edges with  $ \E \left(T_{\word}^2 \right)  = \frac{4}{(\# \cC_m )^2}$. While selecting $j_m$ edges amongst $\frac{\# \cC_m (\# \cC_m  - 1) n  }2$ edges of this kind, there are $\dps { \frac{ \# \cC_m (\# \cC_m - 1) n }2  \choose j_m}$ possibilities.
	\item assign $2$ positions amongst $j$ possible positions to first edge -- $\dps { j \choose 2} $ possibilities; assign $2$ positions amongst rest $j -2 $ possible positions to two edges in second block -- $\dps { j-2  \choose 2} $ possibilities; \ldots ; assign $2$ positions amongst rest $j-(j-2)$ possible positions to two edges in $\frac{j}2$th block -- $\dps { j \choose 2} $ possibilities;	
	Overall, there are $\frac{j!}{2^{\frac{j}2}}$ possibilities: 
\end{enumerate}
To conclude, with fixed $j_1,\ldots, j_{\NumOfB}$
{\allowdisplaybreaks
	\begin{eqnarray*} 	&  &  \lim_{ n \to \infty}\expc \left[   \sum_{i, k =1 }^{\NumOfB} \frac1{\# \cC_{k} }  \sum_{ r \in \cC_i} \left( \sum_{ t \in \cC_k} x_{rt }^2 + 2  \sum_{ s< t \in \cC_k} x_{rs }x_{rt}\right) -\expc  W_n  \right]^j\\
		&=& \sum_{\sentence = \left(\word_1, \ldots, \word_{ \frac{j}2 } \right)}  \prod_{m = 1}^{  \frac{j}2 } \expc \left( \word_m^2 \right) + o  \left(  \left(\NumOfB n \sigma_Q^4 \right)^{\frac{j  }2 } \right) \\
		&=&	2^{j}\sigma_Q^2 \sum_{ \substack{j_1 + \ldots + j_{\NumOfB}  = \frac{j}2 \\ 0 \le j_m  \le \frac{ \# \cC_m (\# \cC_m - 1) n }2 }}\left[ \prod_{m =1}^{\NumOfB} (\# \cC_m)^{ - 2j_m}  \cdot \prod_{ m = 1 }^{\NumOfB} { \frac{ \# \cC_m (\# \cC_m - 1) n }2  \choose j_m} \frac{\left( \frac{j}2\right)! }{2^{\frac{j}2}}\right] +   o  \left(  \left(\NumOfB n \sigma_Q^4 \right)^{\frac{j  }2 } \right) \\
		& = &2^{j}\sigma_Q^2 \sum_{ \substack{j_1 + \ldots + j_{\NumOfB}  = \frac{j}2 \\ 0 \le j_m  \le \frac{ \# \cC_m (\# \cC_m - 1) n }2 }}\left[ \prod_{m =1}^{\NumOfB} (\# \cC_m)^{ - 2j_m }  \cdot \prod_{ m = 1 }^{\NumOfB} { \frac{ \# \cC_m (\# \cC_m - 1) n }2  \choose j_m} \frac{ j! }{2^{\frac{j}2}}\right] +   o  \left(  \left(\NumOfB n \sigma_Q^4 \right)^{\frac{j  }2 } \right) \\
		& = &o  \left(  \left(\NumOfB n \sigma_Q^4 \right)^{\frac{j  }2 } \right) + 2^{j} \sigma_Q^2\sum_{ \substack{j_1 + \ldots + j_{\NumOfB}  = \frac{j}2 \\ 0 \le j_m  \le \frac{ \# \cC_m (\# \cC_m - 1) n }2 }} \left\{  \frac{j ! }{2^{\frac{j}2}} \cdot   \prod_{ m = 1 }^{\NumOfB} \left[   \frac{ (\# \cC_m)^{ - 2j_m} }{j_m! } \right. \right. \\
		&& \left. \left. \cdot \frac{ \# \cC_m (\# \cC_m - 1	) n }2 \cdot \left(\frac{ \# \cC_m (\# \cC_m - 1) n }2 -j_m + 1 \right)  \cdot \ldots \cdot  \left(\frac{ \# \cC_m (\# \cC_m - 1) n }2 -1 \right) \right] \right\} \\
		& = &o  \left(  \left(\NumOfB n \sigma_Q^4 \right)^{\frac{j  }2 } \right) + 2^{j} \sigma_Q^2 \sum_{ \substack{j_1 + \ldots + j_{\NumOfB}  = \frac{j}2 \\ 0 \le j_m  \le \frac{ \# \cC_m (\# \cC_m - 1) n }2 }} \left\{  \frac{ j ! }{2^{\frac{j}2}} \cdot   \prod_{ m = 1 }^{\NumOfB} \left[   \frac{ 2^{ - j_m }  }{j_m! } \right. \right. \\
		&& \left. \left. \cdot \left( n - \frac{n}{ \# \cC_m^2}\right)\cdot  \left( n - \frac{n + 2\# \cC_m }{\# \cC_m^2}\right)  \cdot \ldots \cdot \left( n - \frac{ n + 2(j_m -1) \# \cC_m }{\# \cC_m^2 } \right) \right] \right\}  \\
		& = &  \sigma_Q^2 \sum_{ \substack{j_1 + \ldots + j_{\NumOfB}  = \frac{j}2 \\ 0 \le j_m  \le \frac{ \# \cC_m (\# \cC_m - 1) n }2 }}  j !  \cdot   \prod_{ m = 1 }^{\NumOfB}   \frac{ n^{\frac{j_m }2 } }{j_m! }  + o  \left(  \left(\NumOfB n \sigma_Q^4 \right)^{\frac{j  }2 } \right) \\
		& \xlongequal{\text{\eqref{eq:combinatoric summation}}} &  \sigma_Q^2 j !  n^{\frac{j  }2 }  \sum_{  j_1 + \ldots + j_{\NumOfB}  = \frac{j}2  }             \frac{1 }{\prod_{ m = 1 }^{\NumOfB} j_m! } + o  \left(  \left(\NumOfB n \sigma_Q^4 \right)^{\frac{j  }2 } \right)  \\
		& = & (j-1)!! \cdot  \sigma_Q^2 (2 \NumOfB n)^{\frac{j  }2 }  + o  \left(  \left(\NumOfB n \sigma_Q^4 \right)^{\frac{j  }2 } \right),
	\end{eqnarray*}
}

}and as a result,  \eqref{eq:moment method verification} holds.
\subsection{Proof of Theorem \ref{thm:asym distri testing statistics AJ}} We ignore here since it is exactly same as proof of Theorem \ref{thm:central limit theorem for frobenius norm} in Section \ref{subsec: proof of CLT for customized normalization} above.
\ignore{	\begin{lem} \label{lem:combinatoric summation}
		\begin{equation}\label{eq:non repetitive summation}
		\sum_{i <j=1 }^{ \NumOfB } (\# \cC_{i }  +\# \cC_{j } )  = (\NumOfB - 1)n.
		\end{equation}
		\begin{equation}
		\sum_{  j_1 + \ldots + j_{K}  = N }             \frac{1 }{\prod_{ m = 1 }^{K } j_m! }  = \frac{K^{N}}{N !} . \label{eq:combinatoric summation}
		\end{equation}
	\end{lem}
	\begin{proof} \eqref{eq:non repetitive summation} is easy. As for \eqref{eq:combinatoric summation}, just use mathematical induction on $K = 1,2,\ldots$.
	\end{proof}

}

\section{Mean for $\sin \Theta$ distance in Frobenius norm}
\label{sec:traditional sine theta distance}
This section evaluates mean of square of $\sin \Theta$ distance in Frobenius norm \eqref{eq:sine theta distance in Frobenius norm} under the Assumption \ref{assum:size of each block}, \ref{assum:homogeneous weighted SBM}, \ref{assum:region of interest Renyi divergence}. The aim of clarifying this mean is to argue that multiplier $\Lambda_{\bbW^{(2)}}$ can simplify the calculation of mean and variance in two-sample test statistic \eqref{eq:test statistic}.

\begin{thm}[Mean for the square of $\sin \Theta$ distance in Frobenius norm]
	\label{theorem:mean for eigenspace distance}

	Suppose the Assumption \ref{assum:size of each block}, \ref{assum:homogeneous weighted SBM}, \ref{assum:region of interest Renyi divergence} hold. As for  $\sin \Theta$ distance \eqref{eq:sine theta distance in Frobenius norm}, $ \left\| \sin \Theta \left( \bbV_{\bbW_n},\bbV_{\bbE_n} \right) \right\|_F$ of $\bbV_{\bbW_n}$ observed singular components, and $\bbV_{\bbE_n}$, singular components of $\bbE_n$,
	we have
	\begin{eqnarray*}
		&& \expc \left\| \sin \Theta \left( \bbV_{\bbW_n},\bbV_{\bbE_n} \right) \right\|_F^2 \\
		&= &  \frac{\NumOfB^3}n \cdot  \frac{ (b_{\cQ}\NumOfB +b_{\cP} -2b_{\cQ})^2 - b_{\cQ}^2   }{ (b_{\cP}-b_{\cQ})^2 (b_{\cQ}\NumOfB +b_{\cP} -b_{\cQ})^2}  \zeta (2)  \sigma_{\cQ}^2 \\
		&& + \frac{\NumOfB^2 }n \cdot \frac{  \NumOfB^2 b_{\cQ}^2 \left[ \zeta(1)  \right]^2  }{ (b_{\cP}-b_{\cQ})^2 (b_{\cQ}\NumOfB +b_{\cP} -b_{\cQ})^2}  \sigma_{\cQ}^2 \\
		&& +   \frac{ \NumOfB^2 }n \cdot\frac{ (b_{\cQ}\NumOfB +b_{\cP} -2b_{\cQ})^2 + \left( \NumOfB - 1\right) b_{\cQ}^2  }{ (b_{\cP}-b_{\cQ})^2 (b_{\cQ}\NumOfB +b_{\cP} -b_{\cQ})^2} \zeta (1)\left( \sigma_{\cP}^2 - \sigma_{\cQ}^2 \right) 
		,
	\end{eqnarray*}
	where for the sake of simplicity, we define 
	\begin{equation}
	\zeta (s) \triangleq \frac{n^s}{\NumOfB^{s +1 }} \sum_{k=1}^{\NumOfB} \frac1{(\# \cC_k)^s} \asymp 1 .
	\label{eq:zeta function}
	\end{equation}
	
	The order of the mean is complicated to analyze  since it depends on $b_{\cP}, b_{\cQ},b_{\cP}- b_{\cQ}$.
\end{thm}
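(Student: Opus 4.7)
The plan is to split $\|\sin\Theta(\bbV_{\bbW_n},\bbV_{\bbE_n})\|_F^2$ into a linear representation in $\bbW_n-\bbE_n$ (as in Lemma \ref{lem:tang asymptotic expansion}) whose expectation can be computed directly, and then to diagonalize $\bbE_n$ explicitly using the structure of $\BExpect$ in order to evaluate the resulting trace. First I would rerun the argument in Section \ref{subsec:Proof of Lemma {lem:tang asymptotic expansion}} \emph{without} the right-multiplier $\DiaEig_{\bbE_n}$: the same chain of substitutions delivers
\[
\bbV_{\bbW_n}-\bbV_{\bbE_n}\,\Procrustes(\bbV_{\bbE_n},\bbV_{\bbW_n})
= (\bbW_n-\bbE_n)\,\bbV_{\bbE_n}\,\DiaEig_{\bbE_n}^{-1}\,\Procrustes(\bbV_{\bbE_n},\bbV_{\bbW_n}) + O_P(\text{lower order}),
\]
so that it suffices to compute the expectation of $\|(\bbW_n-\bbE_n)\bbV_{\bbE_n}\DiaEig_{\bbE_n}^{-1}\|_F^2$, the cross term with the remainder being negligible under Assumption \ref{assum:region of interest Renyi divergence}.

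Using independence of the off-diagonal entries of $\bbX=\bbW_n-\bbE_n$,
\[
\E\|\bbX\bbV_{\bbE_n}\DiaEig_{\bbE_n}^{-1}\|_F^2 = \sum_j s_j\,\bigl(\bbV_{\bbE_n}\DiaEig_{\bbE_n}^{-2}\bbV_{\bbE_n}^T\bigr)_{jj},
\]
where $s_j=(\#\cC_{\cK(j)}-1)\sigma_\cP^2 + (n-\#\cC_{\cK(j)})\sigma_\cQ^2$. Writing $s_j = n\sigma_\cQ^2 + \#\cC_{\cK(j)}(\sigma_\cP^2-\sigma_\cQ^2) + O(\sigma_\cP^2)$ reduces the problem to evaluating the two traces $\tr(\DiaEig_{\bbE_n}^{-2})$ and $\tr(\bbZ_n D\bbZ_n^T\, \bbV_{\bbE_n}\DiaEig_{\bbE_n}^{-2}\bbV_{\bbE_n}^T)$, where $D=\bbZ_n^T\bbZ_n=\diag(\#\cC_1,\ldots,\#\cC_\NumOfB)$.

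To evaluate these traces I would diagonalize $\bbE_n$ explicitly. Writing $\bbV_{\bbE_n}=\bbZ_n D^{-1/2}U$ for some $U\in\cO(\NumOfB)$ and plugging into $\bbE_n\bbV_{\bbE_n}=\bbV_{\bbE_n}\DiaEig_{\bbE_n}$ (using $\bbE_n\approx\bbZ_n\BExpect\bbZ_n^T$) shows that $U$ must diagonalize the symmetric $\NumOfB\times\NumOfB$ matrix
\[
D^{1/2}\BExpect D^{1/2} = (b_\cP-b_\cQ)\,D + b_\cQ\,\mathbf{d}\mathbf{d}^T,\qquad \mathbf{d}=(\sqrt{\#\cC_1},\ldots,\sqrt{\#\cC_\NumOfB})^T,
\]
a diagonal plus rank-one matrix. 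Its eigenvalues solve the secular equation $1 = b_\cQ\sum_i \#\cC_i/(\lambda-(b_\cP-b_\cQ)\#\cC_i)$, the corresponding eigenvectors admit the closed form $u^{(\lambda)}_i\propto \sqrt{\#\cC_i}/(\lambda-(b_\cP-b_\cQ)\#\cC_i)$, and the two traces collapse to $\tr(\DiaEig_{\bbE_n}^{-2})$ and $\tr(U^TD^2U\DiaEig_{\bbE_n}^{-2})$. The main obstacle will be separating the secular spectrum into one \emph{large} root near $(b_\cP-b_\cQ)\tfrac{n}{\NumOfB}+nb_\cQ$ (which, after pulling out the factor $n/\NumOfB$, accounts for the denominator factor $b_\cQ\NumOfB+b_\cP-b_\cQ$) and $\NumOfB-1$ \emph{small} roots clustered near $(b_\cP-b_\cQ)\#\cC_i$ (accounting for the $(b_\cP-b_\cQ)^2$ factor), and then carrying the eigenvector asymptotics through the second trace with enough precision to isolate the coefficients $\zeta(1)=n\NumOfB^{-2}\sum_k(\#\cC_k)^{-1}$ and $\zeta(2)=n^2\NumOfB^{-3}\sum_k(\#\cC_k)^{-2}$. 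With this bookkeeping in place, grouping the $\sigma_\cQ^2$ and $\sigma_\cP^2-\sigma_\cQ^2$ contributions yields the three-term expression in the theorem; the residual from the first step and the diagonal correction $\diag(\bbZ_n\BExpect\bbZ_n^T)$ both fall within the advertised error.
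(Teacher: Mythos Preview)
Your reduction to computing $\E\|(\bbW_n-\bbE_n)\bbV_{\bbE_n}\DiaEig_{\bbE_n}^{-1}\|_F^2$ is exactly what the paper does, but your plan for evaluating this expectation is much harder than necessary. You propose to diagonalize $D^{1/2}\BExpect D^{1/2}$ via the secular equation for a diagonal-plus-rank-one matrix and then do asymptotic eigenvalue/eigenvector bookkeeping. The paper avoids the secular equation entirely: since the Frobenius norm is unitarily invariant and $\bbV_{\bbE_n}\DiaEig_{\bbE_n}^{-1}$ differs from $\bbZ_n(\bbZ_n^T\bbZ_n)^{-1}\BExpect^{-1}(\bbZ_n^T\bbZ_n)^{-1/2}$ only by a right orthogonal factor, one can work directly with $\BExpect^{-1}$, which has the closed Sherman--Morrison form
\[
\BExpect^{-1}=\frac1{b_\cP-b_\cQ}\Bigl[\bbI_\NumOfB-\frac{b_\cQ\,\1_\NumOfB\1_\NumOfB^T}{b_\cQ\NumOfB+b_\cP-b_\cQ}\Bigr].
\]
In your trace language, $\tr(\DiaEig_{\bbE_n}^{-2})=\tr\bigl(\BExpect^{-1}D^{-1}\BExpect^{-1}D^{-1}\bigr)$ and $\tr(U^TD^2U\DiaEig_{\bbE_n}^{-2})=\tr\bigl(\BExpect^{-1}D^{-1}\BExpect^{-1}D\bigr)$, both of which are elementary once $\BExpect^{-1}$ is written out. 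The paper carries this out entrywise: it writes down the $(i,k)$-entry of $(\bbW_n-\bbE_n)\bbZ_n(\bbZ_n^T\bbZ_n)^{-1}\BExpect^{-1}(\bbZ_n^T\bbZ_n)^{-1/2}$ as an explicit linear combination of the $x_{it}$, squares, and takes expectations using independence. This yields the three-term formula \emph{exactly}, without any eigenvalue asymptotics.

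Your route would eventually get there, but the ``main obstacle'' you anticipate---tracking one large secular root and $\NumOfB-1$ small ones only approximately---would produce asymptotic coefficients rather than the exact ones in the statement, and the matching to $\zeta(1),\zeta(2)$ would be fragile. The Sherman--Morrison shortcut makes the computation purely algebraic and removes that obstacle entirely.
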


For the sake of simplicity, we assume $w_{ii} \sim \cP$ due to same argument as the one below \eqref{eq:expectation dominant term}. Same as Theorem 2.1.31 in \cite{anderson2010introduction},  the first step is to evaluate mean and variance of 
$$W_{n} \triangleq \left\| \left(\bbW_n - \bbE_n \right) \bbZ_n \left( \bbZ_n^T \bbZ_n\right)^{- \frac12}\left[ \left( \bbZ_n^T \bbZ_n \right)^{\frac12} \BExpect \left( \bbZ_n^T \bbZ_n \right)^{\frac12} \right]^{-1} \right\|_F^2 .$$ For the sake of convenience, denote $\bbX_n \triangleq \bbW_n - \bbE_n$ with each entry zero mean.  By noticing
\begin{eqnarray*}
	&& \left[ \left( \bbZ_n^T \bbZ_n \right)^{\frac12} \BExpect \left( \bbZ_n^T \bbZ_n \right)^{\frac12} \right]^{-1}   \\
	& = &   \left( \bbZ_n^T \bbZ_n \right)^{- \frac12} \BExpect^{-1} \left( \bbZ_n^T \bbZ_n \right)^{  -  \frac12}  \\
	& =& \frac1{b_{\cP} - b_{\cQ}} \begin{bmatrix}  \#\cC_1 \\ &\ddots \\&& \#\cC_{\NumOfB} \end{bmatrix}^{ - \frac12 } \left[\bbI_{\NumOfB} - \frac{ b_{\cQ} \1_{\NumOfB} \1_{\NumOfB}^T}{b_{\cQ} \NumOfB + b_{\cP}- b_{\cQ}} \right] \begin{bmatrix}  \#\cC_1 \\ &\ddots \\&& \#\cC_{\NumOfB} \end{bmatrix}^{ - \frac12 }  \\
	& = &   \frac1{b_{\cP} - b_{\cQ}} \cdot \\
	&& \left\{ \begin{bmatrix}  \#\cC_1 \\ &\ddots \\&& \#\cC_{\NumOfB} \end{bmatrix}^{ - 1 } - \frac{b_{\cQ}}{b_{\cQ}\NumOfB + b_{\cP} - b_{\cQ}} \begin{bmatrix}  (\#\cC_1)^{ - \frac12 } \\ \ldots \\ (\#\cC_{\NumOfB})^{ - \frac12 }   \end{bmatrix}  \begin{bmatrix}  (\#\cC_1)^{ - \frac12 } \\ \ldots \\ (\#\cC_{\NumOfB})^{ - \frac12 }   \end{bmatrix}^T \right\},
\end{eqnarray*}
where for $\bbB^{-1}$ we utilize Assumption \ref{assum:homogeneous weighted SBM}.  Combining with \eqref{eq:ikth entry W-P Z inv(sqrt(ZtZ))}, we get the $ik$-th entry with $i \in [n],k\in \left[\NumOfB\right]$:
\begin{eqnarray}
& & \frac1{(b_{\cP}- b_{\cQ})  }\left\{  \frac1{\sqrt{\# \cC_{k}} \# \cC_{k}  }  \sum_{t \in \cC_k} x_{it} \right. \label{eq:ikth entry}  \\
&& \left. - \frac{b_{\cQ}}{b_{\cQ}\NumOfB + b_{\cP}-b_{\cQ}}  \frac1{\sqrt{\# \cC_{k}} }\sum_{j =1}^{\NumOfB} \frac{ \sum_{t \in \cC_j} x_{it}}{\# \cC_{j }}  \right\}  \nonumber  \\
&  = & \frac1{(b_{\cP}-b_{\cQ}) \sqrt{\# \cC_{k} } }\left\{ \frac{b_{\cQ}\NumOfB + b_{\cP}-2b_{\cQ}}{b_{\cQ}\NumOfB + b_{\cP}-b_{\cQ}} \frac1{ \# \cC_{k}  }  \sum_{t \in \cC_k} x_{it} \right. \nonumber\\
&& \left. - \frac{b_{\cQ}}{b_{\cQ}\NumOfB + b_{\cP}-b_{\cQ}} \sum_{\substack{ j =1 \\ j \ne k} }^{\NumOfB} \frac{ \sum_{t \in \cC_j} x_{it}}{\# \cC_{j }}  \right\},
\nonumber
\end{eqnarray}

Hence as for mean, by assumption \ref{assum:homogeneous weighted SBM}
{\allowdisplaybreaks\begin{eqnarray*}
	&& (b_{P}-b_{Q})^2 (b_Q\NumOfB +b_P -b_Q)^2 \E W_n \\
	& = & \sum_{i =1}^n \sum_{k =1 }^{\NumOfB} \frac1{\# \cC_{k} } \E  \left[ (b_Q\NumOfB +b_P -2b_Q)  \frac{\sum_{t \in \cC_k} x_{it}}{\#\cC_k} - b_Q\sum_{\substack{ j =1 \\ j \ne k}}^{\NumOfB} \frac{\sum_{t \in \cC_{j}} x_{it}}{\#\cC_{j}} \right]^2\\
	& = &  \sum_{i =1}^{\NumOfB} \sum_{s \in \cC_{i }}  \sum_{k =1 }^{\NumOfB} \frac1{\# \cC_{k} } \left[(b_Q\NumOfB +b_P -2b_Q)^2 \frac{\sum_{t \in \cC_k}\E  \left[x_{ s t}^2\right]}{(\#\cC_k)^2} + b_Q^2 \sum_{\substack{ j =1 \\ j \ne k}}^{\NumOfB} \frac{\sum_{t \in \cC_{j}} \E \left[x_{s t}^2\right]}{(\#\cC_{j})^2}  \right] \\
	& = &(b_Q\NumOfB +b_P -2b_Q)^2  \sum_{i =1}^{\NumOfB} \sum_{s \in \cC_{i }}   \sum_{k =1 }^{\NumOfB} \frac1{(\# \cC_{k})^3 } \sum_{t \in \cC_k} \E  \left[x_{ s t}^2\right]  \\
	& & + b_Q^2 \sum_{i =1}^{\NumOfB} \sum_{s \in \cC_{i }}  \sum_{k =1 }^{\NumOfB} \frac1{\# \cC_{k} } \sum_{\substack{ j =1 \\ j \ne k}}^{\NumOfB}\frac{ \sum_{t \in \cC_{j}}   \E \left[x_{s t}^2\right]    }{(\#\cC_{j})^2}  \\
	& = &(b_Q\NumOfB +b_P -2b_Q)^2   \sum_{k =1 }^{\NumOfB} \frac1{(\# \cC_{k})^3 } \sum_{t \in \cC_k}  \sum_{i =1}^{\NumOfB} \sum_{s \in \cC_{i }} \bbE  \left[x_{ s t}^2\right]  \\
	& & + b_Q^2 \sum_{j =1 }^{\NumOfB}\frac{1 }{(\#\cC_{j})^2}  \sum_{\substack{ k =1 \\ k \ne j}}^{\NumOfB} \frac1{\# \cC_{k} } \sum_{t \in \cC_{j}}   \sum_{i =1}^{\NumOfB} \sum_{s \in \cC_{i }}  \bbE \left[x_{s t}^2\right]    \\
	& = &   (b_Q\NumOfB +b_P -2b_Q)^2\left[ n \sigma_Q^2\cdot  \sum_{ k =1 }^{\NumOfB} \frac{1 }{(\# \cC_{k})^2 }  +     \left( \sigma_P^2 - \sigma_Q^2 \right) \sum_{ k =1 }^{\NumOfB} \frac{ 1}{\# \cC_{k} }\right]  \\
	&&  + b_Q^2 \left\{ n \sigma_Q^2 \left[ \left( \sum_{k =1 }^{\NumOfB} \frac{1}{\# \cC_{k}  }   \right)^2   -  \sum_{k =1 }^{\NumOfB} \frac{1}{(\# \cC_{k})^2  }   \right]
	+  \left( \sigma_P^2 - \sigma_Q^2 \right) (\NumOfB - 1) \sum_{k =1 }^{\NumOfB}\frac{ 1 }{\# \cC_{k} } \right\}   \\
	& = &n \sigma_Q^2\left\{ \left[(b_Q \NumOfB +b_P -2b_Q)^2 - b_Q^2 \right] \cdot  \sum_{ k =1 }^{\NumOfB} \frac{1 }{(\# \cC_{k})^2 } + b_Q^2   \left( \sum_{k =1 }^{\NumOfB} \frac{1}{\# \cC_{k}  }   \right)^2 \right\}\\
	&&   +  \left[(b_Q\NumOfB + b_P -2b_Q)^2 + \left( \NumOfB - 1\right) b_Q^2 \right]  \left( \sigma_P^2 - \sigma_Q^2 \right) \sum_{k =1 }^{\NumOfB}\frac{1}{\# \cC_{k} },
\end{eqnarray*}}
or by \eqref{eq:zeta function} equivalently, 
\begin{eqnarray*}
	&& \bbE W_n \\ 
	& = &  \frac{\NumOfB^3}n \cdot  \frac{ (b_Q\NumOfB +b_P -2b_Q)^2 - b_Q^2   }{ (b_P-b_Q)^2 (b_Q\NumOfB +b_P -b_Q)^2}  \zeta (2)  \sigma_Q^2+ \frac{\NumOfB^2 }n \cdot \frac{  \NumOfB^2 b_Q^2 \left[ \zeta(1)  \right]^2\sigma_Q^2  }{ (b_P-b_Q)^2 (b_Q\NumOfB +b_P - b_Q)^2}   \\
	&& +   \frac{ \NumOfB^2 }n \cdot\frac{ (b_Q\NumOfB +b_P -2b_Q)^2 + \left( \NumOfB - 1\right) b_Q^2  }{ (b_P-b_Q)^2 (b_Q\NumOfB +b_P -b_Q)^2} \zeta (1)\left( \sigma_P^2 - \sigma_Q^2 \right)= \Theta \left(\frac{\NumOfB^3 }n\right),
\end{eqnarray*}

\section{Proof of theorem for asymptotic power}
This section proves Theorem \ref{thm:asymptotic power guarantee}. Recall that we only consider the  hypothesis test \eqref{eq:hypothesis test -- Hamming distance} with $\epsilon' >0$ and we are in this simple scenario with equal-size assumption,. Hence,
  \begin{eqnarray*}
	T_{n,\NumOfB}  	&=& \frac1{n\NumOfB}\left\|  \left[\bbV_{\bbW_n^{(1)}}  \Procrustes \left( \bbV_{\bbW_n^{(1)}}  , \bbV_{\bbW_n^{(2)}} \right)   - \bbV_{\bbW_n^{(2)}} \right] \Lambda_{\bbW_n^{(2)}}\right\|_F^2  \\
	& \asymp_P & \frac1{n\NumOfB}\left\| \left[\bbZ_n^{(1)} \left( \left[ \bbZ_n^{(1)}\right]^T  \bbZ_n^{(1)}\right)^{-\frac12 }\Procrustes_{\bbZ}	-  \bbZ_n^{(2)} \left( \left[ \bbZ_n^{(2)}\right]^T  \bbZ_n^{(2)}\right)^{-\frac12 } \right] \Lambda_{\bbW_n^{(2)}} \right\|_F^2 \\
	& = & \frac1{n\NumOfB}\left(\sqrt{\frac{\NumOfB}{n}} n\ell_n \cdot \frac{nb_{\cP}}{\NumOfB}\right)^2  = \frac{n^2 b_{\cP}^2 \ell_n^2}{\NumOfB^2} \succeq  n^{2 \epsilon'}\mu_{n}\succ  \mu_n,
\end{eqnarray*}
where $$\Procrustes_{\bbZ} =  \Procrustes \left(\bbZ_n^{(1)} \left( \left[ \bbZ_n^{(1)}\right]^T  \bbZ_n^{(1)}\right)^{-\frac12 } , \bbZ_n^{(2)} \left( \left[ \bbZ_n^{(2)}\right]^T  \bbZ_n^{(2)}\right)^{-\frac12 } \right).$$ Consequentially,
	$$
	\frac{		T_{n,\NumOfB} - \mu_n }{ \sqrt{\Var_n }}\succeq_P   n^{\frac12 + 2\epsilon'}\cdot \frac{\mu_n \sqrt{\NumOfB}}{b_{\cP}} \succeq n^{\frac12 + 2\epsilon'}\sqrt{\NumOfB}.
	$$
	
Therefore, for any two-sided $\alpha$-level test with $q_{\frac{\alpha}{2}}$,$ q_{\left(1-\frac{\alpha}{2}\right)}$ the $\frac{\alpha}{2}$-quantile and $\left(1-\frac{\alpha}{2}\right)$-quantile of Gaussian distribution, the probability under the alternative $H_1'$ of \eqref{eq:hypothesis test -- Hamming distance} satisfies
	
	$$
	\P_{H'_1} \left(q_{\frac{\alpha}{2}} < \frac{		T_{n,\NumOfB} - \mu_n }{ \sqrt{\Var_n }} < q_{\left(1-\frac{\alpha}{2}\right)}\right) \to 1.
	$$

\section{Proofs of auxiliary lemmas}

\begin{defn}[Induced norms] An operator (or induced) matrix norm is a norm $\|\cdot \|_{a,b} : \Reals^{m \times n} \to \Reals$ defined as
	$\|\bbA\|_{a,b}=\max_{\|\x\|_b \le 1}\|\bbA\x\|_a
	$,
	where $\|.\|_a$ is a vector norm on $\Reals^m$ and $\| \cdot \|_b$ is a vector norm on $\Reals^n$.
	\label{defn: induced norms}
\end{defn}

\begin{lem}[Matrix norm inequalities] \label{lem:matrix norm inequality}
	For $\bbA \in \Reals^{m \times n}, \bbB \in \Reals^{n \times p}$, we have 
	\begin{equation}
	\| \bbA \bbB \|_F \le \| \bbA \|_2 \cdot  	\|  \bbB \|_F . \label{eq:matrix norm inequality}
	\end{equation}
	Every induced norm in \eqref{defn: induced norms} is submultiplicative, i.e.
	\begin{equation}
	\| \bbA \bbB \|_{a,b} \le \| \bbA \|_{a,b} \cdot  	\|  \bbB \|_{a,b} . \label{eq:induced norm submultiplicative}
	\end{equation}
\end{lem}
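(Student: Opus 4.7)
\textbf{Proof plan for Lemma \ref{lem:matrix norm inequality}.} Both inequalities are classical, so the plan is to record the standard arguments compactly; I do not expect any serious obstacle.

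For the first inequality \eqref{eq:matrix norm inequality}, the plan is to decompose $\bbB$ into its columns $\bbB = [\b_1, \ldots, \b_p]$, so that $\bbA\bbB = [\bbA\b_1, \ldots, \bbA\b_p]$ and consequently $\|\bbA\bbB\|_F^2 = \sum_{j=1}^p \|\bbA\b_j\|_2^2$. Then I would apply the definition of the induced $2$-norm \eqref{eq:2-2 norm} column by column: each term satisfies $\|\bbA\b_j\|_2 \le \|\bbA\|_2 \cdot \|\b_j\|_2$. Summing the squared inequalities yields
\begin{equation*}
\|\bbA\bbB\|_F^2 \;\le\; \|\bbA\|_2^2 \sum_{j=1}^p \|\b_j\|_2^2 \;=\; \|\bbA\|_2^2 \cdot \|\bbB\|_F^2,
\end{equation*}
and taking square roots gives \eqref{eq:matrix norm inequality}.

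For the second inequality \eqref{eq:induced norm submultiplicative}, the plan is to work directly from Definition \ref{defn: induced norms}, which in particular implies the compatibility bound $\|\bbM\x\|_a \le \|\bbM\|_{a,b}\|\x\|_b$ valid for every $\x$ in the domain (this is immediate by scaling: if $\x \ne \0$, apply the definition to $\x / \|\x\|_b$). Given this compatibility bound, for any $\x$ with $\|\x\|_b \le 1$ I would chain the inequality twice:
\begin{equation*}
\|(\bbA\bbB)\x\|_a \;=\; \|\bbA(\bbB\x)\|_a \;\le\; \|\bbA\|_{a,b}\|\bbB\x\|_b \;\le\; \|\bbA\|_{a,b}\|\bbB\|_{a,b}\|\x\|_b \;\le\; \|\bbA\|_{a,b}\|\bbB\|_{a,b}.
\end{equation*}
Taking the supremum over $\|\x\|_b \le 1$ on the left-hand side then produces \eqref{eq:induced norm submultiplicative}.

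The only subtlety worth flagging is that, for the second step to literally fit Definition \ref{defn: induced norms}, the vector norm on the intermediate space must play both the roles of ``input norm for $\bbA$'' and ``output norm for $\bbB$,'' so the notation $\|\cdot\|_{a,b}$ should be understood as using a common norm on the shared intermediate space $\Reals^n$ (a standard convention, and the only case we use in the body of the paper, where $a = b = 2$). No other obstacle is anticipated.
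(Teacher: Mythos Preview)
Your proof is correct and, for \eqref{eq:matrix norm inequality}, matches the paper's argument exactly: column decomposition of $\bbB$ followed by the induced-$2$-norm bound on each column. For \eqref{eq:induced norm submultiplicative} you actually give more than the paper does---the paper simply cites an external reference, whereas you supply the standard compatibility-and-chain argument; your remark about the shared intermediate norm is apt and is implicitly the convention in force.
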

\begin{proof} 
	\begin{enumerate} 
		\item By letting $\bbB = \left( \b_1,\ldots, \b_p\right)$, we have
		$$\left\| \bbA \bbB  \right\|_F^2 = \sum_{i=1}^p \left\| \bbA \b_i \right\|_2^2 \le \left\|  \bbA \right\|_2^2 \sum_{i=1}^p \left\|\b_i \right\|_2^2 =  \left\|  \bbA \right\|_2^2  \cdot \| \bbB\|_F^2. $$
		\item Refers to \cite{ahmadi:website}.
	\end{enumerate}
\end{proof}

%

\section{Additional simulation results}
When $\NumOfB >2$, estimation of the community memberships becomes more difficult, which can lead to  slower convergence rates (see Table \ref{tab:type I error unweighted SBM Kn>2}), although  type I errors are approximately under control. 
\begin{table}[ht]
	\caption{Average type I error of two sided test with significance level $\alpha = 5\%$ for unweighted SBM with $p = 0.5, q=0.1$ based on 2000 replications. $\bbT = \Procrustes  \left( \bbV_{\bbW^{(1)}} , \bbV_{\bbW^{(2)}} \right) $. For $\NumOfB = 3$,  pick $\# \cC_1 = \frac{n}6, \# \cC_2 = \frac{n}3, \# \cC_3 = \frac{n}2$; for  $\NumOfB = 4$, pick $\# \cC_{ 1} = \frac{n}9, \# \cC_{ 2} = \frac{2n}9, \# \cC_{ 3} = \frac{n}3, \# \cC_{ 1} = \frac{n}3   $
		.}
	\label{tab:type I error unweighted SBM Kn>2}
	\begin{center}
		\begin{tabular}{ccccc} \hline 
			$n$ & $\NumOfB$& $\gamma = 1.5$& $\gamma= 1$  &  $\gamma = 0.7$
			\\ \hline
			\multirow{3}{*}{500} 
			& 2 &5.3\%&4.7\% &9.7\%
			\\ 	
			&3 &7.6\% &14.8\% &36.3\% 
			\\ 
			& 4& 18.75\%  & 46.9\% & 91.9\%  
			\\  
			\hline 
			\multirow{3}{*}{1000} 
			&2& 4.9\% & 4.8\% &7.3\% 
			\\ 
			& 3 &5.8\%  & 10.4\% & 20.2\% 
			\\ 
			& 4 &10.7\% & 26.8\%  &  59.9\% 
			\\  \hline
			\multirow{3}{*}{2000} 
			&2 &  4.9\% & 5.3\% & 5.6\%  
			\\ 
			&  3&5.0\%&7.8\%&15.4\%
			\\  
			&4 &  8.3\% & 15.7\%   & 34.8\%   
			\\ 
			\hline
			\multirow{3}{*}{4000} 
			&2 & 4.5\% & 5.2\%  & 6.1\% 
			\\ 
			&3&4.9\% & 5.7\% & 9.2\% 
			\\ 
			&4 &6.3\% &  11.0\%  & 26.2\% 
			\\ 	 \cline{2-5}
			\hline
			\multirow{3}{*}{8000} & 2 & 5.1\% & 4.9\% &5.3\% 
			\\  
			& 3 & 4.6\% & 5.4\% &6.4\% 
			\\  
			& 4&  5.3\% &  6.1\% & 11.7\%   
			\\  \hline
		\end{tabular}
	\end{center}
\end{table}

\end{document}